\setlist[itemize]{label={$\blacktriangleright$}}
\theoremstyle{definition}
\newtheorem{defi}{Definition}[section]
\newtheorem{expl}[defi]{Example}
\newtheorem{notat}[defi]{Notation}
\theoremstyle{plain}
\newtheorem{pro}[defi]{Proposition}
\newtheorem{thm}[defi]{Theorem}
\newtheorem{cor}[defi]{Corollary}
\newtheorem{lmm}[defi]{Lemma}
\newtheorem{thmintro}{Theorem}
\theoremstyle{remark}
\newtheorem{rmk}[defi]{Remark}
\newcommand{\R}{\mathbb{R}}
\newcommand{\K}{\mathbb{K}}
\DeclareMathOperator{\Map}{Map}
\DeclareMathOperator{\Prim}{Prim}
\DeclareMathOperator{\Ind}{Indec}
\newcommand{\Seq}{\mathrm{Seq}}
\newcommand{\Sp}{\mathrm{Spec}}
\newcommand{\sSet}{\mathrm{sSets}}
\newcommand{\Operad}{\mathrm{Operad}}
\newcommand{\Cooperad}{\mathrm{Cooperad}}
\newcommand{\Hopf}{\mathrm{Hopf}}
\newcommand{\Bimod}{\mathrm{Bimod}}
\newcommand{\Cobimod}{\mathrm{Cobimod}}
\newcommand{\CDGA}{\mathrm{CDGA}}
\newcommand{\Ch}{\mathrm{Ch}}
\newcommand{\dg}{\mathrm{dg}}
\newcommand{\PL}{\mathrm{PL}}
\newcommand{\iso}{\mathrm{core}}
\newcommand{\calO}{\mathcal{O}}
\newcommand{\calC}{\mathcal{C}}
\newcommand{\calB}{\mathcal{B}}
\newcommand{\calP}{\mathcal{P}}
\newcommand{\calQ}{\mathcal{Q}}
\newcommand{\calF}{\mathcal{F}}
\newcommand{\calU}{\mathcal{U}}
\newcommand{\calM}{\mathcal{M}}
\newcommand{\calR}{\mathcal{R}}
\newcommand{\bfC}{\mathbf{C}}
\newcommand{\Tree}{\mathbb{T}}
\newcommand{\sTree}{s\mathbb{T}}
\newcommand{\lTree}{\mathbb{L}}
\newcommand{\slTree}{s\mathbb{L}}
\DeclareMathOperator{\lev}{lev}
\DeclareMathOperator{\ev}{ev}
\DeclareMathOperator{\id}{id}
\DeclareMathOperator{\Emb}{Emb}
\newcommand{\xtwoheadrightarrow}[2][]{%
  \xrightarrow[#1]{#2}\mathrel{\mkern-14mu}\rightarrow
}
\title{Boardman--Vogt resolutions and bar/cobar constructions of (co)operadic (co)bimodules}
\author{Ricardo Campos \and Julien Ducoulombier \and Najib Idrissi}
\date{July 3, 2021}
\begin{document}

\maketitle

\begin{abstract}
  We develop the combinatorics of leveled trees in order to construct explicit resolutions of (co)operads and (co)operadic (co)bimodules.
  We build explicit cofibrant resolutions of operads and operadic bimodules in spectra analogous to the ordinary Boardman--Vogt resolutions and we express them as cobar constructions of indecomposable elements.
  Dually, in the context of CDGAs, we perform similar constructions, and we obtain fibrant resolutions of Hopf cooperads and Hopf cooperadic cobimodules.
  We also express them as bar constructions of primitive elements.
\end{abstract}

\tableofcontents

\section{Introduction}

\subsection{Motivation}
\label{sec:motivation}

Given two smooth manifolds $M$ and $N$, the embedding space $\Emb(M,N)$ is the space of smooth embeddings $M \hookrightarrow N$ endowed with the compact-open topology.
Studying this space is a classical problem in topology.
Determining its homotopy type, or even its rational homotopy type, is a difficult problem.
For example, understanding the connected components of $\Emb(S^{1}, S^{3})$ is the object of knot theory.

Our approach towards the understanding of $\Emb(M,N)$ involves two key ingredients: Goodwillie--Weiss calculus and operad theory.
On the one hand, Goodwillie--Weiss calculus~\cite{GoodwillieWeiss1999} gives information about $\Emb(M,N)$ when $\dim N - \dim M \ge 3$.
It allows one to express $\Emb(M,N)$ as the limit of a tower of ``polynomial approximations'' $T_k \Emb(M,N)$ which are easier to compute.
On the other hand, operads are combinatorial objects that encode algebraic structures.
The little disks operads $E_{n}$, that were initially introduced in the study of iterated loop spaces~\cite{BoardmanVogt1968,May1972}, play a central role in this theory.
An element of $E_{n}$ is a configuration of numbered disjoint $n$-disks in the unit $n$-disk.
The operadic structure consists in plugging a configuration of disks inside one of the disks of another configuration.
Algebras over these little disks operads are precisely spaces having the homotopy type of iterated loop spaces (under some technical conditions).

Ways of computing the rational homotopy type of $\Emb(M,N)$ with combinatorial methods have been developed using these two ingredients.
In particular, for $M = \R^{d}$, $N = \R^{n}$ and $n-m\geq 3$, the space of compactly supported embeddings $\Emb_{c}(\R^{d}, \R^{n})$ was shown to be weakly equivalent to the $(d+1)$-iterated loop space of the derived mapping space of operads $\Operad^{h}(E_{d}, E_{n})$~\cite{DucoulombierTurchin2017,BoavidaWeiss2015}.
Using Sullivan's rational homotopy theory and the formality of the little disks operads, this derived mapping space can be expressed in terms of hairy graph complexes~\cite{FresseTurchinWillwacher2017}.
As a consequence, $\pi_{0}\Emb_{c}(\R^{d}, \R^{n})$ has been proved to be a finitely generated group of rank $\leq 1$.

One of the key steps in this computation was the construction of fibrant resolutions of Hopf cooperads (i.e.\ cooperads in commutative differential graded algebras) that can be expressed as cofree objects.
The fibrant resolution is used to compute the derived mapping space, and the fact that it is cofree allows one to reduce this mapping space to a mapping space of symmetric sequences with some differential.
One of the usual ways of providing resolutions of (co)operads is the Boardman--Vogt $W$ construction.
Fresse--Turchin--Willwacher~\cite{FresseTurchinWillwacher2017} managed to identify the Boardman--Vogt construction of a cooperad with the bar construction of an explicit operad (see also Berger--Moerdijk~\cite{BM}), therefore the $W$ resolution is in particular cofree.

Our goal is to provide tools to extend these computations to study the space of \emph{string links}, i.e.\ the space of compactly supported embeddings $\Emb_c(\R^{d_1}\sqcup\dots \sqcup \R^{d_k}, \R^n)$.
In the strategy outlined above, Goodwillie--Weiss calculus is replaced with multivariable Goodwillie--Weiss calculus.
With this multivariable version, the space of string links cannot be expressed as a mapping space of operads but rather as a mapping space of \emph{operadic bimodules}~\cite{Ducoulombier2018}.
If one thinks of an operad as a monoid in a certain (non-symmetric) monoidal category, then an operadic ($\calP$-$\calQ$)-bimodule corresponds to a right module over the monoid $\calQ$ and a left module over the monoid $\calP$.

In this paper, we extend the Boardman--Vogt resolution and the bar-cobar resolution to deal with (co)operadic (co)bimodules.
The main difference with the classical Boardman--Vogt resolution for (co)operads is the following.
Since an operad is equipped with a unit, the operadic structure maps can be expressed in terms of infinitesimal compositions, i.e.\ compositions where elements are composed one at a time instead of all at once.
Free constructions can thus be defined using planar trees, as we can contract edges independently.
However, for bimodules, this is not the case: only total compositions are available.
We are thus led to consider categories of \emph{leveled trees}, i.e., trees whose leaves are all at the same height (and with possibly bivalent vertices), instead of mere planar trees.
The introduction of levels allows us to have well-defined total edge contraction operations on trees, which encodes the combinatorics of bimodules better.
We moreover consider leveled trees \emph{with section}: the level of the section corresponds to the bimodule itself, the levels below the section correspond to the left module structure, and the levels above the section correspond to the right module structure.
As explained in Remark~\ref{rmk:batanin}, we use leveled trees because it is technically convenient for us.
At the cost of more complex definitions, one can adapt all our results to the setting of traditional planar trees.

Before dealing with (co)bimodules, we adapt the usual Boardman--Vogt resolutions and bar-cobar resolutions of (co)operads to use leveled trees.
While leveled trees had already been used to define bar-cobar constructions of (co)operads~\cite{Fresse2004,Livernet2012}, these earlier constructions did not inherit the structure that we need.
To solve this problem, we introduce additional morphisms in our categories of leveled trees.
(Let us note that our terminology differs from the one of~\cite{LodayVallette2012,MarklShniderStasheff2002} who use the name ``leveled trees'' for different kinds of trees.)
These extra morphisms consist in he permutation of some levels, if they satisfy a certain condition.
This flexibility allows us to define the appropriate structures on the Boardman--Vogt and bar-cobar resolutions.
We prove that, in the (co)operadic case, the bar and cobar constructions that we define are isomorphic to the usual bar and cobar constructions.
These adapted constructions on (co)operads are compatible with our resolutions for (co)bimodules.
Beyond bimodules, our results could extend to other situations in which unleveled trees do not work, e.g.\ the derived triple product $B(L,P,M,Q,R)$ considered by Arone--Ching~\cite{AroneChing2011}.

A particular case of the main result of~\cite{Ducoulombier2018} states that if $d_1=\dots =d_k=d$, then one can express the space of string links $\Emb_c(\R^d \sqcup \dots \sqcup \R^{d}, \R^n )$ in terms of the $d$-fold loop space of the bimodule derived mapping space $\Bimod^h_{E_{d,k}}(E_d \times \dots \times E_d, E_n)$, where $E_{d,k}$ is a certain colored operad obtained from the little disks operads.
In order to compute the rational homotopy type of this derived mapping space, it would be necessary to find appropriate resolutions of the operadic (co)bimodules involved.
In future work, we plan to use these resolutions and the formality of the little disks operads to express the rational homotopy type of the derived mapping space of bimodules above in terms of colored hairy graph complexes.

\subsection{Summary of results}
\label{sec:summary-results}

In Section~\ref{sec:model-categ-struct}, we recall background on operads, cooperads, operadic bimodules, and cooperadic cobimodules.
In each case, we describe the projective model category structures on the associated categories.
We also consider $\Lambda$ versions of these objects, i.e.\ we allow constants in our (co)operads (which gives extra structure on the (co)bimodules), and we describe the Reedy model category structures on the corresponding categories. The rest of the paper is split into $5$ sections.

\paragraph{Categories of trees}\label{para:categ-trees}
In Section~\ref{sec:invent-categ-trees}, we introduce the categories of trees that we consider in this paper:
\begin{itemize}
  \item[$\Tree$:]
        The category $\Tree[n]$ of planar trees with $n$ leaves whose morphisms are generated by isomorphisms of planar trees and contractions of two consecutive vertices.
        The family of categories $\Tree=\{\Tree[n],\, n\geq 1\}$ inherits an operadic structure given by compositions of trees.
        We also consider the subcategory $\Tree^{\geq 2}[n]$ composed of planar trees whose internal vertices all have at least two incoming edges.

  \item[$\lTree$:]
        The category $\lTree[n]$ of leveled trees with $n$ leaves whose morphisms are generated by isomorphisms of trees, contractions of consecutive levels and permutations of two consecutive ``permutable'' levels.
        The family $\lTree=\{\lTree[n],\, n\geq 1\}$ is equipped with a kind of operadic structure satisfying the operadic axioms up to contractions and permutations of permutable levels.
  \item[$\slTree$:]
        The category $\slTree[n]$ of leveled trees with section having $n$ leaves and whose morphisms are generated by isomorphisms of trees, contractions of consecutive levels and permutations of two consecutive permutable levels.
        The family $\slTree=\{\slTree[n],\,n\geq 1\}$ is equipped with a kind of ($\lTree$-$\lTree$)-bimodule structure satisfying the bimodule's axioms up to contractions and permutations of permutable levels.
\end{itemize}

It is well-know that operads can be constructed as algebras over a certain monad of planar trees, which gives a simple description of the free operad.
However, operads are also monoids with respect to the (non-symmetric) monoidal product on symmetric sequences $\circ$.
The iterated products $\mathcal{P}^{\circ n}$ produce naturally leveled trees.

Compared with earlier works that dealt with leveled trees, the key point in this paper is the introduction of a new type of morphisms.
These new morphisms consist in contracting or permuting consecutive levels called ``permutable'' and satisfying some conditions.
With the addition of these morphisms, the two categories $\Tree^{\geq 2}$ and $\lTree$ become almost the same.
We will show that all the constructions based on the category $\Tree^{\geq 2}$ (such as (co)free (co)operad functors, (co)bar constructions, $W$-constructions, etc.) can be extended to the category $\lTree$.
For each construction, both versions are isomorphic.
This is a consequence of the following statement:

\begin{thmintro}[Theorem~\ref{pro:now first theorem in intro}]\label{thm0}
  The functor $\alpha:\lTree[n] \rightarrow \Tree^{\geq 2}[n]$ which forgets the leveled

  structure and the bivalent vertices is full and surjective on objects.
  It admits an explicit right inverse $\beta:\Tree^{\geq 2}[n]\rightarrow \lTree[n]$ which is faithful and injective on objects.
\end{thmintro}

\paragraph{Resolutions of operads in spectra}

Ching~\cite{Chi,Chi2} studied Boardman--Vogt $W$ resolutions for operads in spectra (or more generally any category enriched in pointed simplicial sets).
For such an operad $\calO$ satisfying $\calO(0) = \varnothing$ and $\calO(1) = *$ (the singleton), he proved that $W\calO$ is weakly equivalent to the usual bar-cobar resolution $\Omega B \calO$, and that the usual bar construction $B\calO$ is weakly equivalent to the suspension of the cooperad of indecomposable elements $\Sigma \Ind(\calO)$.
Dual statements were also proved in the setting of commutative differential graded algebras (CDGAs) by Fresse~\cite{Fresse2004}.

In Section~\ref{sec:cofibr-resol-lambda}, we adapt the Boardman--Vogt resolution as well as the bar and cobar constructions to the setting of leveled trees.
We do these constructions for reduced operads, also denoted by $\Lambda$-operads, which are operads defined in arity $\ge 1$ equipped with  extra structure~\cite{Fresse2017}.
Our operads still satisfy $\calO(1) = *$, i.e.\ they are 1-reduced.
Thanks to the comparison Theorem~\ref{thm0}, we prove that these leveled constructions are isomorphic to the usual ones.
We can summarize this by:

\begin{thmintro}\label{thmA}
  Let $\calO$ be a 1-reduced $\Lambda$-operad in spectra.
  \begin{enumerate}[label={(\alph*)}, nosep]
    \item
          The leveled Boardman--Vogt resolution $W_{l}\calO$ is isomorphic to the usual Boardman--Vogt resolution $W\calO$.
          It is therefore a cofibrant resolution of $\calO$ in the Reedy model category structure of 1-reduced $\Lambda$-operads  (Proposition~\ref{prop:w-o-resolution}).
    \item
          The leveled bar construction $\calB_{l}\calO$ is isomorphic to the usual bar construction $\calB\calO$.
          Therefore, the indecomposables of $W_{l}\calO$ define a 1-reduced cooperad $\Ind(W_{l}\calO)$ in spectra and the leveled bar construction $\calB_{l}\calO$ is weakly equivalent to the suspension $\Sigma \Ind(W_{l}\calO)$ (Proposition~\ref{ProOpFr}).
    \item
          The leveled cobar construction $\Omega_{l}\mathcal{C}$ is isomorphic to the usual cobar construction $\Omega\mathcal{C}$ for any cooperad $\mathcal{C}$.
          In particular, the leveled Boardman--Vogt resolution $W_{l}\calO$ is weakly equivalent to the leveled cobar-bar construction $\Omega_{l}\calB_{l}\calO$ as a 1-reduced operad (Proposition~\ref{prop:bw-cobarbar}).
  \end{enumerate}
\end{thmintro}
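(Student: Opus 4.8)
All three statements follow a common plan: compare each leveled construction with its classical planar counterpart via the natural bijection between planar trees and leveled trees up to permutations from Section~\ref{sec:invent-categ-trees}, and then import the known results of Ching and of classical operad theory in spectra. I would organize the argument around three comparison isomorphisms, $W_l\calO\cong W\calO$, $\calB_l\calO\cong B\calO$, and $\Omega_l\calB_l\calO\cong\Omega B\calO$, each followed by the corresponding classical theorem.

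For part~(a), I would first construct the comparison map $W_l\calO\to W\calO$ by sending a leveled tree decorated by $\calO$ to its underlying planar tree carrying the same decoration and edge-length data. Because every planar tree admits a leveling that is unique up to the permutation morphisms of $\Tree_l$, the map induced on the colimits defining the two constructions is a levelwise isomorphism of symmetric sequences. The delicate point is the operad structure: composition in $W_l\calO$ is associative only up to permutation morphisms, whereas $W\calO$ is strictly associative, so I must check that passing to the colimit over the category of leveled trees (which includes those permutation morphisms) identifies the two operad structures. Cofibrancy in the Reedy structure of $1$-reduced $\Lambda$-operads and the weak equivalence to $\calO$ would then transfer from the classical $W\calO$, with compatibility with the $\Lambda$-structure reducing to checking that the maps induced by $\calO(0)=*$ agree on both sides.

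For part~(b), I would identify the decomposables of $W_l\calO$ (the image of operadic compositions in arities $\ge 2$) and show that they form a coideal for the tree-splitting comultiplication, so that $\Ind(W_l\calO)$ inherits a $1$-reduced cooperad structure; the bijection of~(a) then gives $\Ind(W_l\calO)\cong\Ind(W\calO)$. The weak equivalence $\calB_l\calO\simeq\Sigma\Ind(W_l\calO)$ I would obtain by adapting Ching's argument: the leveled bar construction is filtered by number of internal vertices, its associated graded collapses onto the one-vertex (indecomposable) stratum, and the single suspension records the interval direction surviving the bar differential. For part~(c), the same tree bijection gives $\Omega_l\calB_l\calO\cong\Omega B\calO$, and combining the classical equivalence $W\calO\simeq\Omega B\calO$ with part~(a) yields $W_l\calO\simeq\Omega_l\calB_l\calO$ as $1$-reduced operads.

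The hard part will be the ``up to permutation'' associativity of the leveled structures. I expect the real work to lie in verifying that the permutation morphisms are arranged so that the relevant colimits strictify the leveled operad and cooperad structures and identify them with the classical planar ones along the comparison maps---rather than merely yielding levelwise bijections---and in tracking the $\Lambda$-structure coherently through all three constructions.
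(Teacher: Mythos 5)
Your overall plan for (a) and (c) is essentially the paper's: the comparison isomorphisms $W_{l}\calO\cong W\calO$ and $\Omega_{l}\calB_{l}\calO\cong\Omega\calB\calO$ are built exactly via the maps $\alpha$ (forget levels) and $\beta$ (choose a canonical leveling) between planar and leveled trees, the classical results of Berger--Moerdijk and Ching are then imported, the Reedy cofibrancy of the $\Lambda$-version reduces to projective cofibrancy of the underlying $\Sigma$-operad by Fresse's criterion, and the ``hard part'' you flag (strictifying the up-to-permutation associativity) is resolved precisely as you anticipate: the coend over $\Tree_{l}[n]$ already quotients by permutations of permutable levels, so the induced composition on $W_{l}\calO$ is strictly associative. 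The one genuine divergence is part (b). You propose to prove $\calB_{l}\calO\simeq\Sigma\Ind(W_{l}\calO)$ by filtering the bar construction by the number of internal vertices and collapsing the associated graded onto the indecomposable stratum, in the spirit of Ching's comparison. The paper instead obtains an \emph{isomorphism}, not merely a weak equivalence, by a direct inspection of the two defining coends: $\Ind(W_{l}\calO)$ is cut out by replacing $H_{W}$ with the functor that kills any decoration with a level equal to $1$, and the single extra $\Delta[1]$-coordinate attached to the $0$-th level in $\calB_{l}\calO$ is literally the suspension coordinate of $\Sigma\Ind(W_{l}\calO)$; the cooperad structures then match term by term. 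Your filtration route should also produce the weak equivalence asserted in the theorem, but it is heavier machinery than needed and loses the stronger statement (and the explicit cooperad identification) that the direct comparison gives; conversely, your description of the decomposables as a coideal for the tree-splitting comultiplication is a correct reformulation of the paper's definition of $\Ind(W_{l}\calO)$ as a quotient of $W_{l}\calO$.
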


Even if these results seem redundant with Ching's work, they represent an important step to treat the bimodule case.
Indeed, contrary to the operadic case, the only way to get similar constructions for bimodules is to use leveled trees.
The main purpose of Theorem~\ref{thmA} is to ensure that the techniques we develop for bimodules are compatible with the well-known constructions for operads.

\paragraph{Resolutions of operadic bimodules in spectra}

In Section~\ref{sec:cofibr-resol-lambda-1}, we extend these results to $\Lambda$-bimodules in spectra using leveled trees with section. We define the leveled Boardman--Vogt resolution $W_{l}$ for $\Lambda$-bimodules, and the leveled bar and cobar constructions $\calB_{l}[-,-]$, $\Omega_{l}[-,-]$.
We prove:

\begin{thmintro}\label{thmB}
  Let $\calP$ and $\calQ$ be two 1-reduced $\Lambda$-operads in spectra and let $M$ be a ($\calP$-$\calQ$)-bimodule.
  \begin{enumerate}[label={(\alph*)}, nosep]
    \item
          The leveled Boardman--Vogt resolution $W_{l}M$ defines a cofibrant resolution of $M$ in the Reedy model category structure of ($W_{l}\calP$-$W_{l}\calQ$)-bimodules (Proposition~\ref{pro:w-bimod}).
    \item
          The indecomposables of $W_{l}M$ define a ($\Ind(W_{l}\calP)$-$\Ind(W_{l}\calQ)$)-cobimodule $\Ind(W_{l}M)$ in spectra.
          The leveled two-sided bar construction $\calB_{l}[\calP,\calQ](M)$ is weakly equivalent to the suspension $\Sigma \Ind(W_{l}M)$ (Proposition~\ref{ProModFr}).
    \item
          The leveled Boardman--Vogt resolution $W_{l}M$ is weakly equivalent to the leveled cobar-bar construction as a ($W_{l}\calP$-$W_{l}\calQ$)-bimodule (Proposition~\ref{prop:gamma-bimodule}), i.e.:
          \[ W_{l}M \simeq \Omega_{l}[\calB_{l}\calP, \calB_{l}\calQ]\bigl( \calB_{l}[\calP,\calQ](M) \bigr). \]
  \end{enumerate}
\end{thmintro}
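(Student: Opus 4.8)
The plan is to mirror the three-part structure of Theorem~\ref{thmA}, transporting each statement from the operadic to the bimodular setting by replacing the leveled trees $\Tree_{l}$ with the leveled trees with section $\sTree_{l}$ and exploiting the ($\Tree_{l}$-$\Tree_{l}$)-bimodule structure on the latter. Throughout, the operadic resolutions $W_{l}\calP$ and $W_{l}\calQ$, together with their bar and cobar constructions, are already controlled by Theorem~\ref{thmA}, so the task is to build the analogous bimodule objects on top of them compatibly.

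For part~(a), I would first assemble $W_{l}M$ as a functor indexed by $\sTree_{l}$ (grafting an $M$-decorated vertex along the section and $\calP$- or $\calQ$-decorated vertices above and below it), and then establish that it is isomorphic to the usual Boardman--Vogt resolution of~\cite{Duc,DucoulombierTurchin2017}. The essential tool is the natural bijection between planar trees and leveled trees up to permutations from Section~\ref{sec:invent-categ-trees}: it transports the planar construction to the leveled one, while the extra permutation morphisms guarantee that the two agree on the nose. Cofibrancy and the resolution property in the Reedy model structure then follow by transporting the known properties of the usual resolution along this isomorphism. It remains to check that $W_{l}M$ is a genuine ($W_{l}\calP$-$W_{l}\calQ$)-bimodule, which I would deduce from the bimodule structure on $\sTree_{l}$ over $\Tree_{l}$, ensuring compatibility with the operad structure on $\Tree_{l}$ that defines $W_{l}\calP$ and $W_{l}\calQ$.

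For part~(b), I would equip $\Ind(W_{l}M)$ with a cobimodule structure by dualizing the bimodule grafting maps, the cofaces arising from the splitting operations on leveled trees with section. The weak equivalence $\calB_{l}[\calP,\calQ](M) \simeq \Sigma\,\Ind(W_{l}M)$ would then be proved by the same filtration-and-collapse argument as in Theorem~\ref{thmA}(b), now performed level by level on $\sTree_{l}$, the single suspension coordinate coming from collapsing the bottom level of each tree. For part~(c), the weak equivalence with the cobar-bar construction $\Omega_{l}[\calB_{l}\calP,\calB_{l}\calQ]\bigl(\calB_{l}[\calP,\calQ](M)\bigr)$ would follow by combining parts~(a) and~(b) with Theorem~\ref{thmA}(c): the bimodule cobar-bar object is built from the operadic cobar-bar constructions $\Omega_{l}\calB_{l}\calP$ and $\Omega_{l}\calB_{l}\calQ$ and the two-sided bar construction, and each ingredient is governed by the statement already established.

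The hard part will be the coherence bookkeeping underlying part~(a). Because the bimodule axioms on $\sTree_{l}$ hold only up to permutation of levels, the naive leveled construction fails to be a strict bimodule, and one must verify that the permutation morphisms are inserted in precisely the right fashion so that $W_{l}M$ becomes a \emph{strict} bimodule over $W_{l}\calP$ and $W_{l}\calQ$ and so that the comparison isomorphism with the planar construction is natural. Checking that the permutations rectifying the operad structure on $\Tree_{l}$ and those rectifying the bimodule structure on $\sTree_{l}$ assemble coherently---rather than merely up to a further homotopy---is the main obstacle.
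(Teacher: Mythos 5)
Your plan for parts (a) and (b) follows essentially the same route as the paper: $W_{l}M$ is defined as a coend over $\sTree_{l}[n]$ with $M$-decorations on the main section and $\calP$-, $\calQ$-decorations below and above it; the comparison with the planar construction of~\cite{Duc} is the same $\alpha/\beta$ transport used for operads; the strictness issue you flag at the end is resolved exactly as you suspect, because the coend already quotients by permutations of permutable levels, so the only-up-to-permutation bimodule axioms of $\sTree_{l}$ become strict on $W_{l}M$; and $\Ind(W_{l}M)$ is the quotient by decorations containing a level labelled $1$, with the extra suspension coordinate recording a length for the \emph{main-section} level $\iota$ (not the bottom level, as you write --- in the operadic case it is the $0$-th level, but for bimodules the distinguished level is the section). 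Two smaller omissions in (a): the resolution of~\cite{Duc} lives in the projective category of $\Sigma$-bimodules, so for the Reedy statement you must additionally construct the $\Lambda$-structure on $W_{\Lambda}M$ (restriction maps act on the first non-bivalent vertex adjacent to the deleted leaf, erasing any level that becomes entirely bivalent), and then invoke the criterion that a $\Lambda$-bimodule is Reedy cofibrant iff its underlying $\Sigma$-bimodule is projectively cofibrant.

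The genuine gap is in part (c). The weak equivalence with $\Omega_{l}[\calB_{l}\calP,\calB_{l}\calQ]\bigl(\calB_{l}[\calP,\calQ](M)\bigr)$ does \emph{not} follow by combining (a), (b) and Theorem~\ref{thmA}(c): those statements control the individual ingredients but produce no comparison map between $W_{l}M$ and the two-sided cobar-bar object, and there is no formal mechanism that turns ``each ingredient is resolved'' into an equivalence of the composites. The actual content of (c) is an explicit morphism $\Gamma_{n}\colon W_{l}M(n)\to \Omega_{l}\calB_{l}(M)(n)$ generalizing Ching's map: given $\underline{x}=[T_{1};\{\theta_{v}\};\{t_{l}\}]$ and a test tree with section $T_{2}$, one declares $\Phi_{\underline{x};T_{2}}$ to be the basepoint unless $T_{1}\geq T_{2}$, and otherwise cuts $T_{1}$ into subtrees $T_{1}[v]$ indexed by the vertices of $T_{2}$ and redistributes the level parameters, the root level of each piece receiving $1-t_{ol_{v}(0)}$ when $v$ lies on the main section and $\max\bigl(0, t_{ol_{v}(0)}-\tilde{t}_{\lev(v)}\bigr)$ otherwise. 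One must also specify that the $(W_{l}\calP$-$W_{l}\calQ)$-bimodule structure on the target is the restriction along the operad maps $W_{l}\calP\to\Omega_{l}\calB_{l}(\calP)$ and $W_{l}\calQ\to\Omega_{l}\calB_{l}(\calQ)$ of Theorem~\ref{thmA}(c), and check that $\Gamma$ intertwines these. Without constructing this map, part (c) has no proof.
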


In that context, there is no analogue of our construction in the context of planar trees.
In~\cite{Duc}, the second author introduced resolutions for bimodules using planar trees.
However, this resolution gives rise to an element in the category of ($\calP$-$\calQ$)-bimodules instead of ($W\calP$-$W\calQ$)-bimodules.
The main obstruction comes from the left module operations, which are not linear.

Let us also remark that our constructions also work for pointed topological spaces (or more generally any cofibrantly generated model category enriched in pointed topological spaces with good finiteness assumptions).
The base point is notably crucial for the definition of the operadic module structure of the indecomposables.

\paragraph{Resolutions of Hopf cooperads}

The rational homotopy type of a 1-reduced simplicial $\Lambda$-operad is encoded by a 1-reduced $\Lambda$-cooperad in CDGAs (also known as a Hopf cooperad) thanks to results of Fresse~\cite{Fre} (which extend Sullivan's rational homotopy theory).
Fresse--Turchin--Willwacher~\cite{FresseTurchinWillwacher2017} built a fibrant resolution for 1-reduced Hopf $\Lambda$-cooperads using the Boardman--Vogt $W$ construction.
They identified the underlying dg-cooperad of this construction with the bar construction of the operad formed by the subspace of primitive elements (a key step in computing the rational homotopy type of embedding spaces).

In Section~\ref{sec:fibr-resol-hopf}, we define variants of these constructions using leveled trees.
We show that our constructions are quasi-isomorphic to the usual ones.
Moreover, we also prove that the leveled $W_{l}$ construction is quasi-isomorphic to the bar-cobar construction.
Like in the operadic case, the theorem below only provides a construction isomorphic to the usual one.
However, it is an important step to make our constructions for bimodules compatible with the classical theory.

\begin{thmintro}\label{thmC}
  Let $\calC$ be 1-reduced Hopf $\Lambda$-cooperad.
  \begin{enumerate}[label={(\alph*)}, nosep]
    \item The leveled Boardman--Vogt resolution $W_{l}\calC$ is isomorphic to the usual Boardman--Vogt resolution $W\calC$
          Consequently, $W_{l}\calC$ defines a fibrant resolution of $\calC$ in the Reedy model category structure of 1-reduced Hopf $\Lambda$-cooperads (see Theorem~\ref{ThmQI}).
    \item The leveled bar construction is isomorphic to the usual bar construction.

          Furthermore, the primitive elements of $W_{l}\calC$ define a 1-reduced dg-operad $\Sigma^{-1} \Prim(W_{l}\calC)$.
          Therefore, the underlying dg-cooperad of the Boardman--Vogt construction $W_{l}\calC$ is quasi-isomorphic to the leveled bar construction of the primitive elements $\calB_{l}(\Sigma^{-1} \Prim(W_{l}\calC))$ (Theorem~\ref{thm:W=B for bimods}).
    \item The leveled cobar construction is isomorphic to the usual cobar construction.
          So, the leveled Boardman--Vogt resolution $W_{l}\calC$ is quasi-isomorphic to the leveled bar-cobar construction $\calB_{l}\Omega_{l}\calC$ as a 1-reduced dg-$\Lambda$-cooperad (Theorem~\ref{thm:lambda-compatible}).
  \end{enumerate}
\end{thmintro}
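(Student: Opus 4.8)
The plan is to establish the three parts by dualizing, in the category of commutative dg-algebras, the strategy behind Theorem~\ref{thmA}, and by transporting all statements through the natural bijection between planar trees and leveled trees from Section~\ref{sec:invent-categ-trees}. The organizing principle is that the leveled functors $W_{l}$, $\calB_{l}$, and $\Omega_{l}$ are isomorphic to their classical non-leveled counterparts; once this is verified as an isomorphism of Hopf $\Lambda$-cooperads (respectively of dg-operads), the results of Fresse--Turchin--Willwacher~\cite{FresseTurchinWillwacher2017} can be imported. The bulk of the effort therefore goes into the combinatorial comparison and into checking that the permutation and contraction morphisms of $\Tree_{l}$ are compatible with the cooperadic cocompositions, the CDGA multiplications, and the $\Lambda$-structure.

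For part (a), I would write $W_{l}\calC$ as a limit over the category of leveled trees, with each tree $T$ contributing the tensor product of the cooperad values at the vertices of $T$ with the PL de Rham forms on the internal edges of $T$, the CDGA analogue of assigning lengths to edges. The planar--leveled bijection yields a comparison map to the usual construction $W\calC$; the crux is to verify that it is an isomorphism of Hopf $\Lambda$-cooperads, which reduces to matching the contraction and permutation morphisms of $\Tree_{l}$ with the face and degeneracy data that organize the classical $W$ construction. Fibrancy in the Reedy model structure of 1-reduced Hopf $\Lambda$-cooperads and the quasi-isomorphism $\calC \to W_{l}\calC$ then transport directly from the corresponding properties of $W\calC$.

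For part (b), I would define $\Prim(W_{l}\calC)$ as the kernel of the reduced cocomposition and show that it assembles into a 1-reduced dg-operad, dually to the fact, used in Theorem~\ref{thmA}(b), that indecomposables of an operad form a cooperad; here one must check that the only-up-to-permutation associativity of the $\Tree_{l}$-structure does not obstruct the operad axioms. The quasi-isomorphism between the underlying dg-cooperad of $W_{l}\calC$ and the leveled bar construction $\calB_{l}(\Prim(W_{l}\calC))$ would be realized by the natural map inserting a bar-tree decorated by primitive elements into the tree structure of the $W_{l}$ construction, and proved to be a quasi-isomorphism by filtering both sides by the number of vertices: the contractibility of the PL de Rham forms on the interval collapses every internal edge on the associated graded and leaves exactly the primitive part, reproducing the Fresse--Turchin--Willwacher computation in the leveled framework.

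For part (c), I would observe that both $W_{l}\calC$ and the bar-cobar construction $\calB_{l}\Omega_{l}\calC$ are fibrant dg-$\Lambda$-cooperads quasi-isomorphic to $\calC$, via the resolution map $\calC \to W_{l}\calC$ and the unit $\calC \to \calB_{l}\Omega_{l}\calC$ of the bar-cobar adjunction; a lifting argument in the Reedy model structure then produces a comparison quasi-isomorphism $W_{l}\calC \to \calB_{l}\Omega_{l}\calC$ over $\calC$. The substance of the statement is to promote this to a map of dg-$\Lambda$-cooperads, which requires tracking the $\Lambda$-structure, i.e.\ the constants arising from $\calC(0)$, through the adjunction and the comparison. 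I expect the main obstacle throughout to be exactly this rigidity question: ensuring that the up-to-permutation associativity of the leveled constructions is controlled enough that $\Prim(W_{l}\calC)$ is a strict operad, that the bar differential is compatible with all comparison maps, and that the whole zig-zag respects the $\Lambda$-structure, so that the classical Fresse--Turchin--Willwacher results can be transported faithfully.
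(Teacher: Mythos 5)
Your outline for parts (a) and (b) is broadly compatible with the paper, with two caveats. First, the paper decorates the \emph{levels} of a leveled tree by forms in $\K[t,dt]$, not the internal edges; this is the whole point of the leveled formalism, and the comparison with the classical edge-decorated $W$-construction goes through the maps $\alpha,\beta$ of Section~\ref{sec:invent-categ-trees}. Second, and more substantively, the paper does not prove fibrancy of $W_{l}\calC$ by transport from \cite{FresseTurchinWillwacher2017}, nor does it prove $W_{l}\calC \simeq \calB_{l}(\Prim(W_{l}\calC))$ by a vertex-count filtration: both statements are consequences of a single structural fact (Lemma~\ref{Lmm1}), namely that $W_{l}\calC$ is \emph{cofree} as a graded cooperad on $\Prim(W_{l}\calC)$, via the retraction $p(t,dt)\mapsto p(t,dt)-t\,p(1,0)$ onto forms vanishing at $t=1$. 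Cofreeness gives fibrancy immediately (quasi-cofree cooperads are fibrant in Fresse's model structure), and it upgrades your quasi-isomorphism in (b) to an isomorphism of dg-cooperads, with only the differentials left to match. Your filtration argument would likely yield the weaker quasi-isomorphism, but it does not by itself deliver fibrancy, so as written part (a) leans entirely on the comparison isomorphism with the classical construction, which the paper only records as a remark and an alternative route.

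The genuine gap is in part (c). You propose to compare $W_{l}\calC$ and $\calB_{l}\Omega_{l}\calC$ by a lifting argument in the Reedy model category of $\Lambda$-cooperads and then to ``track the $\Lambda$-structure through the adjunction.'' But there is no $\Lambda$-costructure on $\calB_{l}\Omega_{l}\calC$ to track: the cobar construction of a $\Lambda$-cooperad does \emph{not} inherit a $\Lambda$-structure (the paper cites \cite[Example~2.6]{FresseTurchinWillwacher2017} for exactly this failure, and see Remark~\ref{rmk:lambda-fresse}), so the lifting problem cannot even be posed in $\Lambda\Cooperad$ until that structure has been built. The actual content of Theorem~\ref{thm:lambda-compatible} is the explicit \emph{construction} of a $\Lambda$-costructure on the composite $\calB_{l}\Omega_{l}\calC$, using auxiliary non-structural operations $h[i]_{*}$ on $\Omega_{l}\calC$ (sums over all ways of attaching the new leaf), together with the explicit comparison map $\Gamma:\calB_{l}\Omega_{l}\calC\to W_{l}\calC$ defined by decorating levels by $1$ or $dt$ according to a decomposition of the tree; the quasi-isomorphism then follows from two-out-of-three in the triangle over $\calC$, and $\Lambda$-compatibility is checked against the explicit formulas. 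Without the explicit $\Gamma$ and the explicit $\Lambda$-costructure, your plan for (c) does not close.
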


\paragraph{Resolutions of Hopf cooperadic cobimodules}

In Section~\ref{sec:fibr-resol-hopf-2}, we extend the previous results to Hopf $\Lambda$-cobimodules over 1-reduced Hopf $\Lambda$-cooperads.
We define fibrant resolutions for such cobimodules using the Boardman--Vogt construction and leveled trees with section.
We also define leveled two-sided bar and cobar constructions for such cobimodules.

\begin{thmintro}\label{thmD}
  Let $\calP$ and $\calQ$ be two 1-reduced Hopf $\Lambda$-cooperads and let $M$ be a ($\calP$-$\calQ$)-cobimodule.
  \begin{enumerate}[label={(\alph*)}, nosep]
    \item The leveled Boardman--Vogt resolution $W_{l}M$ defines a fibrant resolution of $M$ in the Reedy model category structure of ($W_{l}\calP$-$W_{l}\calQ$)-cobimodules (Theorem~\ref{thm:w-lambda-cobim}).
    \item The primitive elements of $W_{l}M$ define a dg-($\Sigma^{-1}\Prim(W_{l}\calP)$-$\Sigma^{-1}\Prim(W_{l}\calQ)$)-bimodule $\Sigma^{-1}\Prim(W_{l}M)$.
          The underlying dg-cobimodule of the Boardman--Vogt construction $W_{l}M$ is quasi-isomorphic to the leveled two-sided bar construction of the primitive elements $\calB_{l}[\Sigma^{-1}\Prim(W_{l}\calP), \Sigma^{-1}\Prim(W_{l}\calQ)](\Sigma^{-1} \Prim(W_{l}M))$ (Theorem~\ref{thm:wm-bar-cobim}).
    \item The leveled Boardman--Vogt resolution $W_{l}M$ is weakly equivalent to the leveled bar-cobar construction as a ($W_{l}\calP$-$W_{l}\calQ$)-cobimodule (Theorem~\ref{thm:finale}):
          \[ \calB_{l}[\Omega_{l}(\calP), \Omega_{l}(\calQ)]\bigl( \Omega_{l}[\calP,\calQ](M) \bigr). \]
  \end{enumerate}
\end{thmintro}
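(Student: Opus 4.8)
The plan is to treat Theorem~\ref{thmD} as the CDGA-dual of Theorem~\ref{thmB} layered on top of the cooperadic statements of Theorem~\ref{thmC}: every construction for cobimodules is built from the ($\Tree_{l}$-$\Tree_{l}$)-bimodule structure on $\sTree_{l}$ exactly as the bimodule constructions of Theorem~\ref{thmB} are built from the operadic structure of $\Tree_{l}$, and the arguments of Theorem~\ref{thmC} supply the ``ambient'' cooperad statements that we then promote to (co)bimodules. First I would verify that $W_{l}M$ is genuinely a ($W_{l}\calP$-$W_{l}\calQ$)-cobimodule. Since the ($\Tree_{l}$-$\Tree_{l}$)-bimodule axioms on $\sTree_{l}$ only hold up to the permutation morphisms introduced in Section~\ref{sec:invent-categ-trees}, the coaction maps obtained by grafting leveled trees with section must be checked to descend along these permutations; this is the same coherence that underlies Theorem~\ref{thmB}(a), dualized by passing from ends to coends and from colimits to limits.

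For part~(a), I would establish the weak equivalence $M \to W_{l}M$ and the Reedy fibrancy separately. The weak equivalence is produced by the structure map that collapses the edge-length data, dual to the edge-contraction map in the spectra case; showing it is a quasi-isomorphism reduces, via the filtration by number of internal vertices, to the acyclicity of the relevant ``interval'' factors, exactly as in Theorem~\ref{thmC}(a). Reedy fibrancy amounts to checking that each matching map of $W_{l}M$ is a fibration of CDGAs; here the matching object is governed by the coaction of the lower cooperad together with the section, and the argument is the Reedy-dual of the cofibrancy verification in Theorem~\ref{thmB}(a). The isomorphism with the usual (planar) Boardman--Vogt resolution of~\cite{Duc,DucoulombierTurchin2017} then follows from the natural bijection between planar trees and leveled trees up to permutations, which identifies the two indexing categories compatibly with the section.

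For part~(b), I would first identify $\Prim(W_{l}M)$: applying the primitive-element functor to the Hopf cobimodule $W_{l}M$ yields a dg-module whose residual structure is precisely that of a ($\Prim(W_{l}\calP)$-$\Prim(W_{l}\calQ)$)-bimodule, because primitives turn the cofree-type coaction into a genuine strict bimodule action, paralleling the passage to indecomposables in Theorem~\ref{thmB}(b). The comparison with the two-sided bar construction $\calB_{l}[\Prim(W_{l}\calP),\Prim(W_{l}\calQ)](\Prim(W_{l}M))$ is obtained by the evident map of dg-cobimodules sending a leveled tree with section to the corresponding bar element, and I would prove it is a quasi-isomorphism by filtering both sides by the number of internal vertices and comparing associated graded complexes, which split as tensor products of reduced cooperadic and cobimodule pieces; the operadic input of Theorem~\ref{thmC}(b) then finishes the two bimodule directions and the section contributes the remaining tensor factor. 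Finally, for part~(c), I would invoke bar--cobar duality at the cobimodule level: the expression $\calB_{l}[\Omega_{l}(\calP),\Omega_{l}(\calQ)]\bigl(\Omega_{l}[\calP,\calQ](M)\bigr)$ is the value of the leveled two-sided bar construction on the cobar cobimodule, and the comparison map relating it to $W_{l}M$ is a weak equivalence because the operadic comparisons $W_{l}\calP \simeq \calB_{l}\Omega_{l}\calP$ and $W_{l}\calQ \simeq \calB_{l}\Omega_{l}\calQ$ are weak equivalences by Theorem~\ref{thmC}(c), and the cobimodule factor is controlled by the same filtration argument as in~(b). The main obstacle throughout is the quasi-isomorphism in~(b): because the cobimodule structure on $\sTree_{l}$ is only associative up to permutations, the filtration by internal vertices is not a priori compatible with the differential, and one must check that the permutation morphisms preserve the filtration and induce the expected splitting on the associated graded. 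This coherence, rather than any single homological computation, is what makes the cobimodule case genuinely harder than the cooperadic case of Theorem~\ref{thmC}.
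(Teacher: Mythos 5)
Your overall strategy (dualize Theorem~\ref{thmB}, feed in Theorem~\ref{thmC}) matches the paper's, but several of the mechanisms you propose are not the ones that actually make the proof work, and at least one would run into trouble. The central missing idea is the explicit identification of $W_{l}M$, as a \emph{graded} cobimodule, with the cofree cobimodule $\calF^{c}_{l}[\Prim(W_{l}\calP),\Prim(W_{l}\calQ)](\Prim(W_{l}M))$. The paper obtains this by an explicit retraction of decorations, $p(t,dt)\mapsto p(t,dt)-t\,p(1,0)$, landing in $\K[t,dt]_{1}=\ker(\ev_{t=1})$, together with an explicit inverse built tree by tree. This single lemma does double duty: it gives part~(b) as an honest \emph{isomorphism} of dg-cobimodules (one only has to match differentials afterwards, as in Theorem~\ref{thm:W=B for bimods}), and it gives the fibrancy in part~(a), because quasi-cofree cobimodules are fibrant and Reedy fibrancy reduces to fibrancy of the underlying $\Sigma$-cobimodule. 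Your plan instead checks Reedy matching maps directly ``as the Reedy-dual of the cofibrancy verification'' — there is no such verification to dualize — and proves (b) by filtering by the number of internal vertices and comparing associated graded pieces. The obstacle you yourself flag (the filtration need not be compatible with the differential because the $\sTree_{l}$-structure is only associative up to permutations) is real for your route and is precisely what the cofree identification avoids; the paper never needs a spectral sequence here.

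Two further gaps. First, the actual content of Theorems~\ref{thm:w-lambda-cobim} and~\ref{thm:finale} is the construction of the $\Lambda$-costructures: the case analysis defining $h[i]_{*}$ on $W_{\Lambda}M$ (including the special case where the new branch lands on the main section, decorated by $\K\to M(2)$, and the case using the coproduct $m^{*}$ on $\K[t,dt]$), and the auxiliary operations $h[i]_{*}$ on $\Omega_{l}[\calP,\calQ](M)$ (which do \emph{not} form a $\Lambda$-costructure on the cobar construction itself but are used to build one on the bar-cobar construction). Your proposal does not address these at all, yet without them parts~(a) and~(c) are statements about $\Sigma$-objects only. Second, for part~(c) the paper's weak equivalence is a two-out-of-three argument: both $M\to W_{l}M$ (Theorem~\ref{ThmQI2}) and the coaugmentation $M\to\calB_{l}[\Omega_{l}\calP,\Omega_{l}\calQ](\Omega_{l}[\calP,\calQ](M))$ are quasi-isomorphisms, and $\Gamma$ fits in a commuting triangle over $M$. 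Your plan of deducing the cobimodule equivalence from the operadic equivalences of Theorem~\ref{thmC}(c) plus another filtration omits the key input (acyclicity of the two-sided bar-cobar coaugmentation on the cobimodule factor) and is considerably more work than needed. Finally, a small slip: the identification with the planar Boardman--Vogt resolution of~\cite{Duc,DucoulombierTurchin2017} belongs to Theorem~\ref{thmB}(a), not to Theorem~\ref{thmD}(a).
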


\paragraph{Notations and conventions}

In this paper, we always work over a field $\K$ of characteristic zero.
All our complexes have a cohomological grading, i.e.\ differentials have degree $+1$.
Whenever we use the adjective ``cofree'', we implicitly mean ``cofree conilpotent''.
We will use the notation $\Sigma$ both for suspension of spectra or cochain complexes, and for symmetric groups (as in $\Sigma$-sequences); the meaning will always be clear from the context.
The category $\Sigma$ has as objects the ordered sets $[n] = \{ 0 < \dots < n \}$ for $n > 0$ and morphisms are bijective (not necessarily increasing) maps.
The category $\Lambda$ has the same objects as $\Sigma$; its morphisms are injective (not necessarily increasing) maps.

\section{Model category structures for (co)operads and (co)bimodules}
\label{sec:model-categ-struct}

In this section, we recall model category structures for operads and bimodules in spectra as well as model category structures for cooperads and cobimodules in commutative differential graded algebras. In both case, the ambient category is symmetric monoidal and equipped with a notion of (co)interval that allows us to build (co)fibrant resolutions.

\subsection{The model category of 1-reduced $\Lambda$-operads in spectra}
\label{SectModSpOp}

\paragraph{The category of spectra}

For concreteness, we take $S$-modules as models for spectra~\cite{ElmendorfKrizMandellMay1997}.
We denote by $\Sp$ the symmetric monoidal category of spectra with respect to the smash product $\wedge$.
The zero object, i.e.\ the constant spectrum on the point, is denoted by $*$.
The monoidal model category structure of $\Sp$ is the one from~\cite[Theorem~VII.4.6]{ElmendorfKrizMandellMay1997}.
This model category is enriched over pointed simplicial sets and it is equipped with a notion of interval introduced by Berger--Moerdijk in \cite{BM2}.
This interval is the pointed set $\Delta[1]_{+}$ obtained from $\Delta[1]$ by adding a base point.
An element in $\Delta[1]_{+}^{n}$ is an $n$-tuple $\underline{t}=(t_{1},\ldots,t_{n})$, with $t_{i}\in \{0,1\}$, or the basepoint $*$.
The associative product is given by
\begin{align*}
  - \wedge - :\Delta[1]_{+}^{p} \wedge \Delta[1]_{+}^{q} & \longrightarrow \Delta[1]_{+}^{p+q}  , \\
  \underline{t}, \underline{t}'
                                                         & \longmapsto
  \begin{cases}
    \hspace{3,5em}*                           & \text{if } \underline{t}=* \text{ or } \underline{t}'=*, \\
    (t_{1},\ldots,t_{p},t_{1}',\ldots,t_{q}') & \text{otherwise}.
  \end{cases}
\end{align*}

\paragraph{The category of 1-reduced operads}

By a symmetric sequence or $\Sigma$-sequence of spectra, we mean a covariant functor $\Sigma \to \Sp$.
Concretely, a symmetric sequence of spectra is a sequence $X = \{ X(n) \}_{n > 0}$ equipped with a right action of $\Sigma_{n}$ on $X(n)$ for all $n > 0$.
A symmetric sequence $X$ is said to be 1-reduced if the arity $1$ component $X(1)$ is the constant spectrum on the point.
We denote by $\Sigma \Seq_{>0}$ and $\Sigma \Seq_{>1}$  the category of symmetric sequences and 1-reduced symmetric sequences, respectively.
A \emph{1-reduced operad} $\calO$ is the data of a 1-reduced symmetric sequence together with operations, called operadic operations, of the form:
\begin{equation}\label{OpOp}
  \gamma:\calO(k)\wedge \calO(n_{1})\wedge \cdots \wedge \calO(n_{k})\longrightarrow \calO(n_{1}+\cdots + n_{k}),
\end{equation}
compatible with the symmetric group action and satisfying associativity and unitality axioms~\cite{Fresse2017}. The category of 1-reduced operads in spectra, denoted by $\Sigma\Operad$, is endowed with an adjunction:
\begin{equation}\label{adjSymOp}
  \mathcal{F}:\Sigma \Seq_{>1} \leftrightarrows \Sigma\Operad : \mathcal{U},
\end{equation}
where $\mathcal{F}$ is the left adjoint of the forgetful functor $\mathcal{U}$. We give an explicit description of the free functor in Section~\ref{SectBarOp}.

\begin{thm}[{\cite[Section 8.2]{Fre}}]
  The category of 1-reduced operads $\Sigma\Operad$ is equipped with a model category structure such that:
  \begin{itemize}
    \item the weak equivalences are morphisms that form a weak equivalence in every arity,
    \item the fibrations are morphisms that form a fibration map in each arity,
    \item cofibrations are characterized by the left lifting property with respect to the class of acyclic fibrations.
  \end{itemize}
  The model structure on $\Sigma \Seq_{>1}$ is defined similarly. Both model structures are called the projective model structures. They make the adjunction~\eqref{adjSymOp} into a Quillen adjunction.
\end{thm}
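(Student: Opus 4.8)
The plan is to obtain the model structure on $\Sigma\Operad$ by \emph{transfer} (the Quillen--Crans lifting theorem) along the free--forgetful adjunction~\eqref{adjSymOp}, starting from a projective model structure on the underlying category of symmetric sequences. First I would equip $\Sigma\Seq_{>1}$ with its projective model structure: a $1$-reduced symmetric sequence is just a diagram indexed by the full subcategory of $\Sigma$ on the objects $[n]$ with $n>1$ (with $X(1)$ frozen at the point), and since $\Sp$ is cofibrantly generated, the standard existence theorem for projective model structures on diagram categories applies. Its generating cofibrations $I_{\Seq}$ and generating acyclic cofibrations $J_{\Seq}$ are obtained from those of $\Sp$ by pushing them into each arity along the free $\Sigma_{n}$-orbit functor, so that weak equivalences and fibrations are precisely the arity-wise ones.

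With this in hand, I would declare a morphism of operads to be a weak equivalence (resp.\ fibration) exactly when $\mathcal{U}$ of it is one in $\Sigma\Seq_{>1}$, i.e.\ when it is an arity-wise weak equivalence (resp.\ fibration), and then check the hypotheses of the transfer theorem. Two of these are essentially formal: $\Sigma\Operad$ is bicomplete (limits are computed arity-wise in $\Sp$, and colimits exist by the usual reflexive-coequalizer argument for the free-operad monad), and the smallness hypothesis needed to run the small object argument on $\mathcal{F}(I_{\Seq})$ and $\mathcal{F}(J_{\Seq})$ is inherited from the cofibrant generation of $\Sp$. This identifies $\mathcal{F}(I_{\Seq})$ and $\mathcal{F}(J_{\Seq})$ as the candidate sets of generating cofibrations and generating acyclic cofibrations.

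\textbf{The main obstacle} is the last and genuinely non-formal hypothesis of the transfer theorem: every relative $\mathcal{F}(J_{\Seq})$-cell complex must become a weak equivalence after applying $\mathcal{U}$. This is delicate precisely because the symmetric group actions on $\calO(n)$ need not be free, so pushouts of free acyclic cofibrations of operads cannot be analysed naively arity by arity. I would handle this by the path-object argument (in the form adapted to operads by Berger--Moerdijk): since $\Sp$ is enriched, tensored and cotensored over pointed simplicial sets and equipped with the interval $\Delta[1]_{+}$, cotensoring an operad with $\Delta[1]_{+}$ produces a functorial path object $\calO \to \calO^{\Delta[1]_{+}} \to \calO \times \calO$ in $\Sigma\Operad$. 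Because every object of $\Sp$ is fibrant, every operad is fibrant in the transferred sense, and the path object then supplies, for any pushout of a map in $\mathcal{F}(J_{\Seq})$, the retracting homotopy forcing it to be a $\mathcal{U}$-weak equivalence; closure under transfinite composition and retracts upgrades this to all relative $\mathcal{F}(J_{\Seq})$-cell complexes. Equivalently, the existence of the interval makes $\Sp$ an \emph{admissible} monoidal model category in the sense of Berger--Moerdijk, which is exactly the input guaranteeing that the transferred structure on operads exists.

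Once the transfer theorem applies, the rest is formal. The cofibrations of $\Sigma\Operad$ are by definition the maps with the left lifting property against acyclic fibrations, yielding the stated characterization, and the projective structure on $\Sigma\Seq_{>1}$ is the one built in the first step. Finally, because $\mathcal{U}$ creates both weak equivalences and fibrations by construction, it is a right Quillen functor, so $(\mathcal{F},\mathcal{U})$ is a Quillen adjunction, as claimed.
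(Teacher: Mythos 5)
This theorem is not proved in the paper; it is quoted from Fresse's book, and the standard proof there (and in Berger--Moerdijk) is exactly the transfer argument you describe: lift the projective structure on $\Sigma\Seq_{>1}$ along the free--forgetful adjunction, with the acyclicity condition for relative $\mathcal{F}(J_{\Seq})$-cell complexes verified via the path object obtained by cotensoring with the interval $\Delta[1]_{+}$ (using that every object of $\Sp$ is fibrant). Your proposal is correct and takes essentially the same route as the cited source.
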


\begin{rmk}
  An operad $\calO \in \Sigma\Operad$ is said to be \emph{$\Sigma$-cofibrant} if its underlying $\Sigma$-sequence is cofibrant. We will use similar terminology below for 1-reduced operads, Hopf operads, bimodules, etc.
\end{rmk}

\paragraph{The category of 1-reduced $\Lambda$-operads}

Following \cite{Fre}, we call a $\Lambda$-sequence of spectra a covariant functor $\Lambda \to \Sp$ where $\Lambda$ is the category whose objects are finite sets and morphisms are injective maps.
Concretely, a $\Lambda$-sequence is a sequence of spectra $X = \{ X(n) \}_{n > 0}$ equipped with the following structure: for any injection $u : \{ 1, \dots, k \} \to \{ 1, \dots, n \}$, there is a structure map
$$
  u^{*} : X(n) \to X(k),
$$
satisfying the relation $(v\circ u)^{*}=u^{*}\circ v^{*}$ for any pair of composable maps $u : \{ 1, \dots, k \} \to \{ 1, \dots, n \}$ and $v : \{ 1, \dots, n \} \to \{ 1, \dots, m \}$. A $\Lambda$-sequence is said to be 1-reduced if the arity $1$ component $X(1)$ is the constant spectrum on the point. We denote by $\Lambda \Seq_{>0}$ and $\Lambda \Seq_{>1}$  the category of symmetric sequences and 1-reduced $\Lambda$-sequences, respectively.

A 1-reduced $\Lambda$-operad $\calO$ is the data of a 1-reduced $\Lambda$-sequence together with operadic operations \eqref{OpOp} compatible with the $\Lambda$-structure (see \cite{Fre}). The category of 1-reduced $\Lambda$-operads, denoted by $\Lambda\Operad$, is endowed with an adjunction:
\begin{equation}\label{adjLambdaOp}
  \mathcal{F}:\Lambda \Seq_{>1} \leftrightarrows \Lambda\Operad : \mathcal{U},
\end{equation}
where $\mathcal{F}$ is the left adjoint of the forgetful functor $\mathcal{U}$.

Let us remark that, by restricting to bijections, any $\Lambda$-sequence is also a symmetric sequence.
In particular, any $\Lambda$-operad has an underlying $\Sigma$-operad by restricting the action to bijections.

\begin{thm}[{\cite[Section 8.4]{Fre}}]
  The category of 1-reduced $\Lambda$-operads $\Lambda\Operad$ is equipped with a model category structure such that:
  \begin{itemize}
    \item the weak equivalences are morphisms that form a weak equivalence in each arity,
    \item the fibrations are morphisms $\phi:\calO_{1}\rightarrow \calO_{2}$ whose induced maps $\calO_{1}(n)\rightarrow \mathcal{M}(\calO_{1})(n)\wedge_{\mathcal{M}(\calO_{2})(n)} \calO_{2}(n)$ are fibration maps. Here, $\mathcal{M}(\calO)(n)$ is the matching object
          \begin{equation}\label{Matching}
            \mathcal{M}(\calO)(n) \coloneqq \lim_{\substack{f\in \Lambda([r],[n])\\ r<n}}\,\,\calO(r),
          \end{equation}
    \item cofibrations are characterized by the left lifting property with respect to the class of acyclic fibrations.
  \end{itemize}
  The model structure on $\Lambda \Seq_{> 1}$ is defined similarly.
  The model structures so defined are called the Reedy model structures and make the adjunction (\ref{adjLambdaOp}) into a Quillen adjunction.
  An operad in $\Lambda\Operad$ is Reedy cofibrant if and only if its underlying $\Sigma\Operad$ is cofibrant~\cite[Theorem~8.4.12]{Fre}.
\end{thm}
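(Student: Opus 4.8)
Since this statement is due to Fresse \cite[Section~8.4]{Fre}, the plan is to recall its two-step proof: first equip the diagram category $\Lambda\Seq_{>1}$ with a generalized Reedy model structure, then right-transfer it to $\Lambda\Operad$ along the free--forgetful adjunction \eqref{adjLambdaOp}. The starting observation is that $\Lambda$ is a generalized Reedy category in the sense of Berger--Moerdijk, with $[n]$ of degree $n$, automorphism groups the symmetric groups $\Sigma_{n}$, and every non-invertible injection strictly raising the degree. Because the $\Lambda$-structure maps $u^{*}$ lower arity, the morphisms governing $\Lambda$-sequences are arity-lowering, so the associated generalized Reedy structure is an \emph{inverse} one: its latching objects are initial and its fibrations are detected by the matching objects \eqref{Matching}.

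First I would treat $\Lambda\Seq_{>1}$. Because $\Sp$ is cofibrantly generated, symmetric monoidal, and enriched in pointed simplicial sets, the generalized Reedy machinery applies and yields a cofibrantly generated model structure whose weak equivalences are the arity-wise equivalences and whose fibrations are exactly the maps $\phi$ for which $\calO_{1}(n)\to \calM(\calO_{1})(n)\wedge_{\calM(\calO_{2})(n)}\calO_{2}(n)$ is a fibration. Since the latching objects are initial, a map is a cofibration precisely when it is arity-wise a cofibration in the projective model structure on $\Sigma_{n}$-spectra; in particular an object is Reedy cofibrant exactly when its underlying $\Sigma$-sequence is cofibrant. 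The generating (acyclic) cofibrations are produced from those of $\Sp$ by the usual boundary construction.

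Next I would transfer this structure to $\Lambda\Operad$, declaring a morphism of operads to be a weak equivalence or fibration when its underlying morphism of $\Lambda$-sequences is one; this forces the fibrations to be the maps satisfying the matching-object condition and makes \eqref{adjLambdaOp} a Quillen adjunction by construction. To invoke the transfer principle I would verify that $\Lambda\Seq_{>1}$ is cofibrantly generated (done above) and that the free functor $\mathcal{F}$ preserves filtered colimits, which holds because $\mathcal{F}\calO$ is assembled as a coproduct of tree-indexed smash products of copies of $\calO$ and hence commutes with filtered colimits in each arity. \textbf{The crux}, and the step I expect to be the main obstacle, is the acyclicity condition: every relative $\mathcal{F}(J)$-cell complex must be a weak equivalence. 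I would establish this through the path-object argument, building functorial path objects by cotensoring operads arity-wise with the Berger--Moerdijk interval $\Delta[1]_{+}$ \cite{BM2}, whose coassociative structure guarantees that $\calO^{\Delta[1]_{+}}$ is again a $\Lambda$-operad and provides the factorization $\calO \to \calO^{\Delta[1]_{+}} \to \calO\times\calO$ needed to close the transfer.

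Finally, for the claim that a $\Lambda$-operad is Reedy cofibrant if and only if its underlying $\Sigma$-operad is projectively cofibrant \cite[Theorem~8.4.12]{Fre}, I would combine the identification of Reedy and $\Sigma$-cofibrancy for sequences (from the initiality of the latching objects) with the compatibility of $\mathcal{F}$ and $\mathcal{U}$ with both transfers. Concretely, a cofibrant $\Lambda$-operad is a retract of one built by free cellular attachments along arity-wise cofibrations, and the same attachments exhibit its underlying $\Sigma$-operad as cofibrant; the converse uses that the forgetful functor to $\Sigma$-sequences detects the relevant lifting properties. This equivalence is what ensures that the Reedy structure carries the intended homotopy theory and that the leveled resolutions constructed below are genuinely (co)fibrant.
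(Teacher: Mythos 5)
The paper gives no proof of this statement: it is quoted as background from Fresse's book \cite[Section~8.4]{Fre}, so there is nothing internal to compare your argument against. Your sketch does reconstruct Fresse's actual strategy correctly — treat $\Lambda$-sequences as presheaves on a generalized Reedy (in fact direct) category so that latching objects are trivial and Reedy cofibrations coincide with arity-wise projective cofibrations of $\Sigma$-sequences, then right-transfer along $\mathcal{F} \dashv \mathcal{U}$ using a path object obtained by cotensoring arity-wise with the interval. Two spots are glossed over: the path object must be a path object for the \emph{Reedy} structure (so one needs a Reedy-fibrant replacement, or must check acyclicity of relative $\mathcal{F}(J)$-cells directly against arity-wise data), and the ``if'' direction of the final claim (underlying $\Sigma$-operad cofibrant $\Rightarrow$ Reedy cofibrant) is the substantive content of \cite[Theorem~8.4.12]{Fre} and does not follow merely from the forgetful functor detecting lifting properties, since a lift of $\Sigma$-operads need not respect the $\Lambda$-structure.
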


\subsection{The model category of $\Lambda$-bimodules in spectra}

\paragraph{The category of bimodules}

Let $\calP$ and $\calQ$ be 1-reduced operads in spectra. A ($\calP$-$\calQ$)-bimodule is the data of a symmetric sequence $M=\{M(n)\}_{n>0}$ together with operations $\gamma_{L}$ and $\gamma_{R}$, called respectively left and right module operations, of the form
\begin{equation}\label{OpBi}
  \begin{aligned}{rcl}
    \gamma_{L} : \calP(k)\wedge M(n_{1})\wedge \cdots \wedge M(n_{k})     & \longrightarrow M(n_{1}+\cdots+n_{k}), \\
    \gamma_{R} : M(k)\wedge \calQ(n_{1})\wedge \cdots \wedge \calQ(n_{k}) & \longrightarrow M(n_{1}+\cdots+n_{k}),
  \end{aligned}
\end{equation}
satisfying some compatibility relations with the symmetric group action as well as associative, commutative and unit axioms~\cite{AroneTurchin2014}. The category of ($\calP$-$\calQ$)-bimodules, denoted by $\Sigma\Bimod_{\calP,\calQ}$, is endowed with an adjunction
\begin{equation}\label{adjSigmBi}
  \mathcal{F}_{B}:\Sigma \Seq_{>0} \leftrightarrows \Sigma\Bimod_{\calP,\calQ}: \mathcal{U}
\end{equation}
where the free ($\calP$-$\calQ$)-bimodule functor $\mathcal{F}_{B}$ is the left adjoint of the forgetful functor $\mathcal{U}$.

\begin{expl}
  The reader can easily check that an operad $\calO$ is obviously a ($\calO$-$\calO$)-bimodule. Moreover, if $\eta:\calO\rightarrow \calO'$ is a  map of operads, then the map $\eta$ is also a ($\calO$-$\calO$)-bimodule map and the bimodule structure on $\calO'$ is defined as follows:
  $$
    \begin{aligned}
      \gamma_{R}: \calO'(k)\wedge \calO(n_{1})\wedge \cdots\wedge \calO(n_{k})  & \longrightarrow \calO'(n_{1}+\cdots + n_{k}),         \\
      (x\,;\,y_{1},\ldots,y_{k})                                                & \longmapsto \gamma(x,\eta(y_{1}),\ldots,\eta(y_{k})); \\
      \gamma_{L}: \calO(k)\wedge \calO'(n_{1})\wedge \cdots\wedge \calO'(n_{k}) & \longrightarrow \calO'(n_{1}+\cdots +n_{k}),          \\
      (x\,;\,y_{1},\ldots,y_{n})                                                & \longmapsto \gamma(\eta(x),y_{1},\ldots,y_{k}).
    \end{aligned}
  $$
\end{expl}

\begin{thm}[\cite{DucoulombierFresseTurchin2019}]  The category $\Sigma\Bimod_{\calP,\calQ}$ is equipped with a model structure such that:
  \begin{itemize}
    \item the weak equivalences are morphisms that form a weak equivalence in each arity,
    \item the fibrations are morphisms that form a fibration map in each arity,
    \item cofibrations are characterized by the left lifting property with respect to the class of acyclic fibrations.
  \end{itemize}
  The model structure on $\Sigma\Seq_{>0}$ is defined similarly.
  Both model structures are called the projective model structures and make the adjunction (\ref{adjSigmBi}) into a Quillen adjunction.
\end{thm}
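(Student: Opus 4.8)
The plan is to produce the model structure by transfer along the free--forgetful adjunction~\eqref{adjSigmBi}, using the standard lifting criterion for cofibrantly generated model categories (the transfer theorem, as in Berger--Moerdijk~\cite{BM2}). The projective model structure on $\Sigma\Seq_{>0}$ is the arity-wise one: it is cofibrantly generated, with generating cofibrations $I$ and generating acyclic cofibrations $J$ obtained from the generators of $\Sp$ by freely inducing up a $\Sigma_{n}$-action in each arity $n$. I would then declare a morphism of $\Sigma\Bimod_{\calP,\calQ}$ to be a weak equivalence (resp.\ fibration) exactly when $\mathcal{U}$ sends it to one, and show that $\mathcal{F}_{B}I$ and $\mathcal{F}_{B}J$ generate a model structure with these weak equivalences and fibrations. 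The transfer theorem reduces the task to three points: (i) $\Sigma\Bimod_{\calP,\calQ}$ is bicomplete; (ii) the domains of $\mathcal{F}_{B}I$ and $\mathcal{F}_{B}J$ are small relative to the corresponding cell complexes; and (iii) the \emph{acyclicity condition}, that every relative $\mathcal{F}_{B}J$-cell complex is carried by $\mathcal{U}$ to a weak equivalence.

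For (i), I would observe that a ($\calP$-$\calQ$)-bimodule is an algebra over the monad $T = \mathcal{U}\mathcal{F}_{B}$ on the bicomplete category $\Sigma\Seq_{>0}$. Limits of bimodules are created arity-wise by $\mathcal{U}$, and colimits exist because $T$ is assembled from coproducts and smash products in $\Sp$, hence preserves filtered colimits and reflexive coequalizers; the existence of all colimits in the category of algebras then follows from the standard construction for finitary monads. The same finitary property gives (ii): smallness of the generating domains in $\Sigma\Bimod_{\calP,\calQ}$ is inherited from smallness in $\Sp$, since $\mathcal{U}$ commutes with the relevant filtered colimits.

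The substance of the argument, and the step I expect to be the main obstacle, is the acyclicity condition (iii). The strategy is to analyse a pushout $X \to X \sqcup_{\mathcal{F}_{B}(A)} \mathcal{F}_{B}(B)$ formed along a generating acyclic cofibration $A \to B$ of $\Sigma\Seq_{>0}$, and to filter the pushout by the number of generators introduced from $B$ --- this is where the explicit tree description of the free bimodule $\mathcal{F}_{B}$ enters, the section of each tree carrying the new generators and the vertices above and below carrying the components $\calP(k)$, $\calQ(n_{i})$. Each layer of the filtration is a pushout of a map obtained by smashing an iterated pushout-product of $A \to B$ with operad components and then passing to a quotient by symmetric group actions coming from the $\Sigma$-sequence structure. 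That these layers are weak equivalences, and hence that their transfinite composite is, rests on special features of the EKMM category of $S$-modules: a monoid-type axiom ensures that smashing acyclic cofibrations with arbitrary components yields weak equivalences, while the good homotopical behaviour of symmetric powers of $S$-modules controls the symmetric group quotients that a general symmetric monoidal model category could not handle. Here the pointed-simplicial enrichment together with the Berger--Moerdijk interval $\Delta[1]_{+}$ supplies the functorial cylinder objects used to build the requisite homotopies. Once (i)--(iii) are in place, the transfer theorem delivers the model structure, the analogous (in fact simpler) transfer gives the projective structure on $\Sigma\Seq_{>0}$, and the Quillen adjunction is immediate since $\mathcal{U}$ preserves fibrations and acyclic fibrations by definition of the transferred classes.
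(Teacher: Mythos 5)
The paper does not prove this statement: it is imported verbatim from the cited reference (Ducoulombier--Fresse--Turchin), so there is no internal proof to compare against. Your proposal — transferring the arity-wise projective structure along the free--forgetful adjunction~\eqref{adjSigmBi} via the lifting theorem, with the acyclicity condition checked by filtering pushouts of free maps and using the good behaviour of the smash product and of the (free) symmetric group actions in the EKMM category — is the standard argument and is essentially the route taken in that reference.
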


\paragraph{The category of $\Lambda$-bimodules}

Let $\calP$ and $\calQ$ be 1-reduced $\Lambda$-operads in spectra. A $\Lambda$-bimodule over the operads $\calP$ and $\calQ$ is the data of a $\Lambda$-sequence together with left and right module operations \eqref{OpBi} compatible with the $\Lambda$-structure (see \cite{DucoulombierFresseTurchin2019}). The category of $\Lambda$-bimodules over $\calP$ and $\calQ$, denoted by $\Lambda\Bimod_{\calP,\calQ}$, is endowed with an adjunction
\begin{equation}\label{adjLambdaBi}
  \mathcal{F}_{B}:\Lambda \Seq_{>0} \leftrightarrows \Lambda\Bimod_{\calP,\calQ}: \mathcal{U}
\end{equation}
where $\mathcal{F}_{B}$ is the left adjoint of the forgetful functor.
Note that, just like in the case of operads, restricting the action of $\Lambda$ to bijections defines an underlying bimodule for any $\Lambda$-bimodule.

\begin{thm}[\cite{DucoulombierFresseTurchin2019}]
  The category $\Lambda\Bimod_{\calP,\calQ}$ is equipped with a model structure such that:
  \begin{itemize}
    \item the weak equivalences are morphisms that form a weak equivalence in each arity,
    \item the fibrations are morphisms $\phi:M_{1}\rightarrow M_{2}$ that induced maps $M_{1}(n)\rightarrow \mathcal{M}(M_{1})(n)\times_{\mathcal{M}(M_{2})(n)} M_{2}(n)$ are fibration maps, where $\mathcal{M}(M)$ is the matching object \eqref{Matching}.
    \item cofibrations are characterized by the left lifting property with respect to the class of acyclic fibrations.
  \end{itemize}
  The model structure on $\Lambda\Seq_{>0}$ is defined similarly. In both cases, the model structures are called the Reedy model structures and make the adjunction (\ref{adjLambdaBi}) into a Quillen adjunction.
  A $\Lambda$-bimodule is Reedy cofibrant if and only if its underlying $\Sigma$-bimodule is cofibrant in the projective model category.
\end{thm}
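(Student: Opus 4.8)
The plan is to obtain the model structure in two stages: first realize a Reedy structure at the level of $\Lambda$-sequences by viewing $\Lambda$ as a generalized Reedy category, and then transfer it across the free--forgetful adjunction \eqref{adjLambdaBi}.

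First, I would set up the Reedy structure on $\Lambda\Seq_{>0}$, following \cite{Fre}. The category $\Lambda$ is a generalized Reedy category whose objects carry the automorphism groups $\Sigma_{n}$: the degree of $[n]$ is its arity, the degree-lowering maps are exactly the restriction operations $u^{*}$ recorded by a $\Lambda$-sequence, and the only degree-raising maps are the isomorphisms, since every morphism of $\Lambda$ is injective. Hence the matching object is the limit \eqref{Matching} over proper restrictions, while every latching object is trivial. The generalized Reedy model structure on diagrams valued in the cofibrantly generated monoidal model category $\Sp$ then has objectwise weak equivalences, matching-object fibrations, and latching cofibrations; because the latching objects vanish, a map is a Reedy cofibration precisely when its underlying map of $\Sigma$-sequences is an arity-wise cofibration of $\Sigma_{n}$-objects, i.e.\ a projective cofibration. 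This proves the statement for $\Lambda\Seq_{>0}$ and the cofibrancy characterization at the level of sequences.

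Second, I would transfer this structure to $\Lambda\Bimod_{\calP,\calQ}$ along \eqref{adjLambdaBi}, declaring a map $f$ to be a weak equivalence or a fibration exactly when $\mathcal{U}(f)$ is one in $\Lambda\Seq_{>0}$; by the previous paragraph this is the arity-wise and matching-object description in the statement. To apply the standard transfer criterion it suffices to verify that $\Lambda\Seq_{>0}$ is cofibrantly generated (it is, as a Reedy structure over $\Sp$), that $\mathcal{U}$ preserves filtered colimits (the free bimodule monad is finitary), and the acyclicity condition: every relative $\mathcal{F}_{B}(J)$-cell complex, with $J$ the generating acyclic cofibrations of $\Lambda\Seq_{>0}$, is a weak equivalence.

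I expect the acyclicity condition to be the main obstacle. The plan is to analyze a pushout in $\Lambda\Bimod_{\calP,\calQ}$ of a free map $\mathcal{F}_{B}(A)\to\mathcal{F}_{B}(B)$ along an arbitrary $\mathcal{F}_{B}(A)\to M$, and to equip the result with a filtration whose successive layers are, arity by arity, colimits of smash products of the cells of $A\to B$ with components of $\calP$, $\calQ$, and $M$; this is governed by the two-sided leveled-tree combinatorics of the free bimodule functor rather than the rooted-tree combinatorics of the free operad. Using the pushout-product axiom of $\Sp$ and the finiteness properties of the smash product from \cite{ElmendorfKrizMandellMay1997}, each layer is an arity-wise weak equivalence whenever $A\to B$ is, so the entire relative cell complex is. Moreover, since the projective model structure on $\Sigma\Bimod_{\calP,\calQ}$ recalled above already encodes the $\Sigma$-level acyclicity, the only genuinely new point is that this filtration is compatible with the restriction operations $u^{*}$, so that the matching-object decoration is preserved; this reduces the $\Lambda$-version to the $\Sigma$-version. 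The transfer then makes \eqref{adjLambdaBi} a Quillen adjunction by construction. Finally, the cofibrancy characterization for $\Lambda$-bimodules is the bimodule analogue of \cite[Theorem~8.4.12]{Fre}: since latching objects are trivial, the $\Lambda$-decoration imposes no constraint beyond the $\Sigma$-level, and the same filtration shows that the underlying $\Sigma$-bimodule of a Reedy-cofibrant $\Lambda$-bimodule is projectively cofibrant and conversely; I would run Fresse's argument verbatim, replacing the free operad functor by $\mathcal{F}_{B}$. The whole scheme parallels \cite{DucoulombierFresseTurchin2019}, where the statement is recorded.
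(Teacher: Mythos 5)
This theorem is not proved in the paper at all: it is quoted verbatim as background from \cite{DucoulombierFresseTurchin2019}, so there is no internal argument to compare your proposal against. Judged on its own terms, your two-step outline — (1) realize $\Lambda\Seq_{>0}$ as a generalized Reedy diagram category over $\Sp$ with only degree-lowering non-isomorphisms, so that latching objects are trivial and Reedy cofibrations reduce to arity-wise $\Sigma_{n}$-projective cofibrations, then (2) right-transfer along the adjunction $\mathcal{F}_{B} \dashv \mathcal{U}$ of~\eqref{adjLambdaBi} — is the standard strategy and is consistent with how the cited reference and Fresse's treatment of $\Lambda$-operads proceed; the final cofibrancy characterization is indeed the bimodule analogue of \cite[Theorem~8.4.12]{Fre}.

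The one place where your sketch is genuinely thin is the acyclicity condition for the transfer. Saying that the layers of the filtration of a pushout along $\mathcal{F}_{B}(A)\to\mathcal{F}_{B}(B)$ are ``colimits of smash products of the cells of $A\to B$ with components of $\calP$, $\calQ$, and $M$'' and invoking the pushout-product axiom is not yet an argument: those layers involve quotients by symmetric group actions and smash factors $\calP(k)$, $\calQ(l)$, $M(m)$ that carry no cofibrancy hypotheses in the statement, so the pushout-product axiom alone does not make them weak equivalences. This is precisely why the ambient category is required to carry the Berger--Moerdijk interval $\Delta[1]_{+}$ recalled in Section~\ref{SectModSpOp}: the usual way to close this gap (and the way it is closed in the cited reference) is a path-object/interval argument — every object of $\Lambda\Bimod_{\calP,\calQ}$ is fibrant in $\Sp$ and cotensoring with $\Delta[1]_{+}$ produces a functorial path object of bimodules — which yields acyclicity by Quillen's criterion without any cofibrancy assumptions on $\calP$, $\calQ$, or the cells. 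You should either run that argument or add the hypotheses your filtration actually needs; as written, the filtration step would fail for general $\calP$ and $\calQ$.
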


\subsection{The model category of Hopf $\Lambda$-cooperads}\label{SectModHopf}

\paragraph{The category of (commutative) differential graded algebras}

The main categories considered in this section are chain complexes and commutative differential graded algebras denoted by $\Ch$ and $\CDGA$, respectively. Both are symmetric monoidal categories equipped with a notion of cointerval (dual version of the notion of interval).  This cointerval is given by polynomial forms on the interval, $\Omega^{*}(\Delta^{1}) \coloneqq \K[t,dt]$, with the de Rham differential. We have natural algebra maps $d_{0},d_{1}:\K[t,dt]\rightarrow \K$ by evaluation at the endpoints $t=0$ and $t=1$. Furthermore, one has a coassociative coproduct
$$
  m^{\ast}:\K[t,dt]\rightarrow \K[t,dt]\otimes \K[t,dt],
$$
given by the pullback of the multiplication map
\begin{align*}
  m:[0,1] \times [0, 1] & \longrightarrow [0, 1],   \\
  (s,t)                 & \longmapsto 1-(1-s)(1-t).
\end{align*}
Concretely, $m^{*}(t) \coloneqq t \otimes 1 + 1 \otimes t - t \otimes t$ and $m^{*}(dt) \coloneqq dt \otimes 1 + 1 \otimes dt - dt \otimes t - t \otimes dt$.
The evaluation map at the endpoint $t=0$ defines a counit for the coproduct $m^{\ast}$ so obtained.
On the other hand, evaluation at $t=1$ is a coabsorbing element, i.e. the following diagram commutes:
\[
  \begin{tikzcd}
    \K[t,dt] \otimes \K[t,dt] \ar[ddr, "\id \otimes \ev_{t=1}" swap] & \K[t,dt] \ar[r, "m^{\ast}"] \ar[l, "m^{\ast}" swap] \ar[d, "\ev_{t=1}"] & \K[t,dt] \otimes \K[t,dt] \ar[ddl, "\ev_{t=1} \otimes \id"] \\
    & \K \ar[d, hook] & \\
    & \K[t,dt] &
  \end{tikzcd}
\]

\paragraph{The category of 1-reduced cooperads}

By a symmetric cosequence or $\Sigma$-cosequence of chain complexes, we mean a family of chain complexes $X = \{ X(n) \}_{n > 0}$ equipped with a right coaction of $\Sigma_{n}$ on $X(n)$ for all $n > 0$.
A symmetric cosequence $X$ is said to be 1-reduced if the arity $1$ component $X(1)$ is the one dimension vector space $\K$ concentrated in degree $0$. We denote by $\dg\Sigma \Seq_{>0}^{c}$ and $\dg\Sigma \Seq_{>1}^{c}$  the category of symmetric cosequences and 1-reduced symmetric cosequences, respectively. A 1-reduced cooperad $\calC$ is the data of a 1-reduced symmetric cosequence together with operations, called cooperadic operations, of the form
\begin{equation}\label{OpCoop}
  \gamma^{c}:\calC(n_{1}+\cdots + n_{k})\longrightarrow \calC(k)\otimes \calC(n_{1})\otimes \cdots \otimes \calC(n_{k}) ,
\end{equation}
compatible with the symmetric group coaction and satisfying coassociativity and counitality axioms~\cite{Fresse2017}. The category of 1-reduced cooperads in chain complexes $\Sigma\Cooperad$ is endowed with an adjunction
\begin{equation}\label{adjSym}
  \mathcal{U}:\Sigma\Cooperad  \leftrightarrows  \dg\Sigma \Seq_{>1}^{c}:\mathcal{F}^{c}
\end{equation}
where $\mathcal{F}^{c}$ is the right adjoint of the forgetful functor.

\begin{thm}[{\cite[Section 9.2]{Fre}}]\label{thm:model-struct-cooperads}
  The category of 1-reduced cooperads $\Sigma\Cooperad$ is equipped with a model category structure such that:
  \begin{itemize}
    \item the weak equivalences are morphisms that form a quasi-isomorphism in each arity,
    \item the cofibrations are morphisms that form a surjective map in each arity,
    \item the fibrations are characterized by the right lifting property with respect to the class of acyclic cofibrations.
  \end{itemize}
  The model structure on $ \dg\Sigma\Seq_{>1}^{c}$ is defined similarly.
  Both model structures are called the projective model structures and make the adjunction~\eqref{adjSym} into a Quillen adjunction.
  Any quasi-cofree cooperad $(\calF^{c}(X), \partial)$ with a differential induced by a linear map $\calF^{c}(X) \to X$ vanishing on $X$ is fibrant~\cite[Proposition~9.2.9]{Fre}.
\end{thm}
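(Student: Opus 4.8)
The plan is to obtain this model structure by \emph{left induction} along the forgetful functor $\mathcal{U}\colon \Sigma\Cooperad \to \dg\Sigma\Seq^{c}_{>1}$, which is the left adjoint in the adjunction~\eqref{adjSym}. First I would set up the model structure on the cosequences $\dg\Sigma\Seq^{c}_{>1}$. Since we work over a field $\K$ of characteristic zero, Maschke's theorem makes each group algebra $\K[\Sigma_{n}]$ semisimple, so that taking (co)invariants is exact and the category of cosequences decomposes, arity by arity, into equivariant cochain complexes; the stated classes (arity-wise quasi-isomorphisms as weak equivalences, arity-wise surjections as cofibrations, and the remaining maps characterised by the right lifting property) then descend from the corresponding data on cochain complexes over $\K$. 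I would then \emph{define} a morphism of cooperads to be a weak equivalence, respectively a cofibration, precisely when $\mathcal{U}$ sends it to one, and define the fibrations by the right lifting property against acyclic cofibrations. With these definitions the two-out-of-three and retract axioms, together with one of the two lifting axioms, are inherited at once from the cosequence level, because by construction $\mathcal{U}$ creates both the weak equivalences and the cofibrations.

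The substance of the proof is the construction of the two functorial factorizations, together with the remaining lifting axiom; equivalently, one must verify the \emph{acyclicity condition} that makes left induction produce a genuine model structure (as in the induced-model-structure criterion of Hess, K\k{e}dziorek, Riehl and Shipley), or else build the factorizations explicitly as Fresse does. This is the step I expect to be the main obstacle. The key tool is the cofree conilpotent cooperad functor $\mathcal{F}^{c}$ together with the comonad $\mathcal{F}^{c}\mathcal{U}$: I would factor an arbitrary morphism of cooperads through a quasi-cofree cooperad, using the cosimplicial cobar-type resolution associated with this comonad to produce, on one side, a cofibration followed by an acyclic fibration, and, on the other, an acyclic cofibration followed by a fibration. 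Here the characteristic-zero hypothesis intervenes twice over: it guarantees that the relevant coinvariants are exact, so that the cofree functor is homotopically well behaved, and that acyclic cochain complexes over $\K$ are contractible, which is exactly what allows one to solve the equations $d h \pm h d = \varphi$ needed to produce the lifts.

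Finally, the Quillen adjunction is essentially automatic: by construction $\mathcal{U}$ preserves cofibrations and acyclic cofibrations (it creates both), so $\mathcal{U} \dashv \mathcal{F}^{c}$ is a Quillen pair. For the last assertion, that a quasi-cofree cooperad $(\mathcal{F}^{c}(X),\partial)$ whose differential is induced by a map $\mathcal{F}^{c}(X)\to X$ vanishing on $X$ is fibrant, I would show directly that it has the right lifting property against every acyclic cofibration $\iota\colon \calB \xrightarrow{\sim} \calN$, i.e.\ every arity-wise surjective quasi-isomorphism. A lifting problem asks to extend a cooperad morphism $\calB \to (\mathcal{F}^{c}(X),\partial)$ along $\iota$; transposing along the cofreeness of the underlying graded cooperad reduces this to extending a compatible family of maps into the cogenerators $X$, the only extra constraint being compatibility with the coderivation $\partial$. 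I would solve this by induction along the coradical (number-of-vertices) filtration of $\mathcal{F}^{c}(X)$, at each stage extending the lift over the contractible kernel of $\iota$ using the contractibility furnished by the characteristic-zero hypothesis, while the hypothesis that $\partial$ vanishes on $X$ ensures that the coderivation introduces no obstruction to the induction. Assembling these extensions yields the desired lift, and hence fibrancy.
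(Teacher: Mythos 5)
First, a remark on the comparison itself: the paper does not prove this statement at all --- it is quoted from Fresse's book (the bracketed citation is the ``proof''), so there is no in-paper argument to measure yours against; the relevant benchmark is Fresse's own construction. Your overall strategy --- left-induce along the forgetful left adjoint $\mathcal{U}$, build the factorizations using the cofree conilpotent cooperad $\mathcal{F}^{c}$ together with characteristic-zero contractibility, and observe that the Quillen adjunction is then automatic --- is indeed the shape of that construction. (One small slip: $\mathcal{F}^{c}\mathcal{U}$ is a \emph{monad} on $\Sigma\Cooperad$; the comonad $\mathcal{U}\mathcal{F}^{c}$ lives on $\dg\Sigma\Seq^{c}_{>1}$.)

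There is, however, a genuine inconsistency in your fibrancy argument, inherited from the statement as printed. You take the acyclic cofibrations to be the arity-wise \emph{surjective} quasi-isomorphisms, but then propose to ``extend the lift over the contractible kernel of $\iota$''. If $\iota\colon \calB\to\calN$ is arity-wise surjective it is an epimorphism of cooperads, so a lift $g$ with $g\iota=f$ is unique when it exists and there is nothing to extend: $f$ would have to \emph{descend}, i.e.\ vanish on $\ker\iota$, which fails in general. Concretely, for an acyclic cosequence $C$ the projection $\calF^{c}(X\oplus C)\to\calF^{c}(X)$ is an arity-wise surjective quasi-isomorphism of cooperads, yet the identity of $\calF^{c}(X\oplus C)$ does not factor through it; so with ``cofibration $=$ surjection'' essentially no quasi-cofree cooperad would be fibrant, contradicting the last clause of the theorem. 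The resolution is that the cofibrations are the arity-wise \emph{injections} (as in the analogous theorem for $\Sigma\Cobimod_{\calP,\calQ}$ recalled later in the same section, and as in Fresse's book); the word ``surjective'' in the statement appears to be a typo. Once that is corrected, your lifting mechanism --- reduce to the cogenerators by cofreeness, extend over the acyclic cokernel using contractibility in characteristic zero, and use that $\partial$ strictly lowers the coradical filtration to kill the obstructions --- is exactly the right one; but you should make the correction explicit, since as written the inductive step would fail.
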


\paragraph{The category of 1-reduced $\Lambda$-cooperads}

Dually to the previous sections, by a $\Lambda$-cosequence in chain complexes we understand a family of spectra $X = \{ X(n) \}_{n > 0}$ equipped with the following structure: for any injection $u : \{ 1, \dots, k \} \to \{ 1, \dots, n \}$, there is a structure map
$$
  u_{*} : X(k) \to X(n),
$$
satisfying the relation $(v\circ u)_{*}=v_{*}\circ u_{*}$ for any pair of composable maps $u : \{ 1, \dots, k \} \to \{ 1, \dots, n \}$ and $v : \{ 1, \dots, n \} \to \{ 1, \dots, m \}$.
Furthermore, there is a structure map
$
  \epsilon:\K\rightarrow X(n),
$
for all $n > 0$. A $\Lambda$-cosequence is said to be 1-reduced if one has $X(1)=\K$. We denote by $\dg\Lambda \Seq_{>0}^{c}$ and $\dg\Lambda \Seq_{>1}^{c}$  the category of symmetric cosequences and 1-reduced $\Lambda$-cosequences, respectively.  Let us remark that, by restriction to bijection, any  $\Lambda$-cosequence is also a symmetric cosequence.

A 1-reduced $\Lambda$-cooperad $\calC$ is the data of a 1-reduced $\Lambda$-cosequence together with cooperadic operations \eqref{OpCoop} compatible with the $\Lambda$-costructure (see \cite{Fre}). The category of 1-reduced $\Lambda$-cooperads, denoted by $\Lambda\Cooperad$, is endowed with an adjunction:
\begin{equation}\label{adjLambda}
  \mathcal{U}:\Lambda\Cooperad  \leftrightarrows  \dg\Lambda \Seq_{>1}^{c}:\mathcal{F}^{c}
\end{equation}
where $\mathcal{F}^{c}$ is the right adjoint of the forgetful functor.

\begin{thm}[{\cite[Section 11.3]{Fre}}] The category of 1-reduced $\Lambda$-cooperads $\Lambda\Cooperad$ is equipped with a model category structure such that:
  \begin{itemize}
    \item the weak equivalences are morphisms that form a quasi-isomorphism in each arity,
    \item the cofibrations are morphisms $\phi:\calO_{1}\rightarrow \calO_{2}$ that induced maps that form a fibration in the undercategory $Com^{c}\downarrow \Sigma \Operad$ where $Com^{c}$ is the terminal object,
    \item the fibrations are characterized by the right lifting property with respect to the class of acyclic cofibrations.
  \end{itemize}
  The model structure on $ \dg\Lambda\Seq_{>1}^{c}$ is defined similarly.
  Both model structures are called the Reedy model structures and make the adjunction~\eqref{adjLambda} into a Quillen adjunction.
  A $\Lambda$-cooperad is fibrant if and only if its underlying $\Sigma$-cooperad is fibrant in the projective model category (compare with~\cite[Proposition~4.3]{FresseTurchinWillwacher2017}).
\end{thm}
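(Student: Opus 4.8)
The statement bundles together three assertions---existence of the Reedy model structure, the Quillen adjunction~\eqref{adjLambda}, and the fibrancy criterion---and since it is attributed to Fresse~\cite[Section~11.3]{Fre}, my plan is to organize the proof around the interaction between the already-established projective model structure on $\Sigma$-cooperads (Theorem~\ref{thm:model-struct-cooperads}) and the Reedy structure carried by the indexing category $\Lambda$. The crucial structural observation is that $\Lambda$, viewed through Berger--Moerdijk's notion of a generalized Reedy category, has its degree given by the arity, has all non-invertible morphisms (injections of ordered sets) strictly raising the arity, and has its invertible morphisms equal to the symmetric groups. Thus the subcategory $\Lambda^{-}$ of degree-lowering maps consists only of isomorphisms, so that $\Lambda$ is effectively \emph{direct}, while the genuinely degree-raising part encodes exactly the corestriction operations $u_{*}$ of the $\Lambda$-costructure.

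First I would set up the model structure on $\dg\Lambda\Seq_{>1}^{c}$ and then lift it to $\Lambda\Cooperad$. Taking the projective $\Sigma$-cosequence model structure as the aritywise input, the generalized Reedy machinery over $\Lambda$ produces: weak equivalences detected arity by arity; cofibrations those maps $\phi$ whose relative latching maps $X(n)\sqcup_{L_{n}X}L_{n}Y\to Y(n)$ are cofibrations (surjections), where $L_{n}(-)$ is the latching object assembled from the non-invertible injections into $[n]$; and fibrations defined by the right lifting property. The model axioms then reduce to the standard Reedy verifications, the only delicate point being factorization, which I would produce level by level from the latching/matching decompositions compatibly with the $\Sigma_{n}$-coactions. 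To promote this structure to $\Lambda\Cooperad$ I would use that the adjunction~\eqref{adjLambda} has the forgetful functor $\calU$ as its left adjoint: since $\calU$ detects both the aritywise surjections defining cofibrations and the aritywise quasi-isomorphisms defining weak equivalences, the cooperadic model structure is left-induced along $\calU$, exactly as in the projective situation of Theorem~\ref{thm:model-struct-cooperads}, with the cofree-cooperad functor $\calF^{c}$ supplying fibrant replacements.

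The Quillen adjunction is then formal: $\calU$ sends cofibrations to aritywise surjections and weak equivalences to aritywise quasi-isomorphisms, hence preserves cofibrations and acyclic cofibrations and is left Quillen, with $\calF^{c}$ right Quillen. For the fibrancy criterion I would exploit the directness of $\Lambda$ established in the first step. Because $\Lambda^{-}$ contains only isomorphisms, the matching category attached to each $[n]$---built from non-invertible morphisms \emph{out of} $[n]$ that lower arity---is empty, so the $\Lambda$-matching objects are terminal and the Reedy fibrancy condition coming from the $\Lambda$-direction is vacuous. Consequently a $\Lambda$-cooperad is fibrant precisely when its underlying $\Sigma$-cooperad (the restriction of the structure to bijections) is fibrant in the projective model structure, which is the cooperadic counterpart of the cofibrancy criterion for $\Lambda$-operads and agrees with~\cite[Proposition~4.3]{FresseTurchinWillwacher2017}.

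I expect the genuine obstacle to be the factorization axiom in this dual, ``co'' setting. Unlike operads, cooperads are governed by a cofree conilpotent comonad rather than a free monad, so the small-object-type argument has to be run on the cofibration (surjection) side, and the latching objects must be shown to be simultaneously compatible with the cooperadic decomposition maps~\eqref{OpCoop} and with the $\Sigma_{n}$-coactions. Controlling the generalized---rather than strict---Reedy structure, namely the presence of the nontrivial automorphism groups $\Sigma_{n}$ at each arity, is the technical heart of the verification, and is precisely the point at which I would invoke Fresse's detailed development in~\cite[Section~11.3]{Fre} rather than reprove it from scratch.
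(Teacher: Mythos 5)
The paper offers no proof of this statement---it is recalled verbatim from Fresse~\cite[Section~11.3]{Fre}---so the only available comparison is with the cited source, and your outline (viewing $\Lambda$ as a generalized Reedy category all of whose non-invertible morphisms raise arity, defining cofibrations by relative latching maps, left-inducing the structure along the forgetful functor $\calU$, and deferring the factorization axiom and the $\Sigma_{n}$-equivariance issues to Fresse) is exactly the strategy of that source. The one inference to handle with care is the fibrancy criterion: the observation that the matching objects are terminal characterizes the Reedy fibrations of $\Lambda$-\emph{cosequences}, whereas the fibrations of $\Lambda$-\emph{cooperads} are defined by a lifting property against acyclic cofibrations, so identifying the fibrant cooperads with those whose underlying $\Sigma$-cooperad is projectively fibrant still requires the dual of the argument behind~\cite[Theorem~8.4.12]{Fre}, namely that a lift against an acyclic Reedy cofibration of $\Lambda$-cooperads can be assembled from a lift of underlying $\Sigma$-cooperads compatibly with the corestriction maps $u_{*}$; since you explicitly invoke Fresse for precisely this kind of verification, this is a presentational rather than a mathematical gap.
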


\paragraph{The category of Hopf 1-reduced $\Lambda$-cooperads}

Let $\dg \Hopf\Lambda \Seq_{>0}^{c}$ (resp.\ $\dg \Hopf\Lambda \Seq_{>1}^{c}$) be the category of $\Lambda$-cosequences (resp.\ 1-reduced $\Lambda$-cosequences) in commutative differential graded algebras. A Hopf 1-reduced $\Lambda$-cooperad is a cosequence in $\dg \Hopf\Lambda \Seq_{>1}^{c}$ equipped with cooperadic operations \eqref{OpCoop} compatible with the $\Lambda$-costructure and the Hopf structure. The category of Hopf 1-reduced $\Lambda$-cooperads, denoted by $\Hopf\Lambda\Cooperad$, is endowed with an adjunction:
\begin{equation}\label{adjHopf}
  \mathcal{U}:\Hopf\Lambda\Cooperad  \leftrightarrows  \dg\Hopf\Lambda \Seq_{>1}^{c}:\mathcal{F}^{c}
\end{equation}
where $\mathcal{F}^{c}$ is the right adjoint of the forgetful functor.

\begin{thm}[{\cite[Section 11.4]{Fre}}]
  The category of 1-reduced  cooperads $\Hopf\Lambda\Cooperad$ is equipped with a model category structure such that:
  \begin{itemize}
    \item the weak equivalences are morphisms that form a quasi-isomorphism in each arity,
    \item the fibrations are morphisms that form a fibration in $\Lambda\Cooperad$ when we forget about the Hopf structure,
    \item the cofibrations are characterized by the left lifting property with respect to the class of acyclic cofibrations.
  \end{itemize}
  The model structure on $ \dg\Hopf\Lambda\Seq_{>1}^{c}$ is defined similarly.  Both model structures are called the Reedy--Hopf model structures and make the adjunction\eqref{adjHopf} into a Quillen adjunction.
  A Hopf $\Lambda$-cooperad is fibrant if and only if its underlying Hopf $\Sigma$-cooperad is fibrant in the projective model category~\cite[Proposition~4.3]{FresseTurchinWillwacher2017}.
\end{thm}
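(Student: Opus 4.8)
The plan is to obtain this model structure by \emph{right transfer} (right induction) from the Reedy model structure on $\Lambda\Cooperad$ established above, along the forgetful functor that discards the commutative-algebra structure. The first step is to set up the correct adjunction. I would observe that a Hopf $\Lambda$-cooperad is precisely a commutative monoid in $\Lambda\Cooperad$ equipped with its arity-wise (Hadamard) tensor product $(\calC \otimes \calC')(n) = \calC(n) \otimes \calC'(n)$, whose monoidal unit is the terminal cooperad $Com^{c}$. Consequently the forgetful functor $V : \Hopf\Lambda\Cooperad \to \Lambda\Cooperad$ admits a left adjoint, namely the symmetric-algebra monad computed arity-wise and promoted to cooperads using the symmetric monoidal structure of $\Ch$. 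By construction $V$ creates exactly the weak equivalences (arity-wise quasi-isomorphisms) and the fibrations of the statement, so the content is that the right-induced structure genuinely exists and the remaining class is characterized by the left lifting property.

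Second, I would verify the hypotheses of the transfer theorem: that $\Hopf\Lambda\Cooperad$ is complete and cocomplete (limits and filtered colimits are created by $V$, while general colimits require the usual monadic reflexive-coequalizer argument for algebras), that the relevant functors are accessible so the small-object argument applies, and above all the \emph{acyclicity condition} — that every pushout in $\Hopf\Lambda\Cooperad$ along the image under the left adjoint of a generating acyclic cofibration of $\Lambda\Cooperad$ is again a weak equivalence. This is the step where working over a field $\K$ of characteristic zero is essential: there the symmetric powers are exact functors, so the free commutative-algebra functor is homotopically well behaved and the transferred acyclic cofibrations stay arity-wise quasi-isomorphisms. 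I expect this acyclicity check to be the main obstacle, both because it is where the characteristic-zero hypothesis enters in an essential way and because one must simultaneously control the interaction between the cooperadic structure maps and the symmetric-algebra functor.

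Third, the analogous structure on $\dg\Hopf\Lambda\Seq_{>1}^{c}$ and the Quillen adjunction~\eqref{adjHopf} I would handle by compatibility with the non-Hopf case. Since fibrations and weak equivalences in both $\Hopf\Lambda\Cooperad$ and $\dg\Hopf\Lambda\Seq_{>1}^{c}$ are, by definition, detected after forgetting the Hopf structure, and since the cofree-cooperad functor $\mathcal{F}^{c}$ commutes with forgetting that structure, it suffices to show that $\mathcal{F}^{c}$ preserves fibrations and acyclic fibrations. This is exactly the content of the already-established right Quillen property of $\mathcal{F}^{c}$ for $\Lambda\Cooperad$, so the claim transfers with no further work.

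Finally, for the fibrancy criterion I would argue directly. The terminal object of $\Hopf\Lambda\Cooperad$ is $Com^{c}$, whose underlying cooperad is the terminal object of $\Lambda\Cooperad$; hence $M$ is fibrant in $\Hopf\Lambda\Cooperad$ if and only if $V(M) \to Com^{c}$ is a fibration in $\Lambda\Cooperad$, i.e.\ if and only if $V(M)$ is fibrant there. By the fibrancy criterion for $\Lambda\Cooperad$ this holds exactly when the underlying $\Sigma$-cooperad is projectively fibrant, and since forgetting the $\Lambda$-structure commutes with forgetting the Hopf structure, this matches fibrancy of the underlying Hopf $\Sigma$-cooperad in the projective model category; the reference~\cite{FresseTurchinWillwacher2017} records precisely this comparison.
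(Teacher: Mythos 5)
The paper offers no proof of this statement: it is quoted from \cite{Fre} (Section~11.4), with the fibrancy criterion attributed to \cite{FresseTurchinWillwacher2017}, so there is no in-paper argument to measure yours against. Your overall strategy --- identifying $\Hopf\Lambda\Cooperad$ with commutative monoids in $\Lambda\Cooperad$ for the arity-wise tensor product, right-inducing the model structure along the arity-wise symmetric-algebra adjunction, locating the acyclicity check as the place where characteristic zero enters, deducing that \eqref{adjHopf} is Quillen from the non-Hopf case because $\calF^{c}$ commutes with forgetting the Hopf structure, and reading off the fibrancy criterion from the fact that the right adjoint preserves the terminal object --- is the expected one and is consistent with the cited source. (The statement's ``left lifting property with respect to acyclic cofibrations'' is surely a typo for ``acyclic fibrations''; your reading is the intended one.)

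The genuine gap is in your second step. The standard right-transfer theorem needs the base category $\Lambda\Cooperad$ to be cofibrantly generated, so that the images under the left adjoint of generating (acyclic) cofibrations permit the small object argument. But the Reedy structure on $\Lambda\Cooperad$ --- and already the projective structure on $\Sigma\Cooperad$ --- is of left-induced type: cofibrations and weak equivalences are created by the forgetful functor to cosequences, fibrations are defined purely by a lifting property, and no generating sets are exhibited or obviously available. Fresse does not obtain the factorization axioms for (Hopf) cooperads from the small object argument; he constructs the factorizations directly, building fibrations out of quasi-cofree cooperads via the cofree cooperad functor (this is why the paper records that quasi-cofree objects are fibrant). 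So the sentence ``the relevant functors are accessible so the small-object argument applies'' is exactly where the real content of the theorem is hiding, and as written it does not go through. A smaller point: the arity-wise symmetric algebra does not preserve $1$-reducedness, since $S(\K)=\K[x]\neq\K$ in arity $1$, so the left adjoint must be defined to leave arity $1$ untouched; this is harmless (morphisms are forced to be the identity there) but should be said.
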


\subsection{The model category of Hopf $\Lambda$-cobimodules}

\paragraph{The category of cobimodules}

Let $\calP$, $\calQ$ be 1-reduced cooperads in chain complexes.
A ($\calP$-$\calQ$)-cobimodule is the data of a symmetric cosequence $M=\{M(n)\}_{n>0}$ together with operations $\gamma_{L}^{c}$ and $\gamma_{R}^{c}$, called respectively left and right comodule operations, of the form
\begin{equation}\label{OpCobi}
  \begin{aligned}
    \gamma_{L}^{c}: M(n_{1}+\cdots+n_{k}) & \longrightarrow\calP(k)\otimes M(n_{1})\otimes \cdots \otimes M(n_{k}) ;     \\
    \gamma_{R}^{c}:M(n_{1}+\cdots+n_{k})  & \longrightarrow M(k)\otimes \calQ(n_{1})\otimes \cdots \otimes \calQ(n_{k}),
  \end{aligned}
\end{equation}
satisfying some compatibility relations with the symmetric group coaction as well as coassociativity, cocommutativity and counitality axioms~\cite{FresseWillwacher2019}.
The category of ($\calP$-$\calQ$)-cobimodules, denoted by $\Sigma\Cobimod_{\calP,\calQ}$, is endowed with an adjunction
\begin{equation}\label{adjSigmB}
  \mathcal{U}:\Sigma\Cobimod_{\calP,\calQ} \leftrightarrows  \dg\Sigma \Seq_{>0}^{c}:\mathcal{F}^{c}_{B}
\end{equation}
where $\mathcal{F}^{c}_{B}$ is the right adjoint of the forgetful functor.

\begin{thm}[{\cite{FresseWillwacher2019}}]
  The category $\Sigma\Cobimod_{\calP,\calQ}$ is equipped with a model structure such that:
  \begin{itemize}
    \item the weak equivalences are morphisms that form a quasi-isomorphism in each arity,
    \item the cofibrations are morphisms that form an injective map in each arity,
    \item the fibrations are characterized by the right lifting property with respect to the class of acyclic cofibrations.
  \end{itemize}
  The model category structure on $\dg\Sigma \Seq_{>0}^{c}$ is defined similarly.
  Both model structures are called the projective model structures and make the adjunction \eqref{adjSigmB} into a Quillen adjunction.
  A quasi-cofree Hopf $\Sigma$-cobimodule is fibrant.
\end{thm}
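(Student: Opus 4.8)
The plan is to obtain the model structure on $\Sigma\Cobimod_{\calP,\calQ}$ by \emph{left-inducing} it along the forgetful functor $\mathcal{U}$ of the adjunction~\eqref{adjSigmB}. The crucial structural observation, opposite to the algebraic case, is that here $\mathcal{U}$ is the \emph{left} adjoint and the cofree functor $\mathcal{F}^{c}_{B}$ is the right adjoint, so cofibrations and weak equivalences must be created by $\mathcal{U}$. First I would settle the base model structure on $\dg\Sigma\Seq_{>0}^{c}$: since we work over a field $\K$ of characteristic zero, each $\K[\Sigma_{n}]$ is semisimple, so $\dg\Sigma\Seq_{>0}^{c}$ decomposes as a product over $n>0$ of categories of dg-modules over semisimple algebras. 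On each factor the injective-type structure (arity-wise injections as cofibrations, quasi-isomorphisms as weak equivalences, and the fibrations forced by the lifting property) exists and is combinatorial, and the product inherits it. In particular every object is fibrant there, a fact I will reuse at the end.

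Next I would declare a map of cobimodules to be a weak equivalence (resp.\ cofibration) exactly when $\mathcal{U}$ sends it to an arity-wise quasi-isomorphism (resp.\ arity-wise injection), and fibrations to be the maps with the right lifting property against acyclic cofibrations. To see that this defines a model structure I would invoke the left-induced existence theorem of Hess--K\k{e}dziorek--Riehl--Shipley, together with the accessibility results of Makkai--Rosick\'y. Its hypotheses are: (i) $\Sigma\Cobimod_{\calP,\calQ}$ is locally presentable, which holds because cobimodules are precisely the coalgebras over the accessible comonad $\mathcal{U}\mathcal{F}^{c}_{B}$ on the locally presentable category $\dg\Sigma\Seq_{>0}^{c}$; and (ii) the acyclicity condition, namely that every map with the right lifting property against the cofibrations is a weak equivalence.

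The heart of the argument, and the step I expect to be the main obstacle, is verifying this acyclicity condition, since for left-induced structures one cannot simply run the small object argument on transferred generating trivial cofibrations. I would produce the required factorizations concretely by building path objects at the level of cobimodules out of the cointerval $\Omega^{*}(\Delta^{1})=\K[t,dt]$ recalled in Section~\ref{SectModHopf}: tensoring a cobimodule with $\K[t,dt]$ carries a cobimodule structure because the coproduct $m^{*}$, the counit $\ev_{t=0}$, and the coabsorbing element $\ev_{t=1}$ are exactly the data making $\K[t,dt]$ distribute compatibly across the comodule operations~\eqref{OpCobi}, and this object is a path object for $M$. Crucially, over a field the cofree functor $\mathcal{F}^{c}_{B}$ is exact (it is assembled from $\K$-tensor products and products, all exact in $\dg\Sigma\Seq_{>0}^{c}$), so it preserves and detects quasi-isomorphisms; this is what lets me split any arity-wise injective quasi-isomorphism of underlying cosequences as the inclusion of a direct summand with acyclic complement and then extend maps across it, filtering by arity to keep control of the twisting coderivations.

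Finally, the adjunction~\eqref{adjSigmB} is Quillen essentially by construction, as $\mathcal{U}$ preserves cofibrations and weak equivalences because these are created by it. For the last assertion, a quasi-cofree cobimodule $(\mathcal{F}^{c}_{B}(X),\partial)$ is fibrant: since $\mathcal{F}^{c}_{B}$ is right Quillen and every object of $\dg\Sigma\Seq_{>0}^{c}$ is fibrant, the untwisted cofree cobimodule is fibrant, and adding the coderivation $\partial$ preserves fibrancy by the same lifting-and-filtration argument that establishes the acyclicity condition, exactly parallel to the cooperadic case~\cite[Proposition~9.2.9]{Fre}.
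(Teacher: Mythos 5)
First, a point of reference: the paper does not prove this statement at all --- it is imported verbatim from Fresse--Willwacher (and the cooperadic analogue from Fresse's book), so there is no in-text argument to compare yours against. Judged on its own, your sketch has the right architecture and matches the strategy of the cited sources in outline: $\mathcal{U}$ is indeed the left adjoint in~\eqref{adjSigmB}, so the structure must be left-induced, with cofibrations and weak equivalences created arity-wise and fibrations defined by lifting; local presentability via comonadicity over $\dg\Sigma\Seq_{>0}^{c}$ is the standard route; and the Quillen-adjunction and ``everything in the base is fibrant'' observations are correct.

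The genuine gap is in what you call the heart of the argument, the acyclicity condition. For a \emph{left}-induced structure the dual of Quillen's small-object/path-object argument requires \emph{cylinder} objects: one needs to factor the fold map $M\oplus M\to M$ as a cofibration (arity-wise injection of cobimodules) followed by a weak equivalence, and then lift the cylinder against a map having the right lifting property with respect to cofibrations. The object $M\otimes\K[t,dt]$ you build from the cointerval factors the \emph{diagonal} $M\to M\oplus M$, not the fold map; there is only one natural cobimodule map $M\to M\otimes\K[t,dt]$ (tensoring with the constant $1$), so it is not a cylinder, and using it as a path object presupposes that $M\otimes\K[t,dt]\to M\oplus M$ is a fibration --- i.e.\ has the right lifting property against acyclic cofibrations of cobimodules --- which is precisely the class of maps you have not yet identified. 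The subsequent sentence (``split any arity-wise injective quasi-isomorphism as the inclusion of a direct summand \dots filtering by arity to keep control of the twisting coderivations'') is exactly where the real work lives in Fresse's treatment of the cooperadic case (the analysis of quasi-cofree objects behind~\cite[Proposition~9.2.9]{Fre}), and it is asserted rather than carried out; the same deferral reappears in your final step, where fibrancy of a quasi-cofree object $(\mathcal{F}^{c}_{B}(X),\partial)$ is reduced to ``the same lifting-and-filtration argument.'' So the skeleton is right, but the one step you correctly identify as the main obstacle is not actually closed, and the tool proposed for it is the wrong dual.
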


\paragraph{The category of $\Lambda$-cobimodules}

Let $\calP$ and $\calQ$ be 1-reduced $\Lambda$-operads in chain complexes. A $\Lambda$-bimodule over the operads $\calP$ and $\calQ$ is the data of a $\Lambda$-cosequence together with left and right comodule operations \eqref{OpCobi} compatible with the $\Lambda$-costructure. The category of $\Lambda$-cobimodules over $\calP$ and $\calQ$, denoted by $\Lambda\Cobimod_{\calP,\calQ}$, is endowed with an adjunction
\begin{equation}\label{adjLambdaB}
  \mathcal{U}:\Lambda\Cobimod_{\calP,\calQ} \leftrightarrows  \dg\Lambda \Seq_{>0}^{c}:\mathcal{F}^{c}_{B}
\end{equation}
where $\mathcal{F}^{c}_{B}$ is the right adjoint of the forgetful functor.

\begin{thm}[{\cite{FresseWillwacher2019}}]
  The category $\Lambda\Cobimod_{\calP,\calQ}$ is equipped with a model structure such that:
  \begin{itemize}
    \item the weak equivalences are morphisms that form a quasi-isomorphism in each arity,
    \item the cofibrations are morphisms that form a cofibration in the undercategory $Com^{c}\downarrow \Sigma\Bimod_{\calP,\calQ}$.
    \item the fibrations are characterized by the right lifting property with respect to the class of acyclic cofibrations.
  \end{itemize}
  The model category structure on $\dg\Lambda \Seq_{>0}^{c}$ is defined similarly.
  Both model structures are called the Reedy model structures and makes the adjunction \eqref{adjLambdaB} into a Quillen adjunction.
  An object is fibrant if and only if its underlying $\Sigma$-cobimodule is fibrant.
\end{thm}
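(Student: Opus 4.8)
The plan is to obtain the asserted model structure in two stages: first equip the underlying category $\dg\Lambda\Seq_{>0}^{c}$ of $\Lambda$-cosequences with a generalized Reedy model structure, and then left-induce it along the cofree adjunction~\eqref{adjLambdaB} onto $\Lambda\Cobimod_{\calP,\calQ}$. For the first stage I would use that $\Lambda$ is a generalized Reedy category (as developed by Fresse~\cite{Fre}), so that the functor category $\Ch^{\Lambda}=\dg\Lambda\Seq_{>0}^{c}$ inherits a Reedy model structure from the model structure on $\Ch$ in which the cofibrations are the degreewise injections. For the second stage I would exploit that $\mathcal{U}$ in~\eqref{adjLambdaB} is the \emph{left} adjoint---it forgets the comodule operations~\eqref{OpCobi} to the underlying cosequence---so that one is in the coalgebra situation: weak equivalences and cofibrations of $\Lambda$-cobimodules are created by $\mathcal{U}$, and fibrations are defined by the right lifting property against acyclic cofibrations.

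First I would record the generalized Reedy structure on $\Lambda$. The degree is the arity; the wide subcategory $\Lambda^{+}$ consists of all injections (which strictly raise arity, up to the invertible symmetric-group automorphisms), while $\Lambda^{-}$ consists only of isomorphisms. Since $\Lambda^{-}$ contains no arity-lowering non-invertible map, the matching construction is trivial---there is nothing to match against beyond the automorphisms---so the Reedy fibration condition degenerates to an arity-wise $\Sigma_{n}$-equivariant fibration. Over a field of characteristic zero this is exactly the projective fibration condition on the underlying $\Sigma$-cobimodule, which gives the claimed characterization that an object is fibrant if and only if its underlying $\Sigma$-cobimodule is fibrant. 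Dually, the latching object $L_{n}M=\operatorname*{colim}_{[k]\hookrightarrow[n],\,k<n}M(k)$ is genuinely non-trivial, and the Reedy cofibration condition---that each relative latching map $L_{n}M\to M(n)$ be a $\Sigma_{n}$-equivariant cofibration---unwinds, once one packages the coaugmentations $\epsilon\colon\K\to M(n)$ together with the latching data, into the condition of being a cofibration in the undercategory $Com^{c}\downarrow\Sigma\Cobimod_{\calP,\calQ}$ under the terminal cobimodule $Com^{c}$. Weak equivalences are objectwise quasi-isomorphisms by construction.

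With these identifications, the model category axioms largely reduce to formal checks. Two-out-of-three and the retract axiom are immediate, since weak equivalences and cofibrations are detected on the underlying diagram in $\Ch$. That $(\mathcal{U},\mathcal{F}^{c}_{B})$ is a Quillen adjunction is then also formal: by construction $\mathcal{U}$ creates, and hence preserves, cofibrations and acyclic cofibrations, so the left adjoint $\mathcal{U}$ is left Quillen. The statement for $\dg\Lambda\Seq_{>0}^{c}$ is the special case in which the comodule operations~\eqref{OpCobi} are discarded, and is proved by the identical argument applied to $\Ch^{\Lambda}$ directly.

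The main obstacle is the existence of the left-induced factorizations and lifting properties, which---unlike right-induced (transferred) model structures---cannot be taken for granted. I would establish them not by an abstract presentability argument but by an explicit induction on the arity, using the cofree cobimodule functor $\mathcal{F}^{c}_{B}$ to build the required factorizations one arity at a time; the conilpotent, arity-bounded filtration guarantees that the induction is well-founded, and it is anchored at arity one, where the latching object $L_{1}M$ is the empty colimit and so imposes no condition. The genuinely delicate point inside this induction is compatibility: one must verify that the Reedy latching objects carry a natural cobimodule structure and that the comodule operations $\gamma_{L}^{c},\gamma_{R}^{c}$ are compatible with the colimits indexing them, so that the factorizations produced by $\mathcal{F}^{c}_{B}$ actually land in $\Lambda\Cobimod_{\calP,\calQ}$ rather than merely in $\dg\Lambda\Seq_{>0}^{c}$. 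I would handle this by describing $\mathcal{F}^{c}_{B}$ explicitly in terms of the tree combinatorics developed later in the paper, which makes the cobimodule structure on the latching objects manifest. The role of the characteristic-zero hypothesis is precisely to make every $\Sigma_{n}$-representation projective, so that the equivariant latching and matching conditions reduce to their non-equivariant counterparts and no obstruction arises from the symmetric-group isotropy.
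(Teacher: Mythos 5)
First, a point of reference: the paper does not prove this statement at all --- it is quoted verbatim from Fresse--Willwacher as background, so there is no internal proof to compare against. Your outline does follow the strategy of the cited literature (generalized Reedy structure on $\Lambda$-diagrams, left-induction of weak equivalences and cofibrations along the forgetful functor $\mathcal{U}$, explicit factorizations built from the cofree functor $\mathcal{F}^{c}_{B}$), and the identification of ``cofibration in the undercategory $Com^{c}\downarrow\Sigma\Cobimod_{\calP,\calQ}$'' with a relative latching condition packaged with the coaugmentations is the right way to read the statement. But as a proof the proposal has two genuine gaps.

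The first is the clause ``an object is fibrant if and only if its underlying $\Sigma$-cobimodule is fibrant.'' You deduce it from the observation that $\Lambda^{-}$ consists of isomorphisms, so the matching construction on $\dg\Lambda\Seq_{>0}^{c}$ is trivial. That observation lives in the wrong category: in $\Lambda\Cobimod_{\calP,\calQ}$ fibrations are \emph{defined} by the right lifting property against acyclic cofibrations, and there is no a priori matching-object description of them. The acyclic cofibrations one must lift against in $\Lambda\Cobimod_{\calP,\calQ}$ (quasi-isomorphisms that are cofibrations under $Com^{c}$, i.e.\ satisfy the latching condition) and those in $\Sigma\Cobimod_{\calP,\calQ}$ (quasi-isomorphisms that are arity-wise injections) are different classes in different categories, so neither implication of the ``if and only if'' is formal: one direction requires extending a lift of underlying $\Sigma$-objects compatibly with the $u_{*}$ and $\epsilon$ maps, the other requires promoting an arbitrary $\Sigma$-acyclic-cofibration test to a $\Lambda$-one. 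In Fresse's framework this is a standalone proposition, proved via the characterization of fibrant objects as retracts of quasi-cofree objects (compare the paper's remark that any quasi-cofree cooperad with differential induced by a map vanishing on cogenerators is fibrant); your sketch supplies no substitute for that argument.

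The second gap is the factorization axiom, which you correctly identify as the crux but then only gesture at. For a left-induced structure the small object argument is unavailable, and the entire content of the theorem is the explicit construction of (acyclic cofibration, fibration) factorizations --- in Fresse's treatment this is done by an arity-wise induction in which one attaches relative cofree extensions $\mathcal{F}^{c}_{B}(-)$ and verifies at each stage that the latching conditions under $Com^{c}$ are preserved and that the result still carries the comodule coactions $\gamma_{L}^{c},\gamma_{R}^{c}$. Asserting that ``the conilpotent, arity-bounded filtration guarantees that the induction is well-founded'' names the inductive variable but not the inductive step; in particular you never say how the fibration half of the factorization is produced when fibrations are only characterized by a lifting property. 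Until that step is written out, the proposal is a correct plan rather than a proof.
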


\paragraph{The model category of Hopf $\Lambda$-cobimodules}

Let $\calP$ and $\calQ$ be Hopf 1-reduced $\Lambda$-operads. A Hopf $\Lambda$-bimodule over the operads $\calP$ and $\calQ$ is the data of a $\Lambda$-sequence in $\dg\Hopf\Lambda \Seq_{>0}^{c}$ together with left and right comodule operations \eqref{OpCobi} compatible with the $\Lambda$-costructure and the Hopf structure. The category of Hopf $\Lambda$-cobimodules over $\calP$ and $\calQ$, denoted by $\Hopf\Lambda\Cobimod_{\calP,\calQ}$, is endowed with an adjunction
\begin{equation}\label{adjHopfB}
  \mathcal{U}:\Hopf\Lambda\Cobimod_{\calP,\calQ}  \leftrightarrows  \dg\Hopf\Lambda \Seq_{>0}^{c}:\mathcal{F}^{c}_{B}
\end{equation}
where $\mathcal{F}^{c}_{B}$ is the right adjoint of the forgetful functor.

\begin{thm}[{\cite{FresseWillwacher2019}}]
  The category $\Hopf\Lambda\Cobimod_{\calP,\calQ}$ is equipped with a model structure such that:
  \begin{itemize}
    \item the weak equivalences are morphisms that form a quasi-isomorphism in each arity,
    \item the fibrations are morphisms  that form  fibrations in $\Lambda\Cobimod_{\calP,\calQ}$ when we forget about the Hopf structure,
    \item the cofibrations are characterized by the left lifting property with respect to the class of acyclic fibrations.
  \end{itemize}
  The model structure on $\dg\Hopf\Lambda \Seq_{>0}^{c}$ is defined similarly.
  Both model structures are called Reedy--Hopf model category structures and make \eqref{adjHopfB} into a Quillen adjunction.
  An object is fibrant if and only if its underlying non-Hopf object is fibrant.
\end{thm}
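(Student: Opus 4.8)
The plan is to realise $\Hopf\Lambda\Cobimod_{\calP,\calQ}$ as the category of \emph{commutative monoids} internal to $\Lambda\Cobimod_{\calP,\calQ}$ and to obtain the model structure by right transfer along the functor $V\colon \Hopf\Lambda\Cobimod_{\calP,\calQ}\to\Lambda\Cobimod_{\calP,\calQ}$ that forgets the arity-wise commutative-algebra (Hopf) structure. The point is that $\Lambda\Cobimod_{\calP,\calQ}$ is symmetric monoidal for the arity-wise product $(M\otimes N)(n)=M(n)\otimes N(n)$: the comodule structure maps \eqref{OpCobi} on $M\otimes N$ are assembled from those of $M$ and $N$ and then the two resulting copies of each $\calP(k)$ (resp.\ $\calQ(k)$) are merged using the commutative products $\calP(k)\otimes\calP(k)\to\calP(k)$ and $\calQ(k)\otimes\calQ(k)\to\calQ(k)$ supplied by the Hopf structure of $\calP$ and $\calQ$; the monoidal unit is the cosequence $\unit$ with $\unit(n)=\K$. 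Under this identification a monoid for $\otimes$ is exactly a Hopf $\Lambda$-cobimodule, $V$ is the forgetful functor from monoids, and its left adjoint is the free commutative monoid functor $S(N)=\bigoplus_{m\ge 0}(N^{\otimes m})_{\Sigma_{m}}$, which produces a cobimodule automatically since it is built from the monoidal product.

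To run the transfer I would first record the formal hypotheses: $\Hopf\Lambda\Cobimod_{\calP,\calQ}$ is complete (limits created by $V$) and cocomplete (colimits built from reflexive coequalizers and the free functor $S$, which is well behaved over a field of characteristic zero), and the Reedy model structure on $\Lambda\Cobimod_{\calP,\calQ}$ from the previous theorem is cofibrantly generated, with all objects small since everything is defined over $\K$. One then \emph{defines} the weak equivalences and fibrations on $\Hopf\Lambda\Cobimod_{\calP,\calQ}$ to be the morphisms sent by $V$ to weak equivalences and fibrations in $\Lambda\Cobimod_{\calP,\calQ}$ — that is, the arity-wise quasi-isomorphisms and the fibrations of the statement — and takes the cofibrations to be the maps with the left lifting property against acyclic fibrations. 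The remaining, and main, point is the acyclicity condition of the transfer: that $V$ carries relative $S(J)$-cell complexes (pushouts of free maps on the generating acyclic cofibrations $J$) to weak equivalences.

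I expect this acyclicity to be the hard part, and I would establish it by a path-object argument built from the commutative cointerval $\K[t,dt]$ introduced above. Because $\K[t,dt]$ is a CDGA, tensoring arity-wise defines an endofunctor of $\Hopf\Lambda\Cobimod_{\calP,\calQ}$: the coproduct $m^{*}$ distributes the single factor $\K[t,dt]$ across the $k$ tensor slots appearing in the comodule structure maps, while the counit $\ev_{t=0}$ and the coabsorbing element $\ev_{t=1}$ — via the diagram recorded above — guarantee that these structure maps remain compatible, so that $M\otimes\K[t,dt]$ is again a Hopf $\Lambda$-cobimodule. The two evaluations then factor the diagonal as $M\to M\otimes\K[t,dt]\xrightarrow{(\ev_{t=0},\ev_{t=1})}M\times M$, where the first map is a quasi-isomorphism (the unit $\K\to\K[t,dt]$ being one) and the second becomes a fibration after applying $V$. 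This yields functorial path objects, from which the acyclicity condition, and hence the existence of the transferred model structure, follows by the path-object criterion for transfer. It is precisely here that characteristic zero is indispensable: no such commutative cointerval exists in positive characteristic, which is the reason (noted in the introductory Remark) that the analogue of this theorem is unavailable for cobimodules in spectra.

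Finally, the adjunction \eqref{adjHopfB} is Quillen because the cofree Hopf cobimodule functor $\mathcal{F}^{c}_{B}$ is compatible with forgetting the Hopf structure: composing with $V$ reduces it to the cofree functor of the non-Hopf adjunction \eqref{adjLambdaB}, which is right Quillen by the earlier theorem, so $\mathcal{F}^{c}_{B}$ preserves fibrations and acyclic fibrations since both classes are created by $V$. The fibrancy criterion is then immediate from the transferred fibrations: $M$ is fibrant iff $M\to *$ is a fibration iff $V(M)\to V(*)=*$ is a fibration in $\Lambda\Cobimod_{\calP,\calQ}$, i.e.\ iff the underlying non-Hopf cobimodule $V(M)$ is fibrant, using that the right adjoint $V$ preserves the terminal object. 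The model structure on $\dg\Hopf\Lambda\Seq^{c}_{>0}$ is obtained by the same argument, forgetting the comodule structure instead of working over $\calP$ and $\calQ$.
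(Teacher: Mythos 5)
First, a structural point: the paper offers no proof of this statement — it is recalled from Fresse--Willwacher (the cited reference) as background in Section~2 — so there is no internal argument to compare yours against. That said, your overall strategy is the standard one and matches how the cited source (and Fresse's book, in the Hopf cooperad case) proceeds: identify Hopf $\Lambda$-cobimodules with commutative monoids in $\Lambda\Cobimod_{\calP,\calQ}$ for the arity-wise tensor product (using the Hopf structure of $\calP$ and $\calQ$ to merge the duplicated cooperad factors in the costructure maps), right-transfer the Reedy structure along the forgetful functor $V$, and attack the acyclicity condition with path objects built from the cointerval $\K[t,dt]$, whose counit and coabsorbing element make the two evaluations into cobimodule maps. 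The deduction of the Quillen adjunction from the commutation $V\circ\calF^{c}_{B}=\calF^{c}_{B}\circ V'$ and the fibrancy criterion are also fine as stated.

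The genuine gap is in the path-object step, which you correctly flag as the crux but then dispatch by assertion. Fibrations in $\Lambda\Cobimod_{\calP,\calQ}$ (and in $\Sigma\Cobimod_{\calP,\calQ}$) are \emph{defined} by a right lifting property against acyclic cofibrations; they are not the arity-wise surjections. So the claim that $(\ev_{t=0},\ev_{t=1})\colon M\otimes\K[t,dt]\to M\times M$ ``becomes a fibration after applying $V$'' requires an actual argument: arity-wise surjectivity of $\K[t,dt]\to\K\times\K$ gives nothing on its face, and one must invoke the structure theory of fibrations in the cobimodule category (e.g.\ the role of quasi-cofree objects recorded in the earlier theorems, together with an explicit lifting construction). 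Moreover, Quillen's path-object criterion for transfer also requires a fibrant replacement functor on the category of Hopf $\Lambda$-cobimodules — not every object is fibrant, as the statement itself records — and you do not supply one; producing it before the model structure exists is part of the work. These two points are where the real content of the Fresse--Willwacher proof lives; the rest of your outline is a correct reduction to them.
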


\section{Inventory of categories of trees}
\label{sec:invent-categ-trees}

In this section, we define the various categories of trees that we will need in the rest in the paper.
As a convention, we will use the letter $\Tree$ for general planar trees and $\lTree$ for leveled trees, and we will prefix categories with $s$ to indicate the choice of a section.

We first introduce the category of planar $n$-trees $\Tree[n]$ as well as the category of leveled $n$-trees $\lTree[n]$.
We show that the family $\Tree=\{\Tree[n], n\geq 2\}$ gives rise to an operad while $\lTree=\{\lTree[n], n\geq 2\}$ is endowed with a kind of operadic structure satisfying the operadic axioms up to some permutation conditions.
We then introduce a category $\sTree$ of planar trees with sections and we show that it defines an operadic bimodule over $\Tree$.
In the same way, we define the family $\slTree=\{\slTree[n], n\geq 2\}$ equipped with operations which look like a bimodule structure over $\lTree$.
These operations will play an important role in the next sections in order to define (co)operadic and (co)bimodule structures on Boardman--Vogt resolutions and alternative versions of bar/cobar constructions.

\begin{rmk}\label{rmk:planar}
  In what follows, we work exclusively with planar trees, i.e.\ the incoming edges of a vertex are ordered.
  However, note that the (co)operads and (co)bimodules are symmetric.
  We will thus need to consider the action of the symmetric group on incoming edges in order for our definitions to make sense in subsequent sections.
\end{rmk}

\subsection{The categories of planar trees and leveled trees}

In the following, we introduce the two categories of planar trees $\Tree_{\iso}[n]$ and $\Tree[n]$ having the same set of objects and which differ in their morphisms. Contrary to $\Tree_{\iso}[n]$, the category $\Tree[n]$ includes morphisms contracting consecutive vertices. The latter one is used to build resolutions while $\Tree_{\iso}[n]$ is often used to construct free objects. Thereafter, we define the categories of leveled trees $\lTree_{\iso}[n]$ and $\lTree[n]$ together with some kind of operadic compositions. The last paragraph is devoted to the comparison between the categories of trees and their leveled versions.

\subsubsection{The category of planar trees}
\label{sec:categ-plan-trees}

Let us first give an informal definition of planar trees. A planar tree $T$ is a finite planar acyclic graph with one output edge at the bottom and input edges, called leaves, at the top. The output and input edges are considered to be half-open, i.e.\ connected only to one vertex in the body of the graph. The vertex connected to the output edge, called the root of $T$, is denoted by $r$. Each edge in the tree is oriented from top to bottom. For an integer $n\geq 2$, a planar $n$-tree is a planar tree with leaves labeled by the set $[n]=\{1,\ldots,n\}$. Formally, we will define planar $n$-trees as follows:

\begin{defi}\label{def:planar tree}
  For any integer $n\geq 2$, a \emph{planar $n$-tree} $T$ is the data of a set $V(T)$, a total order on $V(T) \cup [n]$ (that does not necessarily restrict to the natural order on $[n]$), a non-decreasing map $t:V(T)\cup [n]\rightarrow V(T)$, called the \textit{target map}, and a marked element $r \in V(T)$, satisfying the two conditions:
  \begin{itemize}
    \item $t(r) = r$;
    \item $\forall v \in V(T) \cup [n], \exists k \ge 0 \text{ s.t. } t^{k}(v) = r$.
  \end{itemize}
\end{defi}

In the definition above, the sets $[n]$ and $V(T)$ represent the leaves and the vertices of the tree $T$, respectively.
The total order encodes the planarity (if $v \le v'$ then $v$ is on the left or in a lower level than the one of $v'$) and moreover gives an indexing of the leaves by $[n]$.
The element $r$ is the root of the tree, and given $v \in V(T) \cup [n]$, the vertex $t(v)$ is the target of the only outgoing edge leaving $v$ (except for the root which simply satisfies $t(r) = r$).
An example of planar $n$-tree is represented in Figure~\ref{fig:example-planar-tree}.
Furthermore, each planar tree $T$ is equipped with a level map $\lev : V(T) \to \mathbb{N}$ satisfying $\lev(r) =0$ and $\lev(t(v)) = \lev(v) - 1$ for all $v \in V(T) \setminus \{ r\}$.
The height of the tree $T$, denoted by $h(T)$ is the highest level of the vertices of the tree: $h(T)=max\{\lev(v), v\in V(T)\}$.

\begin{notat}
  Let $T$ be a planar $n$-tree for some integer $n\geq 2$.
  \begin{itemize}
    \item The set of \emph{edges} of $T$ is $E(T) \coloneqq \{ (v, t(v)) \mid v \in [n]\cup V(T)\setminus \{r\} \}$.
          The set of \emph{inner edges} $E^{in}(T) \coloneqq \{ (v, v') \in E(T) \mid v \in V(T)\setminus \{r\} \}$ is formed by the edges connecting two vertices.
          Each edge or vertex is joined to the root by a unique path composed of edges.
    \item The set of \emph{incoming edges} of a vertex $v \in V(T)$ is given by $in(v) \coloneqq \{ (w,w')\in E(T) \mid w' = v \}$.
          It inherits a total order from the total order on $V(T) \cup S$ (pictorially, from left to right).
    \item The number of incoming edges at a vertex $v \in V(T)$, denoted by $|v| \coloneqq \# in(v)$, is called the \emph{arity} of $v$.
          The total number of adjacent edges at $v$ will be called the \emph{valence} of $v$ and is equal to the arity plus one.
  \end{itemize}
\end{notat}

\begin{figure}[htbp]
  \centering
  \includegraphics[scale=0.55]{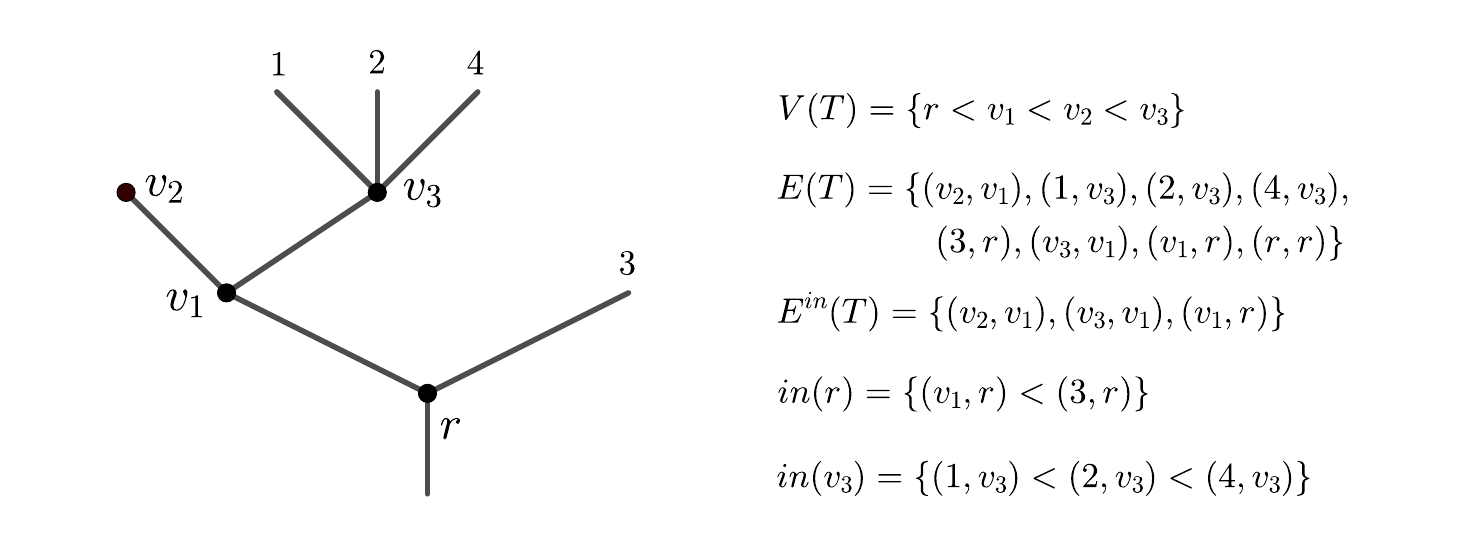}
  \caption{Example of a planar $4$-tree.}
  \label{fig:example-planar-tree}
\end{figure}

\begin{defi}[The categories $\Tree$ and $\Tree_{\iso}$]\label{def:trees}
  An \emph{isomorphism} of planar $n$-trees is a bijection between vertices preserving the root as well as the total order, and commuting with the target map. We also consider morphisms contracting consecutive vertices. There is a \textit{contracting morphism} from a planar tree $T$ to another tree $T'$ if there is a subset of inner edges $E'\subset E^{in}(T)$ such that the tree $T'$ is obtained from $T$ by removing the edges corresponding to $E'$ and by identifying the consecutive vertices $v$ and $v'$ for any $(v,v')\in E'$.

  The category $\Tree_{\iso}[n]$ consists of planar $n$-trees and isomorphisms between them while $\Tree[n]$ is the category with the same set of objects and whose  morphisms are composed of isomorphisms and contracting morphisms. We also consider the subcategories $\Tree^{\geq 2}[n]$ and $\Tree^{\geq 2}_{\iso}[n]$, of planar $n$-trees whose vertices have at least $2$ antecedent (i.e. $|t^{-1}(v)|\geq 2$ for any $v\in V(T)$).
\end{defi}

\begin{rmk}\label{rem:tree isomorphisms}
  Note that planar trees do not admit nontrivial automorphisms, but some isomorphisms change the order of leaves, as we can see in the following example:
  \begin{center}
    \includegraphics[scale=0.4]{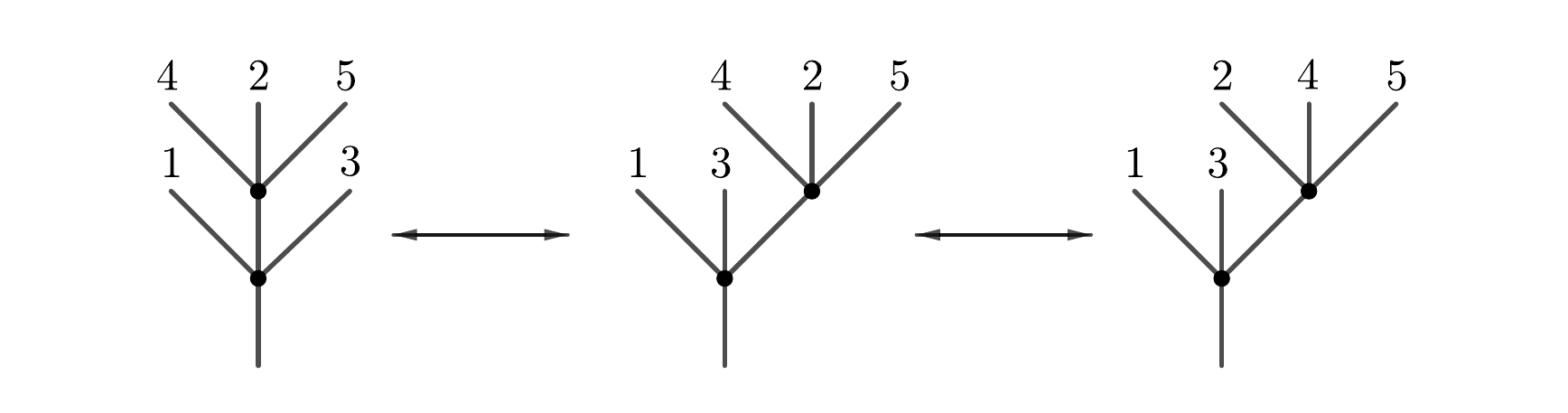}
  \end{center}
\end{rmk}

\subsubsection{The category of leveled trees}\label{SectTree}

Like in the previous section, we first give an informal definition of the category of leveled trees $\lTree[n]$. A leveled $n$-tree is a planar $n$-tree $T$ without univalent vertices for which the level map $\lev : V(T) \to \mathbb{N}$ satisfies the additional condition: $\lev(v) = h(T)$ if the vertex $v$ is of the form $v = t(s)$ for some leaf $s \in [n]$. Furthermore, we assume that each level has at least one vertex of valence $\geq 2$. From now, we give a formal definition of the categories $\lTree[n]$ and $\lTree_{\iso}[n]$ having the same sets of objects.

\begin{defi}
  A \emph{leveled $n$-tree}, with $n > 0$, is the data of a permutation $\sigma\in \Sigma_{n}$, and a sequence of ordered sets together with increasing surjections
  \begin{equation}\label{nondecsurj}
    [n] \xtwoheadrightarrow{t_{h(T)}} V_{h(T)}(T) \xtwoheadrightarrow{t_{h(T)-1}} \cdots \xtwoheadrightarrow{t_{0}} V_{0}(T) = \{r\}
  \end{equation}
  such that $n> |V_{h(T)}(T)|$ and $|V_{i+1}(T)|>|V_{i}(T)|$. If there is no ambiguity about the permutation $\sigma$ and the sequence of non-decreasing surjections, then we will just denote by $T$ a leveled $n$-tree.
  The integer $h(T)$, also denoted by $h$ if there is no ambiguity, is the \textit{height} of the tree $T$. We also make the following definitions:
  \begin{itemize}
    \item The \emph{vertices} of $T$ are given by the set $V(T) \coloneqq \bigsqcup_{i=0}^{h} V_{i}(T)$.

    \item The set of \emph{inner edges} $E^{in}(T)$ and the set of \emph{edges} $E(T)$ are given by
          $$
            E^{in}(T) \coloneqq \big\{\,\, (v, t_{i}(v)) \mid v \in V_{i+1}(T)\,\,\big\}
            \hspace{15pt}\text{and} \hspace{15pt}
            E(T) \coloneqq E^{in}(T) \cup \big\{ (i, t_{h}(i)) \mid i \in [n]\big\}\cup \{(r,r)\}.
          $$

    \item For a vertex $v \in V_{i}(T)$, its \emph{incoming edges} are $in(v) \coloneqq \{ (w,v) \mid t_{i}(w) = v \}$. The set $in(v)$ inherits a total order from $V_{i+1}(T)$. The \emph{arity} $|v|$ is the cardinality of $in(v)$. Note that $|v| \ge 1$ for all $v$, because we require all the $t_{i}$ to be surjections.
  \end{itemize}

  \begin{figure}[htbp]
    \centering
    \includegraphics[scale=0.5]{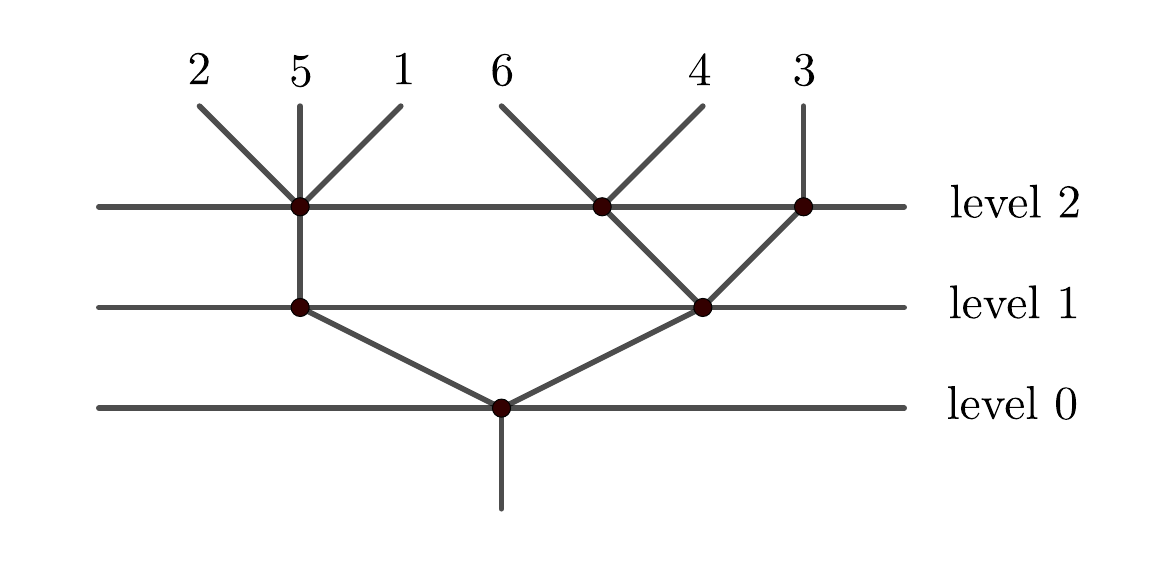}\vspace{-20pt}
    \caption{A leveled $6$-tree.}\label{leveledtree}
  \end{figure}
\end{defi}

\begin{defi}[{The categories of leveled trees $\lTree[n]$ and $\lTree_{\iso}[n]$}]\label{def:ltree}
  In the following we introduce three kinds of elementary morphisms between leveled trees. The categories $\lTree[n]$ and $\lTree_{\iso}[n]$ of leveled $n$-trees have the same set of objects.
  Morphisms in $\lTree[n]$ consist of
  \begin{enumerate*}[label={(\arabic*)}]
    \item isomorphisms,
    \item contractions of consecutive levels, and
    \item permutations of permutable levels.
  \end{enumerate*}
  Morphisms in $\lTree_{\iso}[n]$ only consist of
  \begin{enumerate*}[label={(\arabic*)}]
    \item isomorphisms,
    \item contractions of consecutive \emph{permutable} levels, and
    \item permutations of permutable levels.
  \end{enumerate*}

  \begin{enumerate}
    \item The first kind of elementary morphisms are isomorphisms of planar trees preserving the levels.
    \item The second ones consist in contracting consecutive levels.
          Let $T \in \lTree[n]$ and $N \subset \{ 1, \dots, h(T) \}$ be an interval. We define the tree $T/N$ by forgetting the level $V_{i}(T)$, for $i \in N$, and by composing the corresponding decorations $t_{i}$. We denote the contraction morphism by $\delta_{N}:T \rightarrow T/N$.
          \begin{figure}[htbp]
            \centering
            \hspace{20pt}\includegraphics[scale=0.28]{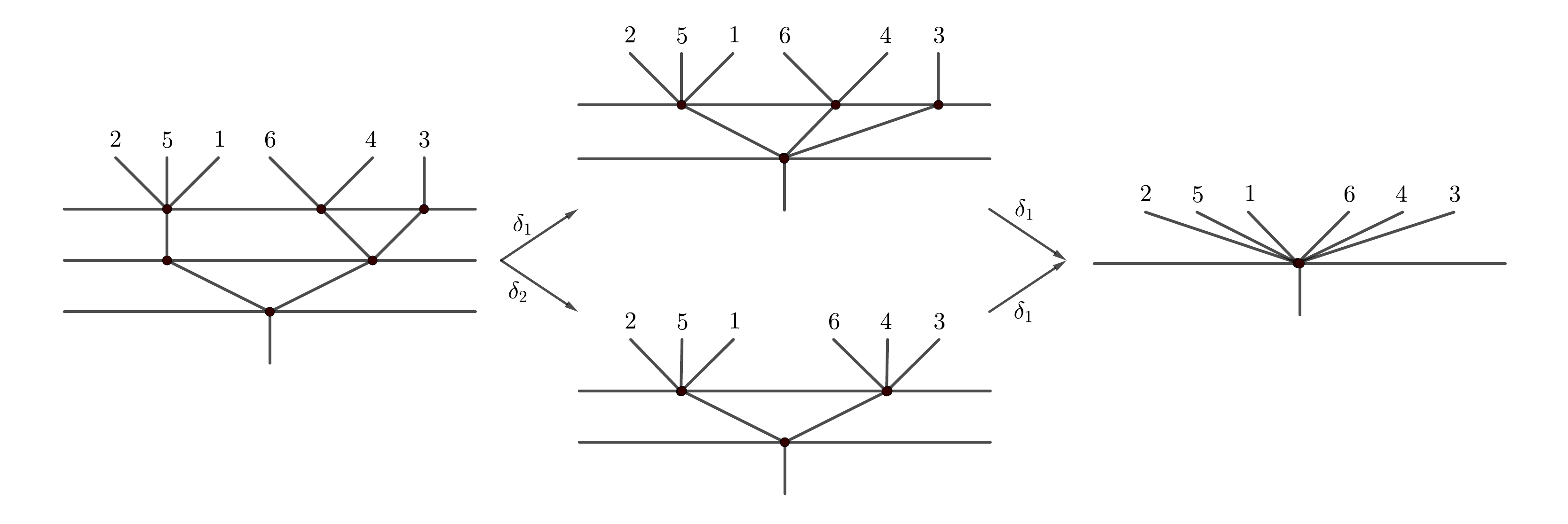}\vspace{-17pt}
            \caption{Contraction morphisms in the category $\lTree[6]$.}
            \label{ContTree}
          \end{figure}
    \item The third ones consist in permuting two consecutive levels. Given a tree $T$, we say that two consecutive levels $i, i+1$ are \emph{permutable} if all the edges between the two have either a bivalent source or a bivalent target. In that case, we denote by $\sigma_{i}(T)$ the tree obtained as follows.  We ``move up'' (see below) all its vertices on level $i$ which have valence $\ge 3$, and we ``move down'' all its vertices on level $i+1$ which have valence $\ge 3$.  See Figure~\ref{PermTree} for an illustration.
          \begin{itemize}
            \item Suppose that $v$ is on level $i$ and that all its children are bivalent. Then we ``move up'' $v$ to the level $i+1$ by collapsing all its children to a single child $v'$. More precisely, suppose we are given a tree $T$ such as in Equation~\eqref{nondecsurj} (with the same notation). Let $v \in V_{i}(T)$ be a vertex such that all its children $c \in t_{i}^{-1}(v)$ satisfy $|c| = 1$. Then we define $\sigma_{v}(T)$ to be the following tree:
                  \[ [n] \xtwoheadrightarrow{t_{h(T)}}  \cdots \xtwoheadrightarrow{t_{i+2}} V_{i+2}(T)
                    \xtwoheadrightarrow{\tilde{t}_{i+1}} \tilde{V}_{i+1}(T) \xtwoheadrightarrow{\tilde{t}_{i}} V_{i}(T) \xtwoheadrightarrow{t_{i-1}} V_{i-1}(T) \xtwoheadrightarrow{t_{i-2}} \cdots \xtwoheadrightarrow{t_{0}} V_{0}(T) \]
                  where $\tilde{V}_{i+1}(T) = V_{i+1}(T) / (t_{i}^{-1}(v))$, i.e.\ we identify all the children of $v$ to a single vertex. We define $\tilde{t}_{i+1}$ and $\tilde{t}_{i}$ to be the induced maps on the quotient.

            \item The reverse operation is moving down a vertex. If $v$ is a vertex of level $i+1$ is the only child of its parent, then we can move $v$ down to level $i$. We replace $v$ by several new vertices, one for each incoming edge at $v$. All of these new vertices have the same parent as $v$.
          \end{itemize}
  \end{enumerate}
\end{defi}

  \begin{figure}[!h]
    \hspace{-25pt}\includegraphics[scale=0.33]{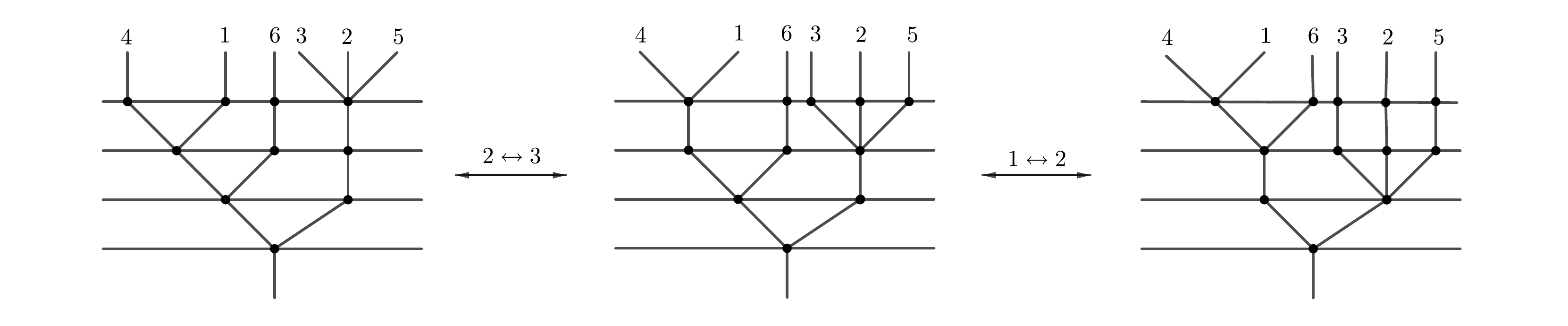}\vspace{-15pt}
    \caption{Permutations in the category $\lTree[6]$.}
    \label{PermTree}
  \end{figure}

\subsubsection{On ``operadic'' structures for (possibly leveled) trees}\label{sec operadic quote}

Let us introduce the operations needed to define the (co)operadic structures on Boardman--Vogt resolutions of (co)operads.
They will also be used to define the (co)operadic structure on an alternative version of the (co)bar construction.
First, we recall the well-known operadic structure on the set of planar trees $\gamma : \Tree[k] \times \Tree[n_{1}] \times \cdots \times \Tree[n_{k}]\longrightarrow \Tree[n_{1}+\cdots+n_{k}]$.
These operations are defined as follows: for any family of planar trees $(T_{0}; T_1, \ldots, T_{k})\in \Tree[k] \times \Tree[n_{1}] \times \cdots \times \Tree[n_{k}]$, the operadic composition is obtained by grafting each tree $T_{i}$, with $1\leq i \leq k$, into the $i$-th leaf of the tree $T_{0}$.
This structure cannot be extended
 to leveled trees.
Nevertheless, we define structure maps on $\lTree=\{\lTree[n],\, n\geq 0\}$ similar to operadic composition maps:
\begin{equation*}
  \gamma : \lTree[k] \times \lTree[n_{1}] \times \cdots \times \lTree[n_{k}]\longrightarrow \lTree[n_{1}+\cdots+n_{k}]
\end{equation*}
in Equation~\eqref{formulatreecomp}.
Fix leveled trees
\begin{align*}
  T_{0}
   & = [k] \xtwoheadrightarrow{t^{0}_{h_{0}}} V_{h_{0}}(T_{0}) \xtwoheadrightarrow{t^{0}_{h_{0}-1}} \cdots \xtwoheadrightarrow{t^{0}_{0}} \{ r \},
   &
  T_{i}
   & = [n_{i}] \xtwoheadrightarrow{t^{i}_{h_{i}}} V_{h_{i}}(T_{i}) \xtwoheadrightarrow{t^{i}_{h_{i}-1}} \cdots \xtwoheadrightarrow{t^{i}_{0}} \{ r \}.
\end{align*}
We will illustrate our constructions with the example of leveled trees from Figure~\ref{famlevtree}.

\begin{figure}[htbp]
  \vspace{-10pt}\hspace{-15pt}\includegraphics[scale=0.4]{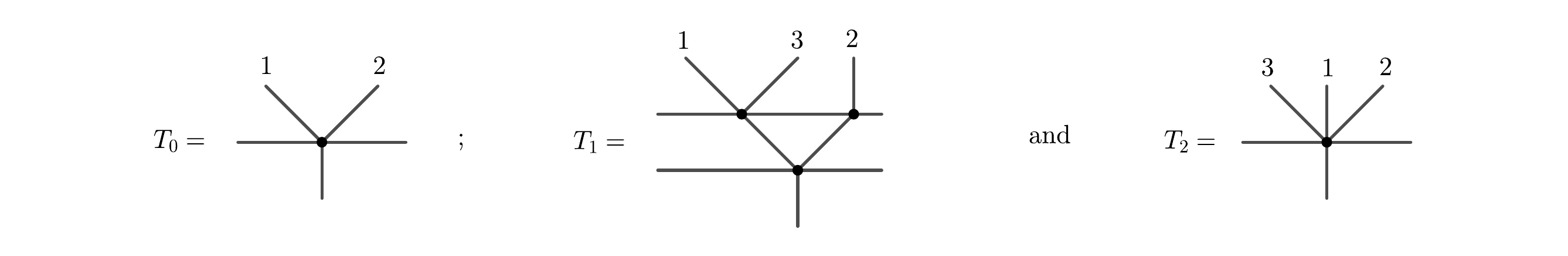}\vspace{-20pt}
  \caption{Leveled trees.}\label{famlevtree}
\end{figure}

To define $\gamma$, we first introduce partial compositions of leveled trees:
\[ \circ_{i} : \lTree[k] \times \lTree[n_{i}] \to \lTree[n_{i}+k-1]. \]
The leveled tree $T_{0}\circ_{i}T_{i}$ is defined by grafting the leveled tree $T_{i}$ into the $i$-th leaf of the leveled tree $T_{0}$ according to the permutation. We then complete the tree using bivalent vertices in order to get a leveled tree. Formally,  $T_{0} \circ_{i} T_{i}$ is given by the sequence of surjective maps
\[
  [n_{i}+k-1] \to  V_{h_{i}}(T_{i}) \sqcup \big( [k]\setminus \{i\} \big) \xrightarrow{t^{i}_{h_{i}-1}\sqcup \id} \dots \xrightarrow{t^{i}_{0} \sqcup \id} \{r\} \sqcup ( [k]\setminus \{i\} \cong [k] \xrightarrow{t^{0}_{h_{0}}} V_{h_{0}}(T_{0}) \xrightarrow{t^{0}_{h_{0}-1}} \dots \xrightarrow{t^{0}_{0}} \{ r \}.
\]
The order on $V_{j}(T_{i}) \sqcup \big( [k]\setminus \{i\} \big)$ is inherited from $V_{j}(T_{i})$ and $[k]$. Furthermore, for any $v\in V_{j}(T_{i})$ and $l \in [k]\setminus \{i\}$, one has $v \ge l$ iff we have $i \ge j$. Finally, the first map in the sequence of surjections is given by
$$
  [n_{i}+k-1]\longrightarrow  V_{h_{i}}(T_{i}) \sqcup \big( [k]\setminus \{i\} \big),
  \quad
  l \longmapsto
  \begin{cases}
    l                    & \text{if } j\leq l,             \\
    t_{h_{i}}^{i}(j-i+1) & \text{if } i\leq l\leq n_{i}+i, \\
    j-n_{i}              & \text{if } j\geq n{i}+i+1.
  \end{cases}
$$
Note that by construction, we have $h(T_{0}\circ_{i}T_{i})=h(T_{0})+h(T_{i})+1$.
See Figure~\ref{famlevtree2} for examples.

\begin{figure}[htbp]
  \hspace{-10pt}\includegraphics[scale=0.4]{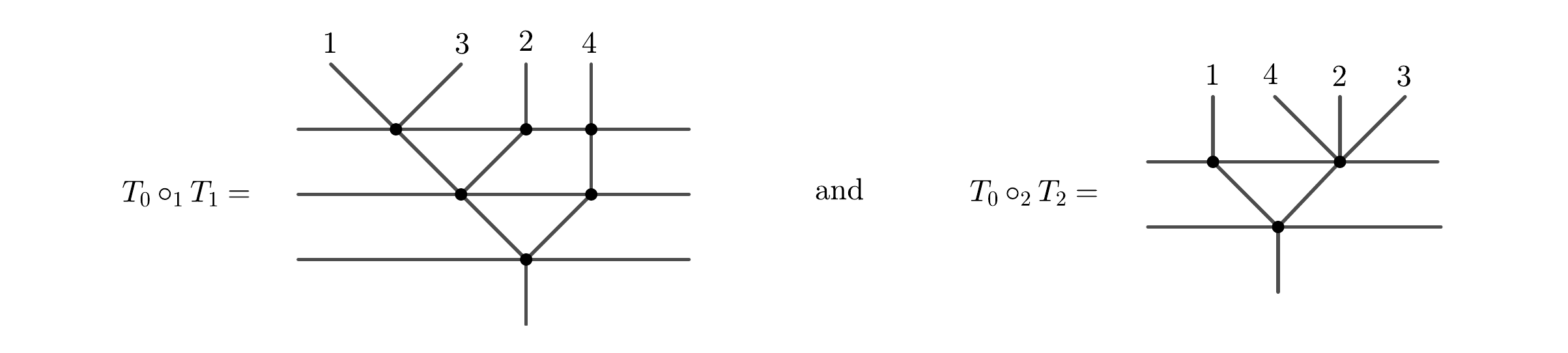}\vspace{-5pt}
  \caption{Partial compositions of the family represented in Figure~\ref{famlevtree}.}
  \label{famlevtree2}
\end{figure}\vspace{-2pt}

\begin{defi}
  The total composition is the leveled tree given by
  \begin{equation}\label{formulatreecomp}
    \gamma(T_{0}, \{T_{i}\}) \coloneqq \bigl(\cdots \bigl(\bigl(T_{0}\circ_{1}T_{1}\big) \circ_{n_{1}+1}T_{2}\bigr)\cdots \bigr) \circ_{n_{1}+\cdots + n_{k-1}+1}T_{k},
  \end{equation}
  which satisfies $h(\gamma(T_{0}, \{T_{i}\}))=h(T_{0})+h(T_{i_{1}})\cdots + h(T_{i_{n}})+n$.
  See Figure~\ref{famlevtree3} for an example.
\end{defi}

The operations $\gamma$ so obtained do not provide an operadic structure on the family $\lTree=\{\lTree[n]\}$.
Indeed, the associativity axiom is only satisfied up to permutation of levels.
Nevertheless, this structure will be enough to define a (co)operad at the level of (co)fibrant resolution or (co)bar construction.

\begin{figure}[htbp]
  \vspace{-10pt}\hspace{80pt}\includegraphics[scale=0.45]{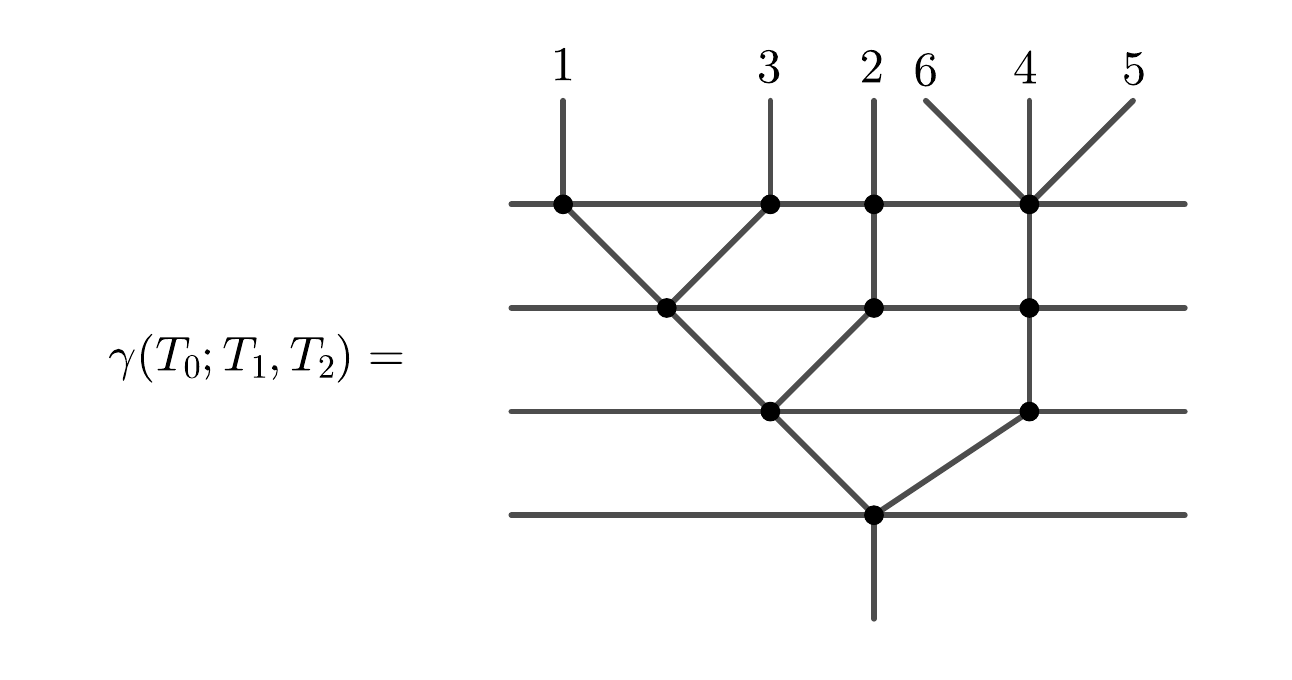}\vspace{-5pt}
  \caption{Total composition $\gamma(T_{0}; T_{1}, T_{2})$ of the family represented in Figure~\ref{famlevtree}}
  \label{famlevtree3}
\end{figure}

\begin{rmk}\label{rem:operadic categories}
  The operations so obtained have their origin in the theory of operadic categories.
  We refer the reader to \cite{BataninMarkl2015} for more details.
  In particular, the functor from the operadic category of leveled trees to the operad of planar trees is similar to our functor $\alpha$ connecting the usual and the levelled constructions.
\end{rmk}

\subsubsection{Comparison between planar trees and leveled trees}
\label{CompTree}

There are two functors $\alpha:\lTree[n] \rightarrow \Tree^{\geq 2}[n]$ and $\alpha_{\iso}:\lTree_{\iso}[n] \rightarrow \Tree_{\iso}^{\geq 2}[n]$ sending a leveled $n$-tree to the planar $n$-tree obtained by removing the bivalent vertices and taking the underlying level map.
In particular, contractions and permutations of permutable levels are sent to identity morphisms, so $\alpha$ and $\alpha_{\iso}$ are not faithful.
Moreover, for a given planar tree there are several ways of adding bivalent vertices to level it, so these functors are not injective on objects.

\begin{figure}[htbp]
  \begin{center}
    \includegraphics[scale=0.32]{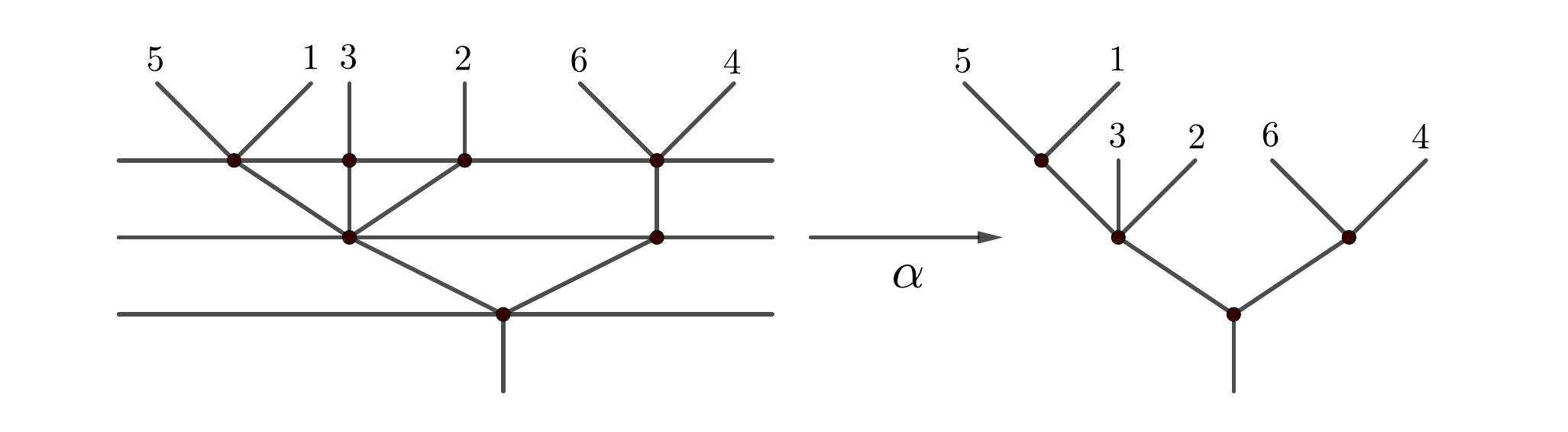}
    \includegraphics[scale=0.32]{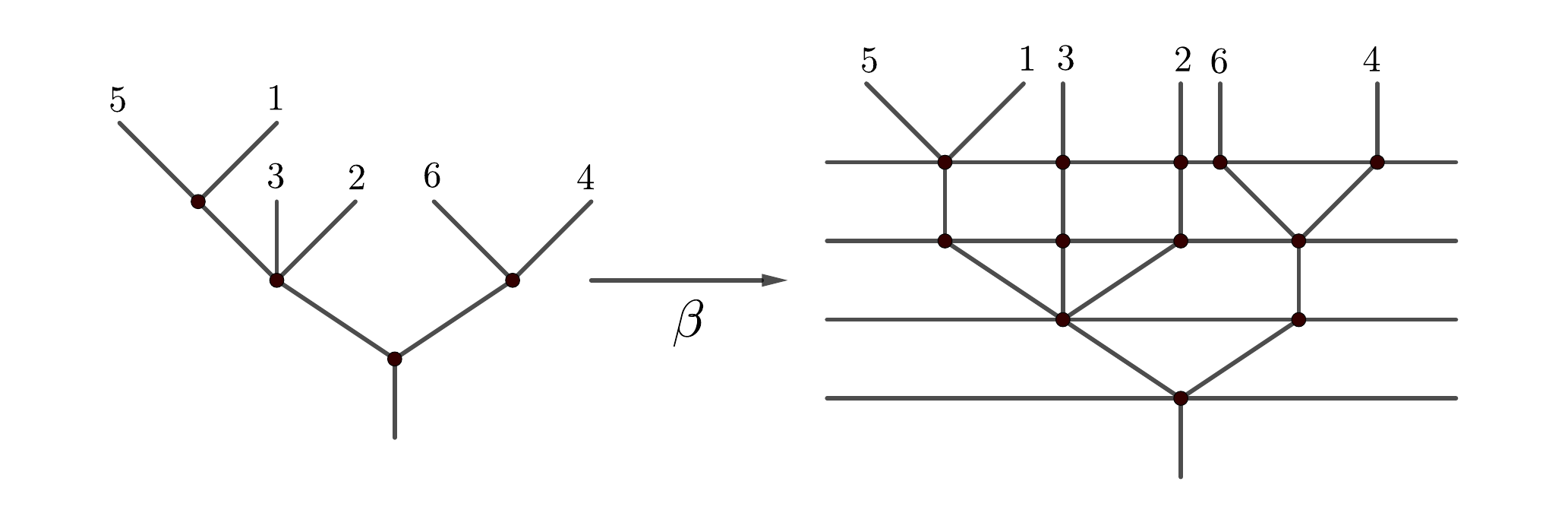}
    \caption{The functors $\alpha$ and $\beta$.}
    \label{MorphAlpha}
  \end{center}
\end{figure}

The functors $\alpha$ and $\alpha_{\iso}$ thus cannot be equivalences of categories.
Nevertheless, they are surjective on objects.
Indeed, for each rooted planar tree $T$, we fix $T_{l}$ to be the unique leveled tree for which  each level has exactly one non-bivalent vertex appearing from bottom to top according to the total order on the set of vertices $V(T)$.
We set $\beta:\Tree^{\geq 2}[n] \rightarrow \lTree[n]$ and $\beta_{\iso}:\Tree_{\iso}^{\geq 2}[n] \rightarrow \lTree_{\iso}[n]$ the two functors sending a planar $n$-tree $T$ to $T_{l}$.
These two functors are faithful and injective on the sets of objects.
However, they are not full, nor are they  surjective on objects or essentially surjective since, contrary to permutations of permutable levels, contractions of permutable levels are not isomorphisms.

The functors $\beta$ and $\beta_{\iso}$ so defined give rise to sections of the functors $\alpha$ and $\alpha_{\iso}$ in the sense that, for any planar $n$-tree $T$, one has $\alpha\circ\beta(T)=T$ and $\alpha_{\iso}\circ\beta_{\iso}(T)=T$. However, for any leveled tree $T$, $\beta\circ \alpha(T)$ and $\beta_{\iso}\circ \alpha_{\iso}(T)$ coincide with $T$ only up to contractions and permutations of permutable levels. Furthermore, all the functors considered are compatible with the operadic operations in the sense that the diagram
$$
  \begin{tikzcd}
    \Tree^{\geq 2}[k] \times \prod_{i=1}^k \Tree^{\geq 2}[n_{i}] \ar[r, "\gamma"] \ar[d, shift left, "\prod \beta"]
    & \Tree^{\geq 2}[n_{1}+\cdots +n_{k}] \ar[d, shift left, "\beta"]
    \\
    \lTree[k] \times \prod_{i=1}^k \lTree[n_{i}] \ar[r, "\gamma"] \ar[u, shift left, "\prod\alpha"]
    & \lTree[n_{1}+\cdots +n_{k}] \ar[u, shift left, "\alpha"]
  \end{tikzcd}
$$
commutes strictly when we restrict to $\alpha$ and it commutes up to contractions and permutations of permutable levels when we restrict to $\beta$. The same is true for the subcategories $\Tree_{\iso}$ and $\lTree_{\iso}$. We resume the above properties in the following theorem:

\begin{thm}\label{pro:now first theorem in intro}
  The functors $\alpha:\lTree[n] \rightarrow \Tree^{\geq 2}[n]$ and $\alpha_{\iso}:\lTree_{\iso}[n] \rightarrow \Tree_{\iso}^{\geq 2}[n]$, obtained removing the bivalent vertices, are full functors and surjective on the sets of objects. They admit right inverses $\beta:\Tree^{\geq 2}[n] \rightarrow \lTree[n]$ and $\beta_{\iso}:\Tree_{\iso}^{\geq 2}[n] \rightarrow \lTree_{\iso}[n]$, respectively, which are faithful and injective on the set of objects.
\end{thm}

\begin{rmk}\label{rmk:bivalent}
  Note that these two functors are well-defined because we use bivalent vertices as ``markers''.
  This is the main reason that we work with trees without levels consisting exclusively of bivalent vertices and why our constructions only work for $1$-reduced objects.
\end{rmk}

\subsection{The categories of planar and leveled trees with section}\label{SectTreeLev}

Similarly to the previous subsection, we introduce the two categories of planar trees with section $\sTree_{\iso}[n]$ and $\sTree[n]$ having the same set of objects and which differ in their morphisms.
Unlike $\sTree_{\iso}[n]$, the category $\sTree[n]$ includes morphisms contracting consecutive vertices.
The latter one is used to build resolutions while $\Tree_{\iso}[n]$ is often used to construct free bimodule objects.
Afterwards, we define the categories of leveled trees with section $\slTree_{\iso}[n]$ and $\slTree[n]$ together with some kind of bimodule structures over $\lTree_{\iso}[n]$ and $\lTree[n]$, respectively.
The last paragraph is devoted to the comparison between the categories of trees with section and their leveled versions.

\subsubsection{The category of planar trees with section}

A planar $n$-tree with section is a pair $(T,V_{\iota}(T))$ where $T$ is a planar $n$-tree and $V_{\iota}(T)$ is a subset of vertices, called pearls, satisfying the following condition: each path joining a leaf
to the root passes through a unique pearl. The pearls form a section cutting the tree into two parts. We usually denote by $V_{u}(T)$ and $V_{d}(T)$ the set of vertices above and below the section, respectively. Let us notice that the sets $V_{u}(T)$ and $V_{d}(T)$ inherit total orders from $V(T)$. We assume that vertices have at least $1$ incoming edge.

\begin{figure}[!h]
  \begin{center}
    \includegraphics[scale=0.33]{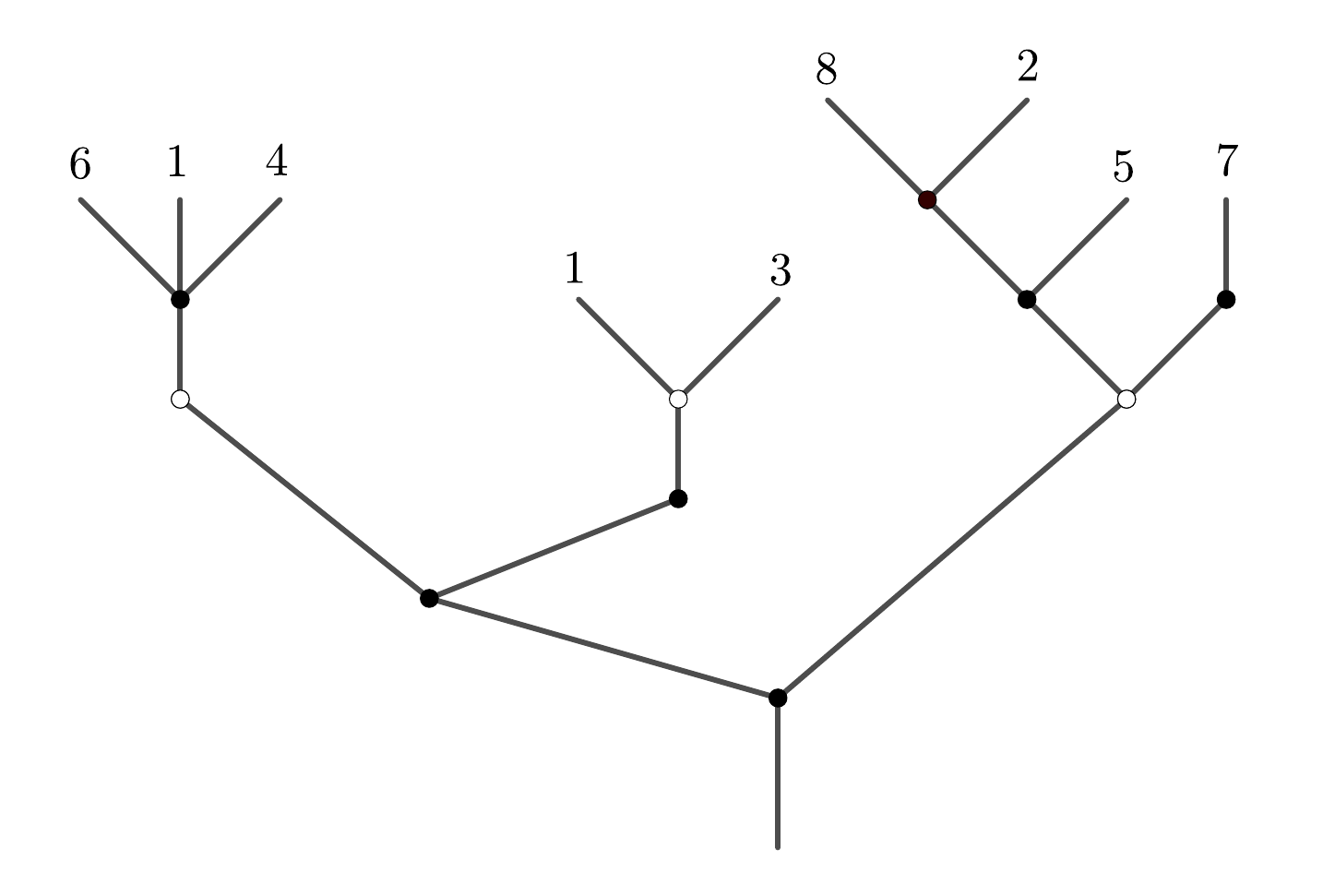}
    \caption{Illustration of a planar tree with section.}
  \end{center}
\end{figure}

\begin{defi}[The categories $\sTree$ and $\sTree_{\iso}$]\label{def:stree}
  An \emph{isomorphism} of planar $n$-trees with section is a bijection between vertices preserving the root, the pearls as well as the total order, and commuting with the target map. We also consider morphisms contracting consecutive vertices. There is an \textit{contracting morphism} from a planar tree $T$ to another tree $T'$ if there is a subset of inner edges $E'\subset E^{in}(T)$ such that the tree $T'$ is obtained from $T$ by removing the edges corresponding to $E'$ and by identifying the consecutive vertices $v$ and $v'$ for any $(v,v')\in E'$. Furthermore, we assume that the subset $E'$ satisfies the condition: if there is $(v,v')\in E'$ with $v\in V_{\iota}(T)$, then all the edges connecting $v'$ to a pearl are contained into $E'$.

  The category $\sTree_{\iso}[n]$ consists of planar $n$-trees with section and isomorphisms between them while $\sTree[n]$ is the category with the same set of objects and whose  morphisms are composed of isomorphisms and contracting morphisms. We also consider the subcategories $\sTree^{\geq 2}[n]$  and $\sTree^{\geq 2}_{\iso}[n]$, of planar $n$-trees whose vertices other than the pearls have at least $2$ antecedents (i.e. $|t^{-1}(v)|\geq 2$ for any $v\in V(T)$).
\end{defi}

\subsubsection{The category of leveled trees with section}\label{TreeSect}

A \emph{leveled $n$-tree with section}, with $n > 0$ is a pair $(T, \iota)$ where $T$ is a sequence of non-decreasing surjections as in \eqref{nondecsurj} and $0 \le \iota \le h(T)$ is an integer such that the surjective maps $t_{j}$ are not bijective for $j\neq \iota$.
The level corresponding to $\iota$ is called the \textit{main section} and can be composed of bivalent vertices.
In particular, if $(T, \iota)$ is a leveled $n$-tree with section, then $T$ is not necessarily a leveled $n$-tree, as $t_{\iota}$ may be bijective.
In pictures, we represent the main section by a dotted line.
We respectively denote by $V_{\iota}(T)$, $V_{u}(T)$, and $V_{d}(T)$ the sets of vertices on the main section, above the main section, and below the main section.
Such a tree will be denoted by $T$ if there is no ambiguity about the main section.

\begin{figure}[htbp]
  \centering
  \includegraphics[scale=0.4]{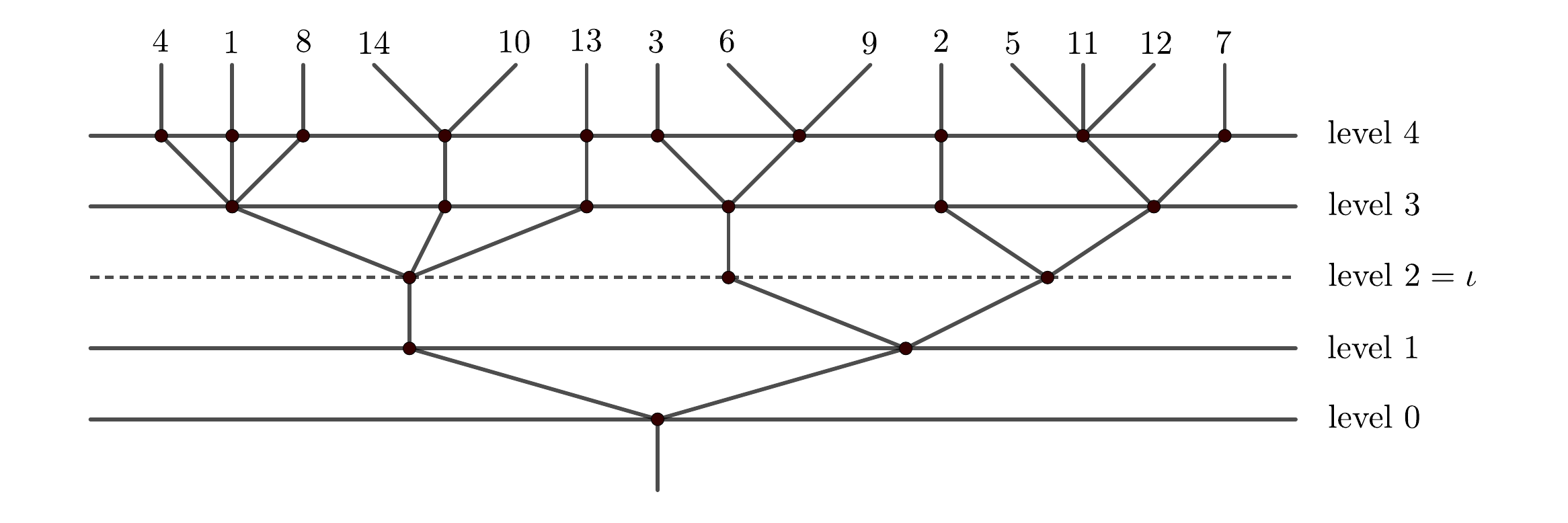}
  \caption{A leveled $14$-tree with section.}
  \label{fig:example-tree-section}
\end{figure}

\begin{defi}[{The categories $\slTree[n]$ and $\slTree_{\iso}[n]$}]\label{def:sltree}
  In the following we introduce three kinds of elementary morphisms between leveled trees. The categories $\slTree[n]$ and $\slTree_{\iso}[n]$ of leveled $n$-trees have the same set of objects. Morphisms in $\slTree_{\iso}[n]$ are generated by isomorphisms of leveled trees preserving the main section, contractions of permutable levels and permutations $\sigma_{i}$ of permutable levels such that, in both cases, neither $i$ nor $i+1$ are the main section $\iota$ (i.e.\ $\iota \not\in \{i,i+1\}$).

  On the other hand, morphisms in  $\slTree[n]$ are generated by isomorphisms of leveled trees preserving the main section, contractions of consecutive levels, and permutations $\sigma_{i}$ of permutable levels such that neither $i$ nor $i+1$ are the main section $\iota$.
\end{defi}

\subsubsection{On ``bimodule'' structures for (possibly leveled) trees with section}

We now introduce the operations needed in order to define (co)bimodule structures on Boardman--Vogt resolutions of (co)bimodules, and which are compatible with the operations introduced in the previous sections. We will also use them to define (co)bimodule structures on alternative versions of two-sided (co)bar constructions. We build the following ``right'' and ``left'' operations (see Equations~\eqref{eq:def-gamma-r} and~\eqref{eq:def-gamma-l})
\begin{align*}
  \gamma_{R} : \slTree[k] \times \lTree[n_{1}] \times \cdots \times \lTree[n_{k}] & \longrightarrow \slTree[n_{1}+\cdots + n_{k}]; \\
  \gamma_{L} : \lTree[k] \times \slTree[n_{1}]\times \cdots \times \slTree[n_{k}] & \longrightarrow \slTree[n_{1}+\cdots + n_{k}].
\end{align*}

The right operation $\gamma_{R}$ is defined as follows. Consider trees $(T_{0},\iota) \in \slTree[k]$ and $T_{i}\in \lTree[n_{i}]$ for $i\leq k$. The right module operation $\gamma_{R}(T_{0}; \{T_{i}\})$ is given by the following formula in which $\gamma$ is the total composition introduced in Section \ref{SectTree}:
\begin{equation}\label{eq:def-gamma-r}
  \gamma_{R}(T_{0}; \{T_{i}\}) = \bigl( \gamma(T_{0}; \{T_{i}\}), \iota \bigr).
\end{equation}

Let us now define the left module operation $\gamma_{L}$. Let $T_{0} \in \lTree[k]$ and $T_{i} =(T_{i}, \iota_{i}) \in \slTree[n_{i}]$ for $i\leq k$. For the sake of example, we will depict the operation $\gamma_{L}$ when applied to the family of trees from Figure~\ref{FigOpLeft}.

\begin{figure}[htbp]
  \hspace{-45pt}\includegraphics[scale=0.4]{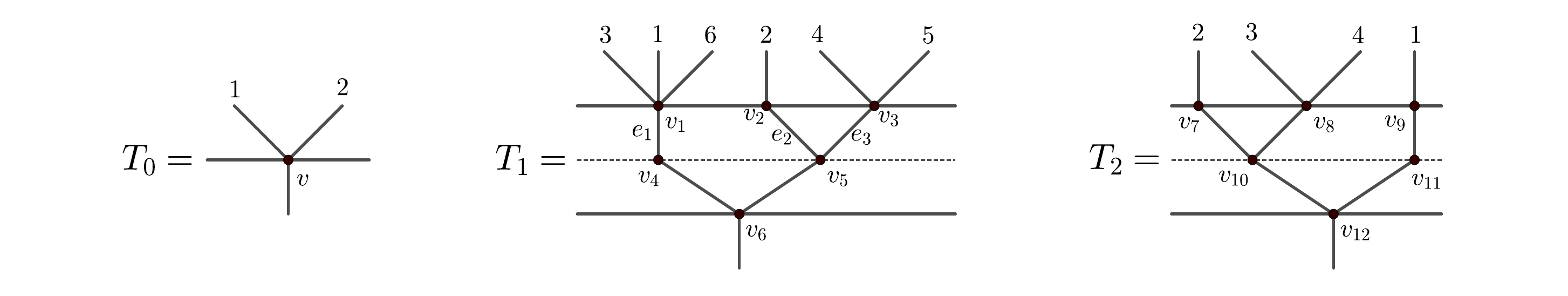}
  \caption{Example of family of leveled trees with $k=2$.}
  \label{FigOpLeft}
\end{figure}

For any $i\leq k$, we denote by $T_{i}^{<}$ the leveled sub-tree of $T_{i}$ composed of the vertices and edges strictly below the main section.
Similarly, for any $i\leq k$, $v \in V_{\iota}(T_{i})$, and $e\in in(v)$ an incoming edge of $v$, we denote by $T^{>e}_{i}$ the leveled sub-tree of $T_{i}$ consisting of all vertices and edges above $e$, having $e$ as the trunk.
Formally, for $\iota < j \le h(T_{i})$), we define the set of vertices of level $j$ by:
\[ V_{j}(T^{>e}_{i}) \coloneqq \{ w \in V_{j}(T) \mid (t^{k-1}(w),t^k(w)) = e \text{ for some } k>0  \} \]
formed by vertices above the edge $e$.
We also denote the leaves of $T_{i}^{>e}$ by $[n]^{>e} = \{ s \in [n] \mid \exists k > 0 \text{ s.t. } (t^{k-1}(s), t^{k}(s)) = e \}$, which we identify with $[n^{>e}]$ for some $n_{e} > 0$.
Then $T^{>e}_{i}$ is the leveled tree given by the sequence of non-decreasing surjections:
\[
  [n^{>e}] \xtwoheadrightarrow{t_{h(T)|[n]^{>e}}} V_{h(T),e}(T_{i}) \xtwoheadrightarrow{t_{h(T)-1|V_{h(T),e}(T_{i})}} \cdots \xtwoheadrightarrow{t_{\iota+1|V_{\iota+2,e}(T_{i})}} V_{\iota+1,e}(T_{i}).
\]

\begin{figure}[htbp]
  \hspace{-110pt}\includegraphics[scale=0.5]{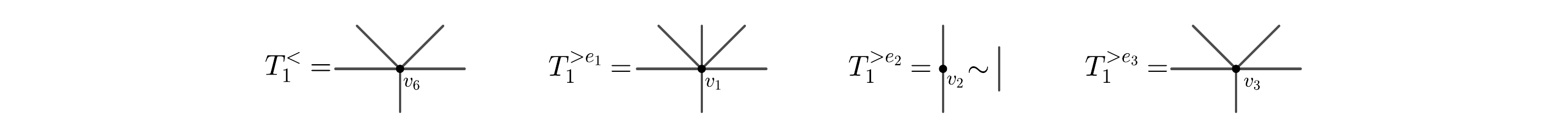}\vspace{-10pt}
  \caption{Sub-trees associated to $T_{1}$ represented in Figure \ref{FigOpLeft}.}
  \label{fig:subtrees}
\end{figure}

First, we consider the leveled tree with section $\Delta(T_{0},\{T_{i}\})$ obtained by grafting into the leaves of $\gamma(T_{0}\,;\,\{T_{i}^{d}\})$ the corresponding vertices in $V_{\iota}(T_{i})$, with $i\leq k$. Furthermore, we remove the sections composed of only bivalent vertices. The main section of this leveled tree so obtained is the top level denoted by $\Delta(\{\iota_{i}\}) = h(T_{0}) + \sum_{i\in I}\iota_{i}$. See Figure~\ref{fig:leveled-delta} for an example.

\begin{figure}[htbp]
  \centering
  \includegraphics[scale=0.4]{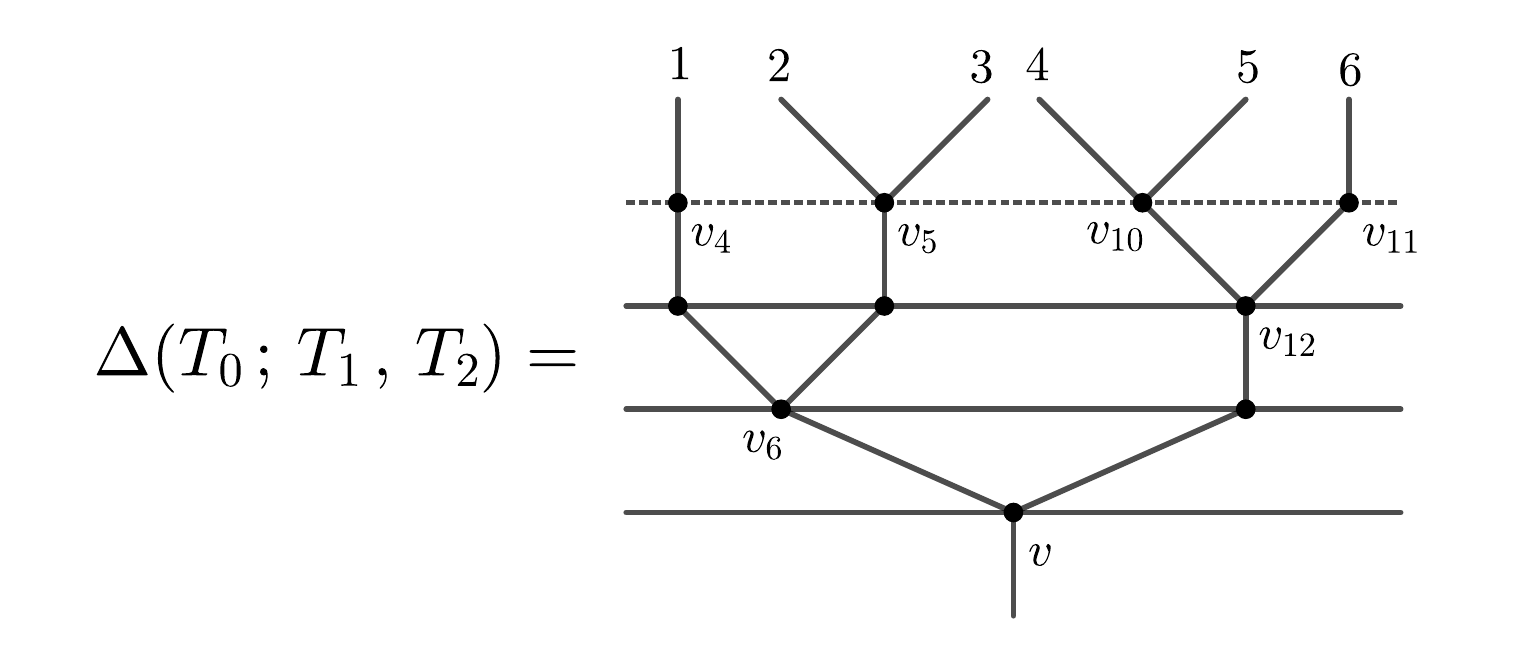}\vspace{-10pt}
  \caption{The leveled tree $\Delta(T_{0},\{T_{i}\})$ associated to  the family represented in Figure \ref{FigOpLeft}.}
  \label{fig:leveled-delta}
\end{figure}

\begin{defi}
  The left operation $\gamma_{L}$ is given by the formula (see Figure~\ref{fig:example-gamma-l}):
  \begin{equation}\label{eq:def-gamma-l}
    \gamma_{L}(T_{0}\,;\,\{T_{i},\iota_{i}\}) \coloneqq \bigl( \gamma(\Delta(T_{0},\{T_{i}\})\,;\,\{T_{i,e}\})\,\,;\,\,\Delta(\{\iota_{i}\})\,\bigr).
  \end{equation}
\end{defi}

\begin{figure}[htbp]
  \centering
  \includegraphics[scale=0.3]{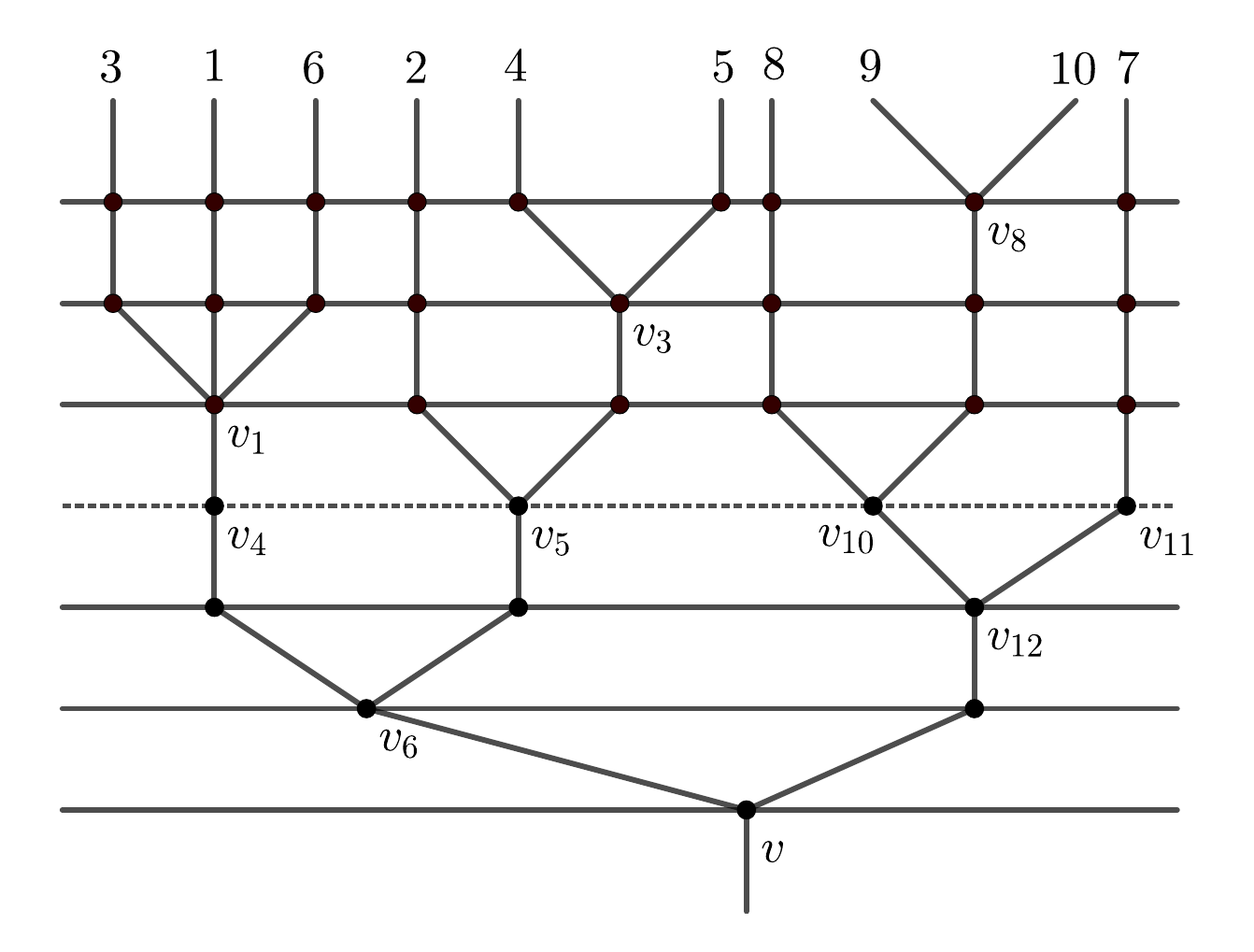}\vspace{-15pt}
  \caption{The image under $\gamma_{L}$ of the family from Figure~\ref{FigOpLeft}.}
  \label{fig:example-gamma-l}
\end{figure}

As in the previous section, the family of sets $\slTree = \{\slTree[n]\}$, equipped with the left and right module operations $\gamma_{L}$ and $\gamma_{R}$, is not a bimodule over $\lTree$ (which is not even an operad). The bimodule axioms are only satisfied up to permutations and contractions of permutable consecutive levels. Nevertheless, this will be enough to define (co)bimodule structures on Boardman--Vogt resolutions or alternative versions of two-sided (co)bar resolutions.

\subsubsection{Comparison between planar trees and leveled trees}\label{sec:comp-planar-leveled}

There are two functors $\alpha:\slTree[n] \rightarrow \sTree^{\geq 2}[n]$ and $\alpha_{\iso}:\slTree_{\iso}[n] \rightarrow \sTree_{\iso}^{\geq 2}[n]$ sending a leveled $n$-tree with section to the planar $n$-tree with section obtained by removing the bivalent vertices other than the pearls and taking the underlying level map. In particular, contractions and permutations of permutable levels are sent to identity morphisms. So, $\alpha$ and $\alpha_{\iso}$ are neither faithful nor injective on objects.

Nevertheless,  $\alpha$ and $\alpha_{\iso}$ are full and surjective on objects. Indeed, for each rooted planar tree $T$, we fix $T_{l}$ to be the unique leveled tree for which  each level other than the main section has exactly one non-bivalent vertex appearing from bottom to top according to the total order on the sets of vertices $V_{d}(T)$ and  $V_{u}(T)$. We set $\beta:\Tree^{\geq 2}[n] \rightarrow \lTree[n]$ and $\beta_{\iso}:\Tree_{\iso}^{\geq 2}[n] \rightarrow \lTree_{\iso}[n]$ the two functors sending a planar $n$-tree with section $T$ to $T_{l}$.
These two functors are faithful and injective on the sets of objects. However there are neither full, nor surjective on objects, nor essentially surjective: unlike permutations of permutable levels, contractions of permutable levels are not isomorphisms.

\begin{figure}[htbp]
  \begin{center}
    \includegraphics[scale=0.32]{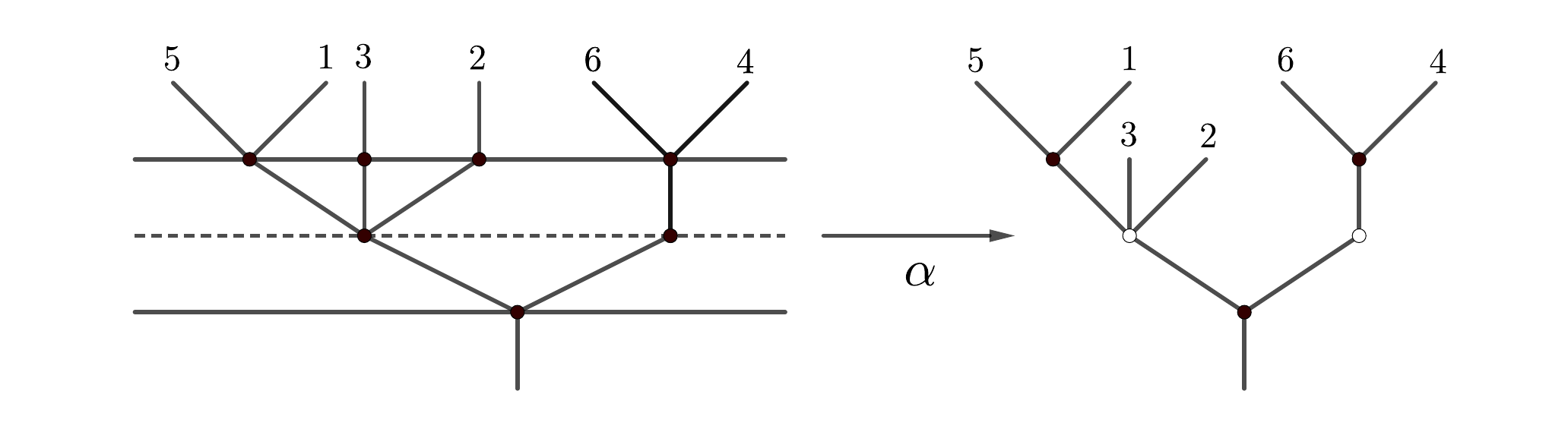}
    \includegraphics[scale=0.32]{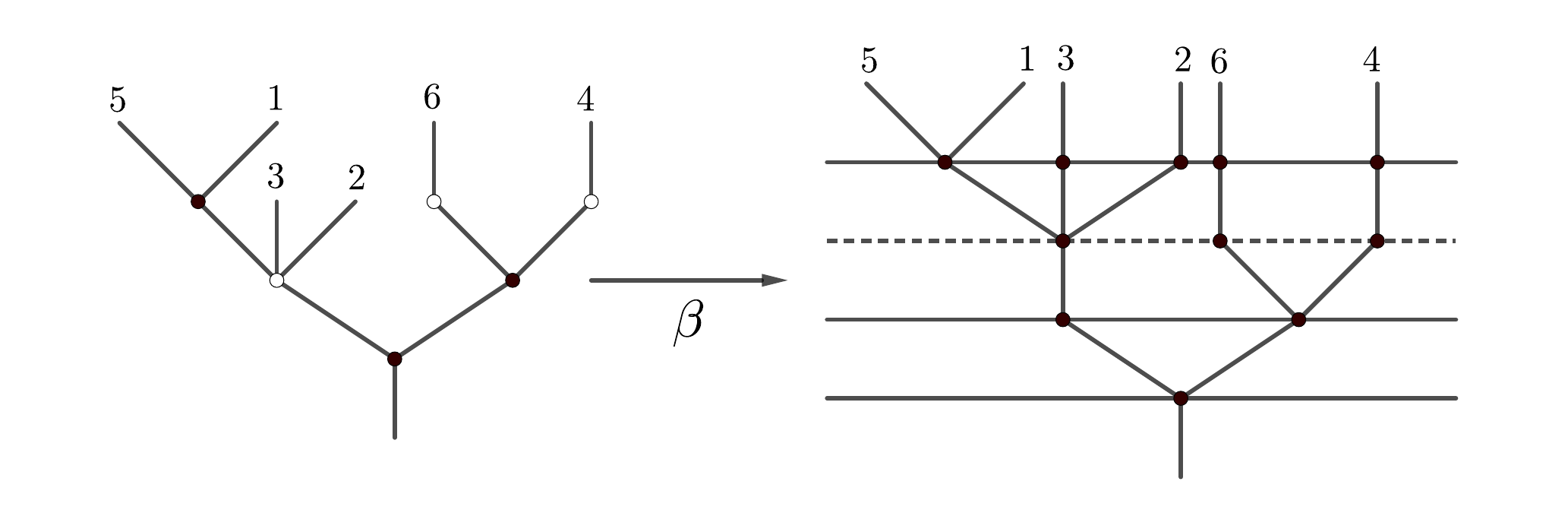}\vspace{-15pt}
    \caption{The functors $\alpha$ and $\beta$.}
    \label{MorphAlpha2}
  \end{center}
\end{figure}

The functors $\beta$ and $\beta_{\iso}$ so defined give rise to sections of the functors $\alpha$ and $\alpha_{\iso}$ in the sense that, for any planar $n$-tree with section $T$, one has $\alpha\circ\beta(T)=T$ and $\alpha_{\iso}\circ\beta_{\iso}(T)=T$. However, for any leveled tree with section $T$, $\beta\circ \alpha(T)$ and $\beta_{\iso}\circ \alpha_{\iso}(T)$ coincide with $T$ only up to contractions and permutations of permutable levels. Furthermore, all the functors considered are compatible with the bimodule operations in the sense that the diagrams
$$
  \begin{tikzcd}
    \sTree^{\geq 2}[k] \times \prod_{i=1}^k \Tree^{\geq 2}[n_{i}] \ar[r, "\gamma_{R}"] \ar[d, shift left, "\prod \beta"]
    & \sTree^{\geq 2}[n_{1}+\cdots +n_{k}] \ar[d, shift left, "\beta"]
    & \Tree^{\geq 2}[k] \times \prod_{i=1}^k \sTree^{\geq 2}[n_{1}] \ar[r, "\gamma_{L}"] \ar[d, shift left, "\prod \beta"]
    & \sTree^{\geq 2}[n_{1}+\cdots +n_{k}] \ar[d, shift left, "\beta"]
    \\
    \slTree[k] \times \prod_{i=1}^k \lTree[n_{i}] \ar[r, "\gamma_{R}"] \ar[u, shift left, "\prod\alpha"]
    & \slTree[n_{1}+\cdots +n_{k}] \ar[u, shift left, "\alpha"]
    & \lTree[k] \times \prod_{i=1}^k \slTree[n_{i}] \ar[r, "\gamma_{L}"] \ar[u, shift left, "\prod\alpha"]
    & \slTree[n_{1}+\cdots +n_{k}] \ar[u, shift left, "\alpha"]
  \end{tikzcd}
$$
commute strictly when we restrict to $\alpha$ and up to contractions and permutations of permutable levels when we restrict to $\beta$.
The same is true for the subcategories $\sTree_{\iso}$ and $\slTree_{\iso}$. We resume the above properties in the following proposition:

\begin{thm}\label{prop:alpha-beta-section}
  The functors $\alpha:\slTree[n] \rightarrow \sTree^{\geq 2}[n]$ and $\alpha_{\iso}:\slTree_{\iso}[n] \rightarrow \sTree_{\iso}^{\geq 2}[n]$, obtained removing the bivalent vertices other than the pearls, are full and surjective on objects. They admit right inverses $\beta:\sTree^{\geq 2}[n] \rightarrow \slTree[n]$ and $\beta_{\iso}:\sTree_{\iso}^{\geq 2}[n] \rightarrow \slTree_{\iso}[n]$, respectively, which are faithful and injective on the set of objects.
\end{thm}

\begin{rmk}\label{rmk:batanin}
  It is possible to adapt our construction to trees without levels as in~\cite{Ducoulombier2018c}.
  However, it is more convenient to use leveled trees in order to construct the bar and cobar constructions for bimodules.
  For instance, in the following tree:
  \begin{center}
    \includegraphics[scale=0.5]{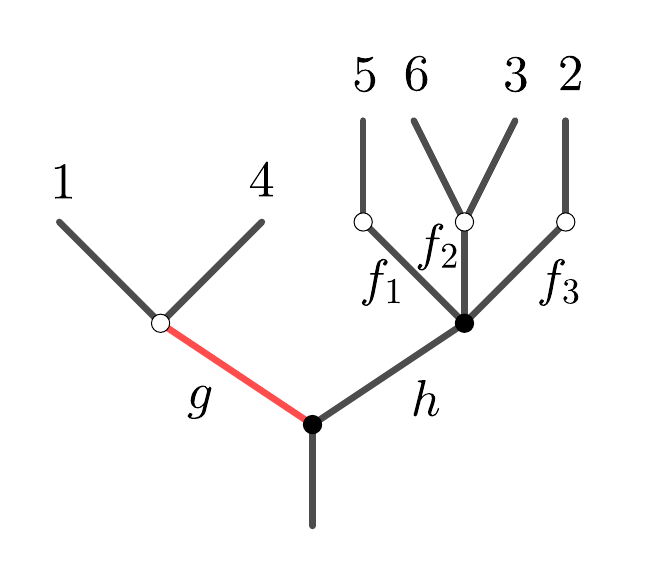}
  \end{center}
  one can contract the edge $h$, or the edges $f_1,f_2,f_3$ together, or the whole tree, but not the edge $g$ alone.
\end{rmk}

\section{Cofibrant resolutions for $\Lambda$-operads in spectra}
\label{sec:cofibr-resol-lambda}

For any  1-reduced operad $\calO$ in $\Sp$ (i.e.\ an operad  $\calO(0) = \calO(1) = *$), we introduce alternative (leveled) definitions of the Boardman--Vogt resolution $W_{l}\calO$ and the bar construction $\mathcal{B}_{l}(\calO)$ of $\calO$.
After that, we prove that the leveled bar resolution of $\calO$ is isomorphic to the cooperad of the indecomposable elements $\Ind(W_{l}\calO)$ .
In the last section, we show that the Boardman--Vogt resolution is also weakly equivalent to the cobar-bar construction related to $\calO$.
Throughout this section all (co)operads will be considered over the category of spectra.

\subsection{The leveled bar construction for operads in spectra}
\label{SectBarOp}

In this section, we introduce an alternative description of the bar construction for operads in spectra using leveled trees.
We show that our construction is isomorphic to the usual one introduced by Salvatore \cite{salvatore1998configuration} and Ching \cite{Chi}.
In what follows, the indices ``B''  emphasize the fact that these functors are used to define the bar construction.
This is to distinguish them from the functors in the next section, which are used to define the W-construction and are decorated by indices ``W''.

Given a 1-reduced operad $\calO$ in spectra, for every $n > 0$ we define the following two functors:

\begin{equation}\label{EqFunctOH}
  \begin{aligned}
    \overline{\calO}_{B} : \lTree[n] & \longrightarrow \Sp \; ,
                                     & T                          & \longmapsto  \underset{v \in V(T)}{\bigwedge} \calO(|v|); \\
    H_{B} : \lTree[n]^{op}           & \longrightarrow \sSet \; ,
                                     & T                          & \longmapsto \begin{cases}
      \Delta[T]/\Delta_{0}[T] & \text{if } n>1, \\
      \hspace{2em}\ast        & \text{if } n=1.
    \end{cases}
  \end{aligned}
\end{equation}
where $\Delta[T]=\prod_{0\leq i \leq h(T)}\, \Delta[1]$ labels the levels by elements in the standard $1$-simplex $\Delta[1]$, while $\Delta_{0}[T]$ is the simplicial subset consisting of faces where either the $0$-th level has value $0$, or any of the other levels has value $1$. By definition, $H_{B}(T)$ is a pointed  simplicial set for any leveled tree $T$ whose basepoint is the equivalence class of $\Delta_{0}[T]$.

On morphisms, the functor $\overline{\calO}_{B}$ is defined using the operadic structure of $\calO$.
For any two consecutive permutable levels $i$ and $i+1$, $H_{B}(\sigma_{i})$ permutes the simplices corresponding to the $i$-th and $(i+1)$-st levels.
For contraction morphisms there are two cases to consider:
\begin{enumerate}
  \item If the levels $i$ and $i+1$ are permutable, then, by using the diagonal map, one has:
        \begin{align*}
          H_{B}(\delta_{\{i+1\}}): H_{B}(T/\{i+1\}) & \longrightarrow H_{B}(T),                                \\
          (t_{0},\dots,t_{h(T)-1})                  & \longmapsto (t_{0},\dots,t_{i},t_{i},\dots, t_{h(T)-1}).
        \end{align*}
  \item If the levels $i$ and $i+1$ are not permutable, then one has instead:
        \begin{align*}
          H_{B}(\delta_{\{i+1\}}): H_{B}(T/\{i+1\}) & \longrightarrow H_{B}(T),                                    \\
          (t_{0},\dots,t_{h(T)-1})                  & \longmapsto (t_{0},\dots,t_{i},0,t_{i+1},\dots, t_{h(T)-1}).
        \end{align*}
\end{enumerate}

\begin{defi}
  The leveled bar construction of a 1-reduced operad $\calO$ in spectra is defined as the simplicial spectrum given by the coend:
  $$
    \calB_{l}(\calO)(n) \coloneqq \int^{T\in \lTree[n]}\overline{\calO}_{B}(T)\wedge H_{B}(T).
  $$
\end{defi}

A point in $\calB_{l}(\calO)(n)$ is the data of a leveled $n$-tree $T$, a family of points in $\calO$ labelling the vertices $\{\theta_{v}\}_{v\in V(T)}$ and a family of elements in the simplicial set $\Delta[1]$ indexing the levels $\{t_{j}\}_{0\leq j \leq h(T)}$.
The equivalence relation induced by the coend is generated by the compatibility with the symmetric group action, permutations of permutable levels, contractions of two consecutive permutable levels indexed by the same simplex, and contractions of consecutive non-permutable levels such that the upper one is indexed by $0$.
Such a point is denoted by $[T\,;\, \{\theta_{v}\}\,;\,\{t_{j}\}]$.

\begin{figure}[!h]
  \hspace{-45pt} \includegraphics[scale=0.37]{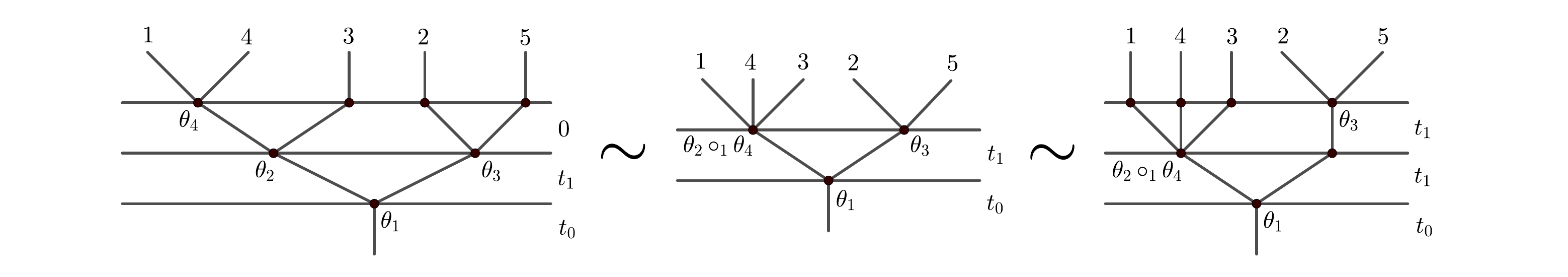}\vspace{-10pt}
  \caption{Illustration of equivalent points in $\calB_{l}(\calO)(5)$.}
  \label{fig:equivalence-bo}
\end{figure}

The sequence $\calB_{l}(\calO)=\{\calB_{l}(\calO)(n)\}$ has a cooperadic structure
\begin{equation}\label{coopstructop}
  \gamma^{c}:\calB_{l}(\calO)(n_{1}+\cdots + n_{k})\longrightarrow \calB_{l}(\calO)(k) \wedge \calB_{l}(\calO)(n_{1})\wedge \cdots \wedge \calB_{l}(\calO)(n_{k}),
\end{equation}
defined as follows. A leveled $n$-tree $T$ is said to be \textit{decomposable} according to the partition $(n_{1},\ldots,n_{k})$, with $n_{1}+\cdots + n_{k}=n$ if there exist leveled trees $T_{0}\in \lTree[k]$ and $T_{i}\in \lTree[n_{i}]$, with $i\leq k$, such that $T$ is of the form $\gamma(T_{0}\,,\,\{T_{i}\})$ up to permutations of permutable levels and contractions of permutable levels where $\gamma$ is the operation \eqref{formulatreecomp}. According to this notation, if $T$ is not decomposable, then $\gamma^{c}([T\,;\, \{\theta_{v}\}\,;\,\{t_{j}\}])$ is sent to the basepoint. Otherwise, let us remark that we have an identification
$$
  [T\,;\, \{\theta_{v}\}\,;\,\{t_{j}\}]= [\gamma(T_{0}\,,\,\{T_{i}\})\,;\, \{\theta_{v}\}\,;\,\{\tilde{t}_{j}\}]
$$
due to the equivalence relation induced by the coend. In that case, we define
$$
  \gamma^{c}([T\,;\, \{\theta_{v}\}\,;\,\{t_{j}\}]) \coloneqq \bigl\{ [T_{i}\,;\, \{\theta_{v}^{i}\}\,;\,\{t_{j}^{i}\}]\bigr\}_{i\in I\sqcup \{0\}}\in \calB_{l}(\calO)(I) \wedge \underset{i\in I}{\bigwedge} \calB_{l}(\calO)(S_{i})
$$
where $\{\theta_{v}^{i}\}$ and $\{t_{j}^{i}\}$ come from the restriction to the parameters corresponding to the sub-tree $T_{i}$ of $\gamma(T_{0}\,,\,\{T_{i}\})$.
The cooperadic operation \eqref{coopstructop} does not depend on the choice of the decomposition of $T$ up to permutations of permutable levels and contractions of permutable levels thanks to the definition of the coend.

\begin{defi}[The usual bar construction for operads in spectra]\label{def:usual-bar}
  For more details, we refer the reader to \cite{Chi2}.  We recall that $\Tree^{\geq 2}[n]$ is the category of planar $n$-trees having vertices with valences $\geq 2$ and whose morphisms are generated by isomorphisms of planar trees and contractions of inner edges. Given a 1-reduced operad $\calO$, we introduce the two functors
  \begin{equation}\label{H'}
    \begin{aligned}
      \overline{\calO}'_{B} : \Tree^{\geq 2}[n] & \longrightarrow \Sp \; ,
                                                & T                          & \longmapsto  \underset{v \in V(T)}{\bigwedge} \calO(|v|); \\
      H_{B}' : \Tree^{\geq 2}[n]^{op}           & \longrightarrow \sSet \; ,
                                                & T                          & \longmapsto
      \begin{cases}
        \Delta'[T]/\Delta_{0}'[T] & \text{if } n>1, \\
        \hspace{2em}\ast          & \text{if } n=1.
      \end{cases}
    \end{aligned}
  \end{equation}
  where $\Delta'[T] \coloneqq \prod_{v\in V(T)}\, \Delta[1]$ labels the vertices by elements in the simplicial $1$-simplex $\Delta[1]$ while $\Delta'_{0}[T]$ is the simplicial subset consisting of faces where, either, the root has value $0$, or, any other vertices has value $1$.
  By definition, $H'(T)$ is a pointed simplicial set for any leveled tree $T$ and the bar construction of $\calO$ is defined as the coend
  $$
    \calB(\calO)(n) \coloneqq \int^{T\in \Tree^{\geq 2}[n]}\, \overline{\calO}'_{B}(T)\wedge H'_{B}(T).
  $$
  A point is denoted by $[T\,;\,\{\theta_{v}\}\,;\,\{t_{v}\}]$ where $T\in \Tree^{\geq 2}[n]$ is a planar tree, $\{\theta_{v}\}_{v\in V(T)}$ is a family of points in $\calO$ labelling the vertices and $\{t_{v}\}_{v\in v\in V(T)}$ is a family of elements in $\Delta[1]$ indexing the vertices.
\end{defi}

\begin{pro}\label{IsoBarOp}
  The leveled bar construction is isomorphic to the usual bar construction denoted by $\calB(\calO)$:
  $$
    \calB_{l}(\calO)\cong \calB(\calO).
  $$
\end{pro}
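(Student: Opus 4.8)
The plan is to construct an explicit natural isomorphism $\calB_{l}(\calO) \cong \calB(\calO)$ by matching the two constructions cell by cell over the combinatorics of trees, using the comparison maps $\alpha$ and $\beta$ from Section~\ref{sec:invent-categ-trees}. First I would recall the description of the usual bar construction of Salvatore and Ching: for a $1$-reduced operad it is assembled from the reduced planar trees $P \in \Tree_{\geq 2}[n]$, the cell indexed by $P$ being $\overline{\calO}(P) \wedge w(P)$, where $\overline{\calO}(P) = \bigwedge_{v \in V(P)} \calO(|v|)$ and $w(P)$ is the cube $(\Delta[1])^{E^{in}(P)}$ indexed by the inner edges of $P$, collapsed to the basepoint along the faces where some inner edge takes the value $1$, and glued along the value-$0$ faces to the cells of the planar trees obtained by contracting that edge (composing its two endpoints through the operadic structure of $\calO$). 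The isomorphism will identify the $P$-cell with the part of $\calB_{l}(\calO)(n)$ supported on the leveled trees $T$ with $\alpha(T) = P$.

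On the decoration functors the comparison is immediate: since $\overline{\calO}_{B}(T) = \bigwedge_{v \in V(T)} \calO(|v|)$ and the bivalent vertices of $T$ contribute the arity-one part $\calO(1)$, which is transparent for the smash product, the functor $\overline{\calO}_{B}$ factors as $\overline{\calO} \circ \alpha$. The content of the statement therefore lies in the parameter spaces. I would define a natural map $\Phi : \calB_{l}(\calO)(n) \to \calB(\calO)(n)$ sending $[T; \{\theta_{v}\}; \{t_{j}\}]$ to the point of the $\alpha(T)$-cell whose vertex decorations are the $\theta_{v}$ at the non-bivalent vertices and whose inner-edge lengths are read off from the level values $\{t_{j}\}$: an inner edge of $\alpha(T)$ spanning the levels $i < i'$ of $T$ is assigned the length determined by the $t_{j}$ for $i < j \le i'$, in such a way that the basepoint locus $\Delta_{0}[T]$ (where $t_{0} = 0$, or $t_{i} = 1$ for some $i \ge 1$) is sent to the length-$1$ collapse. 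The inverse $\Psi$ is built from the section $\beta$: a planar tree $P$ equipped with a configuration of inner-edge lengths is lifted to the chosen leveled representative $\beta(P)$, with level values reproducing the given lengths.

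The heart of the proof, and its main obstacle, is to verify that $\Phi$ is well defined with respect to every relation imposed by the coend and that it is a bijection. The symmetric-group relations and the transparency of bivalent vertices are routine. The essential point is the role of the permutations of permutable levels: different levelings of the same planar tree $P$ correspond exactly to different orderings of the relative heights of the inner edges of $P$, and the level-permutation morphisms $\sigma_{i}$ glue the corresponding simplices along their common faces. In other words, the leveled parameter spaces $\{H_{B}(T)\}_{\alpha(T)=P}$, assembled along the permutation and contraction morphisms of $\Tree_{l}[n]$, realize precisely the standard triangulation of the cube $w(P)$; the permutable-contraction relations (equal adjacent values) are the internal faces of this triangulation, while the non-permutable contractions (upper level equal to $0$) match the value-$0$ faces along which $w(P)$ is glued to the cells of edge-contracted trees. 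Checking this dictionary level by level, and confirming that it is compatible across the gluings between cells of different planar trees, is where the combinatorial work concentrates; once it is in place, $\Phi$ and $\Psi$ are mutually inverse.

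Finally I would check that $\Phi$ is an isomorphism of cooperads and not merely of symmetric sequences. Both cooperad structures are defined by decomposing a tree as a grafting $\gamma(T_{0}, \{T_{i}\})$ and redistributing the decorations and parameters among the sub-trees; since $\alpha$ and the level-to-edge dictionary are compatible with the grafting operation $\gamma$ of Section~\ref{SectTree} and with the usual grafting of planar trees, the cooperadic coproducts~\eqref{coopstructop} correspond under $\Phi$. This yields the desired isomorphism $\calB_{l}(\calO) \cong \calB(\calO)$.
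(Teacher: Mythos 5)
Your overall strategy is the same as the paper's: build mutually inverse maps out of the comparison $\alpha$, $\beta$ between leveled and planar trees, observe that bivalent vertices are transparent for $\overline{\calO}_{B}$, and check compatibility with the coend relations and the cooperad structure. However, there is a concrete error in your description of the target. The usual bar construction recalled in the paper (following Ching) carries one interval parameter per \emph{vertex} of the planar tree: $H'_{B}(P)=\Delta'[P]/\Delta'_{0}[P]$ with $\Delta'[P]=\prod_{v\in V(P)}\Delta[1]$, of dimension $|V(P)|$, with basepoint where the root has value $0$ or another vertex has value $1$. You instead take a cube indexed by the \emph{inner edges}, of dimension $|E^{in}(P)|=|V(P)|-1$, collapsed where an edge equals $1$ and composed where it equals $0$; that object is precisely $\Ind(W\calO)$, i.e.\ the \emph{desuspension} of $\calB(\calO)$ (compare Proposition~\ref{ProOpFr}, $\calB_{l}\calO\cong\Sigma\Ind(W_{l}\calO)$). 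The mismatch is visible in your own dictionary: the rule assigning to an inner edge spanning levels $i<i'$ a length built from the $t_{j}$ with $i<j\le i'$ has nowhere to send the level-$0$ parameter $t_{0}$, and the basepoint face $t_{0}=0$ of $H_{B}(T)$ has no counterpart among your collapse relations. The resulting map forgets a coordinate and cannot be injective, and the proposed inverse $\Psi$ is underdetermined (one level value of $\beta(P)$ is left unspecified by $|V(P)|-1$ edge lengths). With the correct vertex-indexed model the dictionary is much simpler and is exactly the paper's: $t'_{v}=t_{i}$ when $v$ lies on level $i$, and conversely $t'_{i}=t_{v}$ for the unique non-bivalent vertex $v$ of level $i$ of $\beta(P)$.

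A secondary inaccuracy: the cells $H_{B}(T)$ for the various levelings $T$ of a fixed planar tree $P$ are not assembled as the simplices of a triangulation of a cube. A permutation of permutable levels is a morphism of $\Tree_{l}[n]$, so in the coend it identifies two top-dimensional cells with one another \emph{in their entirety} (by permuting coordinates), while the contraction of permutable levels maps the lower-dimensional cells onto diagonals. The colimit over $\alpha^{-1}(P)$ is therefore a single copy of $\prod_{v\in V(P)}\Delta[1]$ rather than a cube subdivided into simplices; the verification that $f$ and $g$ are well defined and mutually inverse is an identification of whole cubes, not a subdivision argument.
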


\begin{proof}
  The proposition is a direct consequence of the comparison morphisms between planar $n$-trees and leveled $n$-trees explained in Section \ref{CompTree}. The isomorphism of operads is given by
  \begin{align*}
    f : \calB_{l}(\calO)(n) & \longrightarrow  \calB(\calO)(n),
                            & \bigl[ T; \{\theta_{v}\}_{v\in V(T)}; \{t_{i}\}_{0\leq i\leq h(T)} \bigr] & \longmapsto \bigl[ \alpha(T); \{\theta_{v}\}_{v\in V(T)}; \{t'_{v}\}_{v\in V(\alpha(T))} \bigr],
  \end{align*}
 where $t'_{e}$ is the maximum of the parameters corresponding to the levels related to the path joining the source vertex of $e$ to its first non-bivalent vertex according to the orientation toward the root. Conversely, one has the continuous map:
  \begin{align*}
    g : \calB(\calO)(n) & \longrightarrow \calB_{l}(\calO)(n),
                        & \bigl[ T; \{\theta_{v}\}_{v\in V(T)}; \{t_{v}\}_{v\in V(T)} \bigr]
                        & \longmapsto \bigl[ \beta(T); \{\theta_{v}\}_{v\in V(T)}; \{t'_{i}\}_{0\leq i\leq h(\beta(T))} \bigr],
  \end{align*}
  where $t'_{i}=t_{v}$ if the unique non-bivalent vertex on the $i$-th level of $\beta(T)$ corresponds to the vertex $v$ in $T$. The reader can easily check that the maps so obtained are well defined, compatible with the cooperadic structures and give rise to isomorphisms between the leveled and usual bar resolutions.
\end{proof}

\subsection{The leveled Boardman--Vogt resolution for 1-reduced $\Lambda$-operads}\label{SectBVop}

This section is split into three parts.
First, we introduce a leveled version of the Boardman--Vogt resolution for 1-reduced operads in spectra.
Then, we compare this alternative construction to the usual Boardman--Vogt resolution introduced by Boardman and Vogt~\cite{BoardmanVogt1973} in the context of topological operads (see also \cite{BM} for a general construction in any symmetric monoidal model category with a notion of interval).
Finally, we extend our resolution to the category of 1-reduced $\Lambda$-operads equipped with the Reedy model category structure.

\paragraph{The leveled Boardman--Vogt resolution for 1-reduced operads}

Let $\calO$ be a 1-reduced operad in spectra. Recall the categories of trees from Section~\ref{sec:invent-categ-trees} and the interval $\Delta[1]_{+}$ from Section \ref{SectModSpOp}.
In the constructions below, the symbols ``$W$'' emphasize the fact that these functors are used to define the Boardman--Vogt resolution. We consider the following two functors:
\begin{align*}
  \overline{\calO}_{W} : \lTree[n] & \longrightarrow \Sp, \qquad T \longmapsto \bigwedge_{v \in V(T)} \calO(|v|);            \\
  H_{W} : \lTree[n]^{op}           & \longrightarrow \sSet, \qquad T \longmapsto \bigwedge_{1 \le i \le h(T)} \Delta[1]_{+}.
\end{align*}

The functor $\overline{\calO}_{W}$ is defined using the operadic structure of $\calO$, the symmetric monoidal structure of spectra, and the unit of the operad $\calO$.
On permutation maps, the functor $H_{W}$ consists in permuting the parameters indexing the levels.
On contraction maps $\delta_{\{i+1\}}:T\rightarrow T/\{i+1\}$ (with $i\in \{0,\ldots,h(T)-1\}$), there are two cases to consider:
\begin{enumerate}
  \item If the levels $i$ and $i+1$ are permutable, then, by using the diagonal map, one has:
        \begin{align*}
          H_{W}(\delta_{\{i+1\}}): H_{W}(T/\{i+1\}) & \longrightarrow H_{W}(T),                                \\
          (t_{1},\dots,t_{h(T)-1})                  & \longmapsto (t_{1},\dots,t_{i},t_{i},\dots, t_{h(T)-1}).
        \end{align*}
  \item If the levels $i$ and $i+1$ are not permutable, then one has instead:
        \begin{align*}
          H_{W}(\delta_{\{i+1\}}): H_{W}(T/\{i\}) & \longrightarrow H_{W}(T),                                    \\
          (t_{1},\dots,t_{h(T)-1})                & \longmapsto (t_{1},\dots,t_{i},0,t_{i+1},\dots, t_{h(T)-1}).
        \end{align*}
\end{enumerate}

\begin{defi}
  The leveled Boardman--Vogt resolution $W_{l}\calO$ is defined in arity $n > 0$ as the coend:
  \[ W_{l}\calO(n) \coloneqq \int^{T\in \lTree[n]} \overline{\calO}_{W}(T) \wedge H_{W}(T). \]
\end{defi}

Roughly speaking, a point of $W_{l}\calO(n)$ is given by a leveled $n$-tree $T$, whose vertices are decorated by points in the operad $\calO$, and whose levels different from $0$ are decorated by elements in $\Delta[1]_{+}$.
Furthermore, we can contract two consecutive levels $i$ and $i-1$ if either the two levels are permutable and they are decorated by the same parameter, or they are not permutable the $i$-th level is decorated by $0$.
Such a point is denoted by $[T\,;\,\{\theta_{v}\}\,;\, \{t_{i}\}]$ where $T$ is a leveled tree, $\{\theta_{v}\}$, with $v\in V(T)$, is the family of points in the operad labelling the vertices and $\{t_{i}\}$, with $1\leq i \leq h(T)$, is the family of real numbers indexing the levels. See Figure~\ref{fig:equivalence-wo} for an example.

\begin{figure}[htbp]
  \hspace{-40pt} \includegraphics[scale=0.37]{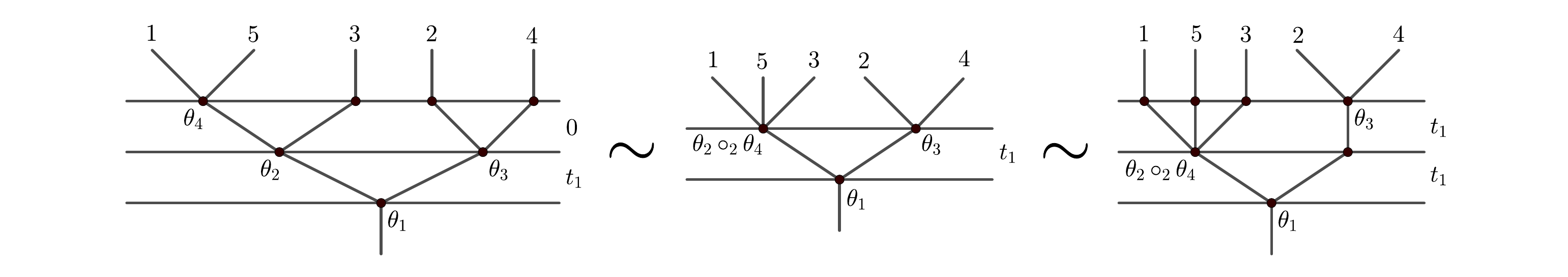}\vspace{-15pt}
  \caption{Illustration of equivalent points in $W_{l}\calO(5)$.}
  \label{fig:equivalence-wo}
\end{figure}

\begin{pro}
  There is an operadic structure on $W_{l}\calO$ defined using the operation $\gamma$ from Equation~\eqref{formulatreecomp} and decorating the new levels by $1:\ast\rightarrow \Delta[1]$.
\end{pro}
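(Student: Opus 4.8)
The plan is to define the operadic structure maps
\[
  \gamma : W_{l}\calO(k) \wedge W_{l}\calO(n_{1}) \wedge \cdots \wedge W_{l}\calO(n_{k}) \longrightarrow W_{l}\calO(n_{1}+\cdots+n_{k})
\]
on representatives by grafting the underlying leveled trees via the operation from Equation~\eqref{formulatreecomp}, keeping the vertex-decorations, and labelling the $k$ freshly created levels (the grafting levels introduced when passing from the disjoint trees to $\gamma(T_{0},\{T_{i}\})$) by the point $1 \in \Delta[1]_{+}$. On a representative this reads
\[
  \gamma\bigl( [T_{0};\{\theta_{v}\};\{t_{i}^{0}\}] ; \{[T_{i};\{\theta_{v}\};\{t_{i}^{j}\}]\} \bigr)
  \coloneqq \bigl[ \gamma(T_{0},\{T_{i}\}) ; \{\theta_{v}\} ; \{t_{i}\} \bigr],
\]
where the level-parameters $\{t_{i}\}$ are inherited from the $t_{i}^{j}$ on the levels coming from the $T_{j}$, and are set to $1$ on each of the $k$ new grafting levels. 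The vertex set of $\gamma(T_{0},\{T_{i}\})$ is the disjoint union $V(T_{0}) \sqcup \bigsqcup_{i} V(T_{i})$, so the decoration $\{\theta_{v}\}$ is simply the union of the given decorations and $\overline{\calO}_{W}$ receives the obvious smash-product factor; this makes the map on the $\overline{\calO}_{W}$-factors nothing but repeated application of the operadic structure of $\calO$.

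The first thing I would check is that $\gamma$ is \emph{well defined} on the coend, i.e.\ that it respects the equivalence relation generated by isomorphisms, permutations of permutable levels, and the two kinds of contractions described after the definition of $W_{l}\calO$. Isomorphisms and permutations are immediate since grafting is functorial and the new levels carry the canonical label $1$, which is permutation-invariant. For contractions the key point is that a contraction performed entirely inside one of the factors $T_{j}$ corresponds to the same contraction inside $\gamma(T_{0},\{T_{i}\})$, so compatibility follows from the fact that the $t_{i}^{j}$ are carried along unchanged; no contraction in a factor ever involves one of the new grafting levels, so the value $1$ on those levels is never disturbed. One must also confirm that the labelling by $1$ is forced rather than arbitrary: decorating the new levels by any value other than $1$ would break associativity, because only the value $1$ sits in the image of the map $1 : \ast \to \Delta[1]_{+}$ that the statement singles out, and only that value is stable under the iterated grafting used in~\eqref{formulatreecomp}.

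Next I would verify the \textbf{operad axioms}. Unitality is straightforward: grafting the trivial leveled tree (corepresenting the unit of $\calO$, which exists since $\calO(1)=*$) does nothing up to the coend relations, precisely because a new level decorated by $1$ together with a bivalent vertex decorated by the operadic unit is identified with the absence of that level. Equivariance with respect to the $\Sigma$-action is inherited from the permutation $\sigma$ recorded in each leveled tree and from the equivariance of $\overline{\calO}_{W}$. The \textbf{main obstacle} is \emph{associativity}: as the authors warn at the end of Section~\ref{SectTree}, the operation $\gamma$ on $\Tree_{l}$ is only associative \emph{up to permutation of levels}. The crux is therefore that the two bracketings of a triple composite yield leveled trees that differ exactly by a permutation of permutable levels, \emph{and} that the corresponding level-parameters match up under that permutation once the new levels are all set to $1$. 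Since a permutation of permutable levels is precisely one of the generators of the coend relation on $W_{l}\calO$, the two composites become equal in the coend; this is where the flexibility of permuting permutable levels, built into $\Tree_{l}$ for exactly this purpose, does the essential work. I would phrase the argument as: the discrepancy between the two bracketings is a permutation $\sigma_{i}$ relating the grafting levels coming from the outer and inner compositions, these levels carry the parameter $1$ on both sides, and $H_{W}(\sigma_{i})$ merely permutes those equal parameters, so the two classes coincide in the coend. The remaining verifications (that this operadic structure is compatible with the cooperadic identifications used elsewhere, and that $\gamma$ is continuous/simplicial) are routine and I would leave them to the reader.
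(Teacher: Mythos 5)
Your proof is correct and follows essentially the same route as the paper's (much terser) argument: the composition is defined via $\gamma$ with new levels labelled by $1$, and strict associativity holds because the two bracketings of a triple composite differ only by a permutation of permutable levels, which the coend defining $W_{l}\calO$ identifies. The only minor imprecision is your claim that the permutation reconciling the two bracketings only moves levels carrying the parameter $1$; in general it may also interleave levels from different subtrees carrying arbitrary parameters, but this is harmless since the coend relation identifies permutations of permutable levels regardless of their decorations.
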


\begin{proof}
  While the operation $\gamma$ is not strictly associative, it is associative up to permutation. Thanks to the definition of $W_{l}\calO$ as a coend, it is invariant under permutation of levels. Hence the composition product on $W_{l}\calO$ is strictly associative and defines an operad structure.
\end{proof}

\begin{defi}[The usual Boardman--Vogt resolution for operads]\label{def:usual-W}
  We recall the usual Boardman--Vogt resolution $W\calO$ for any 1-reduced operad in spectra~\cite{BM}.
  For a 1-reduced operad $\calO$, we introduce two functors:
  \begin{align*}
    \overline{\calO}'_{W} : \Tree^{\geq 2}[n] & \longrightarrow \Sp, \qquad T  \longmapsto \bigwedge_{v \in V(T)} \calO(|v|);             \\
    H'_{W} : \Tree^{\geq 2}[n]^{op}           & \longrightarrow \sSet  , \qquad  T  \longmapsto \bigwedge_{e\in E^{in}(T)} \Delta[1]_{+}.
  \end{align*}

  On morphisms, the first functor is is obtained using the operadic structure of $\calO$, the unit in arity $1$ as well as the symmetric monoidal structure on spectra. The usual Boardman--Vogt resolution of $\calO$ is defined as the simplicial spectrum given by the coend:
  $$
    W\calO(n) \coloneqq \int^{T\in \Tree^{\geq 2}[n]}\, \overline{\calO}'_{W}(T) \wedge H'_{W}(T).
  $$
  A point is denoted by $[T\,;\,\{\theta_{v}\}\,;\,\{t_{e}\}]$ where $T\in \Tree^{\geq 2}[n]$ is a planar tree, $\{\theta_{v}\}_{v\in V(T)}$ is a family of points in $\calO$ labelling the vertices and $\{t_{v}\}_{v\in v\in V(T)}$ is a family of elements in $\Delta[1]$ indexing the inner edges. The equivalence relation coming from the coend is generated by contracting inner edges decorated by $0$ and by the compatibility with the action of the symmetric group.
\end{defi}

\begin{pro}\label{ProWWu}
  The usual and leveled Boardman--Vogt resolutions $W\calO$ and $W_{l}\calO$ are isomorphic.
\end{pro}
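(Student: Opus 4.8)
The plan is to mimic the isomorphism constructed in Proposition~\ref{IsoBarOp}, since the leveled and usual $W$-constructions differ from the leveled and usual bar constructions only in the indexing category of levels/edges (we now decorate by $\Delta[1]_{+}$ rather than forming $\Delta[T]/\Delta_{0}[T]$, and morphisms of $\mathcal{T}[n]$ contract inner edges rather than contracting levels). The key tool is the pair of maps $\alpha : \Tree_{l}[n] \to \Tree_{\geq 2}[n]$ and its section $\beta : \Tree_{\geq 2}[n] \to \Tree_{l}[n]$ from the end of Section~\ref{SectTree}, which identify planar trees with leveled trees up to permutations and contractions of permutable levels. First I would define a forward map $f : W_{l}\calO(n) \to W\calO(n)$ by
\[
  f\bigl([T; \{\theta_{v}\}_{v\in V(T)}; \{t_{i}\}_{1\leq i\leq h(T)}]\bigr)
  = \bigl[\alpha(T); \{\theta_{v}\}_{v\in V(\alpha(T))}; \{t_{e}\}_{e\in E^{in}(\alpha(T))}\bigr],
\]
where an inner edge $e$ of $\alpha(T)$ corresponds to the unique non-bivalent vertex of $T$ immediately above it, sitting at some level $i$, and we set $t_{e} = t_{i}$. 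Conversely I would define $g : W\calO(n) \to W_{l}\calO(n)$ using the section $\beta$, transporting the edge-parameters of a planar tree to the level-parameters of $\beta(T)$ via the correspondence between inner edges and non-bivalent vertices.

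The main verification, and the place where $W$ differs from $B$, is checking that $f$ and $g$ are well defined, i.e.\ that they respect the two coend equivalence relations. On the $W_{l}\calO$ side the relation is generated by permutations of permutable levels, contraction of two permutable levels decorated by the \emph{same} parameter, and contraction of a non-permutable pair whose upper level is decorated by $0$. On the $W\calO$ side the relation is generated by contracting inner edges decorated by $0$. The crucial point is that $\alpha$ already quotients by permutations and by contractions of permutable levels: permuting permutable levels leaves $\alpha(T)$ and all the vertex-decorations unchanged, while contracting a permutable pair carrying equal parameters $t_{i}=t_{i+1}$ corresponds under $\alpha$ to the identity on $W\calO$ because such a contraction in $\Tree_{l}$ does not change the underlying planar tree. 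A contraction of a non-permutable pair with the upper level decorated by $0$ maps precisely to contracting the corresponding inner edge of $\alpha(T)$ decorated by $0$, which is exactly the relation on $W\calO$. I expect this compatibility bookkeeping to be the main obstacle: one must track carefully how each generator of the leveled relation sits over a generator (or an identity) of the planar relation, and in particular verify that the diagonal-versus-insert-$0$ dichotomy in the definition of $H_{W}$ on permutable versus non-permutable contractions is precisely what makes $f$ pass to the quotient.

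Once well-definedness is established, I would check that $f$ and $g$ are mutually inverse: $g \circ f = \id$ holds because $\beta \circ \alpha$ returns a leveled tree in the same class as the original (differing only by permutations and contractions of permutable levels, which are identified in the coend and carry the same parameters along), and $f \circ g = \id$ holds because $\alpha \circ \beta = \id$ on $\Tree_{\geq 2}[n]$ by construction of $\beta$ as a section. Finally I would note that both maps are compatible with the operadic structures, since the operad structure on $W_{l}\calO$ is defined via $\gamma$ decorating new levels by $1$ while the structure on $W\calO$ is defined by grafting planar trees, and these correspond under $\alpha$ (grafting at a leaf followed by forgetting bivalent vertices is $\alpha(T_0 \circ_i T_i) = \alpha(T_0)\circ_i \alpha(T_i)$), with the unit-decorations matching up. As in Proposition~\ref{IsoBarOp}, the routine checks of simpliciality and of commutation with the symmetric group action are straightforward and can be left to the reader; the substantive content is entirely the compatibility of $\alpha$, $\beta$ with the two coend relations.
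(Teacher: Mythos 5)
Your proposal is correct and follows essentially the same route as the paper: the paper's proof defines exactly the maps $f$ and $g$ via $\alpha$ and $\beta$, transporting the level parameter $t_{i}$ to the inner edge whose source (upper) vertex sits on level $i$ and vice versa, and leaves the well-definedness and compatibility checks to the reader. Your more detailed tracking of how each generator of the leveled coend relation sits over a generator (or identity) of the planar one is precisely the verification the paper omits.
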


\begin{proof}
  We build an explicit isomorphism between the two constructions.  According to this notation introduced at the end of  Section \ref{CompTree}, there is an operadic map:
  \begin{align*}
    f : W_{l}\calO(n) & \longrightarrow  W\calO(n),
                      & \bigl[ T; \{\theta_{v}\}_{v\in V(T)}; \{t_{i}\}_{1\leq i\leq h(T)} \bigr] & \longmapsto \bigl[ \alpha(T); \{\theta_{v}\}_{v\in V(T)}; \{t'_{e}\}_{e\in E^{in}(\alpha(T))} \bigr],
  \end{align*}
  where $t'_{e}$ is the maximum of the parameters corresponding to the levels related to the path joining the source vertex of $e$ to its first non-bivalent vertex according to the orientation toward the root. Conversely, one has the continuous map
  \begin{align*}
    g : W\calO(n) & \longrightarrow W_{l}\calO(n),
                  & \bigl[ T; \{\theta_{v}\}_{v\in V(T)}; \{t_{e}\}_{e\in E^{in}(T)} \bigr]
                  & \longmapsto \bigl[ \beta(T); \{\theta_{v}\}_{v\in V(T)}; \{t'_{i}\}_{1\leq i\leq h(\beta(T))} \bigr],
  \end{align*}
  where $t'_{i}=t_{e}$ if the unique non-bivalent vertex on the $i$-th level is the source vertex of $e$. The reader can easily check that the maps so obtained are well defined, compatible with the operadic structures and give rise to an isomorphism between the leveled and usual Boardman--Vogt resolutions.
\end{proof}

\begin{cor}
  Let $\calO$ be a $\Sigma$-cofibrant $1$-reduced operad in spectra. The map $\mu:W_{l}\calO\rightarrow \calO$, sending the parameters indexing the levels to $0$, is a weak equivalence of operads. The operad $W_{l}\calO$ is a cofibrant resolution of $\calO$ in the category of 1-reduced operads equipped with the projective model category structure.
\end{cor}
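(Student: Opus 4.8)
The plan is to reduce the corollary to the classical properties of the usual Boardman--Vogt resolution via the isomorphism of Proposition~\ref{ProWWu}. Since that proposition gives an isomorphism of operads $W_{l}\calO \cong W\calO$, it suffices to prove the two assertions for the usual construction $W\calO$ and transport them back along this isomorphism.

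First I would check that, under the isomorphism $f$ of Proposition~\ref{ProWWu}, the augmentation $\mu$ corresponds to the usual augmentation $W\calO \to \calO$. Recall that $f$ sends $[T; \{\theta_{v}\}; \{t_{i}\}]$ to $[\alpha(T); \{\theta_{v}\}; \{t'_{e}\}]$ with $t'_{e} = t_{i}$ whenever the source vertex of $e$ lies on the $i$-th level. Hence setting every level parameter $t_{i}$ to $0$ is the same as setting every edge parameter $t'_{e}$ to $0$, so $\mu$ is identified with the classical augmentation. It therefore suffices to establish the statements for $W\calO$.

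Next I would invoke the Berger--Moerdijk theory of the $W$-construction \cite{BM, BM2}. The category $\Sp$ of spectra is a cofibrantly generated symmetric monoidal model category equipped with the interval $\Delta[1]_{+}$ recalled in Section~\ref{SectModSpOp}, which is exactly the framework in which their construction applies. Under the hypotheses that $\calO$ is well-pointed and $\Sigma$-cofibrant, their results guarantee both that the augmentation $W\calO \to \calO$ is a weak equivalence of operads and that $W\calO$ is cofibrant in the projective model structure on 1-reduced operads. Transporting these conclusions along $W_{l}\calO \cong W\calO$ then shows that $\mu$ is a weak equivalence and that $W\calO$ (equivalently $W_{l}\calO$) is a cofibrant resolution of $\calO$.

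The hard part is not any new argument but verifying that the classical input applies: one must confirm that the interval $\Delta[1]_{+}$ satisfies the axioms required by the Berger--Moerdijk machinery and that \emph{well-pointed} and \emph{$\Sigma$-cofibrant} are precisely the hypotheses ensuring both the cofibrancy of $W\calO$ and the weak equivalence of the augmentation. These facts are established in \cite{BM2}; granting them, the corollary follows formally from Proposition~\ref{ProWWu}.
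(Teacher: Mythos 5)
Your proposal matches the paper's own proof: both reduce the statement to the Berger--Moerdijk results for the classical $W$-construction and transport them through the isomorphism $W_{l}\calO \cong W\calO$ of Proposition~\ref{ProWWu}. Your added check that $\mu$ corresponds to the usual augmentation under $f$ is a worthwhile (if routine) detail that the paper leaves implicit.
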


\begin{proof}
  The second part of the statement follows from the results of~\cite{BM}.
  The two authors show that if $\calO$ is a well pointed (i.e.\ $* \to \calO(1)$ is a cofibration) and $\Sigma$-cofibrant operad, then the usual Boardman--Vogt resolution is cofibrant replacement of $\calO$ in the projective model category of operad.
  Furthermore, they prove that the map $\mu:W\calO\rightarrow \calO$, sending the parameters indexing the inner edges to $0$, is a weak equivalence of operads.
  By using the isomorphism introduced in Proposition~\ref{ProWWu}, the same is true for the leveled Boardman--Vogt resolution.
\end{proof}

\paragraph{The leveled Boardman--Vogt resolution for 1-reduced $\Lambda$-operads}

Let $\calO$ be a 1-reduced $\Lambda$-operad in spectra.
In order to get a cofibrant resolution of $\calO$ in the Reedy model category $\Lambda\Operad$, we provide a $\Lambda$-structure to the construction introduced in Section \ref{SectBVop}.
As a symmetric sequence, we set
$$
  W_{\Lambda}\calO(n) \coloneqq W_{l}\calO_{>0}(n), \hspace{15pt} \text{for all } n>0,
$$
where $\calO_{>0}$ is the underlying 1-reduced operad of $\calO$. The subscript $\Lambda$ is to emphasize that we work in the category of 1-reduced $\Lambda$-operads. By restriction, $W_{\Lambda}\calO$ inherits operadic compositions
$$
  \gamma:W_{\Lambda}\calO(k)\wedge W_{\Lambda}\calO(n_{1})\wedge\cdots\wedge  W_{\Lambda}\calO(n_{k})\longrightarrow W_{\Lambda}\calO(n_{1}+\cdots+n_{k}).
$$
The $\Lambda$-structure in $W_{\Lambda}\calO$ is defined in the obvious way using the $\Lambda$-structure on the (first non-bivalent) vertex connected to the leaf labeled by $i$. If by doing so, the new point so obtained has a level which consists of bivalent vertices, then we remove it.

\begin{figure}[htbp]
  \hspace{-20pt}\includegraphics[scale=0.4]{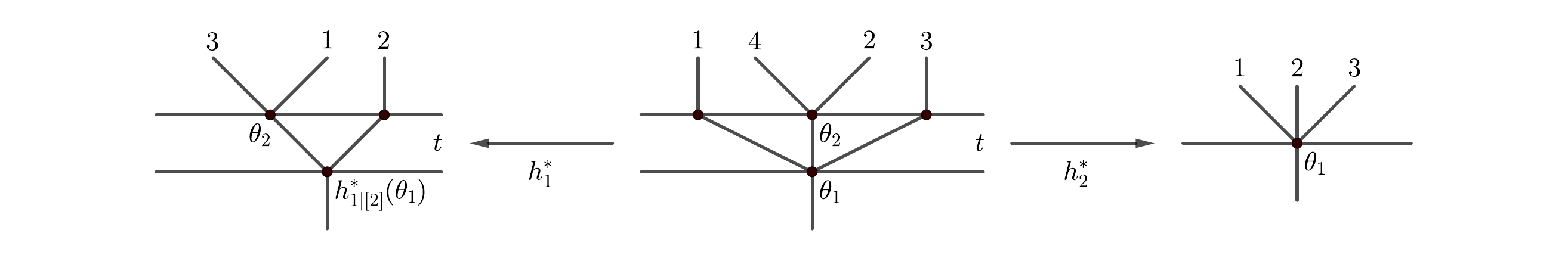}\vspace{-10pt}
  \caption{Illustrations of the $\Lambda$-structure associated to $h_{1},h_{2}:[3]\to [4]$ with $h_{1}(i)=i+1$ and $h_{2}(i)=i$.}
\end{figure}

\begin{pro}\label{prop:w-o-resolution}
  If $\calO$ is a $\Sigma$-cofibrant 1-reduced operad, then $W_{\Lambda}\calO$ is a cofibrant resolution of $\calO$ in the category of 1-reduced $\Lambda$-operads equipped with the Reedy model category structure.
  In particular, the map $\mu:W_{\Lambda}\calO\rightarrow \calO$, sending the parameters indexing the levels to $0$, is a weak equivalence of operads.
\end{pro}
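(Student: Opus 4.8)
The plan is to reduce the Reedy/$\Lambda$-statement to the projective/$\Sigma$-statement already established in the preceding Corollary, using the characterization of Reedy cofibrancy recalled in Section~\ref{SectModSpOp} (namely, a $\Lambda$-operad is Reedy cofibrant if and only if its underlying $\Sigma$-operad is cofibrant). Concretely, I would verify three things in order: that $\mu \colon W_{\Lambda}\calO \to \calO$ is a well-defined morphism of $\Lambda$-operads; that it is a weak equivalence; and that $W_{\Lambda}\calO$ is Reedy cofibrant.

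\paragraph{Step 1: the map is a $\Lambda$-operad morphism and a weak equivalence.}
First I would observe that, as a $\Sigma$-operad, $W_{\Lambda}\calO$ coincides with $W_{l}\calO_{>0}$ for the underlying $1$-reduced $\Sigma$-operad $\calO_{>0}$, so the preceding Corollary (via the isomorphism $W_{l}\calO \cong W\calO$ of Proposition~\ref{ProWWu}) already gives that $\mu$ is a weak equivalence of underlying $\Sigma$-operads, i.e.\ a weak equivalence in each arity. Since weak equivalences in $\Lambda\Operad$ are detected arity-wise, this is automatically a weak equivalence in the Reedy structure. It then remains to check that $\mu$ is compatible with the $\Lambda$-structure: this amounts to checking that the restriction maps $u^{*}$ defined on $W_{\Lambda}\calO$ (by acting on the first non-bivalent vertex above each leaf and deleting any resulting purely-bivalent level) are sent by $\mu$ to the $\Lambda$-structure maps of $\calO$. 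Since $\mu$ sends all level parameters to $0$, collapsing the tree to its single root-vertex decoration, this is a direct unwinding of the definition of the $\Lambda$-action given just before the statement.

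\paragraph{Step 2: Reedy cofibrancy.}
By the cited criterion \cite[Theorem~8.4.12]{Fre}, it suffices to show that the underlying $\Sigma$-operad of $W_{\Lambda}\calO$ is cofibrant in the projective model structure on $\Sigma\Operad$. But that underlying $\Sigma$-operad is exactly $W_{l}\calO_{>0} \cong W\calO_{>0}$, which the preceding Corollary identifies as a cofibrant resolution of $\calO_{>0}$ in the projective model category, using that $\calO$ is well-pointed and $\Sigma$-cofibrant (these hypotheses pass to $\calO_{>0}$ since they are conditions on the arity-$\ge 2$ spectra, which are unchanged). Thus Reedy cofibrancy follows immediately.

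\paragraph{The main obstacle.}
The genuinely new content beyond the $\Sigma$-case is the verification that the $\Lambda$-structure on $W_{\Lambda}\calO$ is well-defined and strictly functorial in $\Lambda$, i.e.\ that $(v\circ u)^{*} = u^{*}\circ v^{*}$ holds after one accounts for the deletion of purely-bivalent levels. The subtlety is that applying a restriction may force the removal of a level, and one must check that iterating restrictions and removing levels commutes up to the coend relations (permutations and contractions of permutable levels) that define $W_{l}\calO$. I expect this to be the main point to argue carefully, although it is essentially combinatorial and follows from the invariance of the coend under permutations of levels, exactly as in the proof that $\gamma$ descends to a strictly associative operad structure. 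The remaining verifications are routine given the earlier results.
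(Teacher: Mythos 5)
Your proposal is correct and follows essentially the same route as the paper: the weak equivalence is deduced arity-wise from the $\Sigma$-case, and Reedy cofibrancy is reduced via Fresse's criterion to the projective cofibrancy of the underlying $1$-reduced operad $W\calO_{>0}$. Your additional care about the well-definedness of the $\Lambda$-structure and the compatibility of $\mu$ with it is a reasonable supplement that the paper leaves implicit in the construction preceding the statement.
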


\begin{proof}
  The map $0 : * \to \Delta^{1}$ is a weak equivalence which implies that the operadic map $\mu$ is a weak equivalence too.
  Moreover, we know from the results of~\cite[§8.5.5.2]{Fre} that a 1-reduced $\Lambda$-operad is Reedy cofibrant if and only if the corresponding 1-reduced operad is cofibrant in the projective model category.
  The 1-reduced operad associated to $W_{\Lambda}\calO$ is $W\calO_{>0}$ which is cofibrant in the projective model category.
\end{proof}

\subsection{The cooperad of indecomposable elements}

In the previous section, we built a cofibrant resolution $W_{l}\calO$ for any $\Sigma$-cofibrant 1-reduced operad $\calO$ in spectra. In what follows we show that the leveled bar construction of the operad $\calO$ can be expressed as the suspension of a cooperad $\Ind(W_{l}\calO)$. Unfortunately, we cannot extend this result to 1-reduced $\Lambda$-operads since the leveled bar construction of a 1-reduced $\Lambda$-operad is not necessarily a 1-reduced $\Lambda$-cooperad.

A point in $W_{l}\calO$ is said to be \textit{indecomposable} if no elements indexing the levels is equal to $1:\ast\rightarrow \Delta[1]$. The indecomposable cooperad $\Ind(W_{l}\calO)$ is obtained by identifying any decomposable element in $W_{l}\calO$ with the basepoint. In other words, if we modify slightly the functor $H$ as follows:
\begin{align*}
  H''_{W} : \lTree[n]^{op} & \longrightarrow \sSet,
  \\
  T                        & \longmapsto
  \begin{cases}
    \tilde{\Delta}[T]/ \tilde{\Delta}_{0}[T] & \text{if } n>1, \\
    \hspace{2em}\ast                         & \text{if } n=1,
  \end{cases}
\end{align*}
where $\tilde{\Delta}[T]=\prod_{1\leq i\leq h(T)} \Delta[1]$ and  $\tilde{\Delta}_{0}[T]$ is the simplicial subset consisting of faces where at least one of the levels has value $1$. By construction, $H''(T)$ is already a pointed simplicial set.

\begin{defi}
  the cooperad of indecomposable points is defined as simplicial spectrum given by the coend:
  $$
    \Ind(W_{l}\calO)(n) \coloneqq \int^{T\in \lTree[n]}\overline{\calO}_{W}(T)\wedge H''_{W}(T).
  $$
\end{defi}

For any partition $n = n_{1} + \dots + n_{k}$, the cooperadic operation
$$
  \gamma^{c}:\Ind(W_{l}\calO)(n_{1}+\cdots+n_{k})\longrightarrow \Ind(W_{l}\calO)(k)\wedge \Ind(W_{l}\calO)(n_{1})\wedge \cdots \wedge \Ind(W_{l}\calO)(n_{k}),
$$
is defined as follows. Consider an element $[T\,;\,\{x_{v}\}\,;\,\{t_{i}\}]$. If $T$, up to permutations of permutable levels and contractions of permutable levels, is not of the form $\gamma(T_{0}\,;\,\{T_{i}\})$ with $T_{0}\in \lTree[k]$ and $T_{i}\in \lTree[n_{i}]$ then $\gamma^{c}([T\,;\,\{x_{v}\}\,;\,\{t_{i}\}])$ is the basepoint. Otherwise, the element is sent to the family
$$
  [T_{0}\,;\,\{x_{v}^{0}\}\,;\,\{t_{j}^{0}\}]\,\,\,\,;\,\,\,\,\big\{ \, [T_{i}\,;\,\{x_{v}^{i}\}\,;\,\{t_{j}^{i}\}]\,\big\}_{i\in I}\in \Ind(W_{l}\calO)(k)\wedge \bigwedge_{1\leq i \leq k} \Ind(W_{l}\calO)(n_{i}),
$$
where the parameters indexing the vertices and the levels of the leveled trees $T_{0}$ and $T_{i}$ are induced by the parameters indexing the leveled tree $T$. This structure is similar to the cooperadic structure introduced on the leveled bar construction introduced in Section \ref{SectBarOp}. Actually, one has the following connection between the Boardman--Vogt resolution and the bar construction:

\begin{pro}\label{ProOpFr}
  The leveled bar construction of the operad $\calO$ is isomorphic to the suspension of the cooperad of indecomposable elements:
  $$
    \mathcal{B}_{l}\calO\cong \Sigma \Ind(W_{l}\calO).
  $$
\end{pro}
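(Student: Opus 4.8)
The plan is to prove the statement arity by arity, by comparing the two coends that define $\calB_{l}(\calO)(n)$ and $\Ind(W_{l}\calO)(n)$ factor by factor. Since both coends run over the same indexing category $\Tree_{l}[n]$ and carry the same ``operadic'' factor $\overline{\calO}_{B}=\overline{\calO}_{W}$ (both send $T$ to $\bigwedge_{v\in V(T)}\calO(|v|)$), all of the work is concentrated in the simplicial factors $H_{B}$ and $H''_{W}$. First I would construct a natural transformation $\Phi_{T}\colon H_{B}(T)\xrightarrow{\cong}\Sigma H''_{W}(T)=S^{1}\wedge H''_{W}(T)$ of functors $\Tree_{l}[n]^{op}\to\sSet$. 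The idea is to read off the suspension coordinate from the root level: the parameter $t_{0}\in\Delta[1]$ indexing level $0$, together with the identifications built into $H_{B}$, realizes the reduced circle $S^{1}=\Delta[1]/\partial\Delta[1]$, while the parameters $t_{1},\dots,t_{h(T)}$ assemble exactly into $H''_{W}(T)$. Concretely this is the based homeomorphism $(\Delta[1]\times Y)/\bigl((\partial\Delta[1]\times Y)\cup(\Delta[1]\times B)\bigr)\cong(\Delta[1]/\partial\Delta[1])\wedge(Y/B)$ applied with $Y=\tilde{\Delta}[T]=\prod_{1\le i\le h(T)}\Delta[1]$ and $B=\tilde{\Delta}_{0}[T]$.

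The crucial point is the naturality of $\Phi$, and this is where I expect essentially all of the genuine work to lie. I would check it on the three types of generating morphisms of $\Tree_{l}[n]$ separately. Isomorphisms and permutations of permutable levels never move the root level, so they act only on the $H''_{W}$ factor and commute with $\Phi$ at once. The decisive observation is that levels $0$ and $1$ are \emph{never} permutable: the root $r$ always has arity $|V_{1}(T)|\ge 2$, so its outgoing edges cannot all have a bivalent endpoint. Consequently the only contraction morphism that touches level $0$ is the non-permutable contraction $\delta_{\{1\}}$, whose action on $H_{B}$ is $(t_{0},t_{1},\dots)\mapsto(t_{0},0,t_{1},\dots)$: it fixes $t_{0}$ and inserts a degenerate $0$ at level $1$. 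This is precisely $S^{1}\wedge H''_{W}(\delta_{\{1\}})$, so $\Phi$ is natural; all remaining contractions act away from level $0$ and the verification is immediate. Once $\Phi$ is natural, I would invoke that $\Sigma=S^{1}\wedge(-)$ is a left adjoint, hence commutes with coends, to obtain arity-wise
\[
\calB_{l}(\calO)(n)=\int^{T}\overline{\calO}_{B}(T)\wedge H_{B}(T)\;\cong\;\int^{T}\overline{\calO}_{W}(T)\wedge\Sigma H''_{W}(T)\;\cong\;\Sigma\int^{T}\overline{\calO}_{W}(T)\wedge H''_{W}(T)=\Sigma\Ind(W_{l}\calO)(n).
\]
In the spirit of the proofs of Proposition~\ref{IsoBarOp} and Proposition~\ref{ProWWu}, the same isomorphism can be phrased by mutually inverse maps on points, $f\bigl[T;\{\theta_{v}\};\{t_{j}\}_{0\le j\le h(T)}\bigr]=\bigl[t_{0}\wedge[T;\{\theta_{v}\};\{t_{j}\}_{1\le j\le h(T)}]\bigr]$ and its inverse, checking that the collapses $t_{0}\in\{0,1\}$ and $t_{i}=1$ are sent to the basepoint on both sides.

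Finally I would verify compatibility with the cooperadic structures. Under a decomposition $T=\gamma(T_{0};\{T_{i}\})$ the root level of $T$ is exactly the root level of the outer tree $T_{0}$, so the circle produced by $\Phi$ is carried onto the outer factor, while the root levels of the inner trees $T_{i}$ occur at strictly positive levels of $T$ located precisely at the cuts of the decomposition. I would then check that restricting the level parameters to each sub-tree matches the suspension coordinate of the corresponding factor, so that the operation $\gamma^{c}$ on $\calB_{l}(\calO)$ corresponds under $\Phi$ to the cooperadic operation on $\Sigma\Ind(W_{l}\calO)$. I expect the main obstacle to be exactly this bookkeeping of how the single suspension coordinate is distributed across the factors of a decomposition, together with confirming (as in the naturality step above) that the root-level parameter genuinely realizes $S^{1}$ rather than a contractible cone; the non-permutability of levels $0$ and $1$ is what makes both of these controllable.
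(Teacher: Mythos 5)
Your proof follows essentially the same route as the paper's: the paper's one-paragraph argument likewise identifies the suspension coordinate with the parameter of the $0$-th level and then records the cooperadic structure on $\Sigma\Ind(W_{l}\calO)$, leaving all verifications to the reader. Your version actually supplies more than the paper does: the naturality check on the three types of generating morphisms, the (correct, and genuinely needed) observation that levels $0$ and $1$ of a leveled tree are never permutable, and the cocontinuity of $\Sigma$ to pass the suspension through the coend.

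One step deserves scrutiny, although the difficulty is inherited from the paper rather than created by you. The homeomorphism $(\Delta[1]\times Y)/\bigl((\partial\Delta[1]\times Y)\cup(\Delta[1]\times B)\bigr)\cong S^{1}\wedge(Y/B)$ needs \emph{both} faces $t_{0}=0$ and $t_{0}=1$ to be collapsed on the left-hand side, whereas $\Delta_{0}[T]$ is defined in Section~\ref{SectBarOp} to contain only the faces with $t_{0}=0$ (besides those with $t_{i}=1$ for $i\ge 1$). With that literal definition $H_{B}(T)$ is a cone on $H''_{W}(T)$ rather than its suspension, so your assertion that the collapses ``$t_{0}\in\{0,1\}$'' are sent to the basepoint on both sides fails for $t_{0}=1$, and one would only obtain the weak equivalence announced in Theorem~\ref{thmA}(b) rather than the isomorphism claimed in the Proposition. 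Either $\Delta_{0}[T]$ must be read as also containing the face $t_{0}=1$ (in which case your argument goes through verbatim), or the conclusion has to be weakened; you should say explicitly which convention you adopt. Apart from this point, which the paper's own proof also glosses over, your argument — including the distribution of the suspension coordinates over the factors of a decomposition $T=\gamma(T_{0};\{T_{i}\})$ — matches the intended one.
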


\begin{proof}
  Taking the indecomposables of $W_{l}\calO$ identifies to the base point all points whose underlying tree has a level of length 1. The suspension coordinate gives us a length for the $0$-th level in the bar construction. To complete the proof, we recall quickly the cooperadic structure on  $\Sigma \Ind(W_{l}\calO))$. We denote by $[T\,;\,\{\theta_{v}\}\,;\,\{t_{j}\}\,;\,x]$ a point in $\Sigma \Ind(W_{l}\calO))$ where $x$ is the suspension coordinate. The cooperadic composition is defined as follows:
  $$
    \begin{aligned}
      \Sigma \Ind(W_{l}\calO))(n_{1}+\cdots+n_{k})           & \longrightarrow \Sigma \Ind(W_{l}\calO))(k)\wedge \Sigma \Ind(W_{l}\calO))(n_{1})\wedge\cdots \wedge \Sigma \Ind(W_{l}\calO))(n_{k}), \\
      \left[T\,;\,\{\theta_{v}\}\,;\,\{t_{j}\}\,;\,x \right] & \longmapsto
      \begin{cases}
        \{\,[T_{i}\,;\,\{\theta_{v}^{i}\}\,;\,\{t_{j}^{i}\}\,;\,x_{i}]\,\}_{i\in I\sqcup \{0\}} & \text{if } T\sim \gamma(T_{0}\,,\,\{T_{i}\}) \text{ is decomposable}, \\
        \hspace{4em}\ast                                                                        & \text{otherwise},
      \end{cases}
    \end{aligned}
  $$
  where $x_{0}=x$ and $x_{i}$, with $i\in I$, is the element indexing the level in $\gamma(T_{0}\,,\,\{T_{i}\})$ corresponding to the root of $T_{i}$. The reader can easily check that the structure so obtained is well defined and compatible with the isomorphism.
\end{proof}

\subsection{The Boardman--Vogt resolution and the cobar-bar construction}\label{SecBVtoCobBar}

In what follows, we adapt the definition of the cobar construction for 1-reduced cooperads in spectra from \cite{Chi}, but using the notion of leveled trees instead of planar trees.
Then we show that this construction is isomorphic to the usual one. After that, we prove that the leveled Boardman--Vogt resolution of a 1-reduced operad $\calO$ in spectra is weakly equivalent to its leveled cobar-bar construction.

\paragraph{The leveled cobar construction for 1-reduced cooperads in spectra}

From \cite{Chi2}, we recall that the simplicial indexing category $\Delta$ has an automorphism $\calR$ that sends a totally ordered set to the same set with the opposite order. For a simplicial set $X$, the reverse of $X$, denoted by $X^{rev}$, is the simplicial set $X\circ \calR$. Let $\calC$ be a 1-reduced cooperad in spectra. We introduce the functor
\begin{equation*}
  \overline{\calC}_{\Omega} : \lTree[n]^{op} \longrightarrow \Sp, \qquad T \longmapsto \bigwedge_{v \in V(T)} \calC(|v|),
\end{equation*}
defined on morphisms using the cooperadic structure of $\calC$.

\begin{defi}
  The leveled cobar construction associated to a 1-reduced cooperad $\calC$ in spectra is the end
  \begin{equation}\label{CobarCoop}
    \Omega_{l}\calC(n) \coloneqq \int_{T\in \lTree[n]}\Map\big(\, H_{B}(T)^{rev}; \overline{\calC}_{\Omega}(T)\,\big),
  \end{equation}
  where $H_{B}$ is the functor given by the formula \eqref{EqFunctOH}.
  By $\Map(-;-)$ we understand the cotensoring of $\Sp$ over pointed simplicial sets. Concretely, a point in $\Omega_{l}\calC(n)$ is a family of maps $\Phi=\{\Phi_{T}:H_{B}(T)\rightarrow \overline{\calC}_{\Omega}(T), \, T\in \lTree[n]\}$ satisfying the following relations: for each permutation $\sigma$ and each contraction morphism $\delta_{N}$, one has the commutative diagrams
  \begin{equation}\label{RelCobar}
    \begin{tikzcd}[column sep = large]
      H_{B}(T\cdot\sigma) \ar[r, "H(\sigma)"] \ar[d, "\Phi_{T\cdot\sigma}"] & H_{B}(T) \ar[d, "\Phi_{T}"]
      & H_{B}(\delta_{N}(T)) \ar[r, "H(\delta_{N})"] \ar[d, "\Phi_{\delta_{N}(T)}"] & H_{B}(T) \ar[d, "\Phi_{T}"] \\
      \overline{\calC}_{\Omega}(T\cdot \sigma) \ar[r, "\overline{\calC}(\sigma)"] & \overline{\calC}_{\Omega}(T)
      & \overline{\calC}_{\Omega}(\delta_{N}(T)) \ar[r, "\overline{\calC}(\delta_{N})"] & \overline{\calC}_{\Omega}(T)
    \end{tikzcd}
  \end{equation}

  The sequence $\Omega_{l}\calC=\{\Omega_{l}\calC(n)\}$ forms an operad in spectra whose operadic composition
  \begin{align*}
    \gamma: \Omega_{l}\calC(k) \wedge \Omega\calC(n_{1})\wedge \cdots \wedge \Omega\calC(n_{k}) & \longrightarrow  \Omega_{l}\calC(n_{1}+\cdots +n_{k}),                                                                                                        \\
    \Phi_{0}\,;\,\{\Phi_{i}\}                                                                   & \longmapsto \gamma_{\Omega}(\Phi_{0}\,;\,\{\Phi_{i}\})=\bigl\{\,\gamma_{\Omega}(\Phi_{0}\,;\,\{\Phi_{i}\})_{T}, \, T\in \lTree[n_{1}+\cdots +n_{k}]\,\bigr\},
  \end{align*}
  is defined as follows. If, up to permutations  of permutable levels and contractions of permutable levels, the leveled tree $T$ is not of the form $\gamma(T_{0}\,;\,\{T_{i}\})$, with $T_{0}\in \lTree[k]$ and $T_{i}\in \lTree[n_{i}]$, then $\gamma_{\Omega}(\Phi_{0}\,;\,\{\Phi_{i}\})_{T}$ sends any decoration of the levels to the basepoint. Otherwise, we define $\gamma_{\Omega}(\Phi_{0}\,;\,\{\Phi_{i}\})_{T}$ to be the composition
  $$
    \gamma_{\Omega}(\Phi_{0}\,;\,\{\Phi_{i}\})_{T}:H_{B}(T)\longrightarrow H_{B}(\gamma(T_{0}\,;\,\{T_{i}\})) \cong H(T_{0}) \wedge \bigwedge_{1\leq i \leq  k} H(T_{i}) \longrightarrow \overline{\calC}_{\Omega}(T_{0}) \wedge \bigwedge_{1\leq i \leq  k} \overline{\calC}_{\Omega}(T_{i}) \cong \overline{\calC}_{\Omega}(T).
  $$
\end{defi}

\begin{defi}[The usual cobar construction for $1$-reduced cooperads]\label{def:usual-cobar}
  For more details, we refer the reader to \cite{Chi2}.  We recall that $\Tree^{\geq 2}[n]$ is the category of planar $n$-trees having vertices with valences $\geq 2$ and whose morphisms are generated by isomorphisms of planar trees and contractions of inner edges. Given a 1-reduced cooperad $\calC$, we introduce the two functors
  $$
    \begin{aligned}
      \overline{\calC}'_{\Omega} : \Tree^{\geq 2}[n] & \longrightarrow \Sp \; ,
                                                     & T                        & \longmapsto  \underset{v \in V(T)}{\bigwedge} \calC(|v|);
    \end{aligned}
  $$
  defined on morphisms using the cooperadic structure of $\calC$. The usual cobar construction of $\calO$ is defined as the end
  $$
    \Omega(\calC)(n) \coloneqq \int_{T\in \Tree^{\geq 2}[n]}\, \overline{\calC}'_{\Omega}(T)\wedge H'_{B}(T),
  $$
  where $H'_{B}$ is given by the formula \eqref{H'}. A point in $\Omega\calC(n)$ is a family of maps $\Phi=\{\Phi_{T}:H_{B}(T)\rightarrow \overline{\calC}_{\Omega}(T), \, T\in \Tree^{\geq 2}[n]\}$ some relations induced by the end.
\end{defi}

\begin{pro}\label{pro:iso-omega-usual}
  The leveled cobar construction is isomorphic to the usual cobar construction:
  $$
    \Omega_{l}\calC\cong \Omega\calC.
  $$
\end{pro}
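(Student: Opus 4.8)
The plan is to build an explicit isomorphism $\Omega_{l}\calC \cong \Omega\calC$ following exactly the same strategy as in the proofs of Proposition~\ref{IsoBarOp} and Proposition~\ref{ProWWu}, now transported to the cobar (end) side using the bijections $\alpha$ and $\beta$ between planar trees and leveled trees. First I would recall the usual cobar construction $\Omega\calC(n) = \int_{T\in\mathcal{T}[n]}\Map(H'_{B}(T)^{rev};\overline{\calC}'_{\Omega}(T))$, where $\mathcal{T}[n]$ is the category of planar $n$-trees (isomorphisms and contractions of inner edges), $H'_{B}$ labels the vertices of a planar tree by $\Delta[1]$, and $\overline{\calC}'_{\Omega}(T) = \bigwedge_{v\in V(T)}\calC(|v|)$. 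A point of $\Omega\calC(n)$ is then a compatible family $\Psi = \{\Psi_{T}:H'_{B}(T)\to\overline{\calC}'_{\Omega}(T)\}_{T\in\mathcal{T}[n]}$ satisfying the naturality squares for isomorphisms and edge-contractions, exactly dual to the relations~\eqref{RelCobar}.

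Next I would define the two comparison maps. A point of $\Omega_{l}\calC(n)$ is a family $\Phi = \{\Phi_{T}\}_{T\in\Tree_{l}[n]}$; I send it to the family indexed by planar trees by setting $(f\Phi)_{T} \coloneqq \Phi_{\beta(T)}$ for each planar tree $T$, using the chosen section $\beta:\Tree_{\geq 2}[n]\to\sTree_{l}[n]$, and reindexing the level-parameters to vertex-parameters via the correspondence ``the $i$-th level of $\beta(T)$ carries the unique non-bivalent vertex $v$''. Conversely, given $\Psi\in\Omega\calC(n)$, I set $(g\Psi)_{T} \coloneqq \Psi_{\alpha(T)}$ composed with the map $H_{B}(T)\to H'_{B}(\alpha(T))$ that identifies each level-parameter $t_{i}$ with the parameter of every non-bivalent vertex on level $i$ (bivalent vertices and the levels they occupy are collapsed under $\alpha$, consistently with how $H_{B}$ treats permutable contractions via the diagonal). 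The key observation, already packaged in the paragraph ``Links between planar trees and leveled trees'', is that $\alpha\circ\beta = \id$ and that two leveled trees have the same image under $\alpha$ iff they differ by permutations and contractions of permutable levels; since an element of $\Omega_{l}\calC$ is by definition \emph{already} coherent with respect to precisely those morphisms (that is the content of the compatibility diagrams~\eqref{RelCobar}), the value $\Phi_{\beta(\alpha(T))}$ determines $\Phi_{T}$ for every $T$. This is what makes $f$ and $g$ mutually inverse.

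I would then verify the three routine points: (i) $f\Phi$ and $g\Psi$ genuinely satisfy the respective compatibility relations — for $f$ this uses that contractions of inner edges in $\mathcal{T}[n]$ correspond to contractions of non-permutable consecutive levels (the case giving the ``insert $0$'' formula in $H_{B}(\delta)$), while the permutable-level relations on the source side become redundant after applying $\alpha$; (ii) $f$ and $g$ are mutually inverse, which reduces to $\alpha\circ\beta=\id$ together with the permutation-invariance just discussed; and (iii) both maps commute with the operadic structure, which follows because $\gamma$ on leveled trees descends to grafting of planar trees under $\alpha$ (up to permutation of levels, to which both constructions are insensitive), so the composite defining $\gamma_{\Omega}$ matches the classical cobar composition arity by arity.

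The main obstacle will be point (i), and specifically the careful bookkeeping that the compatibility conditions~\eqref{RelCobar} on the leveled side are exactly strong enough to recover a \emph{well-defined} family indexed by \emph{all} leveled trees (not just the chosen representatives $\beta(T)$) from the planar data, and no stronger. Concretely one must check that collapsing the permutable-level coherence via the diagonal map in $H_{B}$ matches the identification of equal vertex-parameters forced by $\alpha$, so that $g\Psi$ automatically satisfies the permutation squares even though $\Psi$ carries no such data; dually, one must confirm that $f\Phi$ does not secretly depend on the choice of representative $\beta(T)$. This is the dual, on the mapping-space/end side, of the coend-level verification that was ``left to the reader'' in Proposition~\ref{IsoBarOp}, and the only genuinely new subtlety is that ends (unlike coends) impose the coherence as a limit condition, so the argument runs through commuting limits with the fixed section $\beta$ rather than through an explicit choice of representatives.
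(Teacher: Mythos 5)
Your proposal is correct and follows essentially the same route as the paper: both define the comparison maps by precomposing with $\alpha$ (usual to leveled) and with the section $\beta$ (leveled to usual), and both rely on the compatibility diagrams~\eqref{RelCobar} to show independence of the choice of representative in $\alpha^{-1}(T)$ and hence that the maps are mutually inverse and compatible with the operadic structure. Your extra discussion of the level-to-vertex parameter bookkeeping just makes explicit what the paper leaves to the reader.
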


\begin{proof}
  As a consequence of the comparison morphisms between planar $n$-trees and leveled $n$-trees introduced in Section \ref{CompTree}, one can build an explicit isomorphism
  $$
    L_{n}:\Omega\calC(n)\leftrightarrows \Omega_{l}\calC(n):R_{n}.
  $$
  Let $\Phi$ be an element in $\Omega\calC(n)$ and $T$ be a leveled $n$-tree. Then, $L_{n}(\Phi)_{T}$, the map associated to the leveled tree $T$, is given by $\Phi_{\alpha (T)}$.
  Conversely, let $\Phi'$ be an element in $\Omega_{l}\calC(n)$ and $T'$ be a rooted planar tree.
  Then, $R_{n}(\Phi')_{T}$, the map associated to $T'$, is given by $\Phi'_{\beta (T')}$.
  The map $R_{n}$ does not depend on the fix point $T_{l}\in \alpha^{-1}(T)$ due to the relations \eqref{RelCobar}.
  So, the maps $L_{n}$ and $R_{n}$ are well defined and provide an isomorphism preserving the cooperadic structures.
\end{proof}

\paragraph{Connection between Boardman--Vogt resolutions and cobar-bar constructions}

Let $\calO$ be a 1-reduced operad in spectra.
In Section \ref{SectBVop}, we built a cofibrant resolution of $\calO$ through the leveled Boardman--Vogt resolution $W_{l}\calO$. In Section \ref{SectBarOp}, we introduced a leveled version of the bar construction, denoted $\calB_{l}(\calO)$, which is isomorphic to the usual bar construction.
According to our definition of the leveled cobar construction in the previous section, we apply the strategy used by \cite{Chi} in order to build a map
$$
  \Gamma_{S}:W_{l}\calO(n)\longrightarrow \Omega_{l}\calB_{l}(\calO)(n).
$$

A point in the cobar-bar construction $\Omega_{l}\calB_{l}(\calO)(n)$ is the data of a family of maps $\Phi=\{\Phi_{T}:H_{B}(T)\rightarrow \overline{\calB_{l}(\calO)}_{\Omega}(T),\,\,T\in \lTree[n]\}$ satisfying the relations \eqref{RelCobar}. A point in $\overline{\calB_{l}(\calO)}_{\Omega}(T)$ is a family of elements in $\calB_{l}(\calO)$ indexed the vertices of the leveled tree $T$.

\begin{notat}
  Let $T_{1}$ and $T_{2}$ be two leveled $n$-trees. We say that $T_{1}\geq T_{2}$ if, up to permutations of permutable levels and contractions of permutable levels, $T_{2}$ can be obtained from $T_{1}$ by contracting levels. Given two such trees $T_{1}\geq T_{2}$, we fix the following notation.

  To each vertex $v\in V(T_{2})$ we associate a leveled sub-tree $T_{1}[v]$ of the leveled tree $T_{1}$ in such a way that $T_{1}$ is obtained (up to permutations and contractions of permutable levels) by grafting all the trees $T_{1}[v]$ together.
  For instance, from the two leveled trees $T_{1}\geq T_{2}$:

  \hspace{-30pt}\includegraphics[scale=0.32]{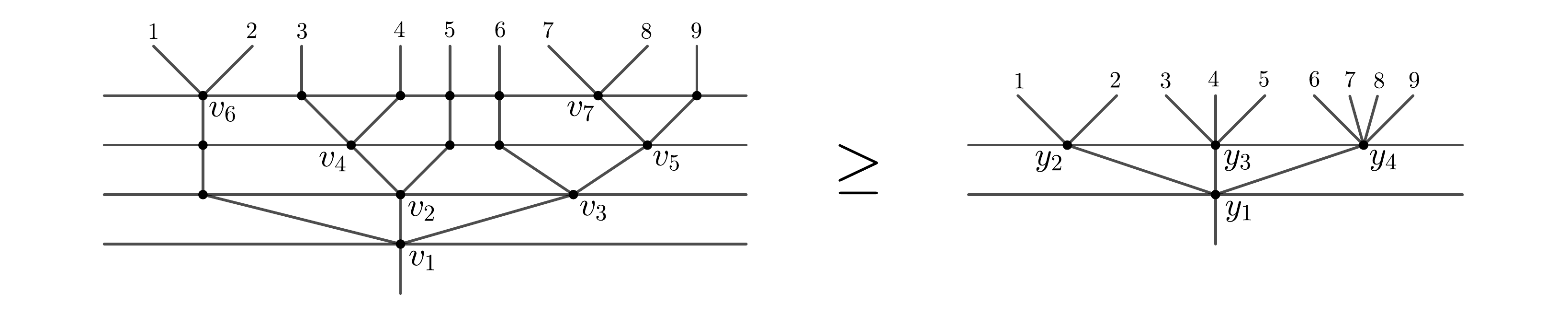}

  \noindent the sub-leveled trees associated to the vertices $y_{1},\ldots,y_{4}$ are the following ones:

  \hspace{-40pt}\includegraphics[scale=0.33]{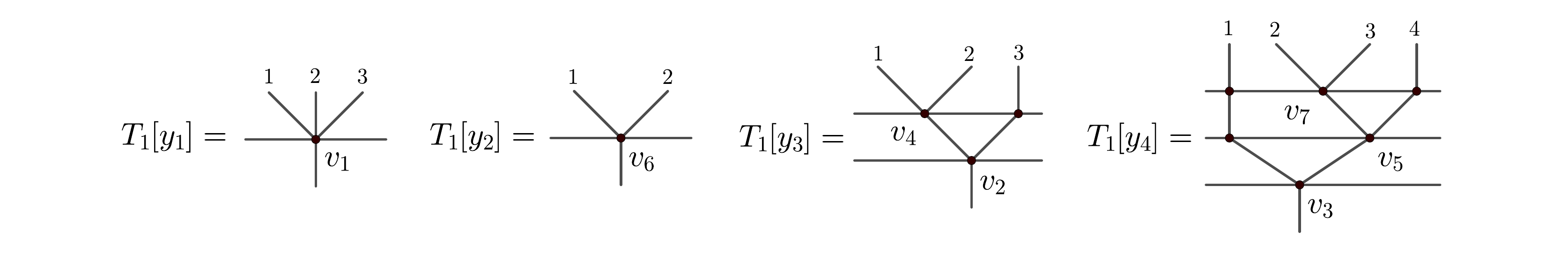}

  For any vertex $v\in V(T_{2})$, we denote by $or_{v}:V(T_{1}[v])\rightarrow V(T_{1})$ the map assigning to a vertex in $T_{1}[v]$ the corresponding vertex in $T_{1}$. Similarly, let $ol_{v}:\{0,\ldots,h(T_{1}[v])\}\rightarrow \{0,\ldots,h(T_{1})\}$ be the map assigning to a level in $T_{1}[v]$ the corresponding level in $T_{1}$. For instance, in the above example one has $ol_{y_{2}}(0)=3=ol_{y_{4}}(2)$ and $ol_{y_{4}}(1)=2$.
\end{notat}

Finally, the map between the leveled Boardman--Vogt resolution and the leveled cobar-bar construction
\begin{equation}\label{eq:BWtoCobarbar}
  \begin{aligned}
    \Gamma_{n} : W_{l}\calO(n)                 & \longrightarrow \Omega_{l}\calB_{l}(\calO)(n),                                 \\
    x=[T_{1}\,;\,\{\theta_{v}\}\,;\,\{t_{l}\}] & \longmapsto \Phi_{x}=\big\{ \Phi_{x;T_{2}}\,\,,\,\,T_{2}\in \lTree[n]\,\big\},
  \end{aligned}
\end{equation}
is defined as follows.
If $T_{1}\ngeq T_{2}$, then $\Phi_{x;T_{2}}$ is the basepoint in $\overline{\calB_{l}(\calO)}_{\Omega}(T_{2})$.
Otherwise, one has
\begin{align}
  \Phi_{x;T_{2}} : H_{B}(T)               & \longrightarrow \overline{\calB_{l}(\calO)}_{\Omega}(T_{2}),                                     \\
  \{\tilde{t}_{i}\}_{0\leq i\leq h(T_{2}} & \longmapsto \bigl\{\,[T_{1}[v]\,;\,\{\theta_{y}[v]\}\,;\,\{t_{j}[v]\}]\,\bigr\}_{v\in V(T_{2})},
\end{align}
where
$$
  \theta_{y}[v]=\theta_{or_{v}(y)}
  \hspace{15pt} \text{and} \hspace{15pt}
  t_{j}[v]
  =
  \begin{cases}
    t_{ol_{v}(j)}                                               & \text{if } j>0,                                                  \\
    1-t_{ol_{v}(j)}                                             & \text{if } j=0 \text{ and } v \text{ is the root of } T_{2},     \\
    \max\big(\, 0\,;\, t_{ol_{v}(j)}-\tilde{t}_{\lev(v)}\,\big) & \text{if } j=0 \text{ and } v \text{ is not the root of } T_{2}.
  \end{cases}
$$

\begin{pro}\label{prop:bw-cobarbar}
  The map \eqref{eq:BWtoCobarbar} induces a weak equivalence of 1-reduced operads.
\end{pro}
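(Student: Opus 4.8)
The plan is to reduce the statement to Ching's comparison theorem for the classical constructions, using the isomorphisms between the leveled and usual constructions established above. Before addressing the weak equivalence, one first checks that $\Gamma=\{\Gamma_{n}\}$ is a well-defined morphism of operads. Concretely, I would verify three things: that the assignment \eqref{eq:BWtoCobarbar} respects the coend relations defining $W_{l}\calO(n)$ (the symmetric group action together with permutations and contractions of permutable levels); that for each point $x$ the family $\Phi_{x}=\{\Phi_{x;T_{2}}\}$ satisfies the naturality squares \eqref{RelCobar}, so that it genuinely defines a point of the end $\Omega_{l}\calB_{l}(\calO)(n)$; and that $\Gamma$ intertwines the operadic composition $\gamma$ on $W_{l}\calO$ with the composition $\gamma_{\Omega}$ on $\Omega_{l}\calB_{l}(\calO)$. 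The truncation rule $t_{j}[v]=\max(0,\,t_{ol_{v}(j)}-\tilde{t}_{\lev(v)})$ appearing in the definition of $\Gamma_{n}$ is precisely what makes these compatibilities hold, as it interpolates correctly when levels are grafted or contracted.

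The key step is then to identify $\Gamma$ with the classical comparison map. Transporting $\Gamma_{n}$ across the arity-wise isomorphisms $W_{l}\calO\cong W\calO$ (Proposition~\ref{ProWWu}), $\calB_{l}\calO\cong\calB\calO$ (Proposition~\ref{IsoBarOp}), and $\Omega_{l}\calC\cong\Omega\calC$ (the preceding proposition) --- all built from the bijection-up-to-permutation $\alpha,\beta$ between planar and leveled trees --- yields a morphism $W\calO(n)\to\Omega\calB\calO(n)$. I claim this is exactly the map constructed by Ching in \cite{Chi}: under $\alpha$, the parameter $t_{i}$ indexing the $i$-th level of a leveled tree corresponds to the parameter $t_{e}$ indexing the inner edge whose source sits on that level, and under this dictionary the decoration rules --- in particular the $\max(0,\,\cdot\,)$ truncation --- agree on the nose with Ching's coface interpolation formula. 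Establishing this identification is the main obstacle: one must track carefully how a single level of $T_{2}$ (which may carry several non-bivalent vertices and is only well defined up to permutation of permutable levels) corresponds to the inner edges of the underlying planar tree $\alpha(T_{2})$, and check that the decorations match at the level of the coend/end, where these permutations have been quotiented out.

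Granting this identification, the conclusion follows from Ching's theorem \cite{Chi,Chi2}: for a $1$-reduced operad $\calO$ in spectra the comparison map $W\calO\to\Omega\calB\calO$ is a weak equivalence of operads. Note that all the constructions of this section involve only the positive-arity components $\{\calO(n)\}_{n\geq 1}$, since leveled trees have no nullary vertices; thus the discrepancy between Ching's convention $\calO(0)=\varnothing$ and ours is immaterial, and his result applies in our setting. Since weak equivalences of operads are detected arity by arity, and since the isomorphisms of Proposition~\ref{ProWWu}, Proposition~\ref{IsoBarOp}, and the cobar isomorphism are arity-wise isomorphisms of spectra, we conclude that $\Gamma$ is a weak equivalence of $1$-reduced operads.

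As an alternative that sidesteps the explicit identification, one could argue by the two-out-of-three property: the augmentation $\mu:W_{l}\calO\to\calO$ sending all level parameters to $0$ is a weak equivalence (by the corollary following Proposition~\ref{ProWWu}), and $\Omega_{l}\calB_{l}(\calO)\to\calO$ is a weak equivalence since bar-cobar duality resolves $\calO$; provided $\Gamma$ commutes with these two augmentations, it is then a weak equivalence. However, verifying commutation with the augmentation on the cobar-bar side is itself of comparable difficulty, so I would favour the direct comparison with Ching's map above.
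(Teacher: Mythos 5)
Your proposal is correct and follows essentially the same route as the paper: the paper's proof likewise deduces the statement directly from Ching's comparison theorem (\cite[Theorem 2.15]{Chi2}) together with the isomorphisms identifying the leveled Boardman--Vogt resolution and the leveled cobar-bar construction with the usual ones. Your additional care in checking that $\Gamma$ is well defined and that it matches Ching's map under the $\alpha,\beta$ dictionary is a sensible elaboration of what the paper leaves implicit, but it is not a different argument.
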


\begin{proof}
  It is a direct consequence of \cite[Theorem 2.15]{Chi2} and the fact that the leveled Boardman--Vogt resolution as well as the leveled cobar-bar construction are both isomorphic to the usual constructions.
\end{proof}

\begin{rmk}\label{rmk:lambda-fresse}
  As far as we know, the bar construction of a $\Lambda$-operad does not inherit a $\Lambda$-cooperad structure, which prevents us from extending our results to $\Lambda$-operads -- one would need a $B\Lambda$ structure on $\mathcal{P}$ for $B\mathcal{P}$ to be a $\Lambda$-cooperad~\cite[Proposition~2.4]{FresseTurchinWillwacher2017}.

  Fresse showed in the algebraic setting that the cobar-bar resolution of a dg-$\Lambda$-operad inherits a $\Lambda$-structure~\cite[Proposition~C.2.18]{Fre}.
  His constructions are in some sense dual to ours.
  While our bar construction is defined by a coend and our cobar construction by an end (similarly to Ching's work~\cite{Chi2}), in Fresse's work the bar construction is an end and the cobar construction is a coend.
  Fresse's result is thus more closely related to our results on cooperads and cobimodules (see Sections~\ref{sec:fibr-resol-hopf} and~\ref{sec:fibr-resol-hopf-2}).
\end{rmk}

\section{Cofibrant resolutions for $\Lambda$-bimodules in spectra}
\label{sec:cofibr-resol-lambda-1}

Let $\calP$ and $\calQ$ be two 1-reduced operads and $M$ be a ($\calP$-$\calQ$)-bimodule in spectra. The aim of this section is to introduce a kind of Boardman--Vogt $W_{l}$ resolution for any ($W\calP$-$W\calQ$)-bimodule $M$ and to prove that the leveled two-sided bar construction of $M$ can be expressed as the suspension of the ($\Ind(W\calP)$-$\Ind(W\calQ)$)-cobimodule of indecomposable elements $\Ind(W_{l}M)$. Similarly to the operadic case, we also prove that the Boardman--Vogt resolution is weakly equivalent to the leveled two-sided cobar-bar construction.

\subsection{The two-sided leveled bar construction in spectra}\label{SectBarBi}

Given an operad $\calP$, a right $\calP$-module $M$, and a left $\calP$-module $N$, recall that the two-sided bar construction $\calB(M,\calP,N)$ is obtained as the realization of the simplicial object $M \circ \calP^{\circ \bullet} \circ N$, where faces and degeneracies are defined using the operad/module structure maps.
In particular, $\calB(M,\calP,\calP)$ (resp.\ $\calB(\calP,\calP,N)$) is a cofibrant resolution of the right module $M$ (resp.\ the left module $N$).

Now, if $\calP$ and $\calQ$ are operads and $M$ is a $(\calP,\calQ)$-bimodule, we can thus define a cofibrant resolution of $M$ as the pullback:
\begin{equation}\label{eq:usual-two-sided}
  \calB[\calP,\calQ](M) \coloneqq \calB(\calP,\calP,M) \circ_{M} \calB(M,\calQ,\calQ) = \bigl| \calP^{\circ(1+\bullet)} \circ M \circ \calQ^{\circ(\bullet+1)} \bigr|.
\end{equation}
Unfortunately, this simplicial resolution does not define a cobimodule: there is no way to define cobimodule structure maps that strictly satisfy associativity, because of the total composition (just like simplicial bar construction $\calP^{\circ(\bullet+1)}$ is not a cooperad).
To solve this problem, we introduce an alternative version of this construction using our notion of leveled trees with section which is naturally endowed with a structure of cobimodule.

Let $\calP$ and $\calQ$ be two 1-reduced operads in spectra.
From a ($\calP$-$\calQ$)-bimodule $M$, we define the following two functors:
\begin{align*}
  \overline{M}_{B} : \slTree[n] & \longrightarrow \Sp,
                                & (T,\iota)              & \longmapsto \bigwedge_{v \in V_{d}(T)} \calP(|v|) \wedge \bigwedge_{v \in V_{\iota}(T)} M(|v|) \wedge \bigwedge_{v \in V_{u}(T)} \calQ(|v|); \\
  sH_{B} : \slTree[n]^{op}      & \longrightarrow \sSet,
                                & (T\,,\,\iota)          & \longmapsto
  \begin{cases}
    \Delta[T]/\Delta_{0}[T\,,\,\iota] & \text{if } n>1, \\
    \hspace{2em}\ast                  & \text{if } n=1,
  \end{cases}
\end{align*}
where $\Delta[T]=\prod_{0\leq i \leq h(T)}\, \Delta[1]$ labels the levels by elements in the standard $1$-simplex $\Delta[1]$ while $\Delta_{0}[T\,,\,\iota]$ is the simplicial subset consisting of faces where, either the $\iota$-th level has value $0$, or any of the other levels has value $1$. By definition, $H(T, \iota)$ is a pointed simplicial set for any leveled tree with section $(T,\iota)$, whose basepoint is the equivalence class of $\Delta_{0}[T,\iota]$.

On morphisms, the functor $\overline{M}_{B}$ is defined using the bimodule structure of $M$.
For any two consecutive permutable levels $i$ and $i+1$, $sH_{B}(\sigma_{i})$ permutes the simplices corresponding to the $i$-th and $(i+1)$-st levels.
For contraction morphisms there are three cases to consider:
\begin{enumerate}
  \item If the levels $i$ and $i+1$ are permutable, then, by using the diagonal map:
        \begin{align*}
          sH_{B}(\delta_{\{i+1\}}): sH_{B}(T/\{i+1\}) & \longrightarrow sH_{B}(T),                               \\
          (t_{0},\dots,t_{h(T)-1})                    & \longmapsto (t_{0},\dots,t_{i},t_{i},\dots, t_{h(T)-1}).
        \end{align*}
  \item If the levels $i$ and $i+1$ are not permutable and $i \ge \iota$, then:
        \begin{align*}
          sH_{B}(\delta_{\{i+1\}}): sH_{B}(T/\{i+1\}) & \longrightarrow sH_{B}(T),                                   \\
          (t_{0},\dots,t_{h(T)-1})                    & \longmapsto (t_{0},\dots,t_{i},0,t_{i+1},\dots, t_{h(T)-1}).
        \end{align*}
  \item If the levels $i$ and $i+1$ are not permutable and $i < \iota$, then:
        \begin{align*}
          sH_{B}(\delta_{\{i+1\}}): sH_{B}(T/\{i+1\}) & \longrightarrow sH_{B}(T),                                   \\
          (t_{0},\dots,t_{h(T)-1})                    & \longmapsto (t_{0},\dots,t_{i-1},0,t_{i},\dots, t_{h(T)-1}).
        \end{align*}
\end{enumerate}

\begin{defi}
  The leveled two-sided bar construction is defined as the coend:
  $$\calB_{l}[\calP,\calQ](M)(n) \coloneqq \int^{T\in \slTree[n]} \overline{M}_{B}(T)\wedge sH_{B}(T).$$
\end{defi}

A point in $\calB_{l}(\calP,M,\calQ)(n)$ is the data of a leveled $n$-tree with section $T=(T\,,\,\iota)$, a family of points $\{\theta_{v}\}_{v\in V(T)}$ labelling the vertices on the main section (resp. below and above the main section) by points in $M$ (resp. points in $\calP$ and $\calQ$) and a family of elements in the simplicial set $\Delta[1]$ indexing the levels $\{t_{j}\}_{0\leq j \leq h(T)}$.
The equivalence relation induced by the coend is generated by the compatibility with the symmetric group action, permutations of permutable levels, contractions of two consecutive permutable levels indexed by the same simplex, and contractions of non-permutable levels such that the upper or lower level (depending on whether we are above or below the section) is indexed by $0$.
If there is no ambiguity with the operadic case, such a point is denoted by $[T\,;\, \{\theta_{v}\}\,;\,\{t_{j}\}]$.

\begin{figure}[htbp]
  \center \includegraphics[scale=0.3]{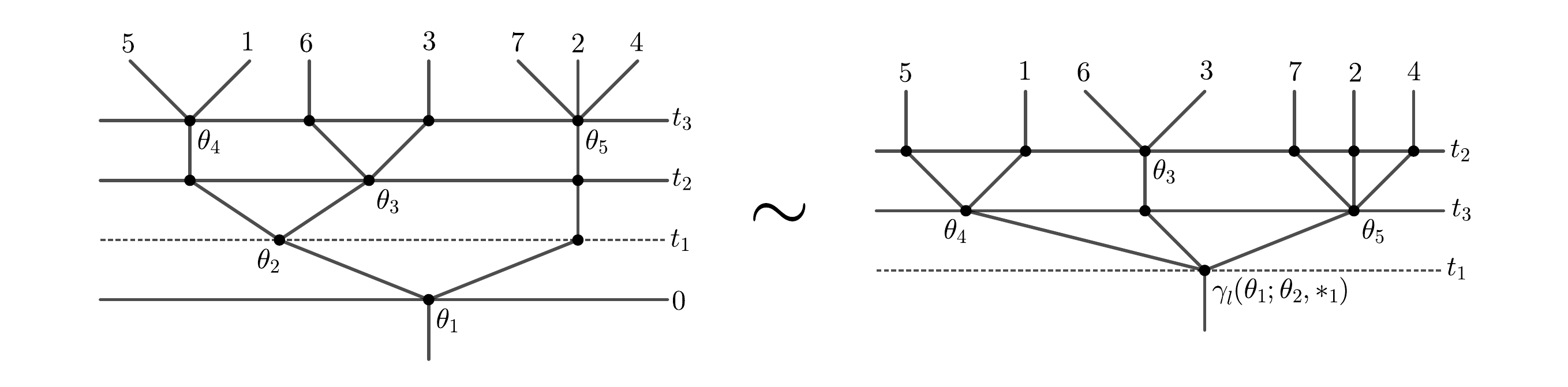}\vspace{-10pt}
  \caption{Illustration of equivalent points in $\calB_{l}[\calP,\calQ](M)(7)$.}
\end{figure}

The sequence $\calB_{l}(\calP,M,\calQ)=\{\calB_{l}(\calP,M,\calQ)(n)\}$ inherits a ($\calB_{l}(\calP)$-$\calB_{l}(\calQ)$)-cobimodule structure, with right and left module maps denoted by:
\begin{equation}\label{comodstructop}
  \begin{aligned}
    \gamma^{c}_{R}:\calB_{l}[\calP,\calQ](M)(n_{1}+\cdots+n_{k}) & \longrightarrow \calB_{l}[\calP,\calQ](M)(k) \wedge \calB_{l}(\calQ)(n_{1})\wedge \cdots \wedge \calB_{l}(\calQ)(n_{k});         \\
    \gamma^{c}_{L}:\calB_{l}[\calP,\calQ](M)(n_{1}+\cdots+n_{k}) & \longrightarrow \calB_{l}(\calP)(k) \wedge  \calB_{l}[\calP,\calQ](M)(n_{1})\wedge\cdots\wedge \calB_{l}[\calP,\calQ](M)(n_{k}).
  \end{aligned}
\end{equation}

A leveled $n$-tree with section $T$ is said to be \emph{right decomposable} according to the partition $(n_{1},\ldots,n_{k})$ if there exist a leveled tree with section $T_{0}\in \slTree[k]$ and leveled trees $T_{i}\in \lTree[n_{i}]$, with $i\leq k$, such that $T$ is of the form $\gamma_{R}(T_{0}\,,\,\{T_{i}\})$ up to permutations and contractions of permutable levels (where $\gamma_{R}$ is the operation \eqref{eq:def-gamma-r}).
According to this notation, if $T$ is not right decomposable, then $\gamma_{R}^{c}([T\,;\, \{\theta_{v}\}\,;\,\{t_{j}\}])$ is sent to the basepoint. Otherwise, let us remark that there is an identification
$$
  [T\,;\, \{\theta_{v}\}\,;\,\{t_{j}\}]= [\gamma_{R}(T_{0}\,,\,\{T_{i}\})\,;\, \{\theta_{v}\}\,;\,\{\tilde{t}_{j}\}]
$$
due to the equivalence relation induced by the coend.
In that case, we define:
$$
  \gamma_{R}^{c}([T\,;\, \{\theta_{v}\}\,;\,\{t_{j}\}]) \coloneqq \bigl\{ [T_{i}\,;\, \{\theta_{v}^{i}\}\,;\,\{t_{j}^{i}\}] \bigr\}_{0\leq i\leq k} \in \calB_{l}[\calP,\calQ](M)(k) \wedge \bigwedge_{1\leq i \leq k} \calB_{l}(\calQ)(n_{i})
$$
where $\{\theta_{v}^{i}\}$ and $\{t_{j}^{i}\}$ come from the parameters corresponding to the sub-tree $T_{i}$ of $\gamma_{R}(T_{0}\,,\,\{T_{i}\})$.

Similarly, a leveled $n$-tree with section $T$ is said to be \emph{left decomposable} according to the partition $(n_{1},\ldots,n_{k})$ if there exist a leveled tree  $T_{0}\in \lTree[k]$ and leveled trees with section $T_{i}\in \slTree[n_{i}]$, with $i\leq k$, such that $T$ is of the form $\gamma_{L}(T_{0}\,,\,\{T_{i}\})$ up to permutations and contractions of permutable levels (where $\gamma_{L}$ is the operation \eqref{eq:def-gamma-l}).
According to this notation, if $T$ is not left decomposable, then $\gamma_{L}^{c}([T\,;\, \{\theta_{v}\}\,;\,\{t_{j}\}])$ is sent to the basepoint. Otherwise, one has
$$
  \gamma_{L}^{c}([T\,;\, \{\theta_{v}\}\,;\,\{t_{j}\}])=\big\{ \,\, [T_{i}\,;\, \{\theta_{v}^{i}\}\,;\,\{t_{j}^{i}\}]\,\,\big\}_{0\leq i\leq k}\in \calB_{l}(\calP)(k) \wedge \underset{1\leq i\leq k}{\bigwedge} \calB_{l}(\calP,M,\calQ)(n_{i})
$$
where $\{\theta_{v}^{i}\}$ and $\{t_{j}^{i}\}$ come from the restriction to the parameters corresponding to the sub-tree $T_{i}$ of $\gamma_{L}(T_{0}\,,\,\{T_{i}\})$.
These operations do not depend on the choice of the decomposition of $T$ up to permutations and contractions of permutable levels thanks to the definition of the coend.

\subsection{The leveled Boardman--Vogt resolution for bimodules}\label{SectBVBi}

We split this subsection into two parts.
First, we construct the Boardman--Vogt resolution for bimodules, which is an isomorphic variant of the one in~\cite{Duc}.
After that, we extend this construction in order to get cofibrant resolutions for the Reedy model category of  $\Lambda$-bimodules over a pair of 1-reduced $\Lambda$-operads.

\paragraph{The leveled Boardman--Vogt resolution for bimodules}

Let $\calP$ and $\calQ$ be two 1-reduced operads and let $M$ be a $(\calP \text{-} \calQ)$-bimodule in spectra. Adapting the notation introduced in Section \ref{SectBVop}, we consider the following two functors:
\begin{align*}
  \overline{M}_{W} : \slTree[n] & \longrightarrow \Sp,
                                & (T,\iota)              & \longmapsto \bigwedge_{v\in V_{d}(T)} \calP(|v|) \wedge \bigwedge_{v\in V_{\iota}(T)} M(|v|) \wedge \bigwedge_{v\in V_{u}(T)} \calQ(|v|); \\
  sH_{W} : \slTree[n]^{op}      & \longrightarrow \sSet,
                                & (T,\iota)              & \longmapsto \bigwedge_{0\leq i\neq \iota\leq h(T)} \Delta[1]_{+}.
\end{align*}

By convention, in $sH_{W}(T)$, the main section is indexed by $t_{\iota}=0$. On morphisms, the functor $\overline{M}_{W}$ is defined using the operadic structures of $\calP$ and $\calQ$, the bimodule structure of $M$ or the symmetric monoidal structure of $\Sp$. On permutation maps, the functor $sH_{W}$ consists in permuting the parameters indexing the levels.
On contraction maps, $\delta_{\{i+1\}} : T\rightarrow T/\{i+1\}$, with $i\in \{0,\ldots,h(T)-1\}$,
there are three cases to consider:
\begin{enumerate}[label={Case \arabic*:}, leftmargin=38pt]
  \item If the levels $i$ and $i+1$ are permutable (in particular $\iota\notin \{i,i+1\}$), then one has :
        \begin{align*}
          sH_{W}(\delta_{\{i+1\}}): sH_{W}(T/\{i+1\}) & \longrightarrow sH_{W}(T),                                 \\
          (t_{0}, \dots, t_{h(T)-1})              & \longmapsto (t_{0},\ldots,t_{i},t_{i},\ldots, t_{h(T)-1}).
        \end{align*}
  \item If the levels $i$ and $i+1$ are not permutable and $i$ is above the main section, then one has:
        \begin{align*}
          sH_{W}(\delta_{\{i+1\}}): sH_{W}(T/\{i+1\}) & \longrightarrow sH_{W}(T),                                     \\
          (t_{0},\ldots,t_{h(T)-1})               & \longmapsto (t_{0},\ldots,t_{i},0,t_{i+1},\ldots, t_{h(T)-1}).
        \end{align*}
  \item If the levels $i$ and $i+1$ are not permutable and $i+1$ is below the main section, then one has:
        \begin{align*}
          sH_{W}(\delta_{\{i+1\}}): sH_{W}(T/\{i+1\}) & \longrightarrow sH_{W}(T),                                     \\
          (t_{0},\ldots,t_{h(T)-1})               & \longmapsto (t_{0},\ldots,t_{i-1},0,t_{i},\ldots, t_{h(T)-1}).
        \end{align*}
\end{enumerate}

\begin{defi}
  Let $M$ be a ($\calP$-$\calQ$)-bimodule.
  Its leveled Boardman--Vogt resolution is:
  $$
    W_{l}M(n) \coloneqq \int^{T\in \slTree[n]} \overline{M}_{W}(T) \wedge sH_{W}(T).
  $$
\end{defi}

\begin{figure}[htbp]
  \hspace{30pt}\includegraphics[scale=0.3]{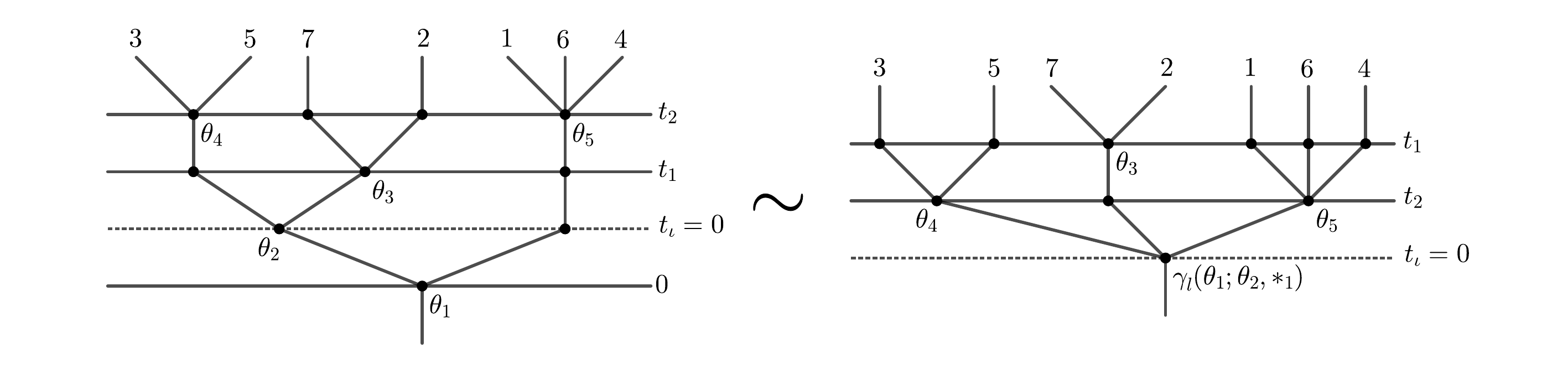}\vspace{-10pt}
  \caption{Illustration of equivalent points in $W_{l}M(7)$.}
\end{figure}

Roughly speaking, a point $[T,\{\theta_{v}\}_{v\in V(T)},\{t_{i}\}_{0\leq i \leq h(T)}]$ in $W_{l}M$ is given by an leveled $n$-tree with section $T=(T,\iota)$ whose vertices above (resp.\ below) the main section are indexed by points in the operads $\calQ$ (resp.\ $\calP$) while the vertices on the main section are labelled by elements in $M$.
The levels other than the main section are indexed by elements in the simplicial set $\Delta[1]$.
Moreover, the equivalence relation, induced by the coend, consists in contracting two consecutive levels $i$ and $i+1$ if we are in one of the following situations:
\begin{enumerate*}
  \item the two levels are permutable and they are indexed by the same parameter in the interval;
  \item the two levels are not permutable, below (resp.\ above) the main section, and the $i$-th (resp.\ $(i+1)$-st) level is indexed by $0$.
\end{enumerate*}

The sequence $W_{l}M$ inherits a ($W_{l}\calP$-$W_{l}\calQ$)-bimodule structure using the left and right operations $\gamma_{L}$ and $\gamma_{R}$ on leveled trees (introduced in Section \ref{SectTreeLev}) and by indexing the new levels by $1$.
This structure is well defined thanks to the definition of the coend.
For instance, the left operation sends the family of elements
\begin{center}
  \includegraphics[scale=0.4]{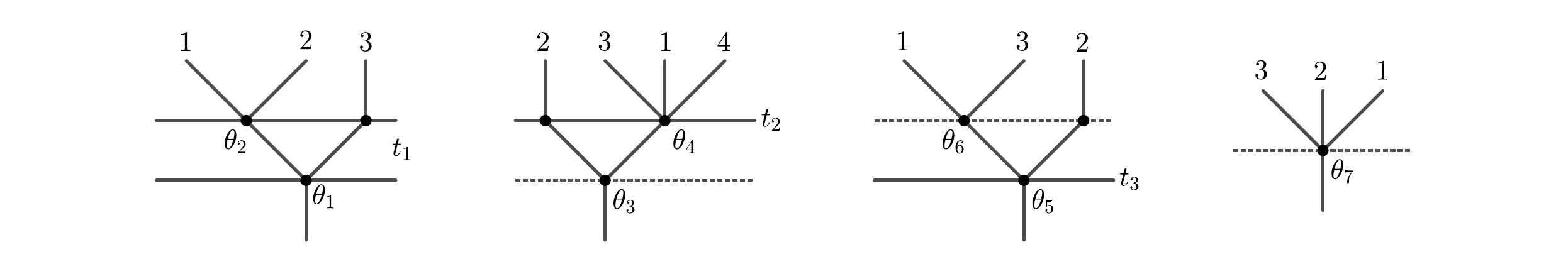}
\end{center}
to the following point
\begin{center}
  \includegraphics[scale=0.37]{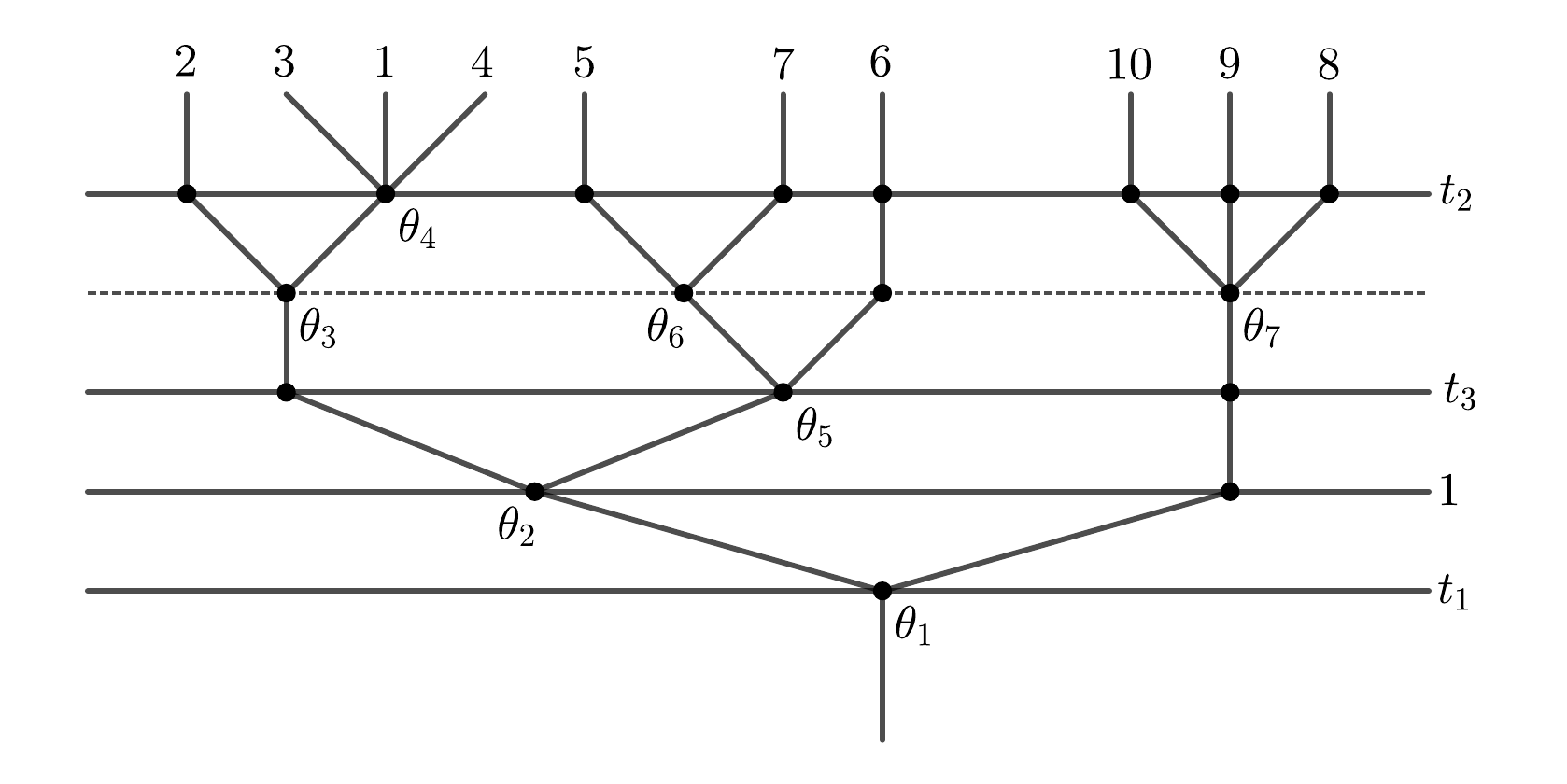}
\end{center}

\begin{pro}\label{pro:w-l-cofibrant}
  If $\calP$ and $\calQ$ are $\Sigma$-cofibrant and $1$-reduced operads and $M$ is $\Sigma$-cofibrant, then $W_{l}M$ is a cofibrant resolution of $M$ in the projective model category of $(W_{l}\calP\text{-}W_{l}\calQ)$-bimodules. In particular, the bimodule map $W_{l}M\rightarrow M$, sending the parameters indexing the levels to $0$, is a weak equivalence of bimodules.
\end{pro}

\begin{proof}
  The map $W_{l}M\rightarrow M$ is a weak equivalence of $(W_{l}\calP\text{-}W_{l}\calQ)$-bimodules.
  In fact, more precisely, it is a retract of the map of symmetric sequences $M\rightarrow W_{l}M$ that sends a point $x\in M$ to the (leveled) corolla indexed by $x$.
  The homotopy consists in bringing the parameters indexing the levels to $0$.

  In order to show that $W_{l}M$ is a cofibrant bimodule, we introduce a filtration of $W_{l}M$ according to the number of leaves.
  First, let us note that an element of $W_{l}M$ is said to be prime if the parameters indexing the levels other than the main section are not equal to $1$.
  Otherwise, it is said to be composite.
  Any element can be decomposed into prime components.
  As illustrated in Figure~\ref{FigFinal}, the prime components are obtained by removing the edges and vertices above (resp.\ below) the sections indexed by $1$ above (resp. below) the main section.

  We now define the filtration of $W_l M$ by using the prime decomposition.
  A prime element is in the $k$-th filtration level $W_{l}M_{k}$ if it has at most $k$ leaves.
  A composite element is in the $k$-filtration term if its prime components are in $W_{l}M_{k}$.
  For instance, the point in Figure~\ref{FigFinal} is in the third filtration level.
  We see that by construction, each $W_{l}M_{k}$ is a ($W_{l}\calP$-$W_{l}\calQ$)-bimodule and one has the following inverse tower of bimodules:
  $$
    W_{l}M_{0}=\emptyset \longrightarrow W_{l}M_{0} \longrightarrow \cdots \longrightarrow W_{l}M_{k} \longrightarrow W_{l}M_{k+1} \longrightarrow \cdots \longrightarrow W_{l}M.
  $$

\hspace{-45pt}\includegraphics[scale=0.31]{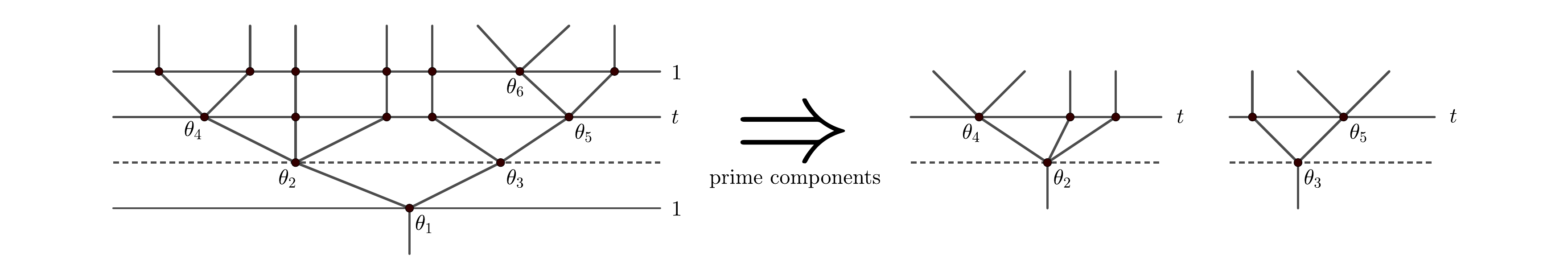}\vspace{-15pt}

  \begin{figure}[!h]
    \begin{center}
      \caption{Illustration of a composite point and its prime components.}\label{FigFinal}
    \end{center}
  \end{figure}\vspace{-15pt}

  Let us show that each map $W_{l}M_{k}\rightarrow W_{l}M_{k+1}$ (with $k\geq 0$) is a cofibration of bimodules.
  We first remark that $W_{l}M_{k}(n)=W_{l}M(n)$ for $n\leq k$.
  We now introduce the following two symmetric sequences concentrated in arity $k+1$:
  $$
    X_{k+1}(n) =
    \begin{cases}
      W_{l}M(k+1)    & \text{if } n=k+1, \\
      \quad\emptyset & \text{otherwise},
    \end{cases}
    \text{\qquad and \qquad}
    \partial X_{k+1}(n) =
    \begin{cases}
      W_{l}M_{k}(k+1) & \text{if } n=k+1, \\
      \quad\emptyset  & \text{otherwise}.
    \end{cases}
  $$
  According to this notation, the bimodule $W_{l}M_{k+1}$ can be obtained from $W_{l}M_{k}$ as a pushout of $(W_{l}\calP\text{-}W_{l}\calQ)$-bimodules:
  $$
    \begin{tikzcd}
      \mathcal{F}_{B}(\partial X_{k+1}) \ar[r] \ar[d] & \mathcal{F}_{B}(X_{k+1}) \ar[d] \\
      W_{l}M_{k} \ar[r] & W_{l}M_{k+1}
    \end{tikzcd}
  $$
  where $\mathcal{F}_{B}:\Sigma Seq_{>0}\rightarrow \Sigma\Bimod_{W_{l}\calP\,,\,W_{l}\calQ}$ is the free bimodule functor.
  Consequently, the map $W_{l}M_{k}\rightarrow W_{l}M_{k+1}$ is a cofibration of bimodules if the inclusion $W_{l}M_{k}(k+1)\rightarrow W_{l}M(k+1)$ is a $\Sigma_{k+1}$-cofibration.
  To prove this statement, we consider another filtration according to the number of levels:
  $$
    W_{l}M_{k}(k+1) \eqqcolon Y_{0} \longrightarrow Y_{1} \longrightarrow \cdots \longrightarrow Y_{i} \longrightarrow Y_{i+1} \longrightarrow \cdots \longrightarrow W_{l}M(k+1).
  $$
  We build the spaces $Y_{i}$ by induction.
  As indicated, we start by setting $Y_0 \coloneqq W_l M_k(k+1)$.
  For any leveled tree $T$, we denote by $sH^{0}_{W}(T)$ the set of elements in $sH_{W}(T)$ that have at least one level over than the main section indexed by $0$ or $1$.
  We also define the set $\slTree[k+1]_{i}$ of leveled trees with section with height $i+1$.
  Finally, let $[T]$ is the isotopy class of $T$ after forgetting the decoration of the leaves by the symmetric group and $\mathrm{Aut}(T)$ is the automorphism group of $T$.
  We then define $Y_{i+1}$ from $Y_i$ by the following pushout diagram:
  \begin{equation}\label{DiagFin}
    \begin{tikzcd}
      \bigvee_{[T] \in \slTree[k+1]_{i} / {\sim}} \bigl( \overline{M}_{W}(T)\wedge sH^{0}_{W}(T) \bigr) \wedge_{\mathrm{Aut}(T)} \Sigma_{k+1} \ar[r] \ar[d]
      & \bigvee_{[T] \in \slTree[k+1]_{i} / {\sim}} \bigl( \overline{M}_{W}(T) \wedge sH_{W}(T) \bigr) \wedge_{\mathrm{Aut}(T)} \Sigma_{k+1} \ar[d] \\
      Y_{i}\ar[r] & Y_{i+1}
    \end{tikzcd}
  \end{equation}
  The $\mathrm{Aut}(T)$-module $\overline{M}_{W}(T)$ is $\mathrm{Aut}(T)$-cofibrant because the operads $\calP$ and $\calQ$ as well as the bimodule $M$ are all $\Sigma$-cofibrant.
  Moreover, the map $sH^{0}_{W}(T) \rightarrow sH_{W}(T)$ is an $\mathrm{Aut}(T)$-cofibration.
  As a consequence of an alternative version of the pushout product axiom~\cite{BM2}, we can thus establish that the upper horizontal map in \eqref{DiagFin} is a $\Sigma_{k+1}$-cofibration.
  Consequently, $Y_{i}\rightarrow Y_{i+1}$ is a $\Sigma_{k+1}$-cofibrations.
  By induction, we thus find that $W_{l}M_{k}(k+1)\rightarrow W_{l}M(k+1)$ is also a $\Sigma_{k+1}$-cofibration, which ends the proof.
\end{proof}

\begin{rmk}\label{rmk:not iso}
  Contrary to the operadic case, we cannot compare directly the leveled Boardman--Vogt resolution introduced in this section with the usual resolution considered by the second author in~\cite{Duc}.
  Indeed, the usual one is a resolution of ($\calP$-$\calQ$)-bimodules in the category of ($\calP$-$\calQ$)-bimodules where vertices are indexed by glued-up simplices.
  By contrast, our construction takes a ($\calP$-$\calQ$)-bimodule and produces a resolution in the category of ($W_{l}\calP$-$W_{l}\calQ$)-bimodules.
\end{rmk}

\paragraph{The leveled Boardman--Vogt resolution for $\Lambda$-bimodules}

Let $\calP$ and $\calQ$ be two 1-reduced $\Lambda$-operads in spectra and let $M$ be a ($\calP$-$\calQ$)-bimodule. In order to get a cofibrant resolution of $M$ in the Reedy model category of ($W_{\Lambda}\calP$-$W_{\Lambda}\calQ$)-bimodules, we make describe the $\Lambda$-structure on the construction introduced in Section \ref{SectBVBi}.
As a symmetric sequence, we set
$$
  W_{\Lambda}M(n) \coloneqq W_{l}M_{>0}(n),
$$
where $M_{>0}$ is the bimodule obtained from $M$ by forgetting the $\Lambda$-structure.
The subscript $\Lambda$ is to emphasize that we work in the category of 1-reduced $\Lambda$-operads.
By definition, $W_{\Lambda}M$ inherits left and right module operations over $W_{\Lambda}\calP$ and $W_{\Lambda}\calQ$, respectively, from $W_{l}M$.

The $\Lambda$-structure map  $h[i]^{*} : W_{\Lambda}\calP(n+1) \to W_{\Lambda}\calP(n)$ (induced by the unique injective increasing map $h[i] : [n] \to [n+1]$ that misses $i$) is defined in the obvious way using the $\Lambda$-structure on the vertex connected to the leaf labelled by $i$.
If the new point so obtained has a level which consists of bivalent vertices, then we remove it.

\begin{figure}[htbp]
  \hspace{-45pt}\includegraphics[scale=0.45]{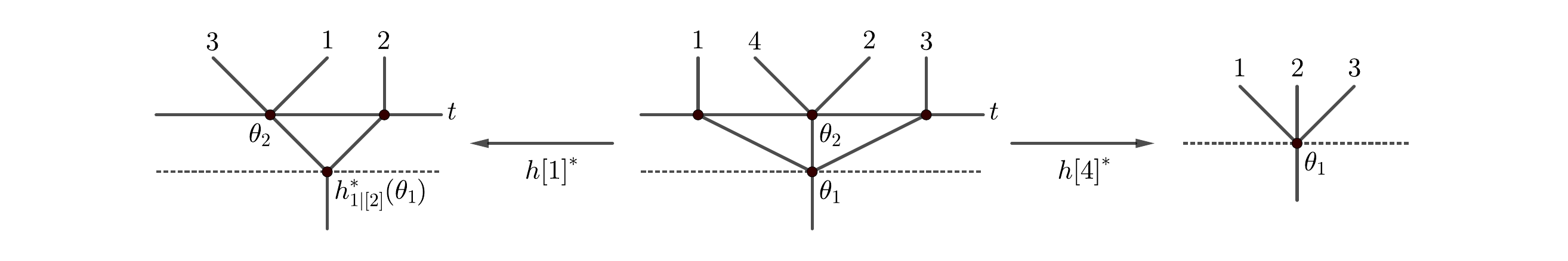}
  \caption{Illustrations of the $\Lambda$-structure associated to $h[1],h[4]:[3]\to [4]$ with $h[1](i)=i+1$ and $h[4](i)=i$.}
\end{figure}

\begin{pro}\label{pro:w-bimod}
  Suppose that $\calP$ and $\calQ$ are $\Sigma$-cofibrant and 1-reduced $\Lambda$-operads, and $M$ is a $\Sigma$-cofibrant $(\calP$-$\calQ)$-bimodule.
  Then $W_{\Lambda}M$ is a cofibrant resolution of $M$ in the Reedy model category of ($W_{\Lambda}\calP$-$W_{\Lambda}\calQ$)-bimodules.
  In particular, the map $\mu : W_{\Lambda}M\rightarrow M$, sending the parameters indexing the levels to $0$, is a weak equivalence of bimodules.
\end{pro}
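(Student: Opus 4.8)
The plan is to adapt, essentially verbatim, the two-step argument used for Proposition~\ref{prop:w-o-resolution}, transposing it from operads to bimodules. Being a cofibrant resolution requires two things: that the structure map $\mu$ be a weak equivalence, and that $W_{\Lambda}M$ be Reedy cofibrant as a $(W_{\Lambda}\calP\text{-}W_{\Lambda}\calQ)$-bimodule. I would prove these separately.

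First I would treat the weak equivalence. The map $\mu$ is induced by the map $0 : \ast \to \Delta[1]_{+}$ sending every level parameter to $0$, which is a weak equivalence of pointed simplicial sets. Setting all level parameters to $0$ contracts every level down to the main section, composing the decorating elements through the operadic structures of $\calP$ and $\calQ$ and the bimodule structure of $M$, and recovers $M$ in each arity. Since weak equivalences of $\Lambda$-bimodules are detected arity by arity, the same reasoning as in the operadic case identifies $\mu$ as a weak equivalence of bimodules.

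Next I would establish Reedy cofibrancy by invoking the criterion from the theorem on $\Lambda$-bimodules (see~\cite{DucoulombierFresseTurchin2019}): a $\Lambda$-bimodule is Reedy cofibrant if and only if its underlying $\Sigma$-bimodule is cofibrant in the projective model category. By construction, the underlying $\Sigma$-bimodule of $W_{\Lambda}M$ is $W_{l}M_{>0}$, which by the previous proposition is isomorphic to the usual Boardman--Vogt resolution $WM_{>0}$. Under the hypotheses that $\calP$ and $\calQ$ are well pointed and $\Sigma$-cofibrant and that $M$ is $\Sigma$-cofibrant, that proposition guarantees that $WM_{>0}$ is cofibrant in the projective model category of $(W\calP_{>0}\text{-}W\calQ_{>0})$-bimodules. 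Since $W\calP_{>0}$ and $W\calQ_{>0}$ are precisely the underlying $\Sigma$-operads of $W_{\Lambda}\calP$ and $W_{\Lambda}\calQ$, the criterion applies and yields the desired Reedy cofibrancy.

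The step requiring the most care is checking that this cofibrancy criterion, which compares Reedy cofibrancy over $W_{\Lambda}\calP$ and $W_{\Lambda}\calQ$ with projective cofibrancy over their underlying $\Sigma$-operads, applies to $W_{\Lambda}M$ equipped with the $\Lambda$-structure defined via the vertices attached to the leaves. Concretely, one must verify that passing to the underlying $\Sigma$-bimodule is compatible with this $\Lambda$-structure and that the matching objects~\eqref{Matching} are computed accordingly, so that the hypotheses of the criterion are genuinely met. Once this compatibility is confirmed, the proof is a direct assembly of the previous proposition and the cofibrancy criterion, exactly parallel to the operadic case.
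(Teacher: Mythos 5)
Your proposal is correct and follows essentially the same two-step argument as the paper's own proof: the weak equivalence of $\mu$ comes from $0 : \ast \to \Delta[1]_{+}$ being a weak equivalence, and Reedy cofibrancy follows from the criterion of~\cite{DucoulombierFresseTurchin2019} applied to the underlying $\Sigma$-bimodule $W_{l}M_{>0}$, which is projectively cofibrant by the earlier comparison with the usual Boardman--Vogt resolution. The paper's proof is in fact terser than yours and does not spell out the compatibility check you flag in your last paragraph.
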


\begin{proof}
  The map $0 : * \to \Delta^{1}$ is a weak equivalence.
  This implies that the operadic map $\mu$ is a weak equivalence too.
  Moreover, we know from \cite{DucoulombierFresseTurchin2019} that a $\Lambda$-bimodule is Reedy cofibrant if and only if the corresponding $\Sigma$-bimodule is cofibrant in the projective model category.
  The $\Sigma$-bimodule associated to the resolution $W_{\Lambda}M$ is $W_{l}M_{>0}$ which is cofibrant in the projective model category.
\end{proof}

\subsection{The cobimodule of indecomposable elements}

In the previous section, we built a cofibrant resolution $W_{l}M$ for any  $\Sigma$-cofibrant bimodule $M$ in spectra. In what follows we show that the leveled two-sided bar construction of $M$ can be expressed as the suspension of a cobimodule $\Ind(W_{l}M)$. Unfortunately, this identification cannot be extended to $\Lambda$-bimodules since the two-sided leveled bar construction of a $\Lambda$-bimodule in spectra is not necessarily a $\Lambda$-cobimodule.

A point in $W_{l}M$ is said to be \textit{indecomposable} if the elements indexing the levels are different from $1\colon \ast \to \Delta[1]$. The indecomposable bimodule $\Ind(W_{l}M)$ is obtained by modding out the decomposable elements in $W_{l}M$. More precisely, let us introduce a slight variation of the functor $H$ as follows:
\begin{equation}\label{EqFunctBH}
  sH''_{W} : \slTree[n]^{op}  \longrightarrow \sSet,
  \qquad
  (T,\iota) \longmapsto \begin{cases}
    \tilde{\Delta}[T]/ \tilde{\Delta}_{0}[T] & \text{if } n>1, \\
    \hspace{2em}\ast                         & \text{if } n=1,
  \end{cases}
\end{equation}
where $\tilde{\Delta}[T]=\prod_{1\leq i\neq \iota\leq h(T)} \Delta[1]$ and  $\tilde{\Delta}_{0}[T]$ is the simplicial subset consisting of faces where at least one of the levels has value $1$. By construction, $sH''_{W}(T)$ is already a pointed simplicial set.
Taking the functor $\overline{M}_{W}$ (see Section~\ref{SectBVBi}) that sends a tree to the ``vertex-wise'' smash product

\begin{defi}
  The cooperad of indecomposable points is the coend:
  $$
    \Ind(W_{l}M)(n) \coloneqq \int^{T\in \slTree[n]}\overline{M}(T)\wedge sH''_{W}(T).
  $$
\end{defi}

The sequence $\Ind(W_{l}M)=\{\Ind(W_{l}M)(n)\}$ has a ($\Ind(W_{l}\calP)$-$\Ind(W_{l}\calQ)$)-cobimodule structure given by coaction maps:
\begin{align*}
  \gamma^{c}_{R}: \Ind(W_{l}M)(n_{1}+\cdots+n_{k}) & \longrightarrow \Ind(W_{l}M)(k)\wedge \Ind(W_{l}\calQ)(n_{1})\wedge\cdots \wedge \Ind(W_{l}\calQ)(n_{k}); \\
  \gamma^{c}_{L}:\Ind(W_{l}M)(n_{1}+\cdots+n_{k})  & \longrightarrow \Ind(W_{l}\calP)(k)\wedge \Ind(W_{l}M)(n_{1})\wedge\cdots \wedge \Ind(W_{l}M)(n_{k}).
\end{align*}

The right structure sends an element $[T\,;\,\{x_{v}\}\,;\,\{t_{i}\}]$ to the base point if $T$, up to permutations of permutable levels and contractions of permutable levels, is not of the form $\gamma_{R}(T_{0}\,;\,\{T_{i}\})$ with $T_{0}\in \lTree[k]$ and $T_{i}\in \slTree[n_{i}]$, with $i\leq k$. Otherwise, the element is sent to the family

$$
  [T_{0}\,;\,\{x_{v}^{0}\}\,;\,\{t_{j}^{0}\}]\,\,\,\,;\,\,\,\,\big\{ \, [T_{i}\,;\,\{x_{v}^{i}\}\,;\,\{t_{j}^{i}\}]\,\big\}_{1\leq i\leq k}\in \Ind(W_{l}M)(k)\wedge \underset{1\leq i \leq k}{\bigwedge} \Ind(W_{l}\calQ)(n_{i}),
$$
where the parameters indexing the vertices and the levels of the leveled trees with section $T_{0}$ and  the leveled trees $T_{i}$ are induced by the parameters indexing the leveled tree with section $T$. The left structure is defined in the same way. Let us remark that this structure is similar to the cobimodule structure introduced on the leveled two-sided bar construction introduced in Section \ref{SectBarBi}.
Indeed, one has the following connection between the leveled  Boardman--Vogt resolution and the leveled two-sided bar construction:

\begin{pro}\label{ProModFr}
  The leveled two-sided bar construction of the bimodule $M$ is isomorphic to the suspension of the indecomposable cobimodule:
  $$
    \mathcal{B}_{l}[\calP,\calQ](M)\cong \Sigma \Ind(W_{l}M).
  $$
\end{pro}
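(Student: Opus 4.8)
The plan is to mimic the proof of Proposition~\ref{ProOpFr}, the operadic analogue, with the distinguished $0$-th level replaced by the main section $\iota$. First I would spell out the two sides. A point of $\calB_{l}[\calP,\calQ](M)(n)$ is a class $[T\,;\,\{\theta_{v}\}\,;\,\{t_{j}\}_{0\le j\le h(T)}]$ where $(T,\iota)$ is a leveled $n$-tree with section (vertices below, on, and above the section decorated by $\calP$, $M$, $\calQ$), and the functor $sH_{B}$ turns $t_{\iota}=0$ and $t_{j}=1$ (for $j\neq\iota$) into basepoints. A point of $\Ind(W_{l}M)(n)$ carries parameters only on the levels away from the section, with all faces where some $t_{j}=1$ collapsed. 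Writing a point of $\Sigma\Ind(W_{l}M)(n)$ as $[T\,;\,\{\theta_{v}\}\,;\,\{t_{j}\}_{j\neq\iota}\,;\,x]$ with $x$ the suspension coordinate, I would define the comparison map by sending $[T\,;\,\{\theta_{v}\}\,;\,\{t_{j}\}_{0\le j\le h(T)}]$ to $[T\,;\,\{\theta_{v}\}\,;\,\{t_{j}\}_{j\neq\iota}\,;\,x=t_{\iota}]$, i.e.\ promoting the length of the main section to the suspension coordinate while keeping every remaining level-parameter as an indecomposable coordinate; the inverse simply reads $x$ back as the section parameter $t_{\iota}$.

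Next I would check well-definedness against the coend relations. On the levels away from the section, the relations (permutations of permutable levels, contraction of permutable levels carrying equal parameters, and contraction of non-permutable levels whose upper or lower member is indexed by $0$) are imposed identically by $sH_{B}$ and by $sH''_{W}$, hence preserved. The only asymmetrically treated level is the section itself: in $\calB_{l}[\calP,\calQ](M)$ the face $t_{\iota}=0$ is the basepoint, which is exactly the basepoint coming from the suspension, and no permutation morphism of $\sTree_{l}$ ever touches the section since one requires $\iota\notin\{i,i+1\}$. This identifies the two coends and shows the two maps are mutually inverse isomorphisms of symmetric sequences.

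The substantive point is compatibility with the cobimodule structure, i.e.\ with both $\gamma^{c}_{R}$ and $\gamma^{c}_{L}$. For the right coaction one writes $T\sim\gamma_{R}(T_{0},\{T_{i}\})$ with $T_{0}\in\sTree_{l}[k]$ and $T_{i}\in\Tree_{l}[n_{i}]$; by~\eqref{eq:def-gamma-r} the section of $T$ is that of $T_{0}$, so the suspension coordinate $x=t_{\iota}$ is carried entirely by the factor $\calB_{l}[\calP,\calQ](M)(k)$, while each operadic factor $\calB_{l}(\calQ)(n_{i})\cong\Sigma\Ind(W_{l}\calQ)(n_{i})$ acquires as its own suspension coordinate the parameter of the level of $T$ at which the root of $T_{i}$ sits — precisely the recipe of Proposition~\ref{ProOpFr}. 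For the left coaction one writes $T\sim\gamma_{L}(T_{0},\{T_{i}\})$ with $T_{0}\in\Tree_{l}[k]$ and $T_{i}\in\sTree_{l}[n_{i}]$, and here the operadic factor is $\calB_{l}(\calP)(k)\cong\Sigma\Ind(W_{l}\calP)(k)$ (receiving the root-level parameter of $T_{0}$) while the section of $T$ redistributes among the sections of the $T_{i}$.

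The hard part will be this last compatibility with $\gamma_{L}$. Unlike the operadic case, the main section of $\gamma_{L}(T_{0},\{T_{i}\})$ is the combined level $\Delta(\{\iota_{i}\})=h(T_{0})+\sum_{i}\iota_{i}$ produced in~\eqref{eq:def-gamma-l} after grafting the sub-trees $T_{i}^{>e}$ and deleting purely bivalent sections, so I must check that promoting this combined level to the single suspension coordinate is compatible with redistributing it to the individual sections $\iota_{i}$ of the factors $\calB_{l}[\calP,\calQ](M)(n_{i})$, and that the differing coordinate conventions (the section basepointed at $0$ versus junction levels basepointed at $1$) are reconciled by the coend identifications, exactly in the spirit of the reversals $t\mapsto 1-t$ appearing in the map~\eqref{eq:BWtoCobarbar}. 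Granting this bookkeeping, which is routine but notation-heavy, the two coaction maps agree under the isomorphism and the proof concludes as in Proposition~\ref{ProOpFr}.
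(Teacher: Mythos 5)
Your proposal follows the same route as the paper's own proof: identify the suspension coordinate with the parameter of the main section $\iota$, match the coend relations level by level, and verify compatibility with the two coaction maps, with the right coaction treated exactly as in Proposition~\ref{ProOpFr} (the suspension coordinate of each operadic factor being the parameter of the level containing the root of the corresponding subtree). The paper's proof is in fact terser --- it writes out only the right coaction and dismisses the left one as ``defined similarly'' --- so your explicit attention to how the main section of $\gamma_{L}(T_{0},\{T_{i}\})$ redistributes among the sections of the factors is the only substantive difference, and it fills in precisely the step the paper leaves to the reader.
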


\begin{proof}
  Taking the indecomposables of $W_{l}M$ identifies to the base point all points whose underlying tree has a level of length 1. The suspension coordinate gives us a length for the $\iota$-th level in the bar construction. To complete the proof, we recall quickly the cobimodule structure on  $\Sigma \Ind(W_{l}\calO))$. We denote by $[T\,;\,\{\theta_{v}\}\,;\,\{t_{j}\}\,;\,x]$ a point in $\Sigma \Ind(W_{l}\calO))$ where $x$ is the suspension coordinate. The right module operation is defined as follows:
  \begin{align*}
    \Sigma \Ind(W_{l}M))(n_{1}+\cdots + n_{k})             & \longrightarrow \Sigma \Ind(W_{l}M))(k)\wedge \Sigma \Ind(W_{l}\calQ))(n_{1})\wedge\cdots \wedge \Sigma \Ind(W_{l}\calQ))(n_{k}), \\
    \left[T\,;\,\{\theta_{v}\}\,;\,\{t_{j}\}\,;\,x \right] & \longmapsto
    \begin{cases}
      \{\,[T_{i}\,;\,\{\theta_{v}^{i}\}\,;\,\{t_{j}^{i}\}\,;\,x_{i}]\,\}_{0\leq i\leq k} & \text{if } T\sim \gamma_{R}(T_{0}\,,\,\{T_{i}\}) \text{ is right decomposable}, \\
      \hspace{4em}\ast                                                                   & \text{otherwise},
    \end{cases}
  \end{align*}
  where $x_{0}=x$ and $x_{i}$, with $1\leq i\leq k$, is the element indexing the level in $\gamma_{R}(T_{0}\,,\,\{T_{i}\})$ corresponding to the root of $T_{i}$.
  The left module structure is defined similarly.
  The reader can easily check that the structure so obtained is well defined and compatible with the isomorphism.
\end{proof}

\subsection{The Boardman--Vogt resolution and the two-sided cobar-bar construction}\label{SectBarCobi}

Similarly to Section \ref{SecBVtoCobBar}, we adapt the definition of the two-sided cobar construction for cobimodules in spectra but using the notion of leveled trees instead of planar trees.
Then we extend it to cobimodules.
Afterwards, we prove that the leveled Boardman--Vogt resolution of a bimodule (or $\Lambda$-bimodule) $M$ in spectra is weakly equivalent to its leveled two-sided cobar-bar construction. Unfortunately, we cannot extend this result to $\Lambda$-bimodules since the two-sided leveled cobar-bar construction for spectra does not inherit a $\Lambda$-structure.

\paragraph{The leveled two-sided cobar construction for cobimodules in spectra}

From \cite{Chi}, we recall that the simplicial indexing category $\Delta$ has an automorphism $\calR$ that sends a total ordered set to the same set with the opposite order. For a simplicial set $X$, the reverse of $X$, denoted by $X^{rev}$, is the simplicial set $X\circ \calR$.  Let $\calP$ and $\calQ$ be two 1-reduced cooperads in spectra.
From a ($\calP$-$\calQ$)-cobimodule $M$, we introduce the functor
\begin{equation*}
  \overline{M}_{\Omega} : \slTree[n]^{op} \longrightarrow \Sp , \qquad T \longmapsto \bigwedge_{\mathclap{v \in V_{d}(T)}} \calP(|v|) \wedge \bigwedge_{\mathclap{v \in V_{\iota}(T)}} M(|v|) \wedge \bigwedge_{\mathclap{v \in V_{u}(T)}} \calQ(|v|),
\end{equation*}
defined on morphisms using the cobimodule structure of $M$.

\begin{defi}
  The leveled two-sided cobar construction associated to a cobimodule $M$ in spectra is the end
  \begin{equation}\label{CobarComod}
    \Omega_{l}[\calP,\calQ](M)(n) \coloneqq \int_{T\in \slTree[n]} \Map\bigl( sH''_{W}(T)^{rev}, \overline{M}_{\Omega}(T) \bigr),
  \end{equation}
  where $sH''_{W}$ is the functor given by the formula \eqref{EqFunctBH}.
  Concretely, a point in $\Omega_{l}[\calP,\calQ](M)(n)$ is a family of maps $\Phi=\{\Phi_{T}: sH''_{W}(T) \rightarrow \overline{\calM}_{\Omega}(T)\}_{T\in \slTree[n]}$ satisfying the following relations: for each permutation $\sigma$ and each contraction morphism $\delta_{N}$, one has the commutative diagrams
  \begin{equation}\label{RelCobarbi}
    \begin{tikzcd}[column sep = large]
      sH''_{W}(T\cdot\sigma) \ar[r, "H''(\sigma)"] \ar[d, "\Phi_{T\cdot\sigma}"] & sH''_{W}(T) \ar[d, "\Phi_{T}"]
      & sH''_{W}(\delta_{N}(T)) \ar[r, "H''_W(\delta_{N})"] \ar[d, "\Phi_{\delta_{N}(T)}"] & sH''_{W}(T) \ar[d, "\Phi_{T}"] \\
      \overline{M}_{\Omega}(T\cdot \sigma) \ar[r, "\overline{M}_{\Omega}(\sigma)"] & \overline{M}_{\Omega}(T)
      & \overline{M}_{\Omega}(\delta_{N}(T)) \ar[r, "\overline{M}_{\Omega}(\delta_{N})"] & \overline{M}_{\Omega}(T)
    \end{tikzcd}
  \end{equation}

  The sequence $\Omega_{l}[\calP,\calQ](M)=\{\Omega_{l}[\calP,\calQ](M)(n)\}$ forms a ($\Omega_{l}\calP$-$\Omega_{l}\calQ$)-bimodule whose structure maps:
  \begin{align*}
    \gamma_{L} : \Omega_{l}\calP(k)\wedge \Omega_{l}[\calP,\calQ](M)(n_{1})\wedge \cdots \wedge \Omega_{l}[\calP,\calQ](M)(n_{k}) & \longrightarrow \Omega_{l}[\calP,\calQ](M)(n_{1}+\cdots + n_{k}),                                        \\
    \{ \Phi_{0}; \{\Phi_{i}\} \}                                                                                                  & \longmapsto \gamma_{L}(\Phi_{0}; \{\Phi_{i}\}) = \bigl\{ \gamma_{L}(\Phi_{0}; \{\Phi_{i}\})_{T} \bigr\}; \\
    \gamma_{R}: \Omega_{l}[\calP,\calQ](M)(k) \wedge  \Omega_{l}\calQ(n_{1})\wedge \cdots \wedge \Omega_{l}\calQ(n_{k})           & \longrightarrow \Omega_{l}[\calP,\calQ](M)(n_{1}+\cdots + n_{k}),                                        \\
    \{ \Phi_{0};\{\Phi_{i}\} \}                                                                                                   & \longmapsto \gamma_{R}(\Phi_{0}; \{\Phi_{i}\}) = \bigl\{ \gamma_{R}(\Phi_{0};\{\Phi_{i}\})_{T} \bigr\},
  \end{align*}
  are defined as follows. If, up to permutations and contractions of permutable levels, the leveled tree with section $T$ is not of the form $\gamma_{L}(T_{0}\,;\,\{T_{i}\})$, with $T_{0}\in \lTree[k]$ and $T_{i}\in \slTree[n_{i}]$, then $\gamma_{L}(\Phi_{0}\,;\,\{\Phi_{i}\})_{T}$ sends any decoration of the levels to the basepoint. Otherwise, one has the composite map
  $$
    \gamma_{L}(\Phi_{0}; \{\Phi_{i}\})_{T}: sH''_{W}(T)\longrightarrow sH''_{W}(\gamma_{L}(T_{0}; \{T_{i}\})) \cong H_{W}(T_{0}) \wedge \bigwedge_{\mathclap{1\leq i \leq k}} sH''_{W}(T_{i}) \longrightarrow \overline{\calP}_{\Omega}(T_{0}) \wedge \bigwedge_{\mathclap{1\leq i \leq k}} \overline{M}_{\Omega}(T_{i})\cong \overline{M}_{\Omega}(T).
  $$
  Similarly, if, up to permutations and contractions of permutable levels, the leveled tree with section $T$ is not of the form $\gamma_{R}(T_{0}\,;\,\{T_{i}\})$, with $T_{0}\in \slTree[k]$ and $T_{i}\in \lTree[n_{i}]$, then $\gamma_{R}(\Phi_{0}\,;\,\{\Phi_{i}\})_{T}$ sends any decoration of the levels to the basepoint. Otherwise, one has the composite map
  $$
    \gamma_{R}(\Phi_{0};\{\Phi_{i}\})_{T} : sH''_{W}(T) \longrightarrow sH''_{W}(\gamma_{L}(T_{0}; \{T_{i}\})) \cong sH''_{W}(T_{0}) \wedge \bigwedge_{\mathclap{1\leq i \leq k}} H_{W}(T_{i}) \longrightarrow \overline{M}_{\Omega}(T_{0})\wedge \bigwedge_{\mathclap{1\leq i \leq k}} \overline{\calQ}_{\Omega}(T_{i}) \cong \overline{M}_{\Omega}(T).
  $$
\end{defi}

\paragraph{Connection between Boardman--Vogt resolutions and two-sided cobar-bar constructions}

Let $\calP$ and $\calQ$ be two 1-reduced operads in spectra and $M$ be a ($\calP$-$\calQ$)-bimodule.
In Section \ref{SectBVBi}, we built a cofibrant resolution of $M$ using the leveled Boardman--Vogt resolution $W_{l}M$.
In Section \ref{SectBarBi}, we introduced a leveled version of the bar construction, denoted $\calB_{l}[\calP,\calQ](M)$.
Using our definition of the leveled two-sided cobar construction in the previous section, we wish to build a map from the Boardman--Vogt resolution and the bar-cobar construction.
If there is no ambiguity about the operads $\calP$ and $\calQ$, by notation $\Omega_{l}\calB_{l}(M)$ we understand the leveled two-sided cobar-bar resolution $\Omega_{l}[\calB_{l}\calP, \calB_{l}\calQ]\bigl( \calB_{l}[\calP,\calQ](M) \bigr)$.
Using the maps of operads $W_{l}\calP \rightarrow \Omega_{l}\calB_{l}(\calP)$ and $W_{l}\calQ\rightarrow \Omega_{l}\calB_{l}(\calQ)$, we want to show that this maps induces a weak equivalence of ($W_{l}\calP$-$W_{l}\calQ$)-bimodules:

\begin{pro}\label{prop:gamma-bimodule}
  The morphism defined in~\eqref{eq:def-gamma} is a natural weak equivalence of ($W_{l}\calP$-$W_{l}\calQ$)-bimodules
  \[\Gamma : W_{l}M \xrightarrow{\;\sim\;} \Omega_{l}\calB_l(M). \]
\end{pro}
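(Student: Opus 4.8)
The plan is to mimic, in the two-sided setting, the strategy used for the operadic statement in Proposition~\ref{prop:bw-cobarbar}. First I would recall the morphism $\Gamma$, which is the exact analogue of the operadic map~\eqref{eq:BWtoCobarbar} adapted to leveled trees with section. A point $x = [T_{1}; \{\theta_{v}\}; \{t_{l}\}] \in W_{l}M(n)$ is sent to the family $\{\Phi_{x;T_{2}}\}_{T_{2}\in \sTree_{l}[n]}$, where $\Phi_{x;T_{2}}$ is the basepoint unless $T_{1}\geq T_{2}$, and otherwise is built from the leveled sub-trees with section $T_{1}[v]$ associated to the vertices $v\in V(T_{2})$. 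The vertex decorations are inherited via $or_{v}$, while the level parameters $t_{j}[v]$ are given by the same reversal and truncation formulas as in the operadic case (the $1-t$ rule at the root and the $\max(0, t-\tilde{t})$ rule otherwise), now measured relative to the main section instead of the root. By construction the decorations below, on, and above the section carry values in $\calP$, $M$, and $\calQ$ respectively, matching the three regions defining $\overline{M}_{\Omega}$.

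Next I would check that $\Gamma$ is well defined on the coend and is a morphism of $(W_{l}\calP\text{-}W_{l}\calQ)$-bimodules. Well-definedness amounts to compatibility with permutations and contractions of permutable levels; this follows from the case distinctions defining the various $H$-functors exactly as in the operadic proof. For the bimodule compatibility I would verify that $\Gamma$ intertwines the operations $\gamma_{L}$ and $\gamma_{R}$ from Section~\ref{SectTreeLev}: if $T$ is right (resp.\ left) decomposable as $\gamma_{R}(T_{0};\{T_{i}\})$ (resp.\ $\gamma_{L}(T_{0};\{T_{i}\})$), then the sub-tree assignment $v\mapsto T_{1}[v]$ is compatible with grafting, so $\Gamma$ carries the decomposition on the resolution side to the corresponding decomposition on the cobar-bar side. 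This is the same bookkeeping as for operads, now carried out for trees with section.

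The heart of the argument is the weak equivalence, which I would establish by a two-out-of-three comparison. Both $W_{l}M$ and $\Omega_{l}\calB_{l}(M)$ come equipped with augmentations to $M$ obtained by collapsing all level parameters to $0$, the composite $W_{l}M\xrightarrow{\Gamma}\Omega_{l}\calB_{l}(M)\to M$ equals the source augmentation, and the latter is a weak equivalence by Proposition~\ref{pro:w-bimod}. It therefore suffices to show that $\Omega_{l}\calB_{l}(M)\to M$ is a weak equivalence. For this I would adapt Ching's argument~\cite[Theorem~2.15]{Chi2}, filtering by the main section into the contribution below the section (governed by $\calP$), the section itself (governed by $M$), and the contribution above (governed by $\calQ$). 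Using the isomorphisms relating the leveled and usual constructions together with the operadic equivalences $W_{l}\calP\simeq \Omega_{l}\calB_{l}\calP$ and $W_{l}\calQ\simeq \Omega_{l}\calB_{l}\calQ$ from Proposition~\ref{prop:bw-cobarbar}, the contracting-homotopy (extra-degeneracy) argument that trivializes the two-sided bar-cobar construction reduces level by level to the already available one-sided statements.

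The main obstacle I anticipate is precisely this last step: there is no off-the-shelf two-sided analogue of Ching's theorem, so the extra-degeneracy argument must be extended to the setting with a main section, and one must check that the homotopies can be chosen compatibly with the left and right module structures simultaneously. Once this is done, naturality in $M$ is automatic from the functoriality of all the constructions involved.
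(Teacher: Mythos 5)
Your first two steps (recalling the map~\eqref{eq:def-gamma}, checking it descends to the coend and intertwines $\gamma_{L}$ and $\gamma_{R}$) match what the paper does. Be aware, though, that the paper never actually supplies a proof of Proposition~\ref{prop:gamma-bimodule}: it states the proposition, then defines the map, and moves on. The only template it offers is the one-line proof of the operadic analogue, Proposition~\ref{prop:bw-cobarbar}, which invokes \cite[Theorem~2.15]{Chi2} together with the isomorphisms between the leveled and the usual constructions.

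The genuine gap is in your two-out-of-three reduction. You want to factor $\Gamma$ through an augmentation $\Omega_{l}\calB_{l}(M) \to M$ ``obtained by collapsing all level parameters to $0$'', but a point of $\Omega_{l}\calB_{l}(M)(n)$ is a family of maps $\{\Phi_{T}\}$, not a decorated tree, so that recipe does not produce anything; and no such augmentation exists in this framework. Evaluating $\Phi$ at the corolla with section only yields a point of $\calB_{l}[\calP,\calQ](M)(n) \cong \Sigma\Ind(W_{l}M)(n)$ (Proposition~\ref{ProModFr}), a suspension of a quotient of $W_{l}M$, which does not map back to $M(n)$. Contrast this with the dual CDGA setting of Sections~\ref{sec:fibr-resol-hopf} and~\ref{sec:fibr-resol-hopf-2}, where the coaugmentation $M \to \calB_{l}\Omega_{l}(M)$ \emph{does} exist and the paper uses exactly the commuting-triangle argument you propose (Proposition~\ref{ProCompaOp2}); that argument does not dualize to the spectral end/coend setting. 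So the reduction ``it suffices to show $\Omega_{l}\calB_{l}(M)\to M$ is a weak equivalence'' is not available as stated, and what remains of your plan --- extending the filtration/extra-degeneracy analysis of \cite{Chi2} to trees with a main section, compatibly with both module structures --- is precisely the content of the proposition, which you correctly flag as the main obstacle but leave open. The route consistent with the paper's operadic case would be to transport $\Gamma$ along the isomorphisms $W_{l}M \cong WM$ and $\Omega_{l}\calB_{l}(M) \cong \Omega\calB(M)$ and then establish the two-sided analogue of \cite[Theorem~2.15]{Chi2} for the usual constructions; either way, that comparison is the missing ingredient.
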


Recall that a point in the leveled two-sided cobar-bar construction is the data of a family of maps
{$\Phi = \left\{\Phi_{T}:sH''_{W}(T)\rightarrow \overline{\calB_{l}(M)}(T), \; T\in \slTree[n] \right\}$}
satisfying the relations \eqref{RelCobarbi}.
A point in $\overline{\calB_{l}(M)}(T)$ is a family of elements such that the vertices on the main section of $T$ (resp.\ below and above the main section of $T$) are indexed by points in $\calB_{l}(M)$ (resp.\ by points in $\calB_{l}(\calP)$ and $\calB_{l}(\calQ)$).

\begin{notat}
  Let $T_{1}$ and $T_{2}$ be two leveled $n$-trees with section. We say that $T_{1}\geq T_{2}$ if, up to permutations and contractions of permutable levels, $T_{2}$ can be obtained from $T_{1}$ by contracting levels. In that case, we fix the following notation:
  \begin{itemize}
    \item Each vertex $v\in V(T_{2})$ corresponds to a sub-leveled tree $T_{1}[v]$ of the leveled tree $T_{1}$ in such a way that $T_{1}$ is obtained (up to permutations and contractions of permutable levels) by grafting all the trees $T_{1}[v]$ together. The sub-trees corresponding to vertices on the main section of $T_{2}$ are also leveled trees with section while the sub-trees corresponding to vertices above or below the main section are just leveled trees. For instance, from the two leveled trees $T_{1}\geq T_{2}$

          \hspace{-30pt}\includegraphics[scale=0.32]{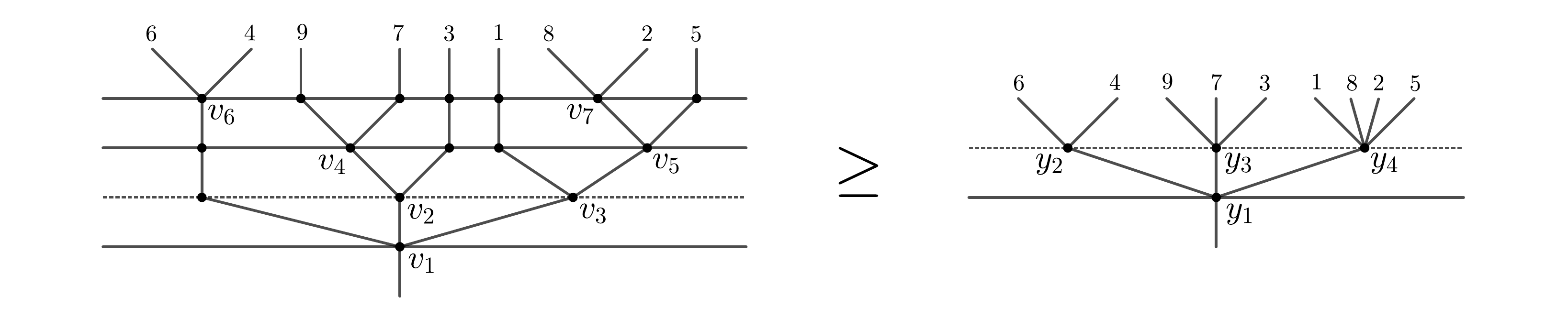}

          \noindent the sub-leveled trees associated to the vertices $y_{1},\ldots,y_{4}$ are the following ones:

          \hspace{-40pt}\includegraphics[scale=0.33]{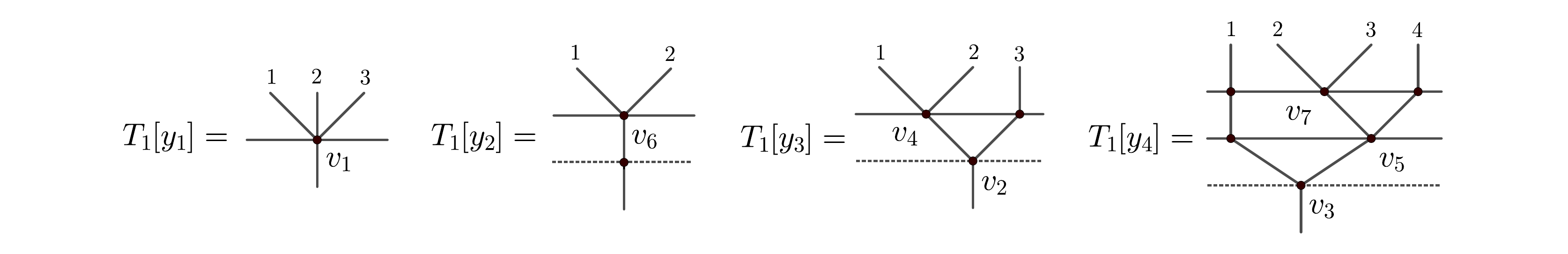}

    \item For any vertex $v\in V(T_{2})$, we denote by $or_{v}:V(T_{1}[v])\rightarrow V(T_{1})$ the map assigning to a vertex in $T_{1}[v]$ the corresponding vertex in $T_{1}$.
          Similarly, let $ol_{v}:\{0,\ldots,h(T_{1}[v])\}\rightarrow \{0,\ldots,h(T_{1})\}$ be the map assigning to a level of $T_{1}[v]$ the corresponding level of $T_{1}$.
          For instance, in the above example, one has $ol_{y_{2}}(1)=3=ol_{y_{4}}(2)$ and $ol_{y_{4}}(1)=2$.
  \end{itemize}
\end{notat}

The map between the leveled Boardman--Vogt resolution and the leveled two-sided cobar-bar construction
\begin{equation}\label{eq:def-gamma}
  \begin{aligned}
    \Gamma_{n}:W_{l}M(n)                                     & \longrightarrow \Omega_{l}\calB_{l}(M)(n),                                                               \\
    \underline{x} = [T_{1}\,;\,\{\theta_{v}\}\,;\,\{t_{l}\}] & \longmapsto \Phi_{\underline{x}}=\bigl\{ \Phi_{\underline{x};T_{2}}\,\,,\,\,T_{2}\in \slTree[n] \bigr\},
  \end{aligned}
\end{equation}
is defined as follows.
If $T_{1} \ngeq T_{2}$, then $\Phi_{\underline{x};T_{2}}$ is the basepoint in $\overline{\calB_{l}(M)}(T_{2})$.
Otherwise, one has:
\begin{align*}
  \Phi_{\underline{x};T_{2}}:H''(T)       & \longrightarrow \overline{\calB_{l}(M)}(T_{2}),                                                 \\
  \{\tilde{t}_{i}\}_{0\leq i\leq h(T_{2}} & \longmapsto \bigl\{ [T_{1}[v] ; \{\theta_{y}[v]\} ; \{t_{j}[v] \}] \bigr\}_{v \in V(T_{2})} \,,
\end{align*}
where
$$
  \theta_{y}[v]=\theta_{or_{v}(y)} \hspace{15pt} \text{and} \hspace{15pt}
  t_{j}[v] =
  \begin{cases}
    t_{ol_{v}(j)}                                              & \text{if } j>0,                                                             \\
    1-t_{ol_{v}(j)}                                            & \text{if } j=0 \text{ and } v \text{ is on the main section of } T_{2},     \\
    \max\bigl(0\,;\, t_{ol_{v}(j)}-\tilde{t}_{\lev(v)}\,\bigr) & \text{if } j=0 \text{ and } v \text{ is not on the main section of } T_{2}.
  \end{cases}
$$

\begin{rmk}
  We cannot extend these results to $\Lambda$-bimodules, see Remark~\ref{rmk:lambda-fresse}.
  However, in the dual case, the $\Lambda$-structure exists and is compatible with our constructions (see Theorem~\ref{thm:finale}).
\end{rmk}

\section{Fibrant resolutions for Hopf $\Lambda$-cooperads}
\label{sec:fibr-resol-hopf}

For any 1-reduced Hopf $\Lambda$-cooperad $\calC$, we build a 1-reduced Hopf $\Lambda$-cooperad $W_{l}\calC$ together with a quasi-isomorphism $\eta:\calC\rightarrow W_{l}\calC$ such that $W_{l}\calC$ is fibrant.
Contrary to \cite{FresseTurchinWillwacher2017}, our construction uses leveled trees.
We show that the cooperad $W_{l}\calC$ so obtained is (as a dg-cooperad) the leveled bar construction of an augmented dg-operad.
Inspired by the methods introduced in \cite{Livernet2012,Chi}, we show that our fibrant replacement $W_{l}\calC$ is quasi-isomorphic to the fibrant replacement introduced in \cite{FresseTurchinWillwacher2017}.

\subsection{The leveled bar construction for 1-reduced cooperads}
\label{SectBarCoop}

We recall from Section \ref{SectTree} that $\lTree_{\iso}[n]$ is the category of leveled trees whose morphisms are generated by isomorphisms of leveled trees, contraction morphisms of permutable levels and permutation morphisms of of permutable levels. Let us define a functor
$$
  \calF^{c}_{l}:\dg\Sigma \Seq_{>1}^{c}\longrightarrow \Sigma\Cooperad.
$$
From such a $1$-reduced symmetric cosequence $X$, we construct the following two functors:
\begin{align*}
  \overline{X}_{F} : \lTree_{\iso}[n]^{op} & \longrightarrow \Ch ,                     & E_{1}: \lTree_{\iso}[n] & \longrightarrow \Ch, \\
  T                                        & \longmapsto \bigotimes_{v\in V(T)}X(|v|); & T                       & \longmapsto \K.
\end{align*}
\begin{defi}
  The leveled cofree cooperad functor $\calF^{c}_{l}$ is defined as the end:
  $$
    \calF^{c}_{l}(X)(n) \coloneqq \int_{T\in \lTree_{\iso}[n]} \overline{X}_{F}(T)\otimes E_{1}(T).
  $$
\end{defi}
Concretely, an element in $\calF^{c}_{l}(X)(n)$ is a map $\Phi$ which maps leveled trees $T$ to elements $\Phi(T)\in \overline{X}(T)$ satisfying the relations:
\begin{enumerate*}[label={(\arabic*)}]
  \item for each permutation $\sigma$ of permutable levels, one has $\Phi(T)=\Phi(\sigma\cdot T)$;
  \item for each morphism $\delta_{i}:T\rightarrow T/\{i\}$ contracting two permutable levels, one has $\Phi(T)=\Phi(T/\{i\})$.
\end{enumerate*}

The cooperadic structure is given by
\begin{align*}
  \gamma'': \calF^{c}_{l}(X)(n_{1}+\cdots+n_{k}) & \longrightarrow \calF^{c}_{l}(X)(k) \otimes \calF^{c}_{l}(X)(n_{1})\otimes \cdots \otimes \calF^{c}_{l}(X)(n_{k}),                     \\
  \Phi                                           & \longmapsto \widehat{\Phi} \coloneqq \bigl\{ \widehat{\Phi}(T_{0},\{T_{i}\}_{1\leq i \leq k}) = \Phi(\gamma(T_{0},\{T_{i}\})) \bigr\},
\end{align*}
where $\gamma$ is the operation \eqref{formulatreecomp}. This formula is well defined: a point $\Phi \in \calF^{c}_{l}(X)$ is equivariant up to permutations of permutable levels and contractions of permutable levels.

\begin{defi}[The usual cofree cooperadic functor]\label{def:usual-cofree}
  We denote the construction of the usual cofree cooperad functor by $\calF^{c}$. We use the category $\Tree_{\iso}^{\geq 2}[n]$, introduced in Section \ref{sec:categ-plan-trees}, of planar trees having $n$ leaves and without univalent or bivalent vertices. In that case morphisms consist of isomorphisms of planar trees. For any $1$-reduced symmetric cosequence $X$, we set
  $$
    \overline{X}_{u}:\Tree_{\iso}^{\geq 2}[n]^{op}\longrightarrow \Ch, \quad T\longmapsto \underset{v\in V(T)}{\bigotimes} X(|v|).
  $$
  Then the cofree cooperad functor is defined as the end
  $$
    \calF^{c}(X)(n)=\int_{T\in \Tree_{\iso}^{\geq 2}[n]}\overline{X}_{u}(T)\otimes E_{1}(T).
  $$
  An element in $\calF^{c}(X)(n)$ is a map $\Phi$ which maps planar trees $T\in \Tree_{\iso}^{\geq 2}[n]$ to elements $\Phi(T)\in \overline{X}(T)$ satisfying the relation: for each isomorphism of planar trees $\sigma$, one has $\Phi(T)=\Phi(\sigma\cdot T)$. The operadic structure is obtained using the operadic composition $\gamma$ of the operad $\Tree_{\iso}^{\geq 2}$:
  \begin{align*}
    \gamma'': \calF^{c}(X)(n_{1}+\cdots+n_{k}) & \longrightarrow \calF^{c}(X)(k) \otimes \calF^{c}(X)(n_{1})\otimes \cdots \otimes \calF^{c}(X)(n_{k}),                                 \\
    \Phi                                       & \longmapsto \widehat{\Phi} \coloneqq \bigl\{ \widehat{\Phi}(T_{0},\{T_{i}\}_{1\leq i \leq k}) = \Phi(\gamma(T_{0},\{T_{i}\})) \bigr\}.
  \end{align*}
\end{defi}

\begin{pro}\label{FreeFunc}
  The functor $\calF_{l}^{c}$ is isomorphic to the usual cofree cooperad functor $\calF^{c}$. In particular, $\calF_{l}^{c}$ is the right adjoint to the forgetful functor from the category of 1-reduced dg-cooperads to $1$-reduced $\Sigma$-cosequences of cochain complexes.
\end{pro}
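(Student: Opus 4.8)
The plan is to construct a natural isomorphism $\calF^{c}_{l}(X) \cong \calF^{c}(X)$ of $1$-reduced dg-cooperads using the comparison maps $\alpha$ and $\beta$ from Section~\ref{SectTree}. Recall that the underlying $\Sigma$-sequence of the usual cofree cooperad can be described as an end over the groupoid $\Tree_{\geq 2}[n]$ of reduced planar $n$-trees and their isomorphisms, namely $\calF^{c}(X)(n) = \int_{T\in \Tree_{\geq 2}[n]} \overline{X}(T)$ with $\overline{X}(T) = \bigotimes_{v\in V(T)} X(|v|)$, its cooperadic cocomposition being induced by decomposing a planar tree as a grafting $T_{0}\circ\{T_{i}\}$. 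Concretely an element is an $\mathrm{Aut}$-invariant family assigning to each reduced planar tree an element of $\overline{X}(T)$. Note also that since $E_{1}$ is the constant functor $\K$, the end defining $\calF^{c}_{l}(X)(n)$ reduces to a limit of $\overline{X}_{F}$ over $\Tree_{l}'[n]$, i.e.\ the invariant families $\Phi$ described after the definition.

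First I would use that $X$ is $1$-reduced. A bivalent vertex $v$ has $|v|=1$ and contributes the factor $X(1)=\K$, the monoidal unit, so for every leveled tree $T$ there is a canonical isomorphism $\overline{X}_{F}(T)\cong \overline{X}(\alpha(T))$ obtained by deleting the unit factors attached to bivalent vertices. This is natural with respect to all morphisms of $\Tree_{l}'[n]$, and in particular identifies the two sides when $T$ and $T'$ differ by a permutation or a contraction of permutable levels, since then $\alpha(T)=\alpha(T')$.

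Next I would reduce the indexing category. The non-invertible generating morphisms of $\Tree_{l}'[n]$ are exactly the permutations and contractions of permutable levels, which are precisely the relations collapsed by $\alpha$; moreover the fibers of $\alpha$ are exactly the resulting equivalence classes, and $\beta$ is a section. Hence an element $\Phi\in\calF^{c}_{l}(X)(n)$, which is a family invariant under isomorphisms, permutations, and contractions of permutable levels, is the same datum as an $\mathrm{Aut}$-invariant family indexed by reduced planar trees: in one direction precompose with $\beta$, in the other pull back along $\alpha$ (well defined by the invariance just recalled, and independent of the chosen representative in $\alpha^{-1}(T)$). Combined with the vertexwise isomorphism of the previous paragraph, this yields a bijection $\calF^{c}_{l}(X)(n)\cong \calF^{c}(X)(n)$, manifestly natural in $X$ and compatible with the $\Sigma_{n}$-coactions.

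The main point, and the step I expect to be the real obstacle, is to verify that this bijection is an isomorphism of cooperads, i.e.\ that it intertwines the leveled cocomposition $\gamma''$ with the usual one. The key geometric fact is that $\alpha$ commutes with grafting: the leveled grafting $\gamma(T_{0},\{T_{i}\})$ of~\eqref{formulatreecomp} differs from the planar grafting $\alpha(T_{0})\circ\{\alpha(T_{i})\}$ only by the insertion of bivalent completion vertices and the choice of levels, all of which $\alpha$ forgets, so that $\alpha(\gamma(T_{0},\{T_{i}\}))=\alpha(T_{0})\circ\{\alpha(T_{i})\}$. Since $\gamma''(\Phi)(T_{0},\{T_{i}\})=\Phi(\gamma(T_{0},\{T_{i}\}))$, this reads off exactly the component of the usual cooperadic cocomposition on the planar tree $\alpha(T_{0})\circ\{\alpha(T_{i})\}$. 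That $\gamma$ is only associative up to permutation of levels causes no trouble, precisely because $\calF^{c}_{l}(X)$ is built to be invariant under those permutations, so $\gamma''$ is well defined and strictly coassociative and agrees with $\calF^{c}$. Finally, the ``in particular'' statement is immediate: $\calF^{c}$ is the right adjoint of the forgetful functor by the adjunction~\eqref{adjSym}, and a functor isomorphic to a right adjoint is again a right adjoint, transporting the universal property across the isomorphism constructed above.
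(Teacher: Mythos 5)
Your proposal is correct and takes essentially the same route as the paper: both construct the isomorphism arity-wise via the maps $\alpha$ and $\beta$ (pulling back along $\alpha$ in one direction, precomposing with $\beta$ in the other), justify well-definedness by the invariance of elements of $\calF^{c}_{l}(X)$ under permutations and contractions of permutable levels, and check compatibility of the cocompositions using that $\alpha$ intertwines the leveled grafting $\gamma$ with planar grafting. Your added remarks (bivalent vertices contributing unit factors since $X(1)=\K$, and the transport of the adjunction along the isomorphism) are just explicit versions of steps the paper leaves implicit.
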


\begin{proof}
  By using the comparison morphisms $\alpha$ and $\beta$ between planar trees $\Tree_{\iso}^{\geq 2}[n]$ and leveled trees $\lTree_{\iso}[n]$, introduced in Section~\ref{CompTree}, we are now able to give an explicit isomorphism between the leveled and usual versions of the cofree cooperad functors:
  $$
    L_{n}:\calF^{c}(X)(n)\leftrightarrows \calF^{c}_{l}(X)(n):R_{n}.
  $$

  Let $\Phi$ be an element in $\calF^{c}(X)(n)$ and $T$ be an leveled $n$-tree. Then $L_{n}(\Phi)$ evaluated at $T$ is given by $\Phi\circ \alpha (T)$. Conversely, let $\Phi'$ be an element in $\calF^{c}_{l}(X)(n)$ and $T'$ be a rooted planar tree in  $\Tree_{\iso}^{\geq 2}[n]$. Then $R_{n}(\Phi')$ evaluated at $T'$ is given by $\Phi'\circ \beta (T')$.  The map $R_{n}$ does not depend on the fixed point $T_{l}\in \alpha^{-1}(T)$ since the decoration $\Phi'(T_{l})$ does not depend on $T_{l}$ (up to contractions  of permutable levels and permutations of permutable levels). Therefore the maps $L_{n}$ and $R_{n}$ are well defined and provide isomorphisms preserving the cooperadic structures.
\end{proof}

\begin{cor}\label{cor:cofree-as-product}
  Let $X$ be a 1-reduced symmetric cosequence and $n$ an integer.
  There is an isomorphism of cochain complexes compatible with the symmetric group coaction:
  \[ \calF_{l}^{c}(X)(n) \cong \prod_{[T] \in \Tree_{\iso}^{\ge 2}[n] / \cong} \overline{X}_{u}(T). \]
  where the product is over the isomorphism classes of planar trees.
\end{cor}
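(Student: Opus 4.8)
The plan is to reduce everything to the computation of the end defining the usual cofree cooperad functor and then to exploit the rigidity of planar trees. By Proposition~\ref{FreeFunc} there is an isomorphism $\calF_{l}^{c}(X)(n) \cong \calF^{c}(X)(n)$ compatible with the cooperadic structure (and in particular with the $\Sigma_{n}$-coaction), so it suffices to identify the end
\[ \calF^{c}(X)(n) = \int_{T\in \Tree_{\geq 2}[n]} \overline{X}_{u}(T) \otimes E_{1}(T) \]
with the stated product. Here $E_{1}(T) = \K$ is constant, so on the diagonal the integrand is simply $\overline{X}_{u}(T) \otimes \K \cong \overline{X}_{u}(T)$.

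The key observation is that $\Tree_{\geq 2}[n]$ is a groupoid in which every object has trivial automorphism group. Indeed, a morphism of planar $n$-trees is by definition a bijection of the totally ordered set $V(T) \cup [n]$ that preserves the total order and commutes with the target map; but an order-preserving self-bijection of a finite totally ordered set is necessarily the identity, so $\mathrm{Aut}(T) = \{\id\}$, and moreover between any two isomorphic trees there is exactly one isomorphism, namely the unique order isomorphism. Thus $\Tree_{\geq 2}[n]$ is equivalent to the discrete category on its set of isomorphism classes.

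Next I would unwind the end as an equalizer, $\mathrm{eq}\bigl( \prod_{T} \overline{X}_{u}(T) \rightrightarrows \prod_{f : S \to T} \overline{X}_{u}(S) \bigr)$, using once more that $E_{1}$ is constant. The wedge condition attached to a morphism $f : S \to T$ then reads $x_{S} = \overline{X}_{u}(f)(x_{T})$. For $f$ an automorphism (necessarily the identity) this condition is vacuous, while for the unique isomorphism between two distinct isomorphic trees it merely expresses $x_{S}$ in terms of $x_{T}$. Hence a compatible family is freely and uniquely determined by a single value $x_{T} \in \overline{X}_{u}(T)$ for each isomorphism class, which yields the isomorphism $\calF^{c}(X)(n) \cong \prod_{[T]\in \Tree_{\geq 2}[n]/\cong} \overline{X}_{u}(T)$.

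Finally, for the compatibility with the $\Sigma_{n}$-coaction I would note that the whole construction is natural in the leaf labels: $\Sigma_{n}$ acts on $\Tree_{\geq 2}[n]$ by relabeling leaves and on the functor $\overline{X}_{u}$ through the symmetric coactions at the vertices, and the end is formed $\Sigma_{n}$-equivariantly, so the resulting isomorphism intertwines the coaction on $\calF_{l}^{c}(X)(n)$ with the permutation of the factors $\overline{X}_{u}(T) \mapsto \overline{X}_{u}(T\cdot\sigma)$ together with the induced twist inside each factor. The only genuinely delicate point is the rigidity claim, i.e.\ the triviality of the automorphism groups: this is precisely what makes the invariants that would a priori appear in an end over a groupoid disappear, leaving a bare product rather than a product of $\mathrm{Aut}(T)$-invariants.
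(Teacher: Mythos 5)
Your proof is correct and follows essentially the same route as the paper: collapse the end over a groupoid to a product over isomorphism classes (the paper argues directly on the leveled category, using that $X$ is 1-reduced to kill the bivalent vertices and that connected components of $\Tree'_{l}[n]$ biject with isomorphism classes in $\Tree_{\ge2}[n]$, whereas you route through Proposition~\ref{FreeFunc}, which rests on the same $\alpha$/$\beta$ identification). Your explicit observation that planar trees have \emph{trivial} automorphism groups — so that no $\mathrm{Aut}(T)$-invariants survive and the end is a bare product — is the one genuinely delicate point, and the paper leaves it implicit; making it explicit is a welcome addition rather than a deviation.
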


\begin{proof}
  We use that the functor $E_{1}$ is constant.
  The category $\lTree_{\iso}[n]$ has two kinds of morphisms: those that keep the height constant are isomorphisms (i.e. isomorphisms of planar trees and permutations of permutable levels), and those that strictly decrease the height (i.e. contractions of permutable levels).
  Elements of the end defining $\calF^l_c(X)$ can thus be expressed using only trees of minimal height, and values only depend on the isomorphism class of such trees.

  Moreover, the symmetric cosequence $X$ is 1-reduced, so $\overline{X}_{F}$ is constant on permutations of permutable levels and contractions of permutable levels and is given by $\overline{X}_F(T) = \overline{X}_u(\alpha(T))$.
  The argument in Section~\ref{SectTreeLev} then show that two trees in $\lTree_{\iso}^{\geq 2}[n]$ are connected by morphisms if and only if they define the same isomorphism class in $\Tree_{\iso}^{\geq 2}[n]$.
  Therefore, the product above is a product over isomorphism classes $\Tree_{\iso}^{\geq 2}[n]$.
\end{proof}

\begin{defi}\label{DefBarlev}
  The leveled bar construction of a 1-reduced dg-operad $\calO$ is given by
  $$
    \calB_{l}(\calO) \coloneqq \bigl( \calF_{l}^{c}(\Sigma \calU(\overline{\calO})),\,\, d_{\mathrm{int}}+d_{\mathrm{ext}} \bigr),
  $$
  where $\calU(\overline{\calO})$ is the sequence underlying the augmentation ideal of $\calO$.
  For $\Phi \in \calB_{l}(\calO)$ and $T \in \lTree_{\iso}[n]$ we set:
  $$
    \deg'(\Phi,T) =\sum_{v\in V_{\ge2}(T)} (\deg(\theta_{v}) + 1).
  $$
  An element $\Phi \in \calB_{l}(\calO)$ is then said to be of degree $d$ if $\deg'(\Phi,T) = d$ for all trees $T$.

  The cooperadic structure and the Hopf structure are inherited from the cofree cooperad functor $\calF^{c}_{l}(\mathcal{U}(\overline{\calO}))$.
  The differential is the sum of two terms:
  \begin{itemize}
    \item The differential $d_{\mathrm{int}}$ is the internal differential of the cochain complex $\mathcal{U}(\overline{\calO})$.
    \item The differential $d_{\mathrm{ext}}$ roughly speaking consists in contracting two consecutive levels.
          More precisely, for $\Phi \in \calB_{l}\calO$ and $T \in \lTree_{\iso}[n]$, consider the set of trees
          \[ D_{T} \coloneqq
            \left\{ (T', i) \in \lTree_{\iso}[n] \times \mathbb{N} \; \middle\vert
            \begin{aligned}
              & T = T' / \{ i \} \text{ and  there is a unique edge between} \\
             & \text{levels } i-1 \text{ and } i \text{ that joins two non-bivalent vertices}
            \end{aligned}
            \right\}. \]
          (Note that levels $i-1$ and $i$ cannot be permutable in the previous definition.)
          Then the element $(d_{\mathrm{ext}}\Phi)(T)$ is the sum $\sum_{(T', i) \in D_{T}} \pm \gamma_{i} \Phi(T')$, where $\gamma_{i}$ uses the operadic structure of $\calO$ to contract the levels $i$ and $i+1$.
          See Figure~\ref{fig:d-ext-bar} for an example.
  \end{itemize}
\end{defi}

\begin{figure}[htbp]
  \centering
  \includegraphics[scale=0.4]{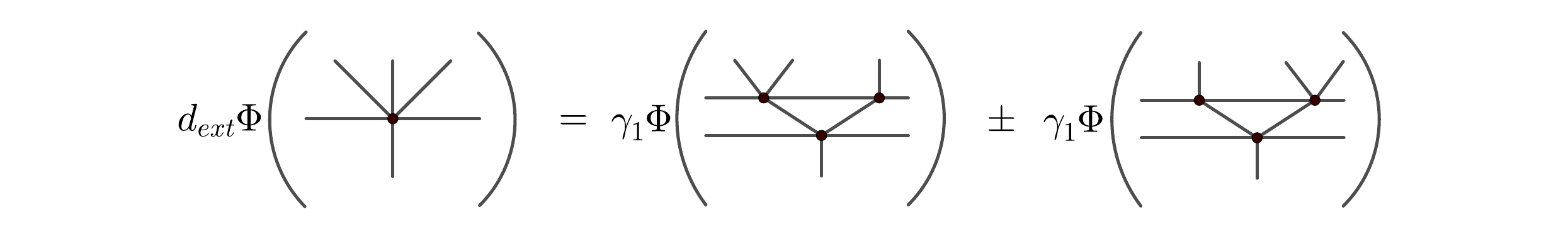}
  \caption{External differential in $\calB_l\calO$.}
  \label{fig:d-ext-bar}
\end{figure}

\begin{pro}\label{prop:bar-well-def}
  The leveled bar construction $\calB_{l}\calO$ of a 1-reduced dg-operad is a well defined 1-reduced dg-cooperad.
\end{pro}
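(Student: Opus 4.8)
The plan is to establish the claim in four steps: the underlying graded cooperad and Hopf structures, well-definedness of the differential on the end, its degree and coderivation property, and finally $d^2=0$. The graded cooperad structure together with the Hopf structure is inherited verbatim from the leveled cofree cooperad $\calF^c_l(\Sigma\calU(\overline{\calO}))$, which by Proposition~\ref{FreeFunc} is a genuine cofree conilpotent cooperad (the right adjoint to the forgetful functor). One-reducedness is immediate: since $\calO$ is $1$-reduced its augmentation ideal satisfies $\overline{\calO}(1)=0$, so in arity $1$ the only contributing tree is the trivial one with no vertex and $\calF^c_l(\Sigma\calU(\overline{\calO}))(1)=\K$ in degree $0$. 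Thus the entire content of the proposition lies in the analysis of $d \coloneqq d_{int}+d_{ext}$.

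First I would check that $d$ descends to the end, i.e.\ that $d\Phi$ is again compatible with permutations and contractions of permutable levels. For $d_{int}$ this is automatic, as it acts vertex-wise and therefore commutes with every generating morphism of $\Tree'_{l}[n]$. For $d_{ext}$ the key point is built into the indexing set $D_{T}$: it only records contractions of \emph{non-permutable} pairs of levels whose upper level carries a unique $\ge 3$-valent vertex, which is exactly the datum of a single inner edge of the associated reduced planar tree. Using Corollary~\ref{cor:cofree-as-product} to identify $\calB_l\calO(n)$ with the product $\prod_{[T]\in\Tree_{\geq 2}[n]/\cong}\overline{X}_{u}(T)$ indexed by isomorphism classes of reduced planar trees, where $X=\Sigma\calU(\overline{\calO})$, the operator $d_{ext}$ becomes the classical operation summing over single inner-edge contractions, each performed through the operadic composition of $\calO$. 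In this description compatibility with permutations and contractions of permutable levels is manifest (and the relevant sums are finite), so $d_{ext}$ is well defined on the end.

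The degree and the coderivation property are most cleanly handled through the universal property of the cofree conilpotent cooperad furnished by Proposition~\ref{FreeFunc}: a coderivation of $\calF^c_l(Y)$ is uniquely determined by its corank-one component, namely its composite with the projection onto the cogenerators $Y=\Sigma\calU(\overline{\calO})$. The map $d_{int}$ is the cofree coderivation extending the internal differential of $Y$, while under the identification above the corank-one component of $d_{ext}$ is precisely the operadic composition of $\calO$ on two-vertex trees; the explicit $D_{T}$-formula is exactly the value of the coderivation cofreely extending this map. Hence both summands, and therefore $d$, are coderivations, and compatibility with the cooperadic coproduct $\gamma''$ holds formally. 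A routine degree count, using that operadic composition is degree-preserving and that the suspension shift recorded in $\deg'$ compensates for the vertex merged by $\gamma_i$, shows that $d_{int}$ and $d_{ext}$ are homogeneous of degree $+1$ for the convention fixed by $\deg'$.

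It remains to verify $d^2=0$, which splits as $d_{int}^2=0$, $d_{int}d_{ext}+d_{ext}d_{int}=0$, and $d_{ext}^2=0$. The first holds because $d_{int}$ comes from the internal differential of $\calU(\overline{\calO})$. For the mixed term, both operators are coderivations, so their anticommutator is again a coderivation and vanishes as soon as its corank-one component does; the latter vanishes because the operadic compositions $\gamma_i$ are morphisms of cochain complexes and hence commute with $d_{int}$ up to the Koszul sign. Finally $d_{ext}^2$ is a coderivation whose corank-one component is the image of the operadic associator, which vanishes by associativity of the composition of $\calO$; equivalently, expanding $d_{ext}^2\Phi$ at a tree produces a sum over pairs of successive non-permutable contractions that cancel in pairs. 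The principal obstacle throughout is the sign bookkeeping: matching the explicit $D_{T}$-sum with the abstract cofree coderivation and fixing the Koszul signs so that the two orders of contraction in $d_{ext}^2$ cancel. Once the reduction to reduced planar trees via Corollary~\ref{cor:cofree-as-product} is in place, these are exactly the classical verifications for the operadic bar construction.
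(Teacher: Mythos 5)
Your proposal is correct and follows essentially the same strategy as the paper's proof: identifying $d_{ext}$ as the coderivation cofreely induced by its corank-one component (the operadic composition evaluated on two-level trees), deducing well-definedness from this characterization, and reducing $d_{ext}^2=0$ to the corestriction to cogenerators on three-level trees, where it follows from associativity of $\calO$. The extra detail you supply via Corollary~\ref{cor:cofree-as-product} and the degree count is a harmless elaboration of the same argument.
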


\begin{proof}
  Let us check that $d_{\mathrm{int}} + d_{\mathrm{ext}}$ is a well-defined coderivation that squares to zero.
  It is clear that $d_{\mathrm{int}}$ is well-defined and that it is a coderivation that squares to zero.

  We have to check that if $T_{1}$ and $T_{2}$ define the same planar trees, then $d_{\mathrm{ext}}\Phi(T_{1}) = d_{\mathrm{ext}}\Phi(T_{2})$ in the quotient defining $\calB_{l}\calO$.
  In fact, we can see $d_{\mathrm{ext}}$ is the unique coderivation induced by the following map $\alpha : \calF_{l}^{c}(\Sigma\calU(\overline{\calO})) \to \calU(\overline{\calO})$ and is therefore well-defined.
  Let $c_{n}$ be the corolla with $n$ leaves, then $D_{c_{n}}$ is the set of trees $T \in \lTree[n]$ with exactly two levels and exactly one vertex with $\ge2$ incoming edges on the second level.
  The element $\alpha(\Phi) \in \calO(n)$ is the sum over all $T \in D_{c_{n}}$ of the map of the operad structure maps to $\Phi(T) \in \calO(k) \otimes \calO(l)$.

  We moreover have that $d_{\mathrm{ext}}d_{\mathrm{int}} + d_{\mathrm{int}}d_{\mathrm{ext}} = 0$ because the operad structure of $\calO$ is compatible with the differential.
  Let us finally check that $d_{\mathrm{ext}}^{2} = 0$.
  Since $d_{\mathrm{ext}}$ is a coderivation, it is enough to check this when corestricted to cogenerators.
  We thus have to check that $d_{\mathrm{ext}}^{2}\Phi(T) = 0$ for all trees $T$ with three levels.
  Just like in the case of the standard bar construction, this follows from the associativity of the operad structure of $\calO$ and the signs in the differential.
\end{proof}

\begin{defi}[The usual bar construction for dg-operads]\label{def:usual-bar-dg}
  The usual bar construction $\calB(O)$ is defined as the cofree cooperad generated by the augmentation ideal of $\calO$ (analogously to Definition~\ref{DefBarlev}):
  $$
    \calB(\calO)=\bigl( \calF^{c}(\Sigma \calU(\overline{\calO})),\; d_{\mathrm{int}}+d_{\mathrm{ext}} \bigr).
  $$
  The degree of an element evaluated to a planar tree $T\in \Tree^{\geq 2}_{\iso}$ is the degree of the decorations plus the number of vertices.
  The differential is composed of the internal differential coming from the differential graded algebra $\mathcal{U}(\calO)$ and an external differential which is dual to edge contraction and uses the operadic structure of $\calO$ (compare with the description of $d_{\mathrm{ext}}$ above).
\end{defi}

\begin{pro}\label{ProBlB}
  Let $\calO$ be a 1-reduced dg-operad.
  The leveled bar construction is isomorphic to the usual one:
  $$
    \calB_{l}(\calO)\cong \calB(\calO).
  $$
\end{pro}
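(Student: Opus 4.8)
The plan is to promote the graded isomorphism of Proposition~\ref{FreeFunc} to an isomorphism of dg-cooperads by checking that it intertwines the differentials, using the fact that both differentials are coderivations and are therefore determined by their corestriction to cogenerators.

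First I would apply Proposition~\ref{FreeFunc} to the $1$-reduced symmetric cosequence $X = \Sigma\calU(\overline{\calO})$. This yields an isomorphism $L_{n} : \calF^{c}(X)(n) \xrightarrow{\sim} \calF^{c}_{l}(X)(n)$ of graded $1$-reduced cooperads, built from the bijection $\alpha,\beta$ between $\Tree_{\geq 2}[n]$ and $\Tree_{l}[n]$ up to permutations and contractions of permutable levels. Since the graded object underlying $\calB_{l}(\calO)$ is $\calF^{c}_{l}(X)$ and that underlying $\calB(\calO)$ is $\calF^{c}(X)$, it remains only to show that $L_{n}$ commutes with the total differentials $d_{int}+d_{ext}$.

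The key structural input is that each total differential is a coderivation on a cofree conilpotent cooperad and is hence uniquely determined by its corestriction to the cogenerators $\Sigma\calU(\overline{\calO})$; this is precisely the principle already used in the proof of Proposition~\ref{prop:bar-well-def}. As $L_{n}$ preserves the cooperadic structure, it sends coderivations to coderivations, so it suffices to match the two corestrictions under $L_{n}$. The internal parts agree immediately: in both constructions the corestriction of $d_{int}$ is induced vertexwise by the internal differential of $\calU(\overline{\calO})$, and the maps $\alpha,\beta$ preserve the set of vertices together with their arities. For the external parts, the corestriction of the usual bar differential evaluates an element on the two-vertex planar tree (a single inner edge) and applies the operadic composition $\gamma$; by the computation in the proof of Proposition~\ref{prop:bar-well-def}, the leveled corestriction is the sum over the two-level non-permutable trees $T \in D_{c_{n}}$ of $\gamma$ applied to $\Phi(T)$. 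Under $\alpha$ these two-level non-permutable trees biject with single-inner-edge planar trees, and contracting the two levels reproduces the edge contraction, so the two external corestrictions coincide.

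The step I expect to be the main obstacle is the bookkeeping of signs: I would verify that the Koszul signs arising from commuting the suspended generators of $\Sigma\calU(\overline{\calO})$ past one another and past $d_{int}$ are the same in the level-indexed and edge-indexed conventions, so that the corestrictions agree on the nose and not merely up to sign. Once this is checked on cogenerators, the coderivation extension principle propagates the equality $L_{n}\circ(d_{int}+d_{ext}) = (d_{int}+d_{ext})\circ L_{n}$ to all of $\calF^{c}(X)$, yielding the desired isomorphism $\calB_{l}(\calO) \cong \calB(\calO)$ of $1$-reduced dg-cooperads.
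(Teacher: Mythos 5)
Your proposal is correct and follows the same route as the paper: both obtain the graded cooperad isomorphism from Proposition~\ref{FreeFunc} applied to $\Sigma\calU(\overline{\calO})$ and then verify compatibility with the differentials. The paper leaves that last verification as "one can check easily," whereas you spell it out via the corestriction-to-cogenerators argument for coderivations (the same principle used in Proposition~\ref{prop:bar-well-def}), which is a legitimate and complete way to carry out exactly the check the paper has in mind.
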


\begin{proof}
  There is an isomorphism of graded cooperads between $\calB_{l}(\calO)$ and $\calB(\calO)$ thanks to Proposition~\ref{FreeFunc}.
  We just need to check that it is compatible with the differential.
  Recall that the isomorphism $L : \calF^c(\Sigma\calU(\overline{\calO})) \to \calF^c_l(\Sigma\calU(\overline{\calO}))$ is defined by $\Phi \mapsto \Phi \circ \alpha_{\iso}$, where $\alpha_{\iso} : \lTree_{\iso} \to \Tree^{\ge 2}_{\iso}$ is the functor that forgets levels and bivalent vertices.
  The operad $\calO$ is $1$-reduced, therefore $\calU(\calO)(1) = 0$.
  For some $\Phi \in \calF^c(\Sigma\calU(\overline{\calO}))$, we then see that all the terms in $(d \circ L)(\Phi)(T)$ correspond exactly to the terms in $(L \circ d)(\Phi)(T)$, as the vertices and edges of $T$ are in bijection with those of $\alpha_{\iso}(T)$.
\end{proof}

\subsection{The leveled Boardman--Vogt resolution}\label{SectBVCoop}

This section is split into three parts.
First, we introduce a leveled version of the Boardman--Vogt resolution for $1$-reduced cooperads in chain complexes and we compare this alternative construction to the usual Boardman--Vogt resolution introduced by Fresse--Turchin--Willwacher in \cite{FresseTurchinWillwacher2017}.
The two last parts are devoted to extending this construction to $1$-reduced $\Lambda$-cooperads and $1$-reduced Hopf $\Lambda$-cooperads, respectively.

\paragraph{The leveled Boardman--Vogt resolution for 1-reduced Hopf cooperads}

Let $\calC$ be a 1-reduced Hopf cooperad.
In what follows, we introduce a Boardman--Vogt resolution $W_{l}\calC$ of $\calC$ producing a fibrant resolution in the projective model category of 1-reduced Hopf cooperads.
We describe its cooperadic structure and we prove that there is a natural weak equivalence of cooperads $\eta:\calC\rightarrow W_{l}\calC$. First, we consider the following two functors:
\begin{align}
  \overline{\calC}_{W} :\lTree[n]^{op} & \longrightarrow \CDGA, \quad T \longmapsto \displaystyle\bigotimes_{\mathclap{v\in V(T)}} \calC(|v|);       \\
  E_{W} : \lTree[n]                    & \longrightarrow \CDGA, \quad T  \longmapsto\bigotimes_{\mathclap{1\leq i\leq h(T)}} \K[t,dt].\label{eq:e-w}
\end{align}

The functor $\overline{\calC}_{W}$ consists in indexing the vertices of leveled trees by elements in the cooperad $\calC$ while the functor $E_{W}$ associates to each level bigger than $1$ a polynomial differential form in $\K[t,dt]$.
If $1 \le i, i+1 \le h(T)$ are permutable levels, then the corresponding permutation $\sigma_{i}$ induces operations $\overline{\calC}_{W}(\sigma_{i})$ and $E_{W}(\sigma_{i})$ which are defined using the symmetric monoidal structure of $\CDGA$.
For $i \in \{0, \dots, h(T)-1\}$, the morphism $\delta_{\{i+1\}}:T\rightarrow T/\{i+1\}$ induces an operation $\overline{\calC}_{W}(\delta_{\{i+1\}})$ which is defined using the cooperadic structure and the symmetric monoidal structure.
However, in order to define $E_{W}(\delta_{\{i+1\}})$, there are two cases to consider:
\begin{enumerate}
  \item If the levels $i$ and $i+1$

  are permutable, then the map is obtained by taking the product of the differential polynomial forms indexing the corresponding levels.
  \item If the levels $i$ and $i+1$ are not permutable, then the map consists in evaluating to $0$ the polynomial differential forms indexing the $(i+1)$-st

  level.
\end{enumerate}

\begin{defi}
  The Boardman--Vogt resolution for 1-reduced Hopf cooperad is the end:
  $$
    W_{l}\calC(n) \coloneqq \int_{T\in \lTree[n]}\overline{\calC}_{W}(T)\otimes E_{W}(T).
  $$
\end{defi}

In other words, an element in $W_{l}\calC(n)$ is a map $\Phi$ which associates to each leveled tree $T$ an element $\Phi(T)\in \overline{\calC}_{W}(T)\otimes E_{W}(T)$ satisfying the following relations:
\begin{enumerate*}[label={(\arabic*)}]
  \item for each permutation $\sigma$ of permutable levels, one has $\Phi(T)=\Phi(\sigma\cdot T)$;
  \item for each morphism $\delta_{N}:T\rightarrow T/N$, one has the following identification in the commutative differential graded algebra $\overline{\calC}_{W}(T)\otimes E_{W}(T/N)$:
\end{enumerate*}
\begin{equation}\label{relationcoeq}
  \bigl( \id\otimes E_{W}(\delta_{N}) \bigr)\circ \Phi(T)= \bigl( \overline{\calC}_{W}(\delta_{N})\otimes \id \bigr) \circ \Phi(T/N).
\end{equation}

We recall that $\gamma$ is the operation on leveled trees given by the formula \eqref{formulatreecomp}. The cooperadic structure
\begin{equation}\label{eq:hopf-coop-w-lambda}
  \gamma^{c}: W_{l}\calC(n_{1}+\cdots + n_{k})\longrightarrow W_{l}\calC(k) \otimes  W_{l}\calC(n_{1})\otimes \cdots \otimes W_{l}\calC(n_{k}),
\end{equation}
sends $\Phi\in W_{l}\calC(n)$ to the map $\gamma^{c}(\Phi)$ which associates to each family of leveled trees $T_{0}\in \lTree[k]$ and $T_{i}\in \lTree[n_{i}]$, with $i\leq k$, the decoration
$$
  \gamma^{c}(\Phi)(T_{0}; \{T_{i}\})= \bigl( \id \otimes \ev_{T_{0}; \{T_{i}\}}\bigr) \circ \Phi\bigl(\gamma(T_{0}; \{T_{i}\})\bigr),
$$
where the morphism
\begin{equation}\label{Eva}
  \ev_{T_{0};\{T_{i}\}} : E_{W}(\gamma(T_{0};\{T_{i}\}))\rightarrow E_{W}(T_{0})\otimes \bigotimes_{\mathclap{1\leq i\leq k}} E(T_{i})
\end{equation}
evaluates to $1$ the polynomials associated to the levels of $\gamma(T_{0};\{T_{i}\})$ corresponding to the $0$-th levels of the leveled trees $T_{i}$ with $1\leq i\leq k$.

\begin{pro}\label{OpStr}
  The family $W_{l}\calC=\{W_{l}\calC(n)\}_{n > 0}$ gives rise to a 1-reduced Hopf cooperad.
\end{pro}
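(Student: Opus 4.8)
The plan is to verify, one axiom at a time, that each arity $W_{l}\calC(n)$ is a CDGA, that $W_{l}\calC(1) = \K$, that the maps $\gamma^{c}$ of~\eqref{eq:hopf-coop-w-lambda} are well-defined morphisms of CDGAs, and finally that they satisfy coassociativity and counitality together with the required $\Sigma$-equivariance. The Hopf structure in each arity comes for free: the functor $T \mapsto \overline{\calC}_{W}(T)\otimes E_{W}(T)$ takes values in $\CDGA$, and every structure morphism occurring in the end (those induced by the permutations $\sigma_{i}$ and the contractions $\delta_{\{i\}}$) is a morphism of CDGAs, being assembled from the cooperadic comultiplication of $\calC$, from products and evaluations of polynomial forms in $\K[t,dt]$, and from the symmetric monoidal structure. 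Since $\CDGA$ is complete and limits are created by the forgetful functor to $\Ch$, the end $W_{l}\calC(n)$ is a CDGA; in particular its differential and commutative product are inherited automatically. The $\Sigma$-coaction is induced by relabeling leaves, equivalently through the permutation datum $\sigma \in \Sigma_{n}$ of a leveled tree, and acts by CDGA automorphisms. For the 1-reduced condition, the only contributions in arity $1$ come from trees all of whose vertices have arity $1$, decorated by $\calC(1) = \K$, so the end collapses to $\K$ concentrated in degree $0$, exactly as in the arity-$1$ convention of~\eqref{EqFunctOH}.

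Next I would check that $\gamma^{c}$ is well-defined. Its formula uses the total composition $\gamma$ of~\eqref{formulatreecomp}, which is only associative and unital up to permutations and contractions of permutable levels. However, an element $\Phi \in W_{l}\calC(n)$ satisfies relation~\eqref{relationcoeq}, hence takes identical values (under the appropriate identifications in $E_{W}$) on trees related by exactly those permutations and contractions. Therefore $(\id \otimes \ev_{T_{0};\{T_{i}\}}) \circ \Phi(\gamma(T_{0};\{T_{i}\}))$ does not depend on the chosen representative of $\gamma(T_{0};\{T_{i}\})$, and a direct check shows that $\gamma^{c}(\Phi)$ again satisfies the defining relations of an element of the target, so that $\gamma^{c}(\Phi)$ lands in $W_{l}\calC(k)\otimes\bigotimes_{i} W_{l}\calC(n_{i})$. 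Because each evaluation of $\Phi$ and each $\ev_{T_{0};\{T_{i}\}}$ is a CDGA morphism, $\gamma^{c}$ is a morphism of CDGAs, which is precisely the compatibility of the cooperadic structure with the Hopf structure (and subsumes compatibility with the differential).

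The crux is coassociativity, and I expect it to be the main obstacle. The two iterated comultiplications both reduce, via the definition of $\gamma$, to evaluating $\Phi$ on a single tree obtained by grafting the constituent trees in two different orders; by the discussion following~\eqref{formulatreecomp} these two trees coincide up to permutations and contractions of permutable levels. Since $W_{l}\calC(n)$ is an end over $\Tree_{l}[n]$, whose morphisms include exactly these permutations and contractions, $\Phi$ takes equal values on the two trees, and the two composites of $\gamma^{c}$ agree; this is the mechanism by which the merely-up-to-permutation associativity of $\gamma$ becomes strict coassociativity, and it is where the extra permutation morphisms of $\Tree_{l}[n]$ are indispensable. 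One must still track carefully how the evaluation maps $\ev_{T_{0};\{T_{i}\}}$ interleave with iterated grafting, but once permutation-invariance of the end is invoked this is the same formal manipulation as in the classical case. Alternatively, one can note that $\gamma^{c}$ is modeled on the comultiplication of the leveled cofree cooperad functor $\calF^{c}_{l}$, which is a genuine cooperad by Proposition~\ref{FreeFunc}, so coassociativity follows from that of $\calF^{c}_{l}$ together with the coherence of the $\ev$ maps. Counitality is checked the same way, using that grafting with the arity-$1$ tree decorated by $\calC(1)=\K$ and removing the resulting bivalent levels recovers the original tree.
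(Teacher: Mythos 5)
Your proposal is correct and its core argument — that the two iterated total compositions of leveled trees agree up to permutations of permutable levels, so that the invariance of elements of the end under those permutations upgrades the up-to-permutation associativity of $\gamma$ to strict coassociativity of $\gamma^{c}$ — is exactly the mechanism used in the paper's proof. The additional routine verifications you spell out (CDGA structure of the end, well-definedness of $\gamma^{c}$, the $1$-reduced convention, counitality) are left implicit in the paper but are consistent with it.
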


\begin{proof}
  We have to check that the following diagram commutes
  \[
    \begin{tikzcd}
      W_{l}\calC(\sum_{i,j}m_{i,j}) \ar[r] \ar[d]
      & W_{l}\calC(\sum_{i} n_{i}) \otimes \underset{i\leq k,\,\, j\leq n_{j} }{\bigotimes} W_{l}\calC(m_{i,j}) \ar[d] \\
      W_{l}\calC(k) \otimes \underset{1\leq i \leq k}{\bigotimes} W_{l}\calC(\sum_{j}m_{i,j}) \ar[r]
      & W_{l}\calC(k) \otimes \underset{1\leq i \leq k}{\bigotimes} \bigl(  W_{l}\calC(n_{i}) \otimes \underset{1\leq j \leq n_{i}}{\bigotimes} W_{l}\calC(m_{i,j}) \bigr)
    \end{tikzcd}
  \]

  Let $T_{0}\in \lTree[k]$, $T_{i}\in \lTree[n_{i}]$ and $T_{i,j}\in \lTree[m_{i,j}]$.
  As explained in Section \ref{SectTreeLev}, the operation $\gamma$ is not strictly associative on leveled trees.
  However, we can easily check that the two total compositions (we refer the reader to the formula (\ref{formulatreecomp}))
  $$
    \gamma\bigl( T_{0};\bigl\{\gamma(T_{i};\{T_{i,j}\})\bigr\}_{i}\bigr)
    \hspace{25pt}\text{and}\hspace{25pt}
    \gamma\bigl( \gamma(T_{0};\{T_{i}\});\{T_{i,j}\}\bigr)
  $$
  coincide up to permutations of permutable levels.
  So, it is not strictly associative at the level of the category $\lTree[\sum_{i,j}m_{i,j}]$ but it is at the level of the resolution $W_{l}\calC$ since $\Phi$ is equivariant along permutations of permutable levels.
\end{proof}

Moreover, there is a morphism of 1-reduced Hopf
cooperads $\eta:\calC\rightarrow W_{l}\calC$ mapping an element $x\in \calC(n)$ to the map $\Phi_{x}$ which when evaluated at a leveled tree $T\in \lTree[n]$ consists in using the cooperadic structure with shape $\alpha T \in \Tree^{\geq 2}[n]$, denoted by $\gamma_{\alpha T}(x)$, and indexing all levels by the constant  polynomial form $1$.
The map so defined preserves the cooperadic structures and gives rise to a resolution of $\calC$ as proved in the next statement.

\begin{thm}\label{ThmQI}
  The morphism of 1-reduced Hopf cooperads $\eta:\calC\rightarrow W_{l}\calC$ is a quasi-isomorphism.
\end{thm}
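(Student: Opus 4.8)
The plan is to reduce the statement to the analogous result of Fresse--Turchin--Willwacher~\cite{FresseTurchinWillwacher2017} by identifying $W_{l}\calC$ with their Boardman--Vogt resolution, exactly in the spirit of the isomorphisms of Propositions~\ref{FreeFunc} and~\ref{ProBlB}. Since weak equivalences of Hopf cooperads are detected arity-wise, it is enough to prove that $\eta$ is a quasi-isomorphism of cochain complexes in each arity $n$. For this I would first record the retraction
\[ r : W_{l}\calC(n) \longrightarrow \calC(n), \qquad \Phi \longmapsto \Phi(c_{n}), \]
evaluating an element at the corolla $c_{n}$, i.e.\ the height-$0$ leveled tree with a single arity-$n$ vertex, which carries no level forms. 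As $\gamma_{c_{n}}$ is the identity, one has $r \circ \eta = \id_{\calC}$, so it suffices to show that $r$ (equivalently $\eta$) induces an isomorphism on cohomology.

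Recall that the usual Boardman--Vogt resolution of~\cite{FresseTurchinWillwacher2017} is the end
\[ W\calC(n) = \int_{T \in \Tree_{\ge 2}[n]} \Bigl( \bigotimes_{v \in V(T)} \calC(|v|) \Bigr) \otimes \bigotimes_{e \in E^{in}(T)} \K[t,dt], \]
where the forms index the inner edges. Using the bijection (up to permutations and contractions of permutable levels) between $\Tree_{\ge 2}[n]$ and $\Tree_{l}[n]$ provided by $\alpha$ and $\beta$ (Section~\ref{SectTreeLev}), I would define mutually inverse maps $L_{n} : W\calC(n) \leftrightarrows W_{l}\calC(n) : R_{n}$ by precomposition with $\alpha$ and $\beta$, just as in the proofs of Propositions~\ref{FreeFunc} and~\ref{ProBlB}. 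The key numerical match is that a planar tree $T$ with $m = |V(T)| - 1$ inner edges has representative $\beta(T)$ of height $m$ with exactly one non-bivalent vertex per level; the resulting bijection between non-root vertices and inner edges identifies the level forms of $W_{l}\calC$ with the edge forms of $W\calC$. By construction these isomorphisms carry the Fresse--Turchin--Willwacher unit to $\eta$.

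The crux, and the main obstacle, is to check that $L_{n}$ and $R_{n}$ intertwine all the structure. The internal differential and the de Rham differential on the forms match on the nose; the delicate point is that the end relation~\eqref{relationcoeq} for a non-permutable contraction $\delta_{\{i\}}$---which evaluates the upper form at $0$ and splits the merged vertex via the cooperadic structure---corresponds precisely to the face map of $W\calC$ contracting an inner edge, while permutations and permutable contractions are exactly the identifications under which both ends become products over $\Tree_{\ge 2}[n]/{\cong}$ (compare Corollary~\ref{cor:cofree-as-product}). Once this dictionary is verified, $\eta$ is a quasi-isomorphism because the Fresse--Turchin--Willwacher unit is one~\cite{FresseTurchinWillwacher2017}. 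As a self-contained alternative, one can instead filter $W_{l}\calC(n)$ by the number of non-bivalent vertices: on the associated graded the relations decouple, and every tree with at least two vertices acquires a tensor factor equal to the complex $\K[t,dt]$ relative to its value at $t=0$ (the kernel of $\ev_{t=0}$), which is acyclic; hence only the corolla survives and $r$ induces an isomorphism on cohomology. In either route the real work is the careful bookkeeping of signs and of the boundary-at-$0$ conditions imposed by the non-permutable contractions.
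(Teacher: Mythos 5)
Your proposal is correct, and it in fact contains both the paper's argument and the paper's stated alternative. The paper's own proof of Theorem~\ref{ThmQI} is essentially your ``self-contained alternative'': it splits $\K[t,dt]=\K 1\oplus\K[t,dt]_{0}$, observes that the end relation~\eqref{relationcoeq} for a non-permutable contraction forces the $\K 1$-component of each level form of $\Phi(T)$ to be determined by $\Phi(T/\{i\})$, and deduces a decomposition of $W_{l}\calC(n)$ as a product over classes in $\Tree_{\geq 2}[n]/{\cong}$ of $\overline{\calC}_{W}(T)\otimes E'_{W}(T)$, where $E'_{W}$ is built from the acyclic complex $\K[t,dt]_{0}=\ker(\ev_{t=0})$; every factor other than the corolla's is then contractible, and $\eta$ is the identity onto the corolla factor. (The paper phrases this as a direct decomposition rather than a filtration, but the computation is the same.) Your primary route --- transporting the statement through an isomorphism $W_{l}\calC\cong W\calC$ with the Fresse--Turchin--Willwacher construction --- is precisely what the paper records in the remark immediately after Theorem~\ref{ThmQI} as ``an alternative proof,'' with the bookkeeping deferred to the arguments of Proposition~\ref{ProBlB}. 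What the direct route buys is independence from the external reference and from the (routine but nontrivial) verification that the level-to-edge dictionary is compatible with all the end relations, which you rightly flag as the crux of the comparison route; what the comparison route buys is that it simultaneously establishes the identification $W_{l}\calC\cong W\calC$, which the paper wants for other purposes anyway. Your observation that evaluation at the corolla gives a retraction $r$ with $r\circ\eta=\id_{\calC}$ is a clean way of packaging the last step of the paper's argument.
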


\begin{proof}
  The proof is similar to \cite[Proposition 5.2]{FresseTurchinWillwacher2017}.  We use the splitting of non-unital CDGAs $\K[t,dt]= \K 1 \oplus \K[t,dt]_{0}$ where $\K[t,dt]_{0} \subset \K[t,dt]$ is the acyclic ideal formed by the polynomial differential forms that vanish at $t=0$. We consider a variation of the functor $E_W$ given by:
  \begin{align*}
    E'_{W} : \lTree[n] & \longrightarrow \Ch,                                  \\
    T                  & \longmapsto \bigotimes_{1\leq i\leq h(T)} \K[t,dt]_{0}.
  \end{align*}
  Let us remark that, if a $k$-th level of a leveled tree $T$ is indexed by $1 \in \K[t,dt]$, then the decoration $\Phi(T)$ is uniquely determined from $\Phi(T/\{k\})$ using the relation (\ref{relationcoeq}).
  Consequently, as chain complexes, there is a quasi-isomorphism:

  $$
    W_{l}\calC(n) \simeq \prod_{[T]\in \pi_{0} \beta\Tree^{\geq 2}[n]} \overline{\calC}_{W}(T)\otimes E'_{W}(T),
  $$
where $\beta\Tree^{\geq 2}[n]$
is the essential image of $\beta$, i.e.\ the subcategory of $\lTree[n]$ which consists of leveled trees having exactly one non-bivalent vertex in each level. The above product is over classes of leveled trees up to isomorphisms of planar trees and permutations of permutable levels. Notice that, thanks to the identity \eqref{relationcoeq}, a point in Boardman--Vogt resolution in  determined by its values on the leveled trees in $\beta\Tree^{\geq 2}[n]$.
Furthermore, if we disregard the term on the right-hand side in which $T$ is not the $n$-corolla $c_{n}$, we obtain a contractible complex. So, the product is quasi-isomorphic to $\overline{\calC}_{W}(c_{n})=\calC(n)$ and the canonical map $\eta:\calC\rightarrow W_{l}\calC$ is given by the identity on this factor.
\end{proof}

\begin{rmk}
  We can easily check that the leveled Boardman--Vogt resolution so obtained is isomorphic to the usual Boardman--Vogt resolution introduced in \cite{FresseTurchinWillwacher2017}. The arguments are the same used in the proof of Proposition \ref{ProBlB}. This gives an alternative proof of the previous theorem.

  For the moment, we do not know that the Boardman--Vogt construction gives rise to a fibrant resolution. It will be proved in Section \ref{SectPrimOp} where this construction is identified with the free operad generated by an explicit cooperad.
\end{rmk}

\paragraph{The leveled Boardman--Vogt resolution for 1-reduced Hopf $\Lambda$-cooperads}

Let $\calC$ be a 1-reduced Hopf $\Lambda$-cooperad.
In order to get a fibrant resolution of $\calC$ in the Reedy model category $\Lambda\Cooperad$, we extend the construction introduced in the previous paragraph to deal with $\Lambda$-structure. As a symmetric cosequence, we set
$$
  W_{\Lambda}\calC(n) \coloneqq W_{l}\calC_{>0}(n), \quad \text{ for all } n>0,
$$
where $\calC_{>0}$ is the underlying 1-reduced Hopf cooperad of $\calC$. The subscript $\Lambda$ is to emphasize that we work in the category of 1-reduced $\Lambda$-cooperads. By definition, $W_{\Lambda}\calC$ inherits cooperadic operations for $k,n_{1},\dots,n_{k} > 0$:
$$
  \gamma^{c}:W_{\Lambda}\calC(n_{1}+\cdots+n_{k})\longrightarrow W_{\Lambda}\calC(k)\otimes W_{\Lambda}\calC(n_{1})\otimes\cdots\otimes  W_{\Lambda}\calC(n_{k}).
$$

It suffices to define the $\Lambda$-costructure on the construction $W_{\Lambda}\calC$: For simplicity, we only build the costructure associated to the order preserving map $h[i]:[n]\rightarrow [n+1]$ skipping the $i$-th term (i.e. $h[i](j)=j$ if $j<i$ and $h[i](j)=j+1$ if $j\geq i$). We need a map of the form
\begin{equation}\label{Lambdacostructure}
  \begin{aligned}
    h[i]_{\ast}:W_{\Lambda}\calC(n) & \longrightarrow  W_{\Lambda}\calC(n+1)                                                       \\
    \Phi                            & \longmapsto h[i]_{\ast}(\Phi) \coloneqq \bigl\{h[i]\circ\Phi(T),\,\,T\in \lTree[n+1]\bigr\}.
  \end{aligned}
\end{equation}
Let $T$ be a leveled $(n+1)$-tree. In what follows, we denote by $v$ the first non-bivalent vertex composing the path from the $i$-th leaf to the root. In order to define $h[i]\circ\Phi(T)\in \overline{\calC}_{W}(T)\otimes E_{W}(T)$, there are different cases to consider:

\begin{itemize}[leftmargin=35pt]
  \item[Case $1$:] If $v$ has at least three incoming edges, then we consider the leveled $n$-tree $T'$ defined from $T$ by removing the branch leading to the $i$-th leaf.
        In that case, $h[i]\circ\Phi(T)$ is given by
        $$
          h[i]\circ\Phi(T)=(h[i]_{|v})_{\ast} \circ \Phi(T'),
        $$
        where the map $(h[i]_{|v})_{\ast}:\calC(|v|-1)\rightarrow \calC(|v|)$ is obtained using the $\Lambda$-costructure of $\calC$ applied to the restriction map $h[i]_{|v}:[|\operatorname{in}(v)|-1]\rightarrow [|\operatorname{in}(v)|]$ to the incoming edges of $T$. For instance, in the next picture, the corresponding map $h[7]_{|v}:[3]\rightarrow [4]$ is given by $h[7]_{|v}(j)=j$.

        \hspace{-22pt}\includegraphics[scale=0.3]{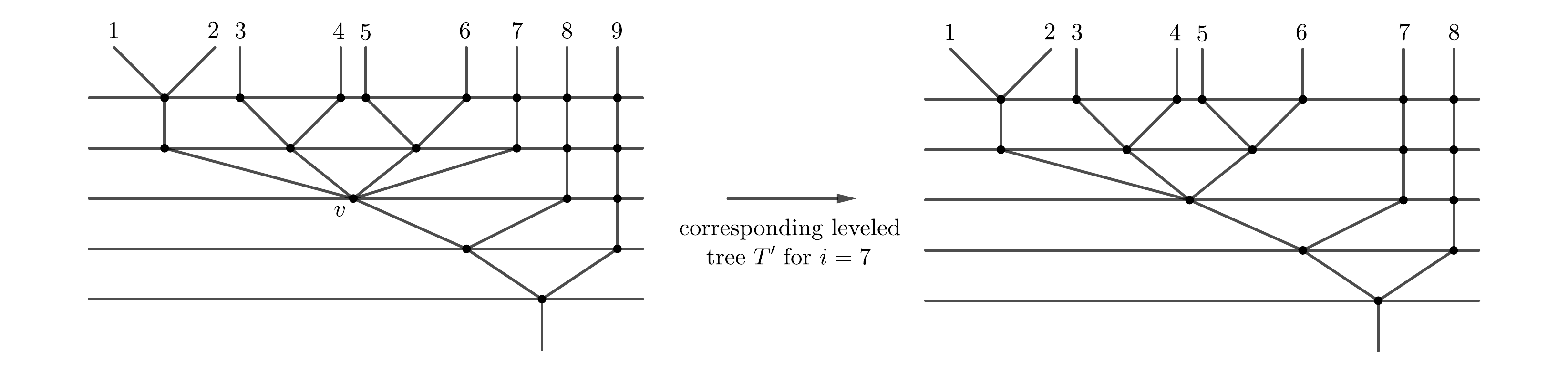}

  \item[Case $2$:] If $v$ has only two incoming edges, then we denote by $e_{i}$ the incoming edge coming from the $i$-th leaf.
        We consider $T'$ obtained from $T$ by removing the edge $e_{i}$.

  \item[Case $2.1$:] If the level $h(v)$ of $T'$ has at least one non-bivalent vertex, then $T'$ is a leveled tree and one has
        $$
          h[i]\circ\Phi(T)= b_{v}\otimes \Phi(T'),
        $$
        where $b_{v}$ is the image of $1$ by the map $\K\rightarrow \calC(2)$ induced by the $\Lambda$-costructure of $\calC$. Roughly speaking, it consists in indexing $v$ by the element $b_{v}$ and keeping the decoration of the other vertices and the levels induced by $\Phi(T')$.

        \hspace{-22pt}\includegraphics[scale=0.3]{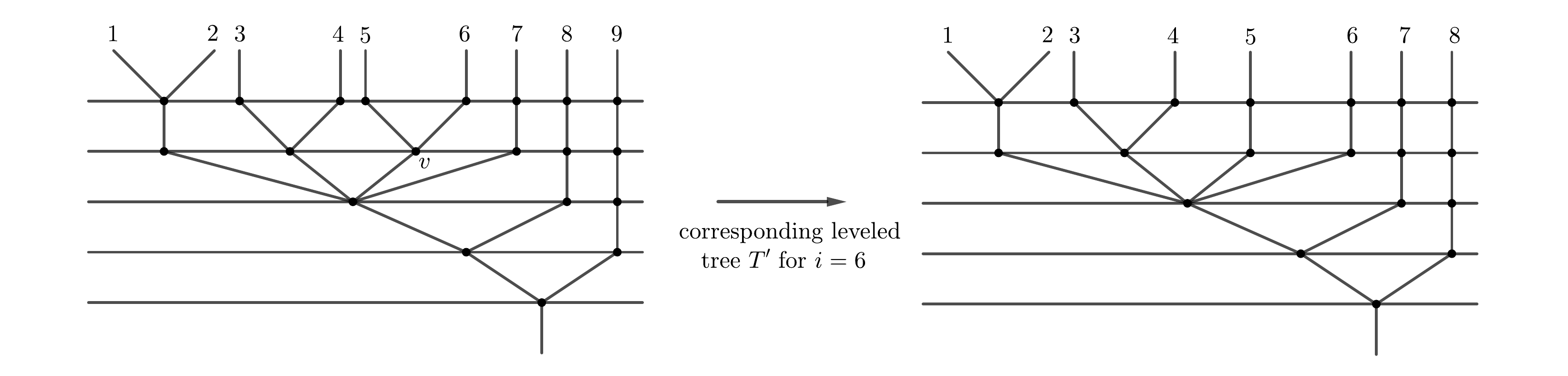}

  \item[Case $2.2$:] If $h(v)=0$ in $T$ and the level consists of a single trivalent vertex $v$, then we consider the leveled tree $T''$ obtained from $T'$ by removing the zeroth level. In that case, one has
        $$
          h[i]\circ\Phi(T)= 1\otimes b_{v}\otimes \Phi(T'').
        $$
        Roughly speaking, it consists in indexing $v$ (which is the root in that case) by the element $b_{v}$, labelling the level $1$ by $1\in \K[t,dt]$ and keeping the decoration of the other vertices and the other levels induced by $\Phi(T'')$.

        \hspace{-22pt}\includegraphics[scale=0.3]{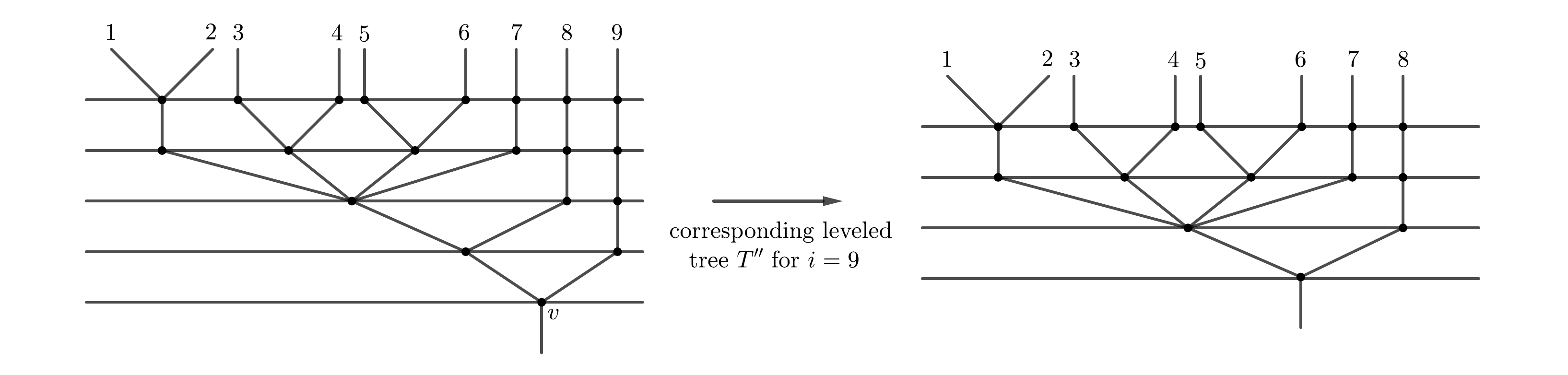}

  \item[Case $2.3$:] If $v$ is a trivalent vertex of $T$ at maximal height $h(v)=h(T)$ and all other vertices at level $h(T)$ are bivalent, then we consider the leveled tree $T''$ obtained from $T'$ by removing the section $h(v)$. In that case, one has
        $$
          h[i]\circ\Phi(T)= 1\otimes b_{v}\otimes \Phi(T'').
        $$
        Roughly speaking, it consists in indexing $v$ by the element $b_{v}$, labelling the top level by $1\in \K[t,dt]$ and keeping the decoration of the other vertices and the other levels induced by $\Phi(T'')$.

        \hspace{-22pt}\includegraphics[scale=0.3]{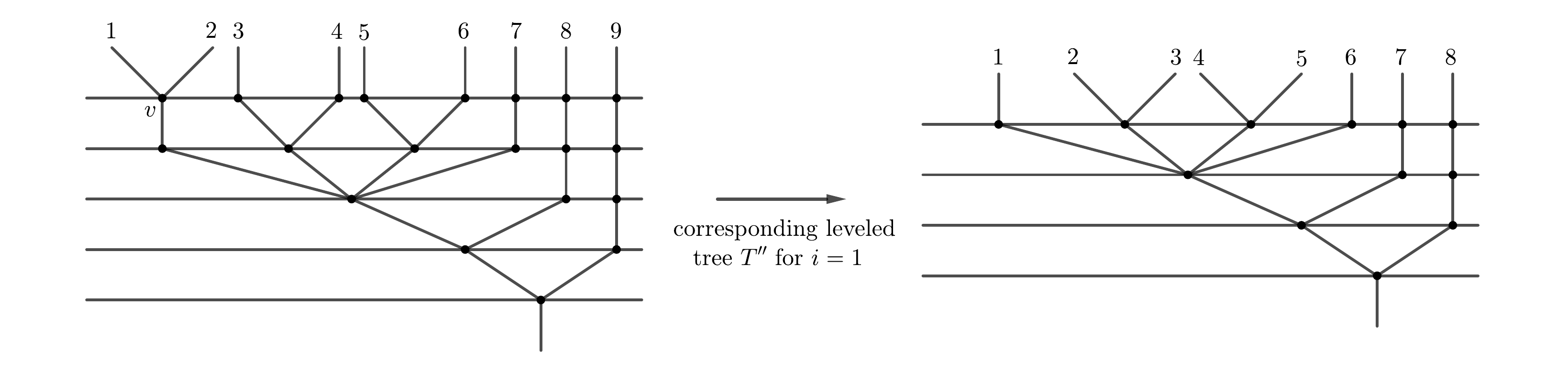}

  \item[Case $2.4$:] If $v$ is the unique non-bivalent vertex at the level $h(v)\notin \{0,h(T)\}$, then we consider the leveled tree $T''$ obtained from $T'$ by removing the section $h(v)$. In that case, one has
        $$
          h[i]\circ\Phi(T)= m^{\ast}_{h(v)}\otimes b_{v}\otimes \Phi(T''),
        $$
        where $m^{\ast}$ is the coassociative coproduct introduced in Section \ref{SectModHopf} and $m^{\ast}_{h(v)}$ is the coproduct applied to the polynomial form associated to the $h(v)$-th level of the leveled tree $T''$.

        \hspace{-22pt}\includegraphics[scale=0.3]{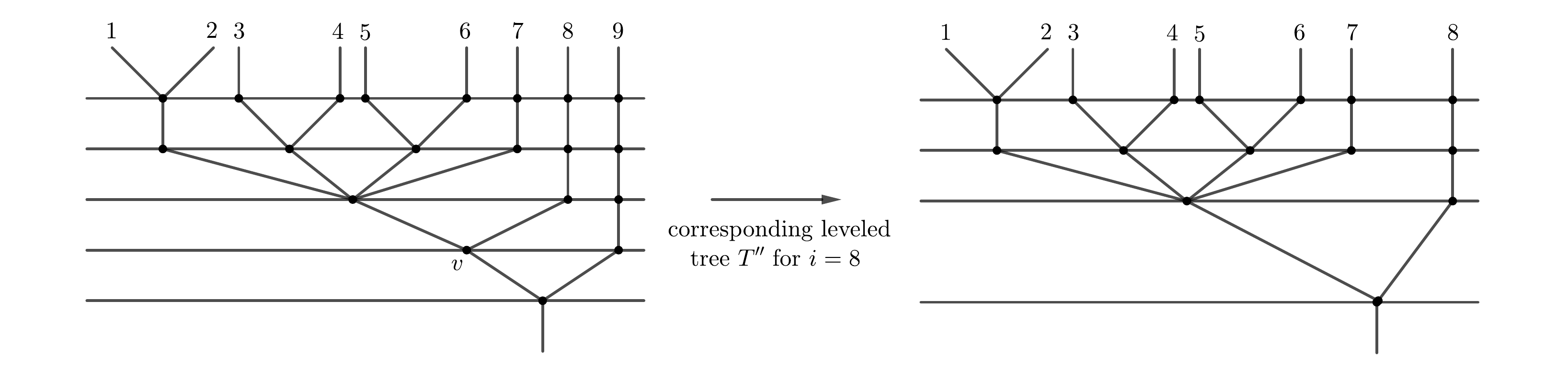}
\end{itemize}

In short, we have
 the following result:

\begin{pro}\label{pro:w-lambda}
  The $\Lambda$-costructure \eqref{Lambdacostructure} makes the 1-reduced Hopf cooperad $W_{\Lambda}\calC$ into a 1-reduced Hopf $\Lambda$-cooperad.
  Furthermore, the morphism $\eta:\calC \rightarrow W_{\Lambda}\calC$ introduced in Theorem \ref{ThmQI} is a quasi-isomorphism of 1-reduced Hopf $\Lambda$-cooperads.
\end{pro}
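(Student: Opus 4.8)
The plan is to verify in turn that the maps $h[i]_*$ are well defined, that they assemble into a $\Lambda$-costructure compatible with the cooperadic and Hopf structures, and finally that $\eta$ intertwines the costructures. First I would check that each $h[i]_*$ respects the relation \eqref{relationcoeq} defining the end $W_{\Lambda}\calC(n)=W_{l}\calC_{>0}(n)$, so that $h[i]_*(\Phi)$ is a legitimate element. The case analysis (Cases~$1$ through~$2.4$) is organized precisely according to the position and valence of the first non-bivalent vertex $v$ on the path from the $i$-th leaf to the root, and one checks that passing from $T$ to a contracted tree $T/N$ either leaves $v$ in the same case or moves it between compatible cases; in each instance the prescribed decoration is compatible with $E_{W}(\delta_{N})$ and $\overline{\calC}_{W}(\delta_{N})$, using that $\ev_{t=0}$ is the counit and $\ev_{t=1}$ the coabsorbing element of $\K[t,dt]$ (for levels collapsed onto $v$) together with the coassociativity of $m^{\ast}$ (relevant in Case~$2.4$).

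Next, to obtain a genuine $\Lambda$-cosequence I must verify $(v\circ u)_{\ast}=v_{\ast}\circ u_{\ast}$. Since $\Lambda$ is generated by the cofaces $h[i]$ together with the symmetric group (which already acts via the underlying $\Sigma$-structure), it suffices to verify the relations among the $h[i]_{\ast}$. These reduce, vertex by vertex, to the corresponding relations in the $\Lambda$-costructure of $\calC$ (governing the decoration of $v$ by $(h[i]_{|v})_{\ast}$ or by $b_{v}$) and to the coassociativity, counit, and coabsorbing axioms of the cointerval $\K[t,dt]$ recalled in Section~\ref{SectModHopf} (governing the polynomial forms indexing the levels adjacent to $v$). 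I then check that $h[i]_{\ast}$ commutes with the cooperadic operations $\gamma^{c}$ of \eqref{eq:hopf-coop-w-lambda} and is a morphism of CDGAs in each arity; the latter holds because $b_{v}$ is the image of $1$ under the algebra map $\K\to\calC(2)$ and $m^{\ast}$ is a coproduct of algebras, so every ingredient entering $h[i]_{\ast}(\Phi)(T)$ is multiplicative. Together these facts establish that $W_{\Lambda}\calC$ is a 1-reduced Hopf $\Lambda$-cooperad.

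Finally, I would verify that $\eta$ is a map of $\Lambda$-cooperads, i.e.\ that $\eta(h[i]_{\ast}x)=h[i]_{\ast}(\eta(x))$ for $x\in\calC(n)$. Unwinding the definition of $\eta$ (which decorates the vertices of $\gamma_{T}(x)$ using the cooperadic structure and indexes all levels by the constant form $1$), this identity follows from the compatibility of the $\Lambda$-costructure of $\calC$ with its cooperadic structure, noting that indexing a level by $1$ interacts correctly with the cases above. Since the weak equivalences of 1-reduced Hopf $\Lambda$-cooperads are exactly the arity-wise quasi-isomorphisms, and $\eta$ is already an arity-wise quasi-isomorphism by Theorem~\ref{ThmQI}, it follows at once that $\eta$ is a quasi-isomorphism of 1-reduced Hopf $\Lambda$-cooperads.

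The main obstacle is the second step: checking that the intricate case-by-case definition of $h[i]_{\ast}$ satisfies the $\Lambda$-cosequence relations. The difficulty is combinatorial bookkeeping---tracking how the position of the distinguished vertex $v$ and of the level it occupies change under successive cofaces, and matching this against the interplay of the coproduct $m^{\ast}$, the counit at $t=0$, and the coabsorbing element at $t=1$ of the cointerval. This is exactly the point where the coherence diagram for $\K[t,dt]$ from Section~\ref{SectModHopf} is indispensable, and the verification parallels the corresponding argument in \cite[Proposition~5.2]{FresseTurchinWillwacher2017}.
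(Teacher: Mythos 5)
Your proposal is correct and follows essentially the same route as the paper, which in fact offers no separate proof at all: the case-by-case construction of $h[i]_{\ast}$ in Equation~\eqref{Lambdacostructure} is immediately followed by ``In short, we have proven the following result,'' leaving the verification of the $\Lambda$-cosequence relations, the compatibility with $\gamma^{c}$ and the Hopf structure, and the reduction of the quasi-isomorphism claim to Theorem~\ref{ThmQI} exactly as implicit as you make them explicit. Your outline of which axioms of the cointerval $\K[t,dt]$ (counit at $t=0$, coabsorbing element at $t=1$, coassociativity of $m^{\ast}$) enter at each case matches the intended argument and the cited \cite[Proposition~5.2]{FresseTurchinWillwacher2017}.
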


\begin{proof}

  We simply need to check that the $\Lambda$-costructure is compatible with the Hopf cooperad structure and the morphism $\eta$ (compare with the proof of \cite[Proposition 5.11]{FresseTurchinWillwacher2017}, where the different cases are completely analogous to ours).
  Notice that the cooperadic structure of $W_l \calC$ is defined using the evaluation at $t=1$, which is compatible with the coassociative coproduct $m^*$ as defined in Section~\ref{SectModHopf}.

  The compatibility with $\eta$ follows from $m^*(1) = 1 \otimes 1$.
\end{proof}

\paragraph{Simplicial frame}
Let us now introduce a simplicial frame (see~\cite[Section~3.2]{Fre}) of $W_{\Lambda}\calC$.
If $X \in \bfC$ is an object of some model category, then a simplicial frame of $X$ is a simplicial object $X^{\Delta^{\bullet}} \in s\bfC$ such that
\begin{enumerate}
  \item the zeroth object is $X$, i.e.\ $X^{\Delta^{0}} = X$;
  \item the iterated degeneracy $X^{\Delta^{0}} \to X^{\Delta^{d}}$ is a weak equivalence for all $d \ge 0$;
  \item the product of the vertex maps $X^{\Delta^{d}} \to \prod_{k=0}^{d} X^{\Delta^{0}} = X^{\operatorname{sk}_{0}(\Delta^{d})}$ is a Reedy fibration in $s\bfC$.
\end{enumerate}

\begin{rmk}
  Despite the notation, $X^{\Delta^{\bullet}}$ is not always obtained as the cotensoring of $X$ by $\Delta^{\bullet}$.
\end{rmk}

Our simplicial frame is inspired by the one in~\cite[Section~5.3]{FresseTurchinWillwacher2017}, and we will generalize it to Hopf $\Lambda$-cobimodules in Section~\ref{sec:fibr-resol-hopf-2}.
We consider an extension of the functor $E_{W}$ from Equation~\eqref{eq:e-w}.
Recall that for $d \ge 0$, the CDGA $\Omega^{*}_{\PL}(\Delta^{d})$ of polynomial forms on $\Delta^{d}$ is
\[ \Omega^{*}_{\PL}(\Delta^{d}) = \K[t_{0}, \dots, t_{d}, dt_{0}, \dots, dt_{d}] / (t_{0} + \dots + t_{d} = 1, \; dt_{0} + \dots + dt_{d} = 0). \]
(In particular $\Omega^{*}_{\PL}(\Delta^{1})$ is isomorphic to $\K[t,dt]$.)
For $n > 0$ (the arity) and $d \ge 0$ (the simplicial degree), we define:
\begin{equation*}
  E_{W}^{\Delta^{d}} : \lTree[n] \to \CDGA, \quad
  T \mapsto \bigotimes_{1 \le i \le h(T)} \K[t,dt] \otimes \bigotimes_{0 \le j \le h(T)} \Omega^{*}_{\PL}(\Delta^{d}).
\end{equation*}

Informally, each step between two levels will be decorated by a polynomial from $\K[t,dt]$, and each level will be decorated by a polynomial form on $\Delta^{d}$.
Let us now describe the functor $E_{W}^{\Delta^{d}}$.
Isomorphisms of trees act in the obvious way.
Contractions of consecutive levels act as in Section~\ref{SectBVCoop} on the $\K[t,dt]$ factors, and multiply the corresponding forms on $\Delta^{d}$ together.
Finally, permutations of permutable levels act as before on $\K[t,dt]$ and swap the corresponding $\Omega^{*}_{\PL}(\Delta^{d})$ factors.

It is clear that $E_{W}^{\Delta^{\bullet}}$ inherits a simplicial structure from the one of $\Omega^{*}_{\PL}(\Delta^{\bullet})$.
The simplicial frame is then:
\[ W_{\Lambda}^{\Delta^{\bullet}}\calC(n) \coloneqq \int_{T \in \lTree[n]} \overline{\calC}_{W}(T) \otimes E_{W}^{\Delta^{\bullet}}(T). \]

\begin{pro}\label{prop:simplicial-frame-cooperad}
  The simplicial object $W_{\Lambda}^{\Delta^{\bullet}}\calC$ defines a simplicial frame of $W_{\Lambda}\calC$.
\end{pro}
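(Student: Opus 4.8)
The plan is to verify the three defining properties of a simplicial frame listed just above the statement, reducing each one to a property of the polynomial forms $\Omega^{*}_{\PL}(\Delta^{\bullet})$, which constitute the standard simplicial frame of the unit $\K$ in $\CDGA$. Condition~(1) is immediate: since $\Omega^{*}_{\PL}(\Delta^{0}) = \K$, the functor $E_{W}^{\Delta^{0}}$ coincides with $E_{W}$ from Equation~\eqref{eq:e-w}, and hence $W_{\Lambda}^{\Delta^{0}}\calC(n) = W_{\Lambda}\calC(n)$ for all $n$, compatibly with the cooperadic, Hopf and $\Lambda$-structures. For the other two conditions the key observation is that, for a fixed tree $T$, one has $E_{W}^{\Delta^{\bullet}}(T) = E_{W}(T) \otimes \Omega^{*}_{\PL}(\Delta^{\bullet})^{\otimes(h(T)+1)}$, where the $\K[t,dt]$ factors of $E_{W}(T)$ and the decorations $\overline{\calC}_{W}(T)$ are \emph{constant} in the simplicial direction, and the simplicial operators act diagonally on the $h(T)+1$ copies of $\Omega^{*}_{\PL}(\Delta^{\bullet})$ (one per level), in a way manifestly compatible with the contraction and permutation morphisms of $\Tree_{l}[n]$ (which respectively multiply and permute these factors).

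For condition~(2) I would show that the iterated degeneracy $s \colon W_{\Lambda}^{\Delta^{0}}\calC \to W_{\Lambda}^{\Delta^{d}}\calC$ is an arity-wise quasi-isomorphism. Composing the quasi-isomorphism $\eta \colon \calC \to W_{\Lambda}^{\Delta^{0}}\calC$ of Theorem~\ref{ThmQI} with $s$ produces the map $\eta^{d} \colon \calC \to W_{\Lambda}^{\Delta^{d}}\calC$ sending $x \in \calC(n)$ to the application indexing every step by $1 \in \K[t,dt]$ and every level by $1 \in \Omega^{*}_{\PL}(\Delta^{d})$; by the two-out-of-three property it suffices to prove that $\eta^{d}$ is a quasi-isomorphism. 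This is carried out exactly as in Theorem~\ref{ThmQI}, now using the splittings $\K[t,dt] = \K 1 \oplus \K[t,dt]_{0}$ together with $\Omega^{*}_{\PL}(\Delta^{d}) = \K 1 \oplus \Omega^{*}_{\PL}(\Delta^{d})_{0}$, where $\Omega^{*}_{\PL}(\Delta^{d})_{0}$ is the acyclic ideal of forms vanishing at the base vertex. The relation~\eqref{relationcoeq} then collapses the end, up to quasi-isomorphism, to a product over $\pi_{0}\Tree_{l}[n]$ of terms $\overline{\calC}_{W}(T) \otimes E_{W}(T)_{\mathrm{red}}$ built from these acyclic ideals; every term except the one attached to the corolla $c_{n}$ is contractible, the surviving factor is $\overline{\calC}_{W}(c_{n}) = \calC(n)$, and $\eta^{d}$ is the identity on it.

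Condition~(3), the Reedy fibration property, is the main obstacle. Unwinding the definition, I must show that for each $d$ the relative matching map
\[ W_{\Lambda}^{\Delta^{d}}\calC \longrightarrow (W_{\Lambda}\calC)^{\operatorname{sk}_{0}\Delta^{d}} \times_{M_{d}\bigl((W_{\Lambda}\calC)^{\operatorname{sk}_{0}\Delta^{\bullet}}\bigr)} M_{d}\bigl(W_{\Lambda}^{\Delta^{\bullet}}\calC\bigr) \]
is a fibration of Hopf $\Lambda$-cooperads. Since the end defining $W_{\Lambda}^{\Delta^{\bullet}}\calC$ and the matching objects are both limits, and the cotensor $(-)^{\operatorname{sk}_{0}\Delta^{d}}$ is a finite product, these limits commute with the end, so this map is obtained by applying $\int_{T \in \Tree_{l}[n]} \overline{\calC}_{W}(T) \otimes (-)$ to the corresponding relative matching map of $E_{W}^{\Delta^{\bullet}}$. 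At the level of a single tree the latter map is, tensored with the constant factors, a surjection of CDGAs: this rests on the standard extension property that $\Omega^{*}_{\PL}$ sends monomorphisms of simplicial sets to surjections, applied to the subcomplex of $\Delta^{d}$ assembling the vertices and the boundary, and on the fact that surjections are preserved by the diagonal tensor power and by tensoring with the fixed factors $\overline{\calC}_{W}(T) \otimes \bigotimes \K[t,dt]$ over the field $\K$.

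The remaining step is to promote this tree-wise surjectivity to a genuine fibration in the Reedy model structure on $\Hopf\Lambda\Cooperad$. Here I would invoke the characterizations recalled in Section~\ref{SectModHopf}: a map is a fibration of Hopf $\Lambda$-cooperads if and only if it is one in $\Lambda\Cooperad$ after forgetting the Hopf structure, which in turn is detected on the underlying $\Sigma$-cooperad and ultimately amounts to an arity-wise surjectivity/lifting condition in $\Ch$, as just established. The genuinely delicate point — and the part I expect to require the most care — is that the end over $\Tree_{l}[n]$ is a limit over a category with nontrivial morphisms (isomorphisms, contractions and permutations of permutable levels), so that preserving the fibration requires the relative matching map to be an objectwise fibration that is moreover natural and Reedy-fibrant as a morphism of functors $\Tree_{l}[n] \to s\CDGA$; this is exactly where one must control the diagonal simplicial action on the several per-level copies of $\Omega^{*}_{\PL}(\Delta^{\bullet})$. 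This verification is parallel to, and is the generalization to leveled trees of, the construction of the simplicial frame in~\cite[Section~5.3]{FresseTurchinWillwacher2017}, to which I would appeal for the technical heart, with the framework of~\cite[Section~3.2]{Fre} for the notion of simplicial frame. Combining conditions~(1)--(3) then yields the claim.
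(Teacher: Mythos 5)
Your handling of conditions (1) and (2) is fine: (1) is immediate, and your two-out-of-three argument via $\eta^{d}$ together with the splitting $\Omega^{*}_{\PL}(\Delta^{d}) = \K 1 \oplus \Omega^{*}_{\PL}(\Delta^{d})_{0}$ is a legitimate substitute for the paper's explicit contracting homotopy. The gap is in condition (3), and it is a real one. In Fresse's model structure on 1-reduced cooperads (Theorem~\ref{thm:model-struct-cooperads}), the arity-wise surjections are the \emph{cofibrations}; the fibrations are only characterized by the right lifting property against acyclic cofibrations and admit no arity-wise surjectivity criterion. So your concluding reduction — that being a fibration of Hopf $\Lambda$-cooperads ``ultimately amounts to an arity-wise surjectivity/lifting condition in $\Ch$, as just established'' — establishes membership in the wrong class of maps. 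The tree-wise surjectivity of $E_{W}^{\Delta^{d}}(T) \to E_{W}^{\partial\Delta^{d}}(T)$ that you extract from the extension property of $\Omega^{*}_{\PL}$ is a correct and relevant input, but by itself it does not produce a fibration of cooperads, and deferring the rest to \cite[Section~5.3]{FresseTurchinWillwacher2017} skips precisely the step where the argument lives.

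What is missing is the structural fact that makes surjectivity usable: one must show, by adapting Lemma~\ref{Lmm1} (the natural transformation $p(t,dt) \mapsto p(t,dt) - t\,p(1,0)$ on the $\K[t,dt]$ factors, leaving the $\Omega^{*}_{\PL}$ factors alone), that both $W^{\Delta^{d}}_{\Lambda}\calC$ and the matching object $W^{\partial\Delta^{d}}_{\Lambda}\calC$ are \emph{cofree as graded cooperads}, cogenerated by their primitive elements, and then that the comparison map is surjective \emph{on cogenerators} — this is where the surjectivity of $\Omega^{*}_{\PL}(\Delta^{d}) \to \Omega^{*}_{\PL}(\partial\Delta^{d})$ actually enters. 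A morphism of quasi-cofree cooperads that is surjective on cogenerators is a fibration in this model structure (this is the same mechanism as the fibrancy criterion for quasi-cofree cooperads recalled in Theorem~\ref{thm:model-struct-cooperads}). Your worry about the end over $\Tree_{l}[n]$ being a limit over a category with nontrivial morphisms is, by comparison, not the essential difficulty; the cofree identification is.
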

\begin{proof}
  The cooperad structure on each $W_{\Lambda}^{\Delta^{d}}\calC$ only involves the decorations between the levels and is identical to Equation~\eqref{eq:hopf-coop-w-lambda}; on the decorations between the levels (i.e.\ the PL forms on $\Delta^{d}$) we simply use the identity.
  The $\Lambda$-costructure is also similar to Equation~\eqref{Lambdacostructure}, and we just take the decoration $1 \in \Omega^{*}_{\PL}(\Delta^{d})$ for the decorations between the new levels.
  It is then straightforward to adapt the previous proofs to show that $W^{\Delta^{d}}_{\Lambda}\calC$ is a 1-reduced Hopf $\Lambda$-cooperad.

  Let us now check that it is a simplicial frame for $W_{\Lambda}\calC$.
  Since $\Omega^{*}_{\PL}(\Delta^{0}) = \K$, we clearly have $W_{\Lambda}^{\Delta^{0}}\calC = W_{\Lambda}\calC$.
  To check that the iterated degeneracies $W_{\Lambda}\calC \to W_{\Lambda}^{\Delta^{d}}\calC$ are quasi-isomorphisms, we can define an explicit homotopy (inspired by the contracting homotopy for $\Omega^{*}_{\PL}(\Delta^{d}) \simeq \K$) to contract $W_{\Lambda}^{\Delta^{d}}\calC$ onto $W_{\Lambda}\calC$ (compare with~\cite[Lemma~5.9]{FresseTurchinWillwacher2017}).

  By definition, checking that $W^{\Delta^{d}}_{\Lambda}\calC \to W^{\operatorname{sk}_{0}\Delta^{d}}_{\Lambda}\calC$ is a Reedy fibration is equivalent to checking that $W^{\Delta^{d}}_{\Lambda}\calC \to W^{\partial \Delta^{d}}_{\Lambda}\calC$ is a fibration (where the matching object $W^{\partial \Delta^{d}}_{\Lambda}\calC$ is defined like $W^{\Delta^{d}}_{\Lambda}\calC$ except that we replace $\Delta^{d}$ by its boundary in $E_{W}^{\Delta^{d}}$).
  We can adapt the proofs of the next subsection
  to show that both $W^{\Delta^{d}}_{\Lambda}\calC$ and $W^{\partial \Delta^{d}}_{\Lambda}\calC$ are cofree as graded cooperads, generated by primitive elements.
  Since the restriction map $\Omega^{*}_{\PL}(\Delta^{d}) \to \Omega^{*}_{\PL}(\partial \Delta^{d})$ is surjective, the map $W^{\Delta^{d}}_{\Lambda}\calC \to W^{\partial \Delta^{d}}_{\Lambda}\calC$ is surjective on cogenerators, therefore it is a fibration in Fresse's model structure.
\end{proof}

\subsection{The operad of primitive elements}
\label{SectPrimOp}

Let $\calC$ be a 1-reduced Hopf cooperad.
A point in $W_{l}\calC$ is said to be \textit{primitive} if its image through the cooperadic operations is $0$.
By definition of the cooperadic structure on $W_{l}\calC$, an element $\Phi\in W_{l}\calC(n)$ is primitive if and only if for each leveled $n$-tree $T$ and each level $i\in \{1,\ldots, h(T)\}$, the evaluation of the polynomial differential form $p_{i}(t,dt)$ to $1$ is $0$.
Hence, the decoration that $\Phi$ assigns to a leveled tree must be so that the level decorations belong to the subspace $\K[t,dt]_{1}\subset \K[t,dt]$ of polynomial differential forms that vanish at the endpoint $t=1$.
For this reason, we introduce the functor
\begin{align*}
  E''_{W} : \lTree[n] & \longrightarrow \Ch,                                  \\
  T                   & \longmapsto \bigotimes_{1\leq i\leq h(T)} \K[t,dt]_{1}.
\end{align*}

As we will see in Proposition \ref{prop:primitives are operad}, the sequence of primitive elements of $W_{l}\calC$ forms an operad.

\begin{defi}\label{def:w-l-cofree}
  The subspace of primitive elements associated to the 1-reduced cooperad $\calC$, denoted by $\Prim(W_{l}\calC)$, is the end
  $$
    \Prim(W_{l}\calC)(n) \coloneqq \int_{T\in \lTree[n]} \overline{\calC}_{W}(T) \otimes E''_{W}(T).
  $$
\end{defi}

The proof of the following lemma is similar to~\cite[Lemma~5.3]{FresseTurchinWillwacher2017}.

\begin{lmm}\label{Lmm1}
  As a graded cooperad, $W_{l}\calC$ is the cofree cooperad generated by $\Prim(W_{l}\calC)=\{\Prim(W_{l}\calC)(n)\}$.
  In particular, $W_{l}\calC$ and $W_{\Lambda}\calC$ are fibrant in the model structure of Theorem~\ref{thm:model-struct-cooperads}.
\end{lmm}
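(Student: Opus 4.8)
The plan is to prove the structural statement first---that $W_{l}\calC$ is, as a graded cooperad, the cofree cooperad on $\Prim(W_{l}\calC)$---and then to deduce fibrancy from the criterion recalled in Theorem~\ref{thm:model-struct-cooperads}. The starting point is the direct sum decomposition of chain complexes
\[ \K[t,dt] = \K\cdot 1 \oplus \K[t,dt]_{1}, \qquad \K[t,dt]_{1} = \ker\bigl(\ev_{t=1}\colon \K[t,dt] \to \K\bigr), \]
which is compatible with the de Rham differential (indeed $d(\K\cdot 1)=0$ and $d(\K[t,dt]_{1}) \subseteq \K[t,dt]_{1}$, since $\ev_{t=1}$ is an algebra map killing $dt$). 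Under this splitting a level of a leveled tree is decorated either by the unit $1$ (a \emph{trivial} level) or by a form in $\K[t,dt]_{1}$ (a \emph{primitive} level); by the description of the cooperadic structure~\eqref{eq:hopf-coop-w-lambda}, in which $\ev_{T_{0};\{T_{i}\}}$ evaluates the cutting levels to $1$, the subspace $\Prim(W_{l}\calC)$ of Definition~\ref{def:w-l-cofree} is exactly the joint kernel of the (reduced) cooperadic operations, i.e.\ the cogenerators of $W_{l}\calC$. The cogenerator projection $\pi\colon W_{l}\calC \to \Prim(W_{l}\calC)$ (project each level onto its $\K[t,dt]_{1}$-component) then induces, by the universal property of the cofree cooperad, a canonical morphism of graded cooperads $\Theta\colon W_{l}\calC \to \calF^{c}_{l}(\Prim(W_{l}\calC))$.

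First I would reduce the claim that $\Theta$ is an isomorphism to a statement about underlying graded objects. By Proposition~\ref{FreeFunc} and Corollary~\ref{cor:cofree-as-product} applied to $X = \Prim(W_{l}\calC)$, the target is
\[ \calF^{c}_{l}(\Prim(W_{l}\calC))(n) \cong \prod_{[T]\in\Tree_{\ge2}[n]/\cong}\ \bigotimes_{v \in V(T)} \Prim(W_{l}\calC)(|v|). \]
On the other hand, expanding each level factor of $E_{W}$ through the above splitting writes $W_{l}\calC(n)$ as a product indexed by leveled trees equipped with a marking of each level as trivial or primitive. The combinatorial heart of the proof is then a bijection: such a marked leveled tree is the same datum as a planar tree $T \in \Tree_{\ge2}[n]$ (obtained by contracting the trivial levels and forgetting bivalent vertices, i.e.\ applying $\alpha$) together with, at each vertex $v$ of $T$, a leveled tree with purely primitive levels, i.e.\ an element of $\Prim(W_{l}\calC)(|v|)$---the trivial levels playing the role of the grafting (cutting) levels produced by the operation $\gamma$ of~\eqref{formulatreecomp}, and the primitive levels being distributed among the local blocks. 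I expect the main obstacle to be the careful bookkeeping here: one must check that this bijection is compatible with the end's equivalence relations on both sides, in particular that the asymmetric treatment of contractions (permutable contractions multiply the forms, whereas non-permutable ones evaluate a level-form at an endpoint) matches precisely the groupoid quotient of Corollary~\ref{cor:cofree-as-product} and the internal structure of $\Prim(W_{l}\calC)$, and that $\Theta$ intertwines the cooperadic decompositions. Alternatively, one may transport the statement along the isomorphism with the non-leveled resolution of~\cite{FresseTurchinWillwacher2017}, for which this is their Lemma~5.3.

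Finally I would deduce fibrancy. Because the splitting of $\K[t,dt]$ is a decomposition of \emph{chain complexes}, the total differential of $W_{l}\calC$ (the internal differentials of the CDGAs $\calC(|v|)$ together with the de Rham differentials) preserves the marking by trivial and primitive levels, hence preserves the tree stratification of the cofree cooperad; in other words $\Theta$ identifies $W_{l}\calC$ with $\calF^{c}_{l}(\Prim(W_{l}\calC))$ carrying \emph{only} the internal differential of the dg-cosequence $\Prim(W_{l}\calC)$, with vanishing twisting term. Thus $W_{l}\calC$ is a quasi-cofree cooperad whose differential is induced by a map vanishing on the cogenerators, and the criterion recalled in Theorem~\ref{thm:model-struct-cooperads} (from~\cite[Proposition~9.2.9]{Fre}) shows that it is fibrant. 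For the $\Lambda$-version, recall that $W_{\Lambda}\calC(n) = W_{l}\calC_{>0}(n)$ has underlying $\Sigma$-cooperad $W_{l}\calC_{>0}$, which is fibrant by the above; since a $\Lambda$-cooperad is fibrant precisely when its underlying $\Sigma$-cooperad is (the characterization of fibrant objects in the Reedy structure on $\Lambda\Cooperad$), the cooperad $W_{\Lambda}\calC$ is fibrant as well.
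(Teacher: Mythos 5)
Your overall architecture is the same as the paper's: produce a cogenerator projection $W_{l}\calC \to \Prim(W_{l}\calC)$, use the universal property of the cofree cooperad to get a morphism of graded cooperads, and prove it is an isomorphism by a decomposition/bijection argument; fibrancy then follows from the quasi-cofree criterion. However, your choice of splitting creates a genuine gap. You project along $\K\cdot 1$, i.e.\ $p \mapsto p - p(1,0)\cdot 1$. This projection does not commute with $\ev_{t=0}$, while both $W_{l}\calC$ and $\Prim(W_{l}\calC)$ are \emph{ends} whose defining relations for contractions of non-permutable levels equate $\ev_{t=0}$ of a level form of $\Phi(T)$ with the cooperadic decomposition of $\Phi(T/N)$. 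Already in arity $3$, for a two-level tree $T$ an element of $\Prim(W_{l}\calC)(3)$ must satisfy $\ev_{t=0}\Psi(T)=\gamma^{c}_{T}(\Psi(c_{3}))$, whereas your projected form has $\ev_{t=0}$ equal to $\ev_{t=0}\Phi(T)-\ev_{t=1}\Phi(T)$; the relation therefore fails unless $\ev_{t=1}\Phi(T)=0$. So your levelwise projection does not land in $\Prim(W_{l}\calC)$ and the map $\Theta$ is not defined. The paper's proof projects along $\K\cdot t$ instead, via $p \mapsto p - t\,p(1,0)$, precisely because this fixes $\ev_{t=0}$ and hence respects the end relations. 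For the same reason, your ``product over leveled trees with levels marked trivial or primitive'' does not hold verbatim for the splitting $\K\cdot 1\oplus\K[t,dt]_{1}$: neither summand is annihilated by $\ev_{t=0}$, so the end relations mix the two markings.

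This propagates into a second error at the end. Since $\K\cdot t$ is not a subcomplex ($d(t)=dt\in\K[t,dt]_{1}$), the correct splitting is \emph{not} one of chain complexes, and the twisting part of the differential does not vanish: your claim that $\Theta$ carries the differential of $W_{l}\calC$ to the purely internal differential of $\calF^{c}_{l}(\Prim(W_{l}\calC))$ contradicts Theorem~\ref{thm:W=B for bimods}, which identifies $W_{l}\calC$ with the bar construction $\calB_{l}(\Prim(W_{l}\calC))$ and its nonzero external differential (in arity $3$ the discrepancy is exactly the term $dt\otimes\ev_{t=1}\Phi(T)$). This does not endanger the conclusion, because the criterion recalled in Theorem~\ref{thm:model-struct-cooperads} only requires the twist to be induced by a map vanishing on the cogenerators, which is what one actually obtains; but as written your fibrancy deduction rests on a false intermediate statement. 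The reduction of the $\Lambda$-case to the $\Sigma$-case at the very end is fine.
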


\begin{proof}
  Explicitly, we show that $W_{l}\calC$ is isomorphic to $\calF^{c}_{l}(\Prim(W_{l}\calC))$ where $\calF^{c}_{l}$ is the leveled cofree cooperad functor using leveled trees introduced in the Section~\ref{SectBarCoop}.
  An element in the cofree cooperadic object is a map $\omega$ mapping a leveled tree to a decoration of the vertices by elements in $\Prim(W_{l}\calC)$:
  $$
    \omega(T)\in \bigotimes_{v\in V(T)} \Prim(W_{l}\calC)(|v|), \qquad \text{for any leveled tree } T \in \lTree_{\iso}.
  $$

  In order to construct a morphism of sequences $\phi:W_{l}\calC\rightarrow \Prim(W_{l}\calC)$, let us recall that, thanks to the identity \eqref{relationcoeq}, any point in the Boardman--Vogt resolution is determined by its values on the leveled trees in the essential image

  $\beta\Tree$ having exactly one non-bivalent vertex in each level. The same is true for the subspace of primitive elements. Then we introduce a natural transformation $\pi:E\Rightarrow E''$ which carries any polynomial differential form $p(t,dt)\in \K[t\, ,\,dt]$ to the polynomial $\widetilde{p}(t,dt)=p(t,dt)-tp(1,0)$.  In particular, one has $\widetilde{p}(t,dt) \in \K[t,dt]_{1}$, and moreover $\ev_{t=0}\widetilde{p}(t,dt)=\ev_{t=0}p(t,dt)$ (so the new element also satisfies the equations defining $\Prim(W_{l}\calC)$ as an end). For any $\Phi\in W_{l}\calC$ and $T\in \beta\Tree^{\geq 2}$, one has
  $$
  \phi(\Phi)(T)=(\id\otimes E)\circ \Phi(T).
  $$
According to the universal property of the cofree cooperadic object, one has a morphism of graded cooperads
  $$
    \psi:W_{l}\calC\longrightarrow \calF^{c}_{l}(\Prim(W_{l}\calC)).
  $$
  The injectivity of our morphism $\psi$ on the primitive elements on the source and the fact that $W_{l}\calC$ is conilpotent implies that $\psi$ is injective itself.
  We are therefore left with proving that $\psi$ is surjective.

  Let $\omega$ be an element of $\calF^{c}_{l}(\Prim(W_{l}\calC))$.

  Thanks to Corollary~\ref{cor:cofree-as-product}, we can view $\omega = \{ \omega_{T'} \in \overline{\calC}_{W}(T') \otimes E''_{W}(T') \}_{T'}$

  as a collection indexed by isomorphism classes of reduced planar trees $T' \in \Tree_{\iso}^{\ge2}[n] / \cong$.
  We want to define an element $\Phi \in W_{l}\calC$ such that $\psi(\Phi) = \omega$.
  Let $T \in \lTree[n]$ and let us define $\Phi(T)$.
  The leveled tree $T$ defines an isomorphism class of reduced planar trees $[T] \in \Tree_{\iso}^{\ge2}[n] / \cong$.
  We can thus define $\Phi(T) \in \overline{\calC}_{W}(T) \otimes E_{W}(T)$ as follows.
  \begin{itemize}
    \item Let $v \in V(T)$ be a vertex with $|v| \ge 2$.
          Then $v$ corresponds to a unique vertex in $[T]$.
          We decorate it in $\overline{\calC}_{W}(T)$ with the decoration of $v$ in $\omega_{[T]}$.
    \item If a level $1 \le i \le h(T)$ is permutable, then we decorate it with $p(t,dt) = t$.
          If the level $i$ is not permutable, then it corresponds to a unique edge in $[T]$, and we decorate the level with the decoration of the corresponding edge in $\omega_{[T]}$.
  \end{itemize}
  We can then check that $\Phi$ defines an element of $W_{l}\calC$, thanks to the conditions on $\omega \in \calF^{c}_{l}(\Prim(W_{l}\calC))$ and the various $\omega_{T'} \in \Prim(W_{l}\calC)(T')$.
  It is also clear that $\psi(\Phi) = \omega$.
\end{proof}

\begin{pro}\label{prop:primitives are operad}
  The sequence $\Prim(W_{l}\calC)=\{\Prim(W_{l}\calC)(n)\}$ is a 1-reduced dg-operad shifted in degree by one (i.e.\ $\Sigma^{-1} \Prim W_l \calC$ is an operad).
\end{pro}

\begin{proof}
  The operadic composition
  $$
    \gamma: \Sigma^{-1} \Prim(W_{l}\calC)(k) \otimes \Sigma^{-1} \Prim(W_{l}\calC)(n_{1}) \otimes \cdots \otimes \Sigma^{-1} \Prim(W_{l}\calC)(n_{k}) \longrightarrow \Sigma^{-1} \Prim(W_{l}\calC)(n_{1}+\cdots +n_{k})
  $$
  is defined using the operation $\gamma$ introduced in Section \ref{SectTreeLev}.
  Indeed, let $\Phi_{0}$ and $\Phi_{i}$ be elements in $\lTree[k]$ and $\lTree[n_{i}]$, respectively, with $i\leq k$.
  In order to define $\Phi=\gamma(\Phi_{0}, \{\Phi_{i}\})$ there are three cases to consider.
  First, if the leveled tree $T$ is of the form $\gamma(T_{0}, \{T_{i}\})$, then one has
  $$
    \Phi(T)=\Phi_{0}(T_{0}) \otimes \bigotimes_{i\in I} \bigl( \Phi_{i}(T_{i}) \otimes dt \bigr).
  $$
  Secondly, if $T$ is of the form $T' = \gamma(T_{0}, \{T_{i}\})$ up to permutations of permutable levels and contractions of permutable levels, then the decoration of $T$ is given by the decoration of $T'$ composed with the corresponding morphisms of permutations and contractions, with all the new levels decorated by $dt$.
  Finally, if $T$ is not of the form $\gamma(T_{0}, \{T_{i}\})$, then $\Phi(T)=0$.
  The reader can easily check that the operations so obtained are well defined and satisfy the operadic axioms.
\end{proof}

\begin{rmk}
  In general, the component $\Prim(W_{l}\calC)(n)$ does not inherit the structure of a CDGA.
  Indeed, the product of two primitive elements is not necessarily primitive.
\end{rmk}

\begin{thm}\label{thm:W=B for bimods}
  Let $\calC$ be a 1-reduced Hopf cooperad.
  The leveled Boardman--Vogt resolution $W_{l}\calC$ is isomorphic (as a 1-reduced dg-operad) to the leveled bar construction of the shifted operad of its primitive elements:
  $$
    W_{l}\calC \cong \calB_{l}(\Sigma^{-1} \Prim(W_{l}\calC)).
  $$
\end{thm}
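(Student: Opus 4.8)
The plan is to combine the identification of underlying graded cooperads from Lemma~\ref{Lmm1} with a comparison of coderivations, in the same spirit in which Proposition~\ref{ProBlB} was deduced from the isomorphism of cofree functors of Proposition~\ref{FreeFunc}. By Lemma~\ref{Lmm1}, the leveled Boardman--Vogt resolution $W_{l}\calC$ is, as a graded cooperad, the leveled cofree cooperad $\calF^{c}_{l}(\Prim(W_{l}\calC))$ cogenerated by the dg-operad of primitives (Proposition~\ref{prop:primitives are operad}). By definition, the leveled bar construction $\calB_{l}(\Prim(W_{l}\calC))$ is built on the very same functor $\calF^{c}_{l}$ applied to the suspension $\Sigma\calU(\overline{\Prim(W_{l}\calC)})$ of the augmentation ideal. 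The suspension together with the degree convention $\deg'$ of the bar construction—its shift of $+1$ for each $\ge 3$-valent vertex and its global shift—are matched by the degrees of the forms in $\K[t,dt]_{1}$ decorating the levels of the primitives, so that one obtains an isomorphism of graded cooperads $\Psi : W_{l}\calC \xrightarrow{\cong} \calB_{l}(\Prim(W_{l}\calC))$.

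Next I would check that $\Psi$ intertwines the differentials. The differential on $W_{l}\calC$ is the sum of the internal differential inherited from $\calC$ and the de Rham differential acting on the $\K[t,dt]$-decorations, while the bar differential is $d_{int}+d_{ext}$; both are coderivations of the cofree cooperad. As in the proof of Proposition~\ref{prop:bar-well-def}, it therefore suffices to compare their corestrictions to the cogenerators $\Prim(W_{l}\calC)$. The corestriction of $d_{int}$ is the internal differential of $\Prim(W_{l}\calC)$, which is precisely the sum of the internal differential of $\calC$ and the de Rham differential restricted to $\K[t,dt]_{1}$ (a subspace preserved by $d$, since $d(p+q\,dt)=p'\,dt$ has vanishing $0$-form part). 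This accounts for the internal part of the $W_{l}\calC$-differential, and it remains only to match $d_{ext}$, whose corestriction is the map $\alpha$ from the proof of Proposition~\ref{prop:bar-well-def} that contracts the unique non-permutable pair of levels carrying a $\ge 3$-valent vertex using the operad structure of $\Prim(W_{l}\calC)$.

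The heart of the argument is to recognise this contraction as the remaining piece of the de Rham differential. Recall from the proof of Lemma~\ref{Lmm1} that in the reconstruction of $W_{l}\calC$ from $\calF^{c}_{l}(\Prim(W_{l}\calC))$ a permutable level is decorated by the form $t$, so that the de Rham differential produces $d(t)=dt$; note that $t\notin\K[t,dt]_{1}$ whereas $dt\in\K[t,dt]_{1}$, so this is exactly the ``off-diagonal'' part of the differential not already absorbed into $d_{int}$. Reading these contributions through the end relation~\eqref{relationcoeq} associated to the contraction of the corresponding level and corestricting to cogenerators, the terms $t\mapsto dt$ assemble into the operadic composition of the two adjacent decorations, i.e.\ into the map $\alpha$. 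Hence the de Rham differential on the permutable-level decorations reproduces $d_{ext}$, and compatibility of $\Psi$ with the differentials follows; compatibility with the cooperadic and Hopf structures is inherited from the graded identification, so $\Psi$ is an isomorphism of dg-cooperads.

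The step I expect to be the main obstacle is this last identification: pinning down, sign by sign, that differentiating a permutable-level decoration $t$ to $dt$ and reading it through~\eqref{relationcoeq} is precisely dual to contracting a non-permutable level and applying the operad structure of $\Prim(W_{l}\calC)$, with Koszul signs coming from the suspension matching those in $d_{ext}$. This requires exhibiting the correct bijection between the permutable levels on which $d$ can act and the refinement set $D_{T}$ indexing $d_{ext}$. Once this local computation is settled, the global isomorphism follows formally from Lemma~\ref{Lmm1}, exactly as Proposition~\ref{ProBlB} followed from Proposition~\ref{FreeFunc}.
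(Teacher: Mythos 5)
Your proposal takes essentially the same route as the paper: the graded cooperad identification is exactly Lemma~\ref{Lmm1}, and the paper's proof then simply states that ``all that remains is to check that the differentials agree,'' by analogy with Proposition~\ref{ProBlB}. Your more detailed account --- comparing corestrictions of the coderivations to the cogenerators and recognising $d_{ext}$ as the de~Rham differential $t\mapsto dt$ on the permutable-level decorations --- is precisely the content of that check, so the argument matches the paper's.
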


\begin{proof}
  Given Lemma \ref{Lmm1}, we simply check that the differentials agree.
  Consider some element $\Phi = \{ \Phi(T) \in \overline{\calC}_W(T) \otimes E_W(T) \}_{T \in \lTree[n]} \in W_l\calC$.
  We must check that $(d_{\mathrm{int}}+d_{\mathrm{ext}})\psi(\Phi) = \psi(d\Phi)$, where $\psi : W_l \calC \to \calF^c_l(\Prim(W_l C))$ is the morphism of cooperads coinduced by $p(t,dt) \mapsto p(t,dt) - t \cdot p(1,0)$ on decorations of levels.

  Since $\psi$ is a morphism and differentials are coderivations, it is sufficient to check that $d\psi = \psi d$ when projected down to cogenerators.
  The internal differential $d_{\mathrm{int}} \psi(\Phi)$ (i.e.\ the action on decorations of vertices and the levels) simply correspond to the differential acting on $\overline{\calC}_W(T)$ and to the $dp$ part of $d(p(t,dt) - p(1,0) \cdot t)$.
  The external differential $d_{\mathrm{ext}} \psi(\Phi)$ contracts consecutive levels using the operadic structure defined in Proposition~\ref{prop:primitives are operad}.
  This corresponds to the $dt$ part of the differential acting decorations of levels $p(t,dt) - t \cdot p(1,0) \in E_W(T) = \bigotimes_{i=1}^{h(T)} \K[t,dt]$ thanks to the description of the operadic structure.
\end{proof}

\subsection{The Boardman--Vogt resolution and the bar-cobar construction}\label{SectCobarCDGA}

This section is split into three parts.
First, we introduce an alternative description of the cobar construction for 1-reduced cooperads.
Then, we show that the bar-cobar construction of a 1-reduced cooperad is quasi-isomorphic to its Boardman--Vogt resolution.
Finally, we extend this result to 1-reduced $\Lambda$-cooperads by introducing a $\Lambda$-costructure on its bar-cobar construction.

\paragraph{The leveled cobar construction for 1-reduced cooperads}

Dually to Section \ref{SectBarCoop}, we introduce alternative versions of the free operad functor and the cobar construction using the category of leveled trees.
Then, we compare these two definitions with the usual ones.
Following the notation introduced in Section \ref{SectBarCoop}, we consider the functor

$$
  \calF_{l}:\dg\Sigma \Seq_{>1}\longrightarrow \Sigma\Operad,
$$
from the category of symmetric sequences of chain complexes to 1-reduced operads. For each sequence $X\in \dg\Sigma \Seq_{>1}$, we consider the two functors
\begin{align*}
  \overline{X}_{F} : \lTree_{\iso}[n] & \longrightarrow \Ch , \qquad T \longmapsto \bigotimes_{v\in V(T)} X(|v|); \\
  E_{1} : \lTree_{\iso}[n]^{op}       & \longrightarrow \Ch, \qquad T \longmapsto \K.
\end{align*}
\begin{defi}
  The leveled free operad functor $\calF_{l}$ is defined as the coend
  $$
    \calF_{l}(X)(n) \coloneqq \int^{T\in \lTree_{\iso}[n]} \overline{X}_{F}(T)\otimes E_{1}(T).
  $$
\end{defi}
An element in $\calF_{l}(X)(n)$ is the data of a leveled tree $T$ together with a family $\{x_{v}\}$, with $v\in V(T)$, of elements in the symmetric sequence $X$.
Such an element is denoted by $[T;\{x_{v}\}]$. The operadic structure defined by
\begin{align*}
  \gamma' : \calF_{l}(X)(k) \otimes \calF_{l}(X)(n_{1})\otimes \cdots \otimes \calF_{l}(X)(n_{k}) & \longrightarrow \calF_{l}(X)(n_{1}+\dots+ n_{k}),                                                    \\
  [T_{0}; \{x_{v}^{0}\}] \otimes \bigl\{ [T_{i}; \{x_{v}^{i}\}] \bigr\}_{1\leq i\leq k}           & \longmapsto \biggl[ \gamma(T_{0} \{T_{i}\}); \{x_{v}^{i}\}_{0\leq i \leq k}^{v\in V(T_{i})} \biggr],
\end{align*}
where $\gamma$ is the operation (\ref{formulatreecomp}), is well defined since a point $\Phi$ is an equivalence class up to permutations and contractions of permutable levels.

\begin{defi}[The usual free functor for operads]\label{deffreeop}
  In order to define the usual free operad functor $\calF$, we use the category $\Tree_{\iso}^{\geq 2}[n]$ introduced in Section \ref{sec:categ-plan-trees}. The morphisms are just isomorphisms of rooted planar trees. For any sequence $X\in \dg\Sigma \Seq_{>1}$, we consider the functor
  $$
    \overline{X}_{u}:\Tree^{\geq 2}_{\iso}[n]\longrightarrow \Ch, \quad T \longmapsto \bigotimes_{v\in V(T)} X(|v|).
  $$
  The free operad functor is defined as the coend
  $$
    \calF(X)(n) \coloneqq \int^{T\in \Tree^{\geq 2}_{\iso}[n]} \overline{X}_{u}(T) \otimes E_{1}(T).
  $$
  A point in the free operad $\calF(X)(n)$ is denoted by $\langle T; \{x_{v}\} \rangle$.
\end{defi}

\begin{pro}\label{FreeFunc2}
  The functor $\calF_{l}$ is isomorphic to the usual free operad functor denoted by $\calF$. In particular, it means that $\calF_{l}$ is the right adjoint to the forgetful functor.
\end{pro}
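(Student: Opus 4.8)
The plan is to mirror the proof of Proposition~\ref{FreeFunc}, replacing the end defining the cofree cooperad functor by the coend defining the free operad functor and dualizing every step accordingly. First I would recall the usual free operad functor, defined as the coend
\[
  \calF(X)(n) = \int^{T \in \Tree_{\geq 2}[n]} \overline{X}_{u}(T) \otimes E_{1}(T),
\]
where $\overline{X}_{u}(T) = \bigotimes_{v \in V(T)} X(|v|)$ and $\Tree_{\geq 2}[n]$ is the groupoid of reduced planar trees (only isomorphisms as morphisms). Since $E_{1}$ is constant and $\Tree_{\geq 2}[n]$ is a rigid groupoid, this coend is just the direct sum $\bigoplus_{[T] \in \Tree_{\geq 2}[n]/\cong} \overline{X}_{u}(T)$ over isomorphism classes, so that an element of $\calF(X)(n)$ is a decorated reduced planar tree $[T';\{x_{v}\}]$. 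This is the coend (direct sum) analogue of the end (product) description used in Proposition~\ref{FreeFunc} and Corollary~\ref{cor:cofree-as-product}.

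Next I would build the comparison maps directly from the bijection $\alpha,\beta$ of Section~\ref{SectTreeLev}, exactly as in Proposition~\ref{FreeFunc} but now on representatives of coend classes rather than on morphisms out of an end:
\[
  L_{n} : \calF(X)(n) \leftrightarrows \calF_{l}(X)(n) : R_{n},
\]
with $L_{n}([T';\{x_{v}\}]) = [\beta(T');\{x_{v}\}]$ and $R_{n}([T;\{x_{v}\}]) = [\alpha(T);\{x_{v}\}]$. The decorations transport because the non-bivalent vertices of a leveled tree $T$ are in canonical bijection with the vertices of $\alpha(T)$, while the new bivalent vertices introduced by $\beta$ carry the canonical element $1 \in X(1) = \K$; conversely the unique non-bivalent vertex on each level of the chosen representative $\beta(T')$ corresponds to a vertex of $T'$.

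Then I would verify the three routine points. (i) \emph{Well-definedness}: $R_{n}$ is constant on the coend equivalence relation because, $X$ being $1$-reduced, every bivalent vertex is decorated by $X(1) = \K$, so permutations and contractions of permutable levels act as the canonical isomorphism on $\overline{X}_{F}$ and are collapsed by $\alpha$; $L_{n}$ is independent of the choice of $T_{l} \in \alpha^{-1}(T')$ for the same reason, as in the discussion following the definition of $\beta$. (ii) \emph{Mutual inverseness}: $\alpha\circ\beta = \id$, and $\beta\circ\alpha = \id$ up to permutations and contractions of permutable levels, which are precisely the identifications imposed by the coend. (iii) \emph{Operadic compatibility}: on both sides the composition is grafting via the operation $\gamma$ of~\eqref{formulatreecomp}, and $\alpha,\beta$ intertwine grafting of reduced planar trees with $\gamma$ up to permutation of permutable levels, which the coend again identifies. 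Finally, the isomorphism $\calF_{l} \cong \calF$ transports the universal property of the usual free operad functor to $\calF_{l}$, exhibiting it as the free functor left adjoint to the forgetful functor $\calU$ of~\eqref{adjSymOp}.

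The main obstacle is the dual of the subtle point in Proposition~\ref{FreeFunc}: showing that the coend genuinely collapses onto the direct sum indexed by $\Tree_{\geq 2}[n]/\cong$, i.e.\ that the equivalence relation generated by permutations and contractions of permutable levels identifies two decorated leveled trees if and only if they have the same image under $\alpha$ with matching decorations. This is exactly where $1$-reducedness is essential, as it trivializes the decorations on the bivalent vertices that $\alpha$ removes; it is the coend analogue of Corollary~\ref{cor:cofree-as-product}, and once established the remaining verifications are formal.
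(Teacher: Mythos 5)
Your proof is correct and follows essentially the same route as the paper's: both constructions use the correspondence $\alpha,\beta$ of Section~\ref{SectTree} to define the mutually inverse maps on coend classes, with $1$-reducedness ($X(1)=\K$) guaranteeing that decorations transport across the bivalent vertices that $\alpha$ removes and $\beta$ inserts; the paper simply leaves the well-definedness, inverseness, and operadic-compatibility checks to the reader, which you spell out. (One remark: you correctly identify $\calF_{l}$ as the \emph{left} adjoint of the forgetful functor, consistent with~\eqref{adjSymOp}; the word ``right'' in the statement is a slip carried over from the dual cofree case of Proposition~\ref{FreeFunc}.)
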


\begin{proof}
  By using the comparison morphisms $\alpha$ and $\beta$ between the categories $\Tree^{\geq 2}_{\iso}$ and $\lTree_{\iso}$ introduced in Section \ref{CompTree}, we are now able to give an explicit isomorphism between the leveled and usual versions of the free operad functor:
  \begin{align*}
    L_{n}:\calF_{l}(X)(n) & \longrightarrow \calF(X)(n),
                          & R_{n}:\calF(X)(n)                                 & \longrightarrow \calF_{l}(X)(n),  \\
    [T; \{x_{v}\}]        & \longmapsto \langle \alpha(T); \{x_{v}\} \rangle;
                          & \langle T; \{x_{v}\} \rangle                      & \longmapsto [\beta(T);\{x_{v}\}].
  \end{align*}
  The map $R_{n}$ does not depend on the chosen point in $\alpha^{-1}(T)$ since the elements in $\calF_{l}(X)(n)$ are equivalence classes up to contractions of permutable levels and permutations of permutable levels. So, the maps $L_{n}$ and $R_{n}$ are well defined and provide an isomorphism preserving the operadic structures.
\end{proof}

\begin{defi}\label{def:leveled-cobar}
  The leveled cobar construction of a 1-reduced cooperad $\calC$ is the operad given by
  $$
    \Omega_{l}(\calC) \coloneqq \bigl( \calF_l(\Sigma^{-1} \calU(\overline{\calC})), d_{\mathrm{int}} + d_{\mathrm{ext}} \bigr),
  $$
  where $\calU(\overline{\calC})$ is the underlying symmetric sequence of the coaugmentation quotient of $\calC$.
  The degree of an element $[T; \{x_{v}\}]$ is the sum of the degrees of the elements indexing the vertices, with at least two incoming edges, minus $1$:
  $$
    \deg([T; \{x_{v}\}]) = \sum_{\mathclap{v\in V_{\ge2}(T)}} (\deg(x_{v}) - 1),
  $$
  where $V_{\ge 2}(T)$ is the set of vertices which have at least two incoming edges.
  The differential $d_{\mathrm{int}}$ is the differential corresponding to the differential algebra $\mathcal{U}(\overline{\calC})$.
  The differential $d_{\mathrm{ext}}$ consists in splitting a level into two consecutive levels using the cooperadic structure of $\calC$ on one of the vertices that have at least two incoming edges of that level (in all possible ways).
  The operadic structure is induced from the free operad $\calF_{l}(\mathcal{U}(\overline{\calC}))$.
\end{defi}

\begin{pro}\label{pro-cobar-bimod-well-def}
  The leveled cobar construction $\Omega_{l}\calC$ of a 1-reduced dg-cooperad is a well defined 1-reduced dg-operad.
\end{pro}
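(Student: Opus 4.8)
The plan is to mirror the dual argument of Proposition~\ref{prop:bar-well-def}. By construction the underlying graded operad of $\Omega_{l}\calC$ is the leveled free operad $\calF_{l}(\Sigma^{-1}\calU(\overline{\calC}))$, which Proposition~\ref{FreeFunc2} identifies with the usual free operad; in particular its operadic composition $\gamma'$ is already well defined and associative, so the operad structure requires no further verification. Moreover $\overline{\calC}(1)=0$, hence $\calF_{l}$ produces $\K$ in arity~$1$ and $\Omega_{l}\calC$ is automatically $1$-reduced. The entire content of the statement is therefore that $d \coloneqq d_{int}+d_{ext}$ is a well-defined derivation of this free operad, compatible with the stated grading, and squaring to zero.

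First I would dispatch $d_{int}$: it acts vertex-by-vertex through the internal differential of the cochain complex $\calU(\overline{\calC})$, it visibly commutes with isomorphisms, permutations and contractions of permutable levels (so it descends to the coend defining $\calF_{l}$), it is a derivation for $\gamma'$, and $d_{int}^{2}=0$ since it comes from a chain complex. For $d_{ext}$ the key point, dual to the bar case, is that it is the \emph{unique} derivation of the free operad extending a single map $\delta\colon \Sigma^{-1}\calU(\overline{\calC}) \to \calF_{l}(\Sigma^{-1}\calU(\overline{\calC}))$, namely the one sending a corolla decorated by $x$ to the sum over all two-level splittings obtained by applying the part of the reduced cooperadic cocomposition $\gamma^{c}$ of $\calC$ that produces two vertices. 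Since a derivation of a free operad is completely determined by its restriction to generators, $d_{ext}$ is automatically compatible with the coend relations; this is exactly where the identification with the non-leveled free operad (Proposition~\ref{FreeFunc2}) absorbs the combinatorics of permutable levels.

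It then remains to prove $d^{2}=0$, and because $d$ is a derivation it suffices to check this on the generating corollas. This reduces to three identities: $d_{int}^{2}=0$ (already noted); $d_{int}d_{ext}+d_{ext}d_{int}=0$, which holds because the cooperadic cocomposition $\gamma^{c}$ is a chain map, i.e.\ compatible with the internal differential of $\calC$; and $d_{ext}^{2}=0$. For the last identity, $d_{ext}$ applied twice to a corolla lands in trees with three levels, and the two ways of producing a fixed three-level tree match the two sides of the coassociativity relation for $\gamma^{c}$; the Koszul signs coming from the desuspension $\Sigma^{-1}$ and from the level ordering make these contributions cancel, exactly as associativity of $\calO$ forces $d_{ext}^{2}=0$ in $\calB_{l}\calO$.

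The hard part will be the sign and orientation bookkeeping in $d_{ext}^{2}=0$, together with confirming that the two-level splittings are genuinely compatible with the ``permutable levels'' relations of $\Tree_{l}'[n]$ (splitting then permuting versus permuting then splitting must agree). The cleanest way to discharge both points is to transport the computation through the isomorphism of graded operads $\calF_{l}(\Sigma^{-1}\calU(\overline{\calC}))\cong \calF(\Sigma^{-1}\calU(\overline{\calC}))$ of Proposition~\ref{FreeFunc2}: under it $d_{ext}$ becomes the classical cobar differential, whose square vanishes by the standard coassociativity-and-signs argument, and the internal differential is transported verbatim, so that $d^{2}=0$ for $\Omega_{l}\calC$ follows from the well-known acyclicity of the square of the usual cobar differential.
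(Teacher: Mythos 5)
Your proposal is correct and follows essentially the same route as the paper: the external differential is characterized as the unique derivation of the free operad induced by the map on generators given by all two-vertex cooperadic decompositions, and $d_{ext}^{2}=0$ is deduced from coassociativity of $\calC$, exactly as in the paper's (dual) argument for Proposition~\ref{prop:bar-well-def}. The additional remark that one may transport the sign verification through the isomorphism with the classical cobar construction is consistent with Proposition~\ref{ProBlBCoop} and adds nothing contradictory.
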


\begin{proof}
  The proof is essentially dual to Proposition~\ref{prop:bar-well-def}.
  The differential $d_{\mathrm{ext}}$ is the unique derivation induced by the map $\alpha : \Sigma^{-1}\calU(\overline{\calC}) \to \calF_{l}(\Sigma^{-1} \calU(\overline{\calC}))$ which sends an element to the sum of all possible applications of the cooperad structure maps (indexing two-leveled trees with exactly two vertices with $\ge2$ incoming edges).
  The fact that the differential squares to zero is again similar to the case of the classical cobar construction using the coassociativity of the cooperadic structure of $\calC$.
\end{proof}

\begin{defi}[The usual cobar construction for dg-cooperads] \label{def:usual-cobar-dg}
  Thanks to Definition \ref{deffreeop}, the usual cobar construction $\Omega(\calC)$ is defined as the quasi-free operad
  $$
    \Omega(\calC) \coloneqq \bigl( \calF(\Sigma^{-1} \calU(\overline{\calC})), \; d_{\mathrm{int}}+d_{\mathrm{ext}}\bigr)
  $$
  generated by the coaugmentation quotient of $\calC$ in which the degree of an element is the degree of the decorations minus the number of vertices.
  The differential is composed of the internal differential coming from the  differential graded algebra $\calU(\overline{\calC})$ and an external differential splitting a vertex using the cooperadic structure of $\calC$.
\end{defi}

\begin{pro}\label{ProBlBCoop}
  Let $\calC$ be a 1-reduced dg-cooperad. The leveled and usual cobar constructions are isomorphic:
  $$
    \Omega_{l}(\calC)\cong \Omega(\calC).
  $$
\end{pro}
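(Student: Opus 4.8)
The plan is to mirror the proof of Proposition~\ref{ProBlB}, replacing the isomorphism of cofree cooperad functors by the isomorphism of free operad functors established in Proposition~\ref{FreeFunc2}. First I would recall that the usual cobar construction is the quasi-free operad
$$\Omega(\calC) = \bigl( \calF(\Sigma^{-1}\calU(\overline{\calC})), d_{int} + d_{ext} \bigr),$$
where $\calF$ is the usual free operad functor built on the category $\Tree_{\geq 2}[n]$ of reduced planar trees, the degree of a generator $\langle T; \{x_{v}\}\rangle$ is $-1 + \sum_{v\in V(T)}(\deg(x_{v})-1)$ (note that every vertex of $T \in \Tree_{\geq 2}[n]$ is at least trivalent), the internal differential $d_{int}$ comes from the differential on $\calU(\overline{\calC})$, and the external differential $d_{ext}$ replaces a vertex of $T$ by two vertices joined by a new inner edge, in all possible ways, by applying the cooperadic cocomposition of $\calC$.

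By Proposition~\ref{FreeFunc2}, the underlying graded operads $\calF_{l}(\Sigma^{-1}\calU(\overline{\calC}))$ and $\calF(\Sigma^{-1}\calU(\overline{\calC}))$ are already isomorphic through the mutually inverse operadic maps $L_{n}$ and $R_{n}$ coming from the bijections $\alpha$ and $\beta$ of Section~\ref{SectTreeLev}. I would first check that $L_{n}$ and $R_{n}$ preserve the internal grading: since $\alpha$ forgets the levels and removes the bivalent vertices, it restricts to a bijection between the $\geq3$-valent vertices of a leveled tree $T \in \Tree_{l}'[n]$ and the vertices of the planar tree $\alpha(T) \in \Tree_{\geq 2}[n]$. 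As only such vertices contribute to the two degree formulas, the degrees agree, and consequently $L_{n}$ automatically commutes with $d_{int}$.

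It then remains to verify that $L_{n}$ intertwines the two external differentials. Unwinding the definitions, in $\Omega_{l}(\calC)$ the differential $d_{ext}$ splits a single level of a leveled tree into two consecutive \emph{non-permutable} levels, applying the cocomposition of $\calC$ at one $\geq3$-valent vertex of that level, whereas in $\Omega(\calC)$ it splits one vertex of the planar tree into two along a new inner edge. Under the correspondence $\alpha$, splitting a vertex of $\alpha(T)$ along an inner edge corresponds exactly to refining the level carrying the associated $\geq3$-valent vertex, so the two sums of summands are in canonical bijection.

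The main obstacle will be the matching of signs: the signs in the leveled differential are governed by the ordering of the levels of the tree, while those in the planar differential are governed by the ordering of its vertices and edges. Establishing that these two conventions produce the same sign on corresponding summands is the only delicate point, and it is entirely parallel to the dual sign verification already carried out for the bar construction in Proposition~\ref{prop:bar-well-def} and invoked in Proposition~\ref{ProBlB}. Once this is settled, $L_{n}$ and $R_{n}$ are inverse morphisms of dg-operads, which yields the desired isomorphism $\Omega_{l}(\calC) \cong \Omega(\calC)$.
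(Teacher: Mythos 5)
Your proposal is correct and follows essentially the same route as the paper: the paper's proof likewise recalls the usual cobar construction as the quasi-free operad $\calF(\Sigma^{-1}\calU(\overline{\calC}))$ and then checks that the isomorphism of free operad functors from Proposition~\ref{FreeFunc2} is compatible with the degrees and the differentials. Your write-up simply spells out in more detail the degree comparison and the matching of the external differentials (vertex splitting versus level refinement), which the paper leaves implicit.
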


\begin{proof}
  Thanks to Proposition~\ref{FreeFunc2}, we know that $\Omega_l(\calC)$ is isomorphic to $\Omega(\calC)$ as a graded operad, using the comparison morphisms $\alpha$ and $\beta$.
  We thus only need to check that the differentials are compatible with this isomorphism.
  In both cases, the differential is given by the sum $d_{\mathrm{int}} + d_{\mathrm{ext}}$, where the internal differential acts on decorations by elements of $\calU(\overline{\calC})$ and the external differential acts by splitting either a vertex or a level using the cooperadic structure.
  It is clear that the internal differentials match.
  For the external differentials, this can also be seen easily using the representatives given by the trees of the form $\beta(T)$ that are used for the definition of the isomorphism $R$ in Proposition~\ref{FreeFunc2}: in such a representative, there is only one non-bivalent vertex per level, so splitting a level is equivalent to splitting a vertex.
\end{proof}

\paragraph{Comparison with the Boardman--Vogt resolution for 1-reduced ($\Lambda$-)cooperads}

Let $\calC$ be a 1-reduced Hopf cooperad.
In Section~\ref{SectBVCoop} we built a fibrant resolution of $\calC$ using the leveled Boardman--Vogt resolution $W_{l}\calC$.
In Section~\ref{SectBarCoop} and in the previous paragraph, we introduced leveled versions of the bar and cobar constructions, respectively, which are isomorphic to the usual constructions.
In the following, we show that $W_{l}\calC$ is quasi-isomorphic to the bar-cobar construction of $\calC$. Namely, we build an explicit quasi-isomorphism of 1-reduced dg-cooperads:
$$
  \Gamma:\calB_{l}\Omega_{l}(\calC)\longrightarrow W_{l}\calC.
$$
This map will essentially be dual to the map of Proposition~\ref{prop:bw-cobarbar}.

A point in the bar-cobar construction $\calB_{l}\Omega_{l}(\calC)(n)$ is a family of elements $\Phi=\{\Phi(T)\in \overline{\Omega_{l}(\calC)}(T)\}_{T\in \lTree_{\iso}[n]}$, indexed by the set of leveled trees $\lTree_{\iso}[n]$, and satisfying the following relations:
\begin{itemize}
  \item for each permutation $\sigma$ of permutable levels, one has $\Phi(T)=\Phi(\sigma\cdot T)$;
  \item for each morphism $\delta_{i}:T\rightarrow T/\{i\}$ contracting two permutable levels, one has $\Phi(T)=\Phi(T/\{i\})$.
\end{itemize}

For each leveled tree $T$, the element $\Phi(T) \in \overline{\Omega_{l}(\calC)}(T)$ is the data of a family of leveled trees $\{T[v]\}_{v\in V(T)}$ indexing the vertices of the main leveled tree $T$, and a family $\{x_{u}[v]\}_{v \in V(T), u \in V(T[v])}$ of elements in the cooperad $\calC$ labelling the vertices of the sub-leveled trees $T[v]$.
We will explicitly write $\Phi(T)$ as $\{[\{T[v]\},\{x_{u}[v]\}]\}_{v\in V(T)}$.

Let $\Phi \in \calB_{l}\Omega_{l}(\calC)$ be a point in the bar-cobar construction.
In order to define
\[\Gamma(\Phi) \coloneqq \{\Gamma(\Phi)(T)\in H(T)\otimes \overline{\calC}(T), \; T\in \lTree[n]\}, \]
there are two cases to consider.
First, if there is no leveled tree $T'\in \lTree[n]$ such that $T$ is of the form $\gamma_{T'}(\{T'[v]\})$ up to permutations of permutable levels and contractions of permutable levels, then $\Gamma(\Phi)(T)=0$.
Otherwise, the collection $\{x_{u}[v]\}_{v \in V(T'), u \in V(T'[v])}$ defines an element of $\overline{\calC}(T) = \overline{\calC}(\gamma_{T'}(\{T'[v]\}))$, and we can thus define:
$$
  \Gamma(\Phi)(T)=\{p_{i}\} \otimes \{x_{u}[v]\} \in H(T) \otimes \overline{\calC}(T),
$$
where
the $i$-th level in $\gamma_{T'}(\{T'[v]\})$ is indexed by the constant polynomial form $p_{i}(t,dt)=1$ if this level corresponds to the $0$-th level of one of the leveled sub-trees $T'[v]$, and otherwise by the form $p_{i}(t,dt) = dt$.

\begin{pro}\label{ProCompaOp}
  The map $\Gamma:\calB_{l}\Omega_{l}(\calC)\rightarrow W_{l}\calC$ so defined is a weak equivalence of 1-reduced dg-cooperads.
\end{pro}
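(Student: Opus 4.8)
The plan is to first verify that $\Gamma$ is a well-defined morphism of $1$-reduced dg-cooperads, and then to deduce that it is a quasi-isomorphism by comparison with the classical bar-cobar-to-$W$ comparison map.

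For the first part, well-definedness with respect to the relations defining $W_{l}\calC$ (invariance under permutations and contractions of permutable levels, Equation~\eqref{relationcoeq}) is immediate from the corresponding invariance imposed on a point $\Phi \in \calB_{l}\Omega_{l}(\calC)$. Compatibility with the cooperadic structures holds because the cooperadic structure on both sides is induced by the operation $\gamma$ on leveled trees from Section~\ref{SectTreeLev}, and $\Gamma$ is defined tree-wise through the decomposition $T \sim \gamma_{T'}(\{T'[v]\})$. The substantive point is compatibility with the differentials. On $W_{l}\calC$ the differential reduces to the internal differential $d_{int}$ of $\calC$, since the de~Rham differential annihilates both decorations $p_{i}(t,dt) = 1$ and $p_{i}(t,dt) = dt$ produced by $\Gamma$; hence being a chain map forces the two external differentials on the source to map to terms that cancel. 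I would check, exactly as in the dual computation behind Proposition~\ref{prop:bw-cobarbar}, that under $\Gamma$ the cobar external differential (splitting a level via the cooperadic structure of $\calC$) and the bar external differential (contracting non-permutable levels) do cancel against one another once the level indexed by $dt$ is recorded with the correct sign. This is the usual Sullivan-type mechanism, in which a generator smeared over the interval by the form $dt$ is de~Rham closed precisely because its would-be boundary terms reassemble the (co)bar splitting and contraction.

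For the second part, I would reduce to the classical comparison. By Proposition~\ref{FreeFunc2} and Proposition~\ref{ProBlBCoop} the leveled cobar $\Omega_{l}\calC$ is isomorphic to the usual cobar $\Omega\calC$, and by Proposition~\ref{FreeFunc} and Proposition~\ref{ProBlB} the leveled bar is isomorphic to the usual one; hence $\calB_{l}\Omega_{l}(\calC) \cong \calB\Omega(\calC)$ as dg-cooperads. Likewise $W_{l}\calC$ is isomorphic to the usual Boardman--Vogt resolution of~\cite{FresseTurchinWillwacher2017}. Under these identifications, which are compatible with the morphisms $\alpha$ and $\beta$ of Section~\ref{SectTreeLev}, the map $\Gamma$ becomes the classical comparison morphism from the bar-cobar resolution to the $W$-construction, which is a quasi-isomorphism by~\cite{FresseTurchinWillwacher2017} (the statement dual to the input of Proposition~\ref{prop:bw-cobarbar}). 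Equivalently, one can argue by two-out-of-three: the natural map $\calC \to \calB_{l}\Omega_{l}(\calC)$ and the map $\eta : \calC \to W_{l}\calC$ of Theorem~\ref{ThmQI} are both quasi-isomorphisms, and a chain homotopy built from the contracting homotopy of $\K[t,dt] \simeq \K$ used in the proof of Theorem~\ref{ThmQI} (indexing levels by the form $t$, which interpolates between $1$ and $dt$ via $d(t) = dt$) shows that $\Gamma$ is compatible with them up to homotopy.

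The main obstacle is the differential-compatibility verification of the second paragraph: matching the cobar level-splitting and the bar level-contraction with the fact that the forms $1$ and $dt$ are de~Rham closed, while keeping precise track of the signs and of which levels of $T \sim \gamma_{T'}(\{T'[v]\})$ correspond to the $0$-th levels of the sub-trees $T'[v]$. Once this bookkeeping is settled, identifying $\Gamma$ with the classical comparison map and invoking the quasi-isomorphism of~\cite{FresseTurchinWillwacher2017} is routine.
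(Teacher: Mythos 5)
Your proposal is essentially correct, and your fallback two-out-of-three argument is exactly the paper's proof: the triangle $\calC \to \calB_{l}\Omega_{l}(\calC) \xrightarrow{\Gamma} W_{l}\calC$ commutes (in fact strictly, since $\Gamma$ applied to the coaugmentation decorates only levels coming from $0$-th levels of corollas, all of which receive the form $1$, recovering $\eta$ on the nose --- so the chain homotopy you propose to build is not needed), and both legs to $\calC$ are quasi-isomorphisms by Theorem~\ref{ThmQI} and the standard acyclicity of the bar-cobar coaugmentation. Where you genuinely diverge from the paper is in how you establish that $\Gamma$ is a morphism of dg-cooperads. You plan a direct verification that the cobar level-splitting and the bar level-contraction cancel against the de~Rham-closedness of $1$ and $dt$; this would work but is precisely the sign-heavy bookkeeping you flag as the main obstacle. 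The paper sidesteps it entirely: by Theorem~\ref{thm:W=B for bimods} one has $W_{l}\calC \cong \calB_{l}(\Prim(W_{l}\calC))$ as dg-cooperads, and $\Gamma$ is visibly $\calB_{l}$ applied to the \emph{operad} morphism $\Omega_{l}\calC \to \Prim(W_{l}\calC)$ that decorates all external levels by $dt$; functoriality of the bar construction then gives compatibility with both the cooperad structure and the differential for free. I would also caution against your primary route for the quasi-isomorphism, namely reducing to a ``classical comparison map'' in~\cite{FresseTurchinWillwacher2017}: that reference identifies $W\calC$ with a bar construction of primitives but does not supply the bar-cobar-to-$W$ comparison in the form you invoke (the relevant dual statement lives in~\cite{Fresse2004}), so the two-out-of-three argument is both cleaner and better supported by what is already proved in the paper.
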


\begin{proof}
  Recall that $W_{l}\calC \cong \calB_{l}(\Sigma^{-1}\Prim W_{l}\calC)$ (see Section~\ref{SectPrimOp}).
  The map $\Gamma$ defined above is induced by the morphism of operads $\Omega_{l}\calC \to \Sigma^{-1}\Prim W_{l}\calC$ which decorates all (external) levels by $dt$, therefore it is a cooperad morphism.
  The weak equivalence is a consequence of the commutative diagram:
  \[
    \begin{tikzcd}
      \calC \ar[r, "\simeq"] \ar[d, "\simeq" swap] & W_{l}\calC \\
      \calB_{l}\Omega_{l}(\calC) \ar[ru, "\Gamma" swap] &
    \end{tikzcd}
  \]
  \vspace{-3em}
\end{proof}

\begin{thm}\label{thm:lambda-compatible}
  Let $\calC$ be a 1-reduced Hopf $\Lambda$-cooperad.
  There exists a $\Lambda$-costructure on the leveled bar-cobar construction $\calB_{l}\Omega_{l}(\calC)$ making the map $\Gamma:\calB_{l}\Omega_{l}(\calC)\rightarrow W_{\Lambda}\calC$, introduced in Proposition \ref{ProCompaOp}, into a quasi-isomorphism of 1-reduced dg-$\Lambda$-cooperads.
\end{thm}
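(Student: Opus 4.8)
The plan is to prove the statement in three steps: first equip $\calB_{l}\Omega_{l}(\calC)$ with a $\Lambda$-costructure, then verify that $\Gamma$ intertwines it with the $\Lambda$-costructure on $W_{\Lambda}\calC$, and finally deduce that $\Gamma$ is a quasi-isomorphism of dg-$\Lambda$-cooperads. The third step is formal and should be dispatched immediately: weak equivalences in the Reedy model category $\Lambda\Cooperad$ are the morphisms that are quasi-isomorphisms in each arity, and Proposition~\ref{ProCompaOp} already establishes that $\Gamma$ is an arity-wise quasi-isomorphism (here we forget the Hopf structure on $W_{\Lambda}\calC$ and regard it as a dg-$\Lambda$-cooperad). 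Hence, once $\Gamma$ is known to be a morphism of $\Lambda$-cooperads, it is automatically a weak equivalence in $\Lambda\Cooperad$. The mathematical content therefore lies in the first two steps.

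For the construction of the $\Lambda$-costructure, the plan is to transport it from the usual bar-cobar construction. By Proposition~\ref{ProBlB} and Proposition~\ref{ProBlBCoop} the leveled constructions are isomorphic to the usual ones, so $\calB_{l}\Omega_{l}(\calC) \cong \calB\Omega(\calC)$ as dg-cooperads. As noted in Remark~\ref{rmk:lambda-fresse}, in the cooperadic setting our bar construction is an end and our cobar construction is a coend, which matches Fresse's conventions; consequently a dual of Fresse's Proposition~C.2.18~\cite{Fre} equips $\calB\Omega(\calC)$ with a canonical $\Lambda$-costructure, and transporting it along the isomorphism yields the desired $\Lambda$-costructure on $\calB_{l}\Omega_{l}(\calC)$. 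Concretely, the operation $h[i]_{\ast}$ grows a branch along the $i$-th leaf and applies the $\Lambda$-costructure of $\calC$ to the innermost cooperad-decorated vertex along that branch, the newly created levels being decorated either by $1$ or by the form produced through the coproduct $m^{\ast}$ of Section~\ref{SectModHopf}, exactly as in the explicit formula~\eqref{Lambdacostructure} defining $h[i]_{\ast}$ on $W_{\Lambda}\calC$.

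The main obstacle is the second step: checking that $\Gamma$ is a $\Lambda$-map, i.e.\ that $h[i]_{\ast}\circ \Gamma_{n} = \Gamma_{n+1}\circ h[i]_{\ast}$. Recall from the proof of Proposition~\ref{ProCompaOp} that $\Gamma$ is induced by the operad morphism $\Omega_{l}\calC \to \Prim(W_{l}\calC)$ decorating the external levels by $dt$, and that $W_{l}\calC \cong \calB_{l}(\Prim(W_{l}\calC))$ by Theorem~\ref{thm:W=B for bimods}. I would therefore carry out the verification case by case, following the path from the $i$-th leaf to the root, matching each of the cases $1$ and $2.1$--$2.4$ of the $\Lambda$-costructure on $W_{\Lambda}\calC$ with the transported costructure on the bar-cobar side. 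The delicate points are: (a) the decorations by $dt$ and by $1$ on the new levels must be compatible with the polynomial forms produced by $m^{\ast}$ when an intermediate section is created, and with the insertions of the element $b_{v}$ in the remaining cases; and (b) compatibility with the total differential $d_{int}+d_{ext}$, which holds because both $\Gamma$ and the $\Lambda$-operations are built from (co)operadic structure maps that commute with the differentials. Once these compatibilities are verified, $\Gamma$ is a morphism of dg-$\Lambda$-cooperads, and the conclusion follows from the formal argument of the first paragraph.
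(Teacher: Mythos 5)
Your overall skeleton is right: the reduction of the quasi-isomorphism claim to Proposition~\ref{ProCompaOp} once $\Gamma$ is known to be a $\Lambda$-map is exactly the formal step the paper also takes, and you correctly locate the real work in constructing the costructure and checking compatibility. Your idea of obtaining the costructure by dualizing Fresse's Proposition~C.2.18 and transporting along $\calB_{l}\Omega_{l}(\calC)\cong\calB\Omega(\calC)$ is a genuinely different route from the paper, which instead builds the costructure explicitly; the transport route is plausible in principle (the paper's Remark~\ref{rmk:lambda-fresse} hints at it) but it defers, rather than avoids, the need for an explicit description, since the compatibility of $\Gamma$ with the costructure can only be checked against concrete formulas.

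This is where there is a genuine gap. Your ``concrete'' description of $h[i]_{\ast}$ on $\calB_{l}\Omega_{l}(\calC)$ --- new levels decorated by $1$ or by forms produced through $m^{\ast}$, ``exactly as in~\eqref{Lambdacostructure}'' --- describes the costructure on $W_{\Lambda}\calC$, not one on the bar-cobar construction. The object $\calB_{l}\Omega_{l}(\calC)=\calF^{c}_{l}(\Sigma\calU(\overline{\Omega_{l}\calC}))$ carries no polynomial-form decorations at all: its outer leveled trees have vertices decorated by elements of the cobar construction and nothing on the levels, so $m^{\ast}$ and $\K[t,dt]$ cannot enter its $\Lambda$-costructure. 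The correct mechanism, which your proposal misses, is two-layered: one first defines auxiliary maps $h[i]_{\ast}:\Omega_{l}(\calC)(n)\to\Omega_{l}(\calC)(n+1)$ as a \emph{sum over all positions} at which the new leaf can be attached to the inner cobar trees (inserting either a $b_{v}$ from the costructure $\K\to\calC(2)$ at a new bivalent vertex or applying the costructure of $\calC$ at a $\ge 3$-valent vertex); these sums do \emph{not} define a $\Lambda$-costructure on $\Omega_{l}(\calC)$ itself (this is the failure recorded in~\cite[Example~2.6]{FresseTurchinWillwacher2017}, which is why the statement cannot be proved at the cobar level), and only become one after being fed, vertex by outer vertex, into the cofree cooperad $\calB_{l}\Omega_{l}(\calC)$. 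Your singular ``applies the costructure to the innermost cooperad-decorated vertex'' drops the summation, and without it neither the cocomposition axiom for the costructure nor the identity $h[i]_{\ast}\circ\Gamma_{n}=\Gamma_{n+1}\circ h[i]_{\ast}$ (where, on the $W_{\Lambda}\calC$ side, case~2.4 of~\eqref{Lambdacostructure} kills all terms with $m^{\ast}(dt)=0$ and retains exactly the terms matching the sum over attachment positions) can be verified. As written, the case-by-case check you outline would be comparing the right-hand side against a wrongly described left-hand side.
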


\begin{proof}
  Let $h[i]:[n]\rightarrow [n+1]$ be the
  order preserving map given by $h[i](j)=j$ if $j<i$ and $h[i](j)=j+1$ if $j\geq i$.
  First, we introduce the following operations:
  \begin{equation}\label{Finaleq}
    h[i]_{\ast}:\Omega_{l}(\calC)(n)\longrightarrow \Omega_{l}(\calC)(n+1).
  \end{equation}
  They do not provide a $\Lambda$-costructure on the leveled cobar construction~\cite[Example 2.6]{FresseTurchinWillwacher2017} but they are useful to define a $\Lambda$-costructure on the bar-cobar construction.

  For $T \in \lTree[n]$, we define the set $T[i]\subset\lTree[n+1]$ which consists of leveled $(n+1)$-trees $T'$ such that $T$ can be obtained from $T'$ by removing the $i$-th leaf and levels composed of bivalent vertices (see the pictures following Equation~\eqref{Lambdacostructure}).
  For $T' \in T[i]$, we denote by $v_{i}\in V(T')$ the first non-bivalent vertex on the path joining the $i$-th leaf to the root.
  Let $\underline{x}=[T;\;\{x_{v}\}]$ be an element in $\Omega_{l}(\calC)(n)$.
  The element $\eta_{i}(\underline{x}, T')\in  \Omega_{l}(\calC)(n+1)$ is defined as follows:
  \begin{enumerate}
    \item If $|v_{i}|=2$, then $\eta_{i}(\underline{x}, T') \coloneqq [T';\; \{x_{v}\}\otimes \{b_{v_{i}}\}]$ is obtained by labelling the vertex $v_{i}$ by $b_{v_{i}}$ (which is the image of $1$ by the map $\K\rightarrow \calC(2)$ induced by the $\Lambda$-costructure on $\calC$) and keeping the decorations from $T$ for the other vertices.

    \item If $|v_{i}|\geq 3$, then $\eta_{i}(\underline{x}, T') \coloneqq [T';\; (h[i]_{|v_{i}})_{\ast}(x_{v_{i}}) \otimes \{x_{v}\}_{v\neq v_{i}}]$ where $h[i]_{|v_{i}}:|v_{i}-1|\rightarrow |v_{i}|$ is the injection induced by $h[i]$ on the incoming edges of $v_{i}$.
  \end{enumerate}

  The map \eqref{Finaleq} is given by
  $$
    h[i]_{\ast}(\underline{x}) \coloneqq \sum_{T'\in T[i]} \eta_{i}(\underline{x}, T').
  $$

  Now we are able to build the $\Lambda$-costructure on the leveled bar-cobar construction
  \begin{align*}
    h[i]:\calB_{l}\Omega_{l}(\calC)(n) & \longrightarrow  \calB_{l}\Omega_{l}(\calC)(n+1);                                                                                     \\
    \Phi                               & \longmapsto h[i]_{\ast}(\Phi) = \bigl\{ h[i]_{\ast}(\Phi)(T)_{v}\in \Omega_{l}(\calC)(|v|) \bigr\}_{T \in \lTree[n+1], \; v\in V(T)}.
  \end{align*}

  Let $T$ be a leveled $(n+1)$-tree and $v_{i}\in V(T)$ be the first non-bivalent vertex on the path joining the $i$-th leaf to the root. Let $\epsilon_{T,i}$ be the integer corresponding to the position of the incoming edge of $v_{i}$ connected to the $i$-th leaf of $T$ according to the planar order. Then, $h[i]_{\ast}(\Phi)(T)_{v}$ is defined as follows:
  \begin{enumerate}
    \item If $v\neq v'$, then $h[i]_{\ast}(\Phi)(T)_{v}=\Phi(T')_{v}$.
    \item If $v=v_{i}$ and $|v_{i}|\geq 3$, then $h[i]_{\ast}(\Phi)(T)_{v_{i}}=h[\epsilon_{T,i}]_{\ast}\big( \Phi(T')_{v_{i}}\big)$ using the operation \eqref{Finaleq}.
    \item If $v=v_{i}$ and $|v_{i}|=2$, then $h[i]_{\ast}(\Phi)(T)_{v_{i}}=b_{v_{i}}$ where $b_{v_{i}}$ is the image of $1$ by the composite map $\K\rightarrow \calC(2)\rightarrow \Omega_{l}(\calC)(2)$.
  \end{enumerate}
  One can then check easily by hand that this defines a $\Lambda$-costructure on $\calB_{l}\Omega_{l}\calC$ and that the morphism $\Gamma$ is compatible with this structure.
\end{proof}

\section{Fibrant resolutions for Hopf $\Lambda$-cobimodules}
\label{sec:fibr-resol-hopf-2}

For any pair of 1-reduced Hopf $\Lambda$-cooperads $\calP$ and $\calQ$ as well as any $(\calP\text{-}\calQ)$-cobimodule $M$, we build a Hopf $(W_{l}\calP\text{-}W_{l}\calQ)$-cobimodule $W_{l}M$ together with a quasi-isomorphism $\eta:M\rightarrow W_{l}M$ where $W_{l}\calP$ and $W_{l}\calQ$ are the Boardman--Vogt resolutions introduced in Section \ref{SectBVCoop} associated to $\calP$ and $\calQ$, respectively.
Furthermore, we show that $W_{l}M$ provides a fibrant resolution of $M$.
Similarly to the previous sections, this Boardman--Vogt resolution is quasi-isomorphic to a leveled version of the two-sided bar construction of the cobimodule of its primitive elements.
Finally, we compare the fibrant resolution with the two-sided leveled bar-cobar construction.

\subsection{The two-sided leveled bar construction for bimodules}
\label{sec:two-sided-leveled}

First, we introduce the two-sided cofree cobimodule functor.
For this purpose we use the category of leveled trees with section $\slTree_{\iso}[n]$, introduced in Section \ref{TreeSect}, whose morphisms are generated by isomorphisms of leveled planar trees with section, permutation morphisms of permutable levels and contraction morphisms of permutable levels. Let $A$ and $B$ be two  1-reduced symmetric cosequences in $\dg\Sigma \Seq_{>1}^{c}$.
We construct the \textit{cofree cobimodule functor}:
$$
  \calF_{l}^{c}[A,B]:\dg\Sigma \Seq_{>0}^{c}\longrightarrow \Sigma\Cobimod_{\calF_{l}^{c}(A)\,,\,\calF_{l}^{c}(B)},
$$
where $\calF_{l}^{c}(A)$ and $\calF_{l}^{c}(B)$ are the leveled cofree cooperads introduced in Section \ref{SectBarCoop}.
Let $C$ be a symmetric cosequence in $\dg\Sigma \Seq_{>0}^{c}$. We consider the two functors:
\begin{align*}
  s\overline{C}_{B} : \slTree_{\iso}[n]^{op} & \longrightarrow \Ch, \quad (T,\iota) \longmapsto \bigotimes_{\mathclap{v\in V_{d}(T)}} A(|v|) \otimes \bigotimes_{\mathclap{v\in V_{\iota}(T)}} C(|v|) \otimes \bigotimes_{\mathclap{v\in V_{u}(T)}} B(|v|) ; \\
  sE'_{1} :\slTree_{\iso}[n]                 & \longrightarrow \Ch, \quad (T,\iota) \longmapsto \bigotimes_{\mathclap{0\leq i\neq \iota\leq h(T)}} \K.
\end{align*}
\begin{defi}
  The two-sided cofree cobimodule functor is given arity-wise by the end:
  $$
    \calF_{l}^{c}[A,B](C)(n) \coloneqq \int_{T\in \slTree_{\iso}[n]} s\overline{C}_{B}(T)\otimes sE'_{1}(T).
  $$
\end{defi}
Roughly speaking, a point of $\calF_{l}^{c}[A,B](C)(n)$ is a map $\Phi$ which assigns to each leveled trees with section $T$ a decoration of the vertices on the main section (resp. below and above the main section) by elements in $C$ (resp. in $A$ and $B$).
See Figure~\ref{fig:example-cofree-cobim} for an example. Furthermore, there is a map from $\calF_{l}^{c}[A,B](C)$ to the sequence $C$ by taking the image of the corollas $c_{n}$, for any $n\geq 0$.
The cobimodule structure is given by the following operations:
\begin{align*}
  \tilde{\gamma}_{L} : \calF_{l}^{c}[A,B](C)(n_{1}+\cdots + n_{k}) & \longrightarrow \calF_{l}^{c}(A)(k) \otimes \calF_{l}^{c}[A,B](C)(n_{1})\otimes \cdots \otimes \calF_{l}^{c}[A,B](C)(n_{k}) \\
  \Phi                                                             & \longmapsto \bigl\{ \tilde{\Phi}(T_{0}, \{T_{i}\}) = \Phi(\gamma_{L}(T_{0}, \{T_{i}\}))\bigr\},                             \\
  \tilde{\gamma}_{R} : \calF_{l}^{c}[A,B](C)(n_{1}+\cdots + n_{k}) & \longrightarrow \calF_{l}^{c}[A,B](C)(k) \otimes  \calF_{l}^{c}(B)(n_{1})\otimes \cdots \otimes \calF_{l}^{c}(B)(n_{k})     \\
  \Phi                                                             & \longmapsto \bigl\{ \tilde{\Phi}(T_{0},\{T_{i}\})=\Phi(\gamma_{R}(T_{0}\,,\,\{T_{i}\}))\bigr\},
\end{align*}
where $\gamma_{L}$ and $\gamma_{R}$ are the operations introduced in Section \ref{SectTreeLev}.

\begin{figure}[htbp]

  \hspace{-30pt}\includegraphics[scale=0.35]{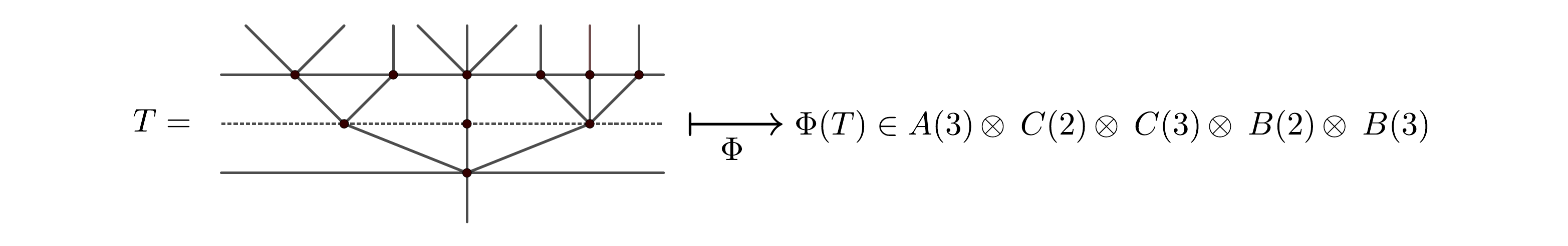}
  \caption{Example of an element in $\calF^c_l[A,B](C)$.}
  \label{fig:example-cofree-cobim}
\end{figure}

\begin{pro}
  The forgetful functor $\calU$ is adjoint to the two-sided cofree cobimodule functor:
  $$
    \calU:\Sigma\Cobimod_{\calF_{l}^{c}(A), \calF_{l}^{c}(B)}\leftrightarrows \dg\Sigma \Seq_{>0}^{c}:\calF^{c}_{l}[A,B].
  $$
  This adjunction is moreover functorial with respect to the sequences $A$ and $B$.
\end{pro}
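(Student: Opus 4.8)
The plan is to exhibit, for every symmetric cosequence $C \in \dg\Sigma\Seq_{>0}^{c}$ and every $(\calF_{l}^{c}(A)$-$\calF_{l}^{c}(B))$-cobimodule $N$, a natural bijection
$$
\operatorname{Hom}_{\Sigma\Cobimod_{\calF_{l}^{c}(A),\calF_{l}^{c}(B)}}\bigl(N,\calF_{l}^{c}[A,B](C)\bigr)\;\cong\;\operatorname{Hom}_{\dg\Sigma\Seq_{>0}^{c}}\bigl(\calU(N),C\bigr),
$$
following closely the proof of Proposition~\ref{FreeFunc} for the cofree cooperad functor. First I would construct the counit $\epsilon_{C}:\calU(\calF_{l}^{c}[A,B](C))\to C$ as the projection that evaluates an application $\Phi\in\calF_{l}^{c}[A,B](C)(n)$ at the \emph{section corolla}, i.e.\ the leveled $n$-tree with section consisting of a single main-section vertex of arity $n$ and no other vertices: for that tree $s\overline{C}_{B}$ is exactly $C(n)$, so $\epsilon_{C}(\Phi)\coloneqq\Phi(c_{n}^{\iota})\in C(n)$.

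Given a morphism $f:\calU(N)\to C$ of symmetric cosequences, I would define its adjunct $\hat f:N\to\calF_{l}^{c}[A,B](C)$ by sending $x\in N(n)$ to the application $\Phi_{x}$ whose value on a tree $(T,\iota)$ is the following decoration of the vertices of $T$. Iterating the left and right coactions of the cobimodule $N$ along the grafting pattern encoded by $(T,\iota)$ decomposes $x$ into a tensor of cogenerators; the vertices below the section are then decorated by projecting the resulting $\calF_{l}^{c}(A)$-factors onto their cogenerators $A$ (via the cofreeness of Proposition~\ref{FreeFunc}), the vertices above the section by projecting the $\calF_{l}^{c}(B)$-factors onto $B$, and the main-section vertices by applying $f$ to the corresponding $N$-factors. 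This is the cobimodule analogue of the ``iterated comultiplication followed by vertexwise $f$'' formula that produces the adjunct in the cooperadic case.

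The main obstacle, and the bulk of the verification, is well-definedness of $\Phi_{x}$ as an element of the end: since $sE'_{1}$ is the constant functor $\K$, this amounts to checking that the decoration is invariant under permutations of permutable levels ($\Phi_{x}(T)=\Phi_{x}(\sigma\cdot T)$) and under contractions of permutable levels ($\Phi_{x}(T)=\Phi_{x}(T/\{i\})$), exactly as in the description of $\calF_{l}^{c}$ in Section~\ref{SectBarCoop}. This reduces to the coassociativity and counitality of the coactions of $N$ together with the matching compatibilities of $\calF_{l}^{c}(A)$ and $\calF_{l}^{c}(B)$. The delicate point is that the ``bimodule'' structure on $\sTree_{l}$ satisfies its axioms only up to permutations and contractions of permutable levels (Section~\ref{SectTreeLev}), so one must check that the distinct iterated decompositions of $x$ coming from such permutable rearrangements of $(T,\iota)$ produce the same tensor after passing to the end; this is precisely what the cobimodule axioms guarantee.

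Finally, I would check that $\hat f$ is a cobimodule morphism by comparing with the defining formulas for $\tilde\gamma_{L}$ and $\tilde\gamma_{R}$, which are built from the grafting operations $\gamma_{L},\gamma_{R}$ of \eqref{eq:def-gamma-l} and \eqref{eq:def-gamma-r}; compatibility then follows from the coassociativity relating the coactions of $N$ to the cooperad structures of $\calF_{l}^{c}(A)$ and $\calF_{l}^{c}(B)$. The triangle identities are immediate from the construction: $\epsilon_{C}\circ\calU(\hat f)=f$ because evaluating $\Phi_{x}$ at the section corolla undoes the trivial decomposition and applies $f$ to $x$, while the assignment $g\mapsto\epsilon_{C}\circ\calU(g)$ is inverse to $f\mapsto\hat f$ by conilpotence, as each $\Phi$ is determined by its values on all trees, which are recovered from the corolla projections through the coactions. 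Functoriality in $A$ and $B$ then follows from the functoriality of the cofree cooperad functor $\calF_{l}^{c}$ established in Proposition~\ref{FreeFunc} and the naturality of all the maps above.
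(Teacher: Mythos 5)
Your proposal is correct and follows essentially the same route as the paper: the paper also verifies the universal property by defining the adjunct of $f$ on a tree $(T,\iota)$ as the iterated coaction $\Delta_{T}$ followed by applying $f$ on the main-section vertices, with uniqueness forced by corestriction to the section corolla. You are in fact more explicit than the paper about the projection of the $\calF_{l}^{c}(A)$- and $\calF_{l}^{c}(B)$-factors onto their cogenerators and about the invariance under permutations and contractions of permutable levels, which the paper leaves implicit.
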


\begin{proof}
  We need to check that the functor $\calF_{l}^{c}[A,B]$ satisfies the universal property associated to the right adjoint of the forgetful functor. Let $M$ be a $(\calF_{l}^{c}(A)\text{-}\calF_{l}^{c}(B))$-cobimodule and $M'$ be a sequence together with a map of sequences $\phi:M\rightarrow M'$. So, we have to build a  $(\calF_{l}^{c}(A)\text{-}\calF_{l}^{c}(B))$-cobimodule map $\tilde{\phi}:M\rightarrow \calF_{l}^{c}[A,B](M')$ such that the following diagram commutes:
  \[
    \begin{tikzcd}
      M \ar[r] \ar[rd] & M' \\
      {} & \calF_{l}^{c}[A,B](M') \ar[u]
    \end{tikzcd}
  \]

  Let $x$ be a point in $M(n)$. On the corolla $c_{n}$, the map $\tilde{\phi}(x)$ must be defined by $\tilde{\phi}(x)(c_{n})=\phi(x)$.
  More generally, let $T$ be a leveled tree with section.
  In order to get a cobimodule map, $\tilde{\phi}$ has to be defined by the composite map
  $$
    \tilde{\phi}(x)(T)=\phi\circ\Delta_{T}(x),
  $$
  using the cobimodule structure of $M$ and applying $\phi$ to the components corresponding to the vertices on the main section. It is the only way to define $\tilde{\phi}$ in order to get a map of cobimodules.
\end{proof}

\begin{defi}\label{def:leveled-bar-sided}
  Let $\calP$ and $\calQ$ be two 1-reduced dg-operads and let $M$ be a $(\calP\text{-}\calQ)$-bimodule.
  The leveled two-sided bar resolution of $M$ according to $\calP$ and $\calQ$ is given by:
  $$
    \calB_{l}[\calP,\calQ](M) \coloneqq \bigl( \calF_{l}^{c}[\Sigma\calU(\overline{\calP}), \Sigma\calU(\overline{\calQ})](\calU(M)), \; d_{\mathrm{int}}+d_{\mathrm{ext}} \bigr).
  $$
  Let $\Phi \in \calB_{l}[\calP,\calQ](M)$ be an element and $(T, \iota) \in \sTree_{l}[n]$ be a leveled tree with section.
  We define:

  $$
    \deg'(\Phi,(T, \iota)) \coloneqq \sum_{v \in V_{\ge2}(T) \setminus V_{\iota}(T)} (\deg(\theta_{v}) + 1) + \sum_{v \in V_{\iota}(T)} \deg(\theta_{v}).
  $$
  Then we say that $\deg \Phi = d$ if $\deg'(\Phi,T) = d$ for all $T$.

  The ($\calB_{l}\calP$-$\calB_{l}\calQ$)-cobimodule structure and the $\Lambda$-structure arise from the cofree cobimodule functor $\calF_{l}^{c}[\calU(\overline{\calP}),\calU(\overline{\calQ})](\calU(M))$.
  Finally, the differential is the sum of two terms:
  \begin{itemize}
    \item The internal differential $d_{\mathrm{int}}$ is the differential corresponding to the differential algebras $\calU(\overline{\calP})$, $\calU(M)$ and $\calU(\overline{\calQ})$.
    \item The external differential $d_{\mathrm{ext}}$ consists in contracting two consecutive levels.
          More precisely, for $\Phi \in \calB_{l}[\calP,\calQ](M)$ and a leveled tree with section $(T, \iota)$, we consider the set (where $D_T$ was defined in Definition~\ref{DefBarlev})
          \[ D_{T,\iota} \coloneqq \{ ((T', \iota'), i) \in \slTree_{\iso}[n] \times \mathbb{N} \mid (T', i) \in D_T \}. \]
          Then $(d_{\mathrm{ext}}\Phi)(T) = \sum_{((T',\iota'),i)} \gamma_{i}(\Phi(T',\iota'))$, where $\gamma_{i}$ uses either the operadic/module structures of $\calP$, $\calQ$, and $M$ to compose the elements corresponding to the contraction of level $i$.
  \end{itemize}
\end{defi}

\begin{pro}\label{pro:two-sided-bar-well-def}
  The leveled two-sided bar construction $\calB_{l}[\calP,\calQ](M)$ of a dg-($\calP$-$\calQ$)-bimodule $M$ is a well defined dg-($\calB_{l}\calP$-$\calB_{l}\calQ$)-cobimodule.
\end{pro}

\begin{proof}
  The proof is an extension of the proof of Proposition~\ref{prop:bar-well-def}.
  The external differential $d_{\mathrm{ext}}$ is the unique coderivation of ($\calP$-$\calQ$)-cobimodules induced by the map $\alpha : \calF_{l}^{c}[\Sigma\calU(\overline{\calP}), \Sigma\calU(\overline{\calQ})](\calU(M)) \to \calU(M)$ defined as follows.
  For the corolla $c_{n}$, $D_{c_{n},0}$ is the set of trees with exactly two levels and exactly one vertex with $\ge2$ incoming edges on the level which is not the main section.
  For $\Phi \in \calF_{l}^{c}[\Sigma\calU(\overline{\calP}),\Sigma\calU(\overline{\calQ})](\calU(M))$, the element $\alpha(\Phi) \in \calU(M)$ is the sum over all $((T', \iota'), i) \in D_{c_{n},0}$ of the application of the bimodule structure maps of $M$ to the element $\Phi(T) \in M(k) \otimes \calQ(l)$ or $\Phi(T) \in \calP(k) \otimes M(l)$ (depending on whether $\iota' = 0$ or $1$).
  Checking that $d_{\mathrm{ext}}^{2} = 0$ follows again from the associativity of the bimodule structures and the compatibility between the left and the right actions.
\end{proof}

\subsection{The leveled Boardman--Vogt resolution}\label{SectBVCobi}

Similarly to Section \ref{SectBVCoop}, we split this section into three parts. First, we introduce a leveled version of the Boardman--Vogt resolution for Hopf cobimodules.
Then, we extend this construction to Hopf $\Lambda$-cobimodules.
The last part is devoted to a simplicial frame version of our construction.

\paragraph{The leveled Boardman--Vogt resolution for Hopf cobimodules}

Let $\calP$ and $\calQ$ be be two 1-reduced cooperads in chain complexes and let $M$ be a $(\calP \text{-} \calQ)$-cobimodule.
According to the notation introduced in Section~\ref{SectTreeLev}, we consider the following two functors:
\begin{align*}
  s\overline{M}_{W} : \slTree[n]^{op} & \longrightarrow \CDGA, \qquad (T,\iota) \longmapsto \bigotimes_{\mathclap{v\in V_{d}(T)}} \calP(|v|) \otimes \bigotimes_{\mathclap{v\in V_{\iota}(T)}} M(|v|) \otimes \bigotimes_{\mathclap{v\in V_{u}(T)}} \calQ(|v|); \\
  sE_{W} : \slTree[n]                 & \longrightarrow \CDGA, \qquad (T,\iota) \longmapsto \bigotimes_{\mathclap{0\leq i\neq \iota\leq h(T)}} \K[t,dt].
\end{align*}

The functor $s\overline{M}_{W}$ labels the vertices on the main section (resp. below and above the main section) by elements of the cobimodule $M$ (resp. by elements of the cooperads $\calP$ and $\calQ$).
The functor $sE_{W}$ indexes levels other than $\iota$ by polynomial differential forms.
By convention, the level $\iota$ is indexed by the constant form $0$, i.e.\ one has $p_{\iota}(t,dt)=0$.

On morphisms, the functor $s\overline{M}_{W}$ is defined using the cooperadic structures of $\calP$ and $\calQ$, the cobimodule structure of $M$ and the symmetric monoidal structure of the ambient category.
On permutations $\sigma$ of two permutable levels, the functor $sE_{W}$ consists in permuting the parameters indexing the corresponding levels.
On contraction maps $\delta_{\{i+1\}}:T\rightarrow T/\{i+1\}$, with $i\in \{0,\ldots,h(T)-1\}$, there are three cases to consider:
\begin{enumerate}[label={Case \arabic*:}, leftmargin=35pt]
  \item If the levels $i$ and $i+1$ are permutable (in particular $\iota\notin \{i,i+1\}$), then one has
        \begin{align*}
          sE_{W}(\delta_{\{i+1\}}): sE_{W}(T) & \longrightarrow sE_{W}(T/\{i+1\})                                 \\
          (p_{0},\ldots,p_{h(T)})           & \longmapsto (p_{0},\ldots,p_{i}\cdot p_{i+1},\ldots, p_{h(T)}).
        \end{align*}
  \item If the levels $i$ and $i+1$ are not permutable and $i\geq \iota$ is above the main section, then one has
        \begin{align*}
          sE_{W}(\delta_{\{i+1\}}): sE_{W}(T) & \longrightarrow sE_{W}(T/\{i+1\})                                   \\
          (p_{0},\ldots,p_{h(T)})           & \longmapsto (p_{0},\ldots,\ev_{t=0}\circ p_{i+1},\ldots, p_{h(T)}).
        \end{align*}
  \item If the levels $i$ and $i+1$ are not permutable and $i+1\leq \iota$ is below the main section, then one has
        \begin{align*}
          sE_{W}(\delta_{\{i+1\}}): sE_{W}(T) & \longrightarrow sE_{W}(T/\{i+1\})                                     \\
          (p_{0},\ldots,p_{h(T)})           & \longmapsto (p_{0},\ldots,\ev_{t=0}\circ p_{i},\ldots, p_{h(T)}).
        \end{align*}
\end{enumerate}

\begin{defi}
  The leveled Boardman--Vogt resolution of $M$ is defined as the end:
  $$
    W_{l}M(n) \coloneqq \int_{T\in \slTree[n]} s\overline{M}_{W}(T) \otimes sE_{W}(T).
  $$
\end{defi}

Roughly speaking, an element in $W_{l}M(n)$ is a map $\Phi:\slTree[n]\rightarrow \CDGA$ which assigns to each leveled tree with section $T$ a decoration.
More precisely, the vertices on the main section (resp. above and below the main section) are indexed by elements of $M$ (resp. the cooperads $\calQ$ and $\calP$) while the levels other than the main section are indexed by polynomial differential forms.
Such a map needs to satisfy some relations related to morphisms in the category of leveled trees with section.
For each permutation $\sigma$ of permutable levels, one has $\Phi(T)=\Phi(\sigma\cdot T)$ and for each morphism $\delta_{N}:T\rightarrow T/N$, one has the following identification in the commutative differential graded algebra $s\overline{M}_{W}(T)\otimes sE_{W}(T/N)$:
\begin{equation}\label{relationcoeq2}
  \bigl( \id \otimes sE_{W}(\delta_{N})\bigr) \circ \Phi(T)= \bigl( M(\delta_{N})\otimes \id \bigr)\circ \Phi(T/N).
\end{equation}

The cobimodule structure is given by the following operations:
\begin{align*}
  \tilde{\gamma}_{L} : W_{l}M(n_{1}+\cdots + n_{k}) & \longrightarrow  W_{l}\calP(k) \otimes W_{l}M(n_{1})\otimes \cdots \otimes W_{l}M(n_{k}),                                      \\
  \Phi                                              & \longmapsto \bigl\{ \tilde{\Phi}_{L}(T_{0},\{T_{i}\}) = \ev_{T_{0},\{T_{i}\}} \circ \Phi(\gamma_{l}(T_{0},\{T_{i}\}))\bigr\};  \\
  \tilde{\gamma}_{R}: W_{l}M(n_{1}+\cdots + n_{k})  & \longrightarrow W_{l}M(k) \otimes W_{l}\calQ(n_{1})\otimes \cdots \otimes W_{l}\calQ(n_{k}),                                   \\
  \Phi                                              & \longmapsto \bigl\{ \tilde{\Phi}_{R}(T_{0}, \{T_{i}\}) = \ev_{T_{0},\{T_{i}\}} \circ \Phi(\gamma_{r}(T_{0},\{T_{i}\}))\bigr\}.
\end{align*}
where $\gamma_{L}$ and $\gamma_{R}$ are the operations introduced in Section \ref{SectTreeLev} on leveled trees with section.
Furthermore, the evaluation maps $\ev_{T_{0},\{T_{i}\}}$ is given by the formula \eqref{Eva}.
The reader can easily check that the operations so obtained are well defined.
By using the same arguments as in the proof of Proposition \ref{OpStr}, we conclude that $W_{l}M$ has a $(W_{l}\calP\text{-}W_{l}\calQ)$-cobimodule structure.
Finally, there is a map of $(W_{l}\calP\text{-}W_{l}\calQ)$-cobimodules
$$
  \eta:M\longrightarrow W_{l}M
$$
sending an element $x\in M(n)$ to the map $\Phi_{x}$ which is defined by indexing the vertices according to the operation $\Delta_{T}(x)$ using the $(W_{l}\calP\text{-}W_{l}\calQ)$-cobimodule structure of $M$ and indexing the levels other than $\iota$ by the constant polynomial differential form equal to $1$.

\begin{thm}\label{ThmQI2}
  The morphism of cobimodules $\eta:M\longrightarrow W_{l}M$ is a quasi-isomorphism.
\end{thm}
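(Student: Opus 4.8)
Since weak equivalences of cobimodules are detected arity-wise on the underlying chain complexes, it suffices to show that each component $\eta(n) \colon M(n) \to W_{l}M(n)$ is a quasi-isomorphism, and the plan is to transport the argument of Theorem~\ref{ThmQI} to the category $\sTree_{l}[n]$. First I would use the splitting $\K[t,dt] = \K 1 \oplus \K[t,dt]_{0}$, where $\K[t,dt]_{0} = \ker(\ev_{t=0})$ is the acyclic dg vector space of polynomial forms vanishing at $t=0$, and introduce the auxiliary functor
\[ sE'_{W} \colon \sTree_{l}[n] \longrightarrow \CDGA, \qquad (T,\iota) \longmapsto \bigotimes_{0 \le i \neq \iota \le h(T)} \K[t,dt]_{0}. \]
The key observation, exactly as in the proof of Theorem~\ref{ThmQI}, is that whenever a level $k \neq \iota$ is indexed by the constant form $1$ (the generator of the $\K 1$ summand), the decoration $\Phi(T)$ is completely determined by $\Phi(T/\{k\})$ through the relation~\eqref{relationcoeq2}; here the three contraction rules defining $sE_{W}(\delta_{\{k\}})$ (permutable levels, and non-permutable levels above or below the section) all project the relevant form onto its $\K 1$ component via $\ev_{t=0}$. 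Iterating this reduction identifies $W_{l}M(n)$, as a chain complex, with the product
\[ W_{l}M(n) \cong \prod_{[(T,\iota)] \in \pi_{0}\sTree_{l}[n]} s\overline{M}_{W}(T) \otimes sE'_{W}(T), \]
where the product ranges over isomorphism classes of leveled $n$-trees with section modulo isomorphisms of planar trees, permutations, and contractions of permutable levels.

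Next I would observe that the only class carrying no level distinct from $\iota$ is the corolla with section $c_{n}$ (a single $n$-ary vertex lying on the main section, with $h(c_{n}) = 0$ and $\iota = 0$), for which $s\overline{M}_{W}(c_{n}) \otimes sE'_{W}(c_{n}) = M(n) \otimes \K = M(n)$. Every other class contributes at least one tensor factor $\K[t,dt]_{0}$; since $\K[t,dt]_{0}$ is acyclic and we work over a field, the whole corresponding summand is acyclic, and the differential of $W_{l}M(n)$ is block-diagonal with respect to the product (there is no external differential mixing distinct tree classes). Hence the projection onto the corolla factor is a quasi-isomorphism, and by construction $\eta(n)$ is the identity onto this factor: it decorates every level $\neq \iota$ by the constant form $1$, which sits in the $\K 1$ summand and is therefore killed in the $sE'_{W}$ picture. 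This yields the desired quasi-isomorphism $\eta \colon M \to W_{l}M$.

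The main obstacle will be the careful verification of the product decomposition: one must check that the three contraction rules of $sE_{W}$, together with the permutation invariance and the reduction relation~\eqref{relationcoeq2}, really do express an arbitrary end element $\Phi$ in terms of its values on the reduced representatives in which each non-section level carries a single non-bivalent vertex and a form in $\K[t,dt]_{0}$. Because the section level $\iota$ is always indexed by the constant form $0$ and so never participates in the splitting of $\K[t,dt]$, this bookkeeping is formally identical to the operadic case treated in Theorem~\ref{ThmQI}; distinguishing the ``above the section'' from the ``below the section'' contraction cases is the only genuinely new feature, and it does not affect the acyclicity argument since both cases evaluate at $t=0$.
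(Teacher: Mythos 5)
Your proposal is correct and follows essentially the same route as the paper's proof: the splitting $\K[t,dt]=\K 1\oplus\K[t,dt]_{0}$, the auxiliary functor $sE'_{W}$, the reduction via relation~\eqref{relationcoeq2}, the product decomposition over $\pi_{0}\sTree_{l}[n]$, and the identification of the corolla factor with $M(n)$ all match the argument given there (which itself adapts Theorem~\ref{ThmQI}).
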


\begin{proof}
  The proof is similar to \cite[Proposition 5.2]{FresseTurchinWillwacher2017} and the proof of Theorem \ref{ThmQI}.
  We use the splitting $\K[t,dt]=\K1 \oplus \K[t,dt]_{0}$ where $\K[t,dt]_{0}\subset \K[t,dt]$ is the acyclic ideal formed by the polynomial differential forms that vanish at $t=0$.
  We consider the following functor:
  \begin{align*}
    sE'_{W} : \slTree[n] & \longrightarrow \Ch,                                             \\
    T                    & \longmapsto \bigotimes_{0\leq i\neq \iota \leq h(T)} \K[t,dt]_{0}.
  \end{align*}
  Let us remark that, if a $k$-level, with $k\neq \iota$, of a leveled tree with section $T$ is indexed by a polynomial of the form $\K1$, then the decoration $\Phi(T)$ can be obtained from $\Phi(T/\{k\})$ due to the relation (\ref{relationcoeq2}). Consequently, as chain complexes, there is a quasi-isomorphism:

  $$
    W_{l}M(n) \simeq \prod_{[T] \in \pi_{0} \beta\sTree^{\geq 2}[n]} s\overline{M}_{W}(T) \otimes sE'_{W}(T),
  $$
where $\beta\sTree^{\geq 2}[n]$ is the essential image of $\beta$, i.e.
the subcategory of $\slTree[n]$ which consists of leveled trees with section having exactly one non-bivalent vertex in each level other than the main section. The above product is over classes of leveled trees up to isomorphisms of planar trees and permutations of permutable levels. Notice that, thanks to the identity \eqref{relationcoeq2}, a point in Boardman--Vogt resolution in  determined by its values on the leveled trees in $\beta\sTree^{\geq 2}[n]$. Furthermore, we disregard the term on the right-hand side in which $T$ is not the $n$-corolla $c_{n}$, we obtain a contractible complex. It follows that the product is quasi-isomorphic to $s\overline{M}_{W}(c_{n})=M(n)$ and the canonical map $\eta:M\rightarrow W_{l}M$ is given by the identity on this factor, thus finishing the proof.
\end{proof}

\paragraph{The leveled Boardman--Vogt resolution for Hopf $\Lambda$-cobimodules}

Let $\calP$ and $\calQ$ be two 1-reduced Hopf $\Lambda$-cooperads and $M$ be a Hopf $\Lambda$-cobimodule over the pair $(\calP,\calQ)$.
In order to get a fibrant resolution of $M$ in the Reedy model category $\Lambda\Bimod_{\calP,\calQ}$, we provide a slight variant of the construction introduced in the previous paragraph.
As a symmetric cosequence, we set
$$
  W_{\Lambda}M(n) \coloneqq W_{l}M_{>0}(n), \hspace{15pt} \text{for all } n>0,
$$
where $M_{>0}$ is the underlying ($\calP_{>0}$-$\calQ_{>0}$)-cobimodule of $M$.
The subscript $\Lambda$ is to emphasize that we work in the category of 1-reduced Hopf $\Lambda$-cooperads.
The symmetric cosequence $W_{\Lambda}M$ inherits a ($W_l\calP_{>0}$-$W_l\calQ_{>0}$)-cobimodule
structure from $W_{l}M_{>0}$.

Let us define the $\Lambda$-costructure on the construction $W_{\Lambda}M$ which is compatible which the cobimodule structure.
For simplicity, we only build the costructure associated to the order preserving map ${h[i]:[n]\rightarrow [n+1]}$ skipping the $i$-th term (i.e. $h[i](j)=j$ if $j<i$ and $h[i](j)=j+1$ if $j\geq i$).
We shall construct a map of the form
\begin{equation}\label{Lambdacostructure2}
  \begin{aligned}
    h[i]_{\ast}:W_{\Lambda}M(n) & \longrightarrow  W_{\Lambda}M(n+1)                                                              \\
    \Phi                        & \longmapsto h[i]_{\ast}(\Phi) \coloneqq \bigl\{ h[i]\circ\Phi(T),\,\,T\in \slTree[n+1] \bigr\}.
  \end{aligned}
\end{equation}
Let $T$ be a leveled $(n+1)$-tree with section. In what follows, we denote by $v$ the first non-bivalent vertex composing the path from the $i$-th leaf to the root.
In order to define $h[i]\circ\Phi(T)\in s\overline{M}_{W}(T)\otimes sE_{W}(T)$, there are different cases to consider:

\begin{itemize}[leftmargin=45pt]
  \item[Case $1$:]
        If $v$ has at least three incoming edges, then we consider the leveled $n$-tree with section $T'$ defined from $T$ by removing the branch coming from the leaf indexed by $i$.
        In that case, $h[i]\circ\Phi(T)$ is given by
        $$
          h[i]\circ\Phi(T)=(h[i]_{|v})_{\ast} \circ \Phi(T'),
        $$
        where the map $(h[i]_{|v})_{\ast}$ is obtained using the $\Lambda$-costructures of $\calP$, $\calQ$ or $M$ applied to the restriction map $h[i]_{|v}:[|\operatorname{in}(v)|-1]\rightarrow [|\operatorname{in}(v)|]$ to the incoming edges of $T$.
        For instance, in the next picture, the corresponding map $h[7]_{|v}:[3]\rightarrow [4]$ is given by $h[7]_{|v}(j)=j$.

        \hspace{-22pt}\includegraphics[scale=0.3]{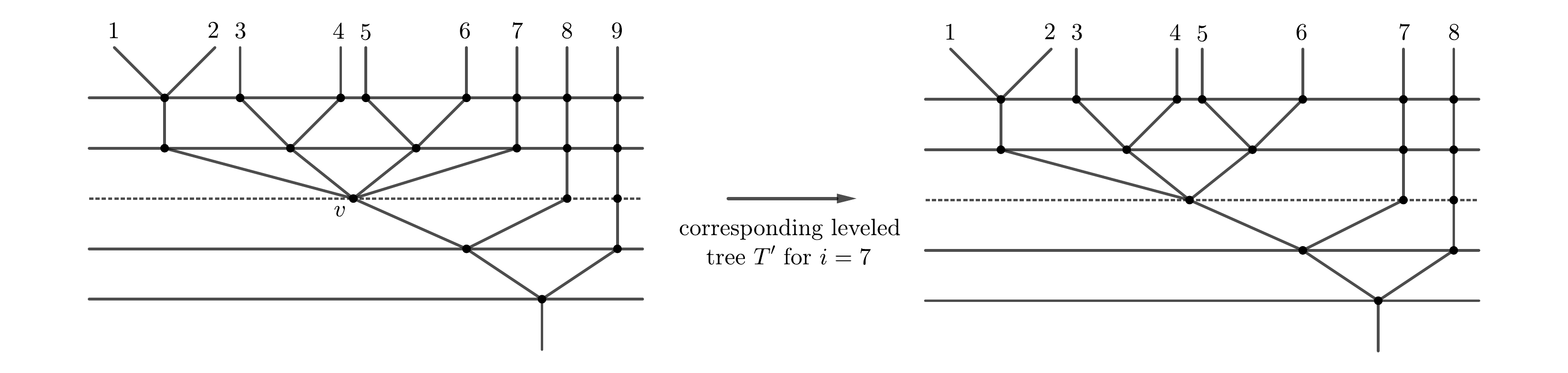}\vspace{-10pt}

  \item[Case $2$:] If $v$ has only two incoming edges, then we denote by $e_{i}$ the incoming edge connected to the $i$-th leaf.
        We consider $T'$ obtained from $T$ by removing the branch connecting the leaf $i$ to $v$.

  \item[Case $2.1$:] If the level $h(v)$ of $T'$ has at least one non-bivalent vertex, then $T'$ is a leveled tree with section and one has
        $$
          h[i]\circ\Phi(T)= b_{v}\otimes \Phi(T'),
        $$
        where $b_{v}$ is the image of $1$ by the map $\K\rightarrow \calP(2)$ or $\K\rightarrow \calQ(2)$ (depending on if $v$ is below or above the main section) induced by the $\Lambda$-costructure of $\calP$ and $\calQ$. Roughly speaking, it consists in indexing $v$ by the element $b_{v}$ and keeping the decoration of the other vertices and the levels induced by $\Phi(T')$.

        \hspace{-22pt}\includegraphics[scale=0.3]{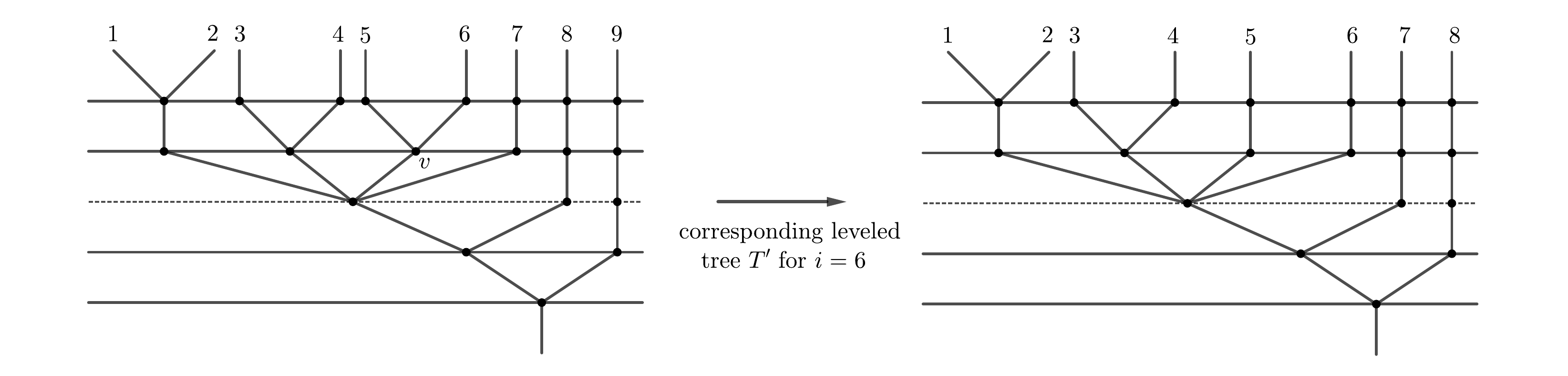}\vspace{-10pt}

  \item[Case $2.2$:] If $v$ is the root, $\iota \neq 0$, and $v$ is trivalent, then we consider the leveled tree $T''$ obtained from $T'$ by removing the zeroth level. In that case, one has
        $$
          h[i]\circ\Phi(T)= 1\otimes b_{v}\otimes \Phi(T'').
        $$
        Roughly speaking, it consists in indexing $v$ (which is the root in that case) by the element $b_{v}$, labelling the level $1$ by $1\in \K[t,dt]$ and keeping the decoration of the other vertices and the other levels induced by $\Phi(T'')$.

        \hspace{-22pt}\includegraphics[scale=0.3]{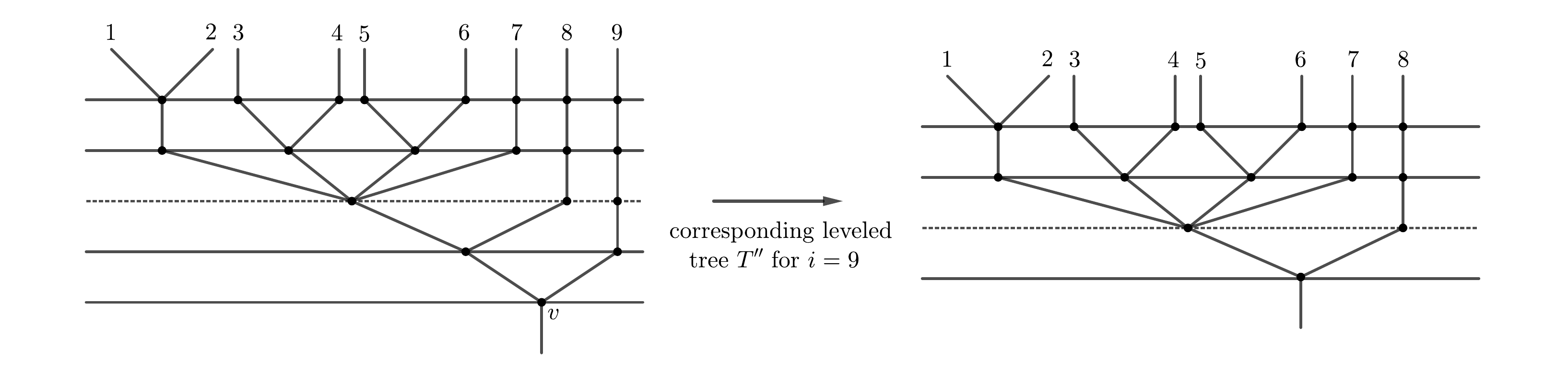}\vspace{-10pt}

  \item[Case $2.3$:]  If $v$ is a trivalent vertex of $T$ at maximal height $h(v)=h(T)\neq \iota$ and all other vertices at $h(T)$ are bivalent, then we consider the leveled tree $T''$ obtained from $T'$ by removing the section $h(v)$.
        In that case, one has
        $$
          h[i]\circ\Phi(T)= 1\otimes b_{v}\otimes \Phi(T'').
        $$
        Roughly speaking, it consists in indexing $v$ by the element $b_{v}$, labelling the top level by $1\in \K[t,dt]$ and keeping the decoration of the other vertices and the other levels induced by $\Phi(T'')$.

        \hspace{-22pt}\includegraphics[scale=0.3]{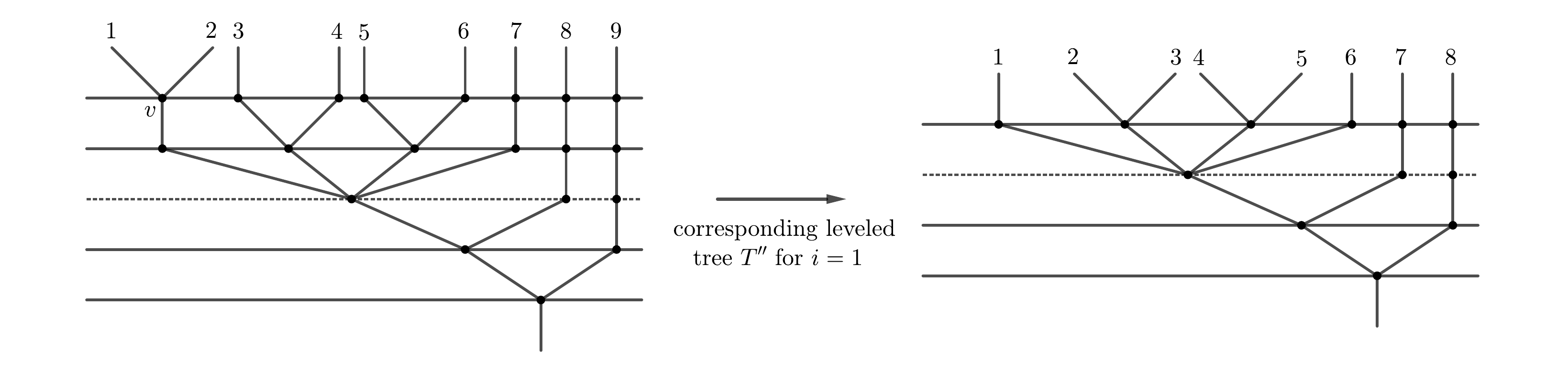}

  \item[Case $2.4$:] If $v$ is the unique non-bivalent vertex at the level $h(v)\notin \{0,h(T),\iota\}$, then we consider the leveled tree $T''$ obtained from $T'$ by removing the section $h(v)$. In that case, one has
        $$
          h[i]\circ\Phi(T)= m^{\ast}_{h(v)}\otimes b_{v}\otimes \Phi(T''),
        $$
        where $m^{\ast}$ is the coassociative coproduct introduced in Section \ref{SectModHopf} and $m^{\ast}_{h(v)}$ is the coproduct applied to the polynomial form associated to the $h(v)$-th level of the leveled tree $T''$.

        \hspace{-22pt}\includegraphics[scale=0.3]{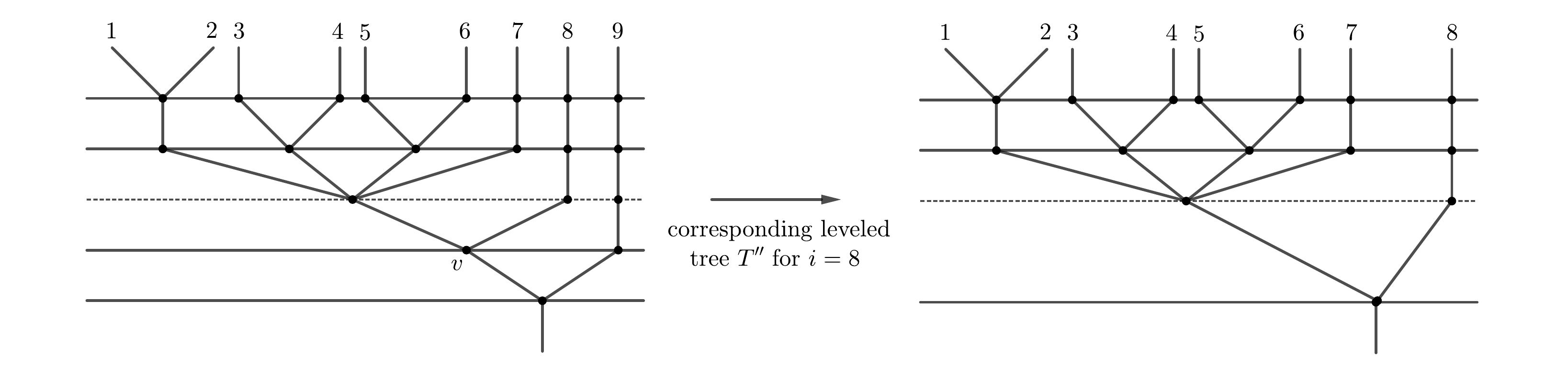}

  \item[Case $2.5$:] If the level $h(v)$ of $T'$ is composed of bivalent vertex and $h(v)=\iota$, then $T'$ is a leveled tree with section and one has one has
        $$
          h[i]\circ\Phi(T)=  b_{v}\otimes \Phi(T'),
        $$
        where $b_{v}$ is the image of $1$ by the map $\K\rightarrow M(2)$ induced by the $\Lambda$-costructure of $M$.
        Roughly speaking, it consists in indexing $v$ (which is on the main section) by the element $b_{v}$, labelling the level $1$ by $1\in \K[t,dt]$ and keeping the decoration of the other vertices and the other levels induced by $\Phi(T')$.

        \hspace{-22pt}\includegraphics[scale=0.3]{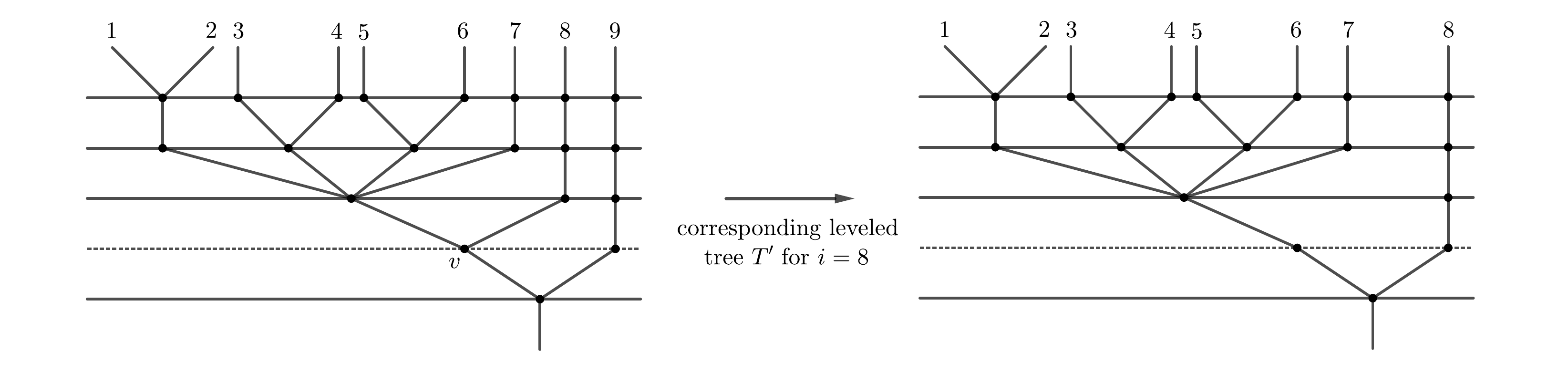}\vspace{-10pt}
\end{itemize}

\begin{thm}\label{thm:w-lambda-cobim}
  The $\Lambda$-costructure \eqref{Lambdacostructure2} makes the cobimodule $W_{\Lambda}M$ into a $\Lambda$-cobimodule over the pair of $\Lambda$-cooperad $W_{\Lambda}\calP$ and $W_{\Lambda}\calQ$. Furthermore, the morphism $\eta:M \rightarrow W_{\Lambda}M$ introduced in Theorem \ref{ThmQI2} is a quasi-isomorphism of $\Lambda$-cobimodules.
\end{thm}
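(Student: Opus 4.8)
The plan is to prove the two assertions in turn, reducing as much as possible to constructions already in place. First I would verify that \eqref{Lambdacostructure2} is well defined, i.e.\ that for each injection $h[i]$ the assignment $\Phi \mapsto \{h[i]\circ\Phi(T)\}_{T}$ respects the coend relations defining $W_{\Lambda}M$: equivariance under permutations of permutable levels, and the contraction identities \eqref{relationcoeq2}. The crucial structural point is that, by the very definition of the morphisms of $\sTree_{l}[n]$, permutations and contractions of permutable levels never involve the main section $\iota$. Hence the decoration on the main section (governed by the $\Lambda$-costructure of $M$, Case $2.5$) and the decorations above and below it (governed by the $\Lambda$-costructures of $\calQ$ and $\calP$, Cases $1$ and $2.1$--$2.4$) evolve independently under these morphisms. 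Cases $1$ and $2.1$--$2.4$ are verbatim the ones appearing in the construction of the $\Lambda$-costructure on $W_{\Lambda}\calP$ and $W_{\Lambda}\calQ$ in Section~\ref{SectBVCoop}, so their compatibility with the coend relations is exactly the content of that construction; the new Case $2.5$ is compatible because the $\Lambda$-costructure of $M$ is compatible with its comodule coactions and with the internal differential.

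Next I would check the $\Lambda$-relations $(h[j]\circ h[i])_{*} = h[j]_{*} \circ h[i]_{*}$ and, more generally, functoriality in $\Lambda$. Since every injection is a composite of the elementary maps $h[i]$, it suffices to compare $h[j]_{*}h[i]_{*}\Phi$ with the reindexed composite on each leveled tree with section $T$. Tracing through the cases, the only interaction occurs at the first non-bivalent vertices $v, v'$ on the paths from the two new leaves to the root: when $v = v'$ this reduces to the relation $(u \circ w)_{*} = u_{*} \circ w_{*}$ on the single decorating object $\calP(|v|)$, $M(|v|)$ or $\calQ(|v|)$, and when $v \neq v'$ the two operations act on disjoint parts of the decoration and visibly commute. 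Each of these local relations holds because $\calP$, $\calQ$ and $M$ are genuine $\Lambda$-(co)operads and a $\Lambda$-cobimodule, which gives the global relation. I would then confirm compatibility of \eqref{Lambdacostructure2} with the $(W_{\Lambda}\calP\text{-}W_{\Lambda}\calQ)$-cobimodule operations $\tilde{\gamma}_{L}$ and $\tilde{\gamma}_{R}$: after restricting to a leaf, the $\Lambda$-map either acts entirely within one outer factor of a decomposition $\gamma_{L}(T_{0};\{T_{i}\})$ or $\gamma_{R}(T_{0};\{T_{i}\})$, or on the grafting vertex, and in each situation the required identity is the compatibility of the cobimodule coactions of $M$ (and the cocompositions of $\calP,\calQ$) with the $\Lambda$-costructures, which holds since $M$ is a Hopf $\Lambda$-cobimodule over $(\calP,\calQ)$. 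This upgrades $W_{\Lambda}M$ to a Hopf $\Lambda$-cobimodule over $W_{\Lambda}\calP$ and $W_{\Lambda}\calQ$.

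For the ``furthermore'' part, note that $W_{\Lambda}M$ and $M$ coincide with their underlying non-$\Lambda$ objects and that $\eta$ was already shown to be a quasi-isomorphism of the underlying cobimodules in Theorem~\ref{ThmQI2}. It therefore remains only to check that $\eta$ intertwines the $\Lambda$-costructures, i.e.\ $\eta(h[i]_{*}x) = h[i]_{*}\eta(x)$. Since $\eta(x)$ decorates every level other than $\iota$ by the constant form $1$ and every vertex by the iterated coaction $\Delta_{T}(x)$, this identity reduces once more to the compatibility of $\Delta_{T}$ with the $\Lambda$-costructure of $M$, which the case analysis of \eqref{Lambdacostructure2} is precisely designed to reproduce. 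The main obstacle is the bookkeeping in the well-definedness and functoriality steps: one must confirm that the subcases of Case $2$ are mutually consistent across contractions that change which subcase applies (for instance, a contraction turning a ``root'' trivalent vertex into an interior one, passing between Cases $2.2$ and $2.4$), and that the placement of the coproduct $m^{\ast}$ in Cases $2.4$ and $2.5$ is compatible with \eqref{relationcoeq2}. This is exactly where the separation of the section from the remainder of the tree must be exploited most carefully, and it is the point at which the cobimodule case genuinely differs from the cooperadic construction of Section~\ref{SectBVCoop}.
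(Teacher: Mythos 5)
Your proposal is correct and follows essentially the same route as the paper: the paper in fact offers no separate proof of Theorem~\ref{thm:w-lambda-cobim} beyond the case-by-case construction of the costructure \eqref{Lambdacostructure2} itself (mirroring the cooperad case and \cite[Proposition~5.2]{FresseTurchinWillwacher2017}), and your outline is precisely the verification of the coend relations, $\Lambda$-functoriality, compatibility with $\tilde{\gamma}_{L}$, $\tilde{\gamma}_{R}$, and $\eta$ that the paper leaves implicit. Your final remarks correctly identify the only genuinely delicate points (consistency of the subcases of Case~$2$ across contractions, and the placement of $m^{\ast}$ relative to \eqref{relationcoeq2}).
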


\paragraph{Simplicial frame}

Let us now define a simplicial framing of $W_{\Lambda}M$, similarly to Proposition~\ref{prop:simplicial-frame-cooperad}.
The proofs are completely analogous.
We define a new functor:
\begin{equation*}
  sE^{\Delta^{d}}_{W} : \slTree[n] \to \CDGA, \qquad
  (T, \iota) \mapsto \bigotimes_{\mathclap{0\leq i\neq \iota\leq h(T)}} \K[t,dt] \otimes \bigotimes_{\mathclap{0 \le j \le h(T)}} \Omega^{*}_{\PL}(\Delta^{d}).
\end{equation*}

We then define:
\[ W^{\Delta^{d}}_{\Lambda}M(n) \coloneqq \int_{T\in \slTree[n]} s\overline{M}_{W}(T) \otimes sE^{\Delta^{d}}_{W}(T). \]

\begin{pro}
  Let $M$ be a ($\calP$-$\calQ$)-bimodule.
  The collection $(W^{\Delta^{\bullet}}_{\Lambda}\calP, W^{\Delta^{\bullet}}_{\Lambda}M, W^{\Delta^{\bullet}}_{\Lambda}\calQ)$ defines a simplicial frame for the triple $(\calP, M, \calQ)$ in the category of bimodules.
\end{pro}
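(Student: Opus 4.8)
The plan is to transcribe the proof of Proposition~\ref{prop:simplicial-frame-cooperad}, carried out simultaneously on the three components of the triple $(\calP, M, \calQ)$ in the Reedy--Hopf setting of Hopf $\Lambda$-cobimodules. First I would check that in each simplicial degree $d$ the triple $(W^{\Delta^{d}}_{\Lambda}\calP, W^{\Delta^{d}}_{\Lambda}M, W^{\Delta^{d}}_{\Lambda}\calQ)$ consists of Hopf $\Lambda$-cooperads together with a Hopf $\Lambda$-cobimodule between them. The comodule operations $\tilde{\gamma}_{L}$ and $\tilde{\gamma}_{R}$ are defined exactly as in Section~\ref{SectBVCobi}, acting by the identity on the forms $\Omega^{*}_{\PL}(\Delta^{d})$ that decorate the levels; likewise the $\Lambda$-costructure extends \eqref{Lambdacostructure2}, decorating the new levels by $1 \in \Omega^{*}_{\PL}(\Delta^{d})$. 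Then one adapts the proof of Theorem~\ref{thm:w-lambda-cobim} verbatim, and the cosimplicial structure of $\Omega^{*}_{\PL}(\Delta^{\bullet})$ promotes the collection to a simplicial object in the category of such triples.

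Next I would verify the three defining conditions. Since $\Omega^{*}_{\PL}(\Delta^{0}) = \K$, the zeroth object is $(W_{\Lambda}\calP, W_{\Lambda}M, W_{\Lambda}\calQ)$, giving the first condition. For the second, I would show that the iterated degeneracies are quasi-isomorphisms: exactly as in Proposition~\ref{prop:simplicial-frame-cooperad} and \cite[Lemma~5.9]{FresseTurchinWillwacher2017}, the contracting homotopy realizing $\Omega^{*}_{\PL}(\Delta^{d}) \simeq \K$ induces a contraction of $W^{\Delta^{d}}_{\Lambda}M$ onto $W_{\Lambda}M$ which is compatible with the left and right comodule operations and with the $\Lambda$-costructure, and analogously for $\calP$ and $\calQ$ (this is where Theorem~\ref{ThmQI2} and its cooperadic counterpart are reused).

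For the third condition I would follow the cooperadic reduction: the product of the vertex maps $W^{\Delta^{d}}_{\Lambda}M \to W^{\operatorname{sk}_{0}\Delta^{d}}_{\Lambda}M$ is a Reedy fibration if and only if, in each degree, the restriction $W^{\Delta^{d}}_{\Lambda}M \to W^{\partial\Delta^{d}}_{\Lambda}M$ is a fibration, where the matching object $W^{\partial\Delta^{d}}_{\Lambda}M$ is obtained by replacing $\Delta^{d}$ by its boundary in $sE^{\Delta^{d}}_{W}$, and similarly for $\calP$ and $\calQ$. Adapting the analysis of primitive elements of the following subsection (the cobimodule analogue of Lemma~\ref{Lmm1}, underlying Theorem~\ref{thm:wm-bar-cobim}), both $W^{\Delta^{d}}_{\Lambda}M$ and $W^{\partial\Delta^{d}}_{\Lambda}M$ are cofree as graded $(W^{\Delta^{d}}_{\Lambda}\calP\text{-}W^{\Delta^{d}}_{\Lambda}\calQ)$-cobimodules, cogenerated by their primitive elements. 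Since the restriction $\Omega^{*}_{\PL}(\Delta^{d}) \to \Omega^{*}_{\PL}(\partial\Delta^{d})$ is surjective, the induced map is surjective on cogenerators, hence a fibration in the Reedy--Hopf model structure; fibrancy being controlled by the underlying non-Hopf object, the criterion transfers through the cofree description to the whole triple.

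The hard part will be the last point: one must confirm that the cofree-cobimodule descriptions of $W^{\Delta^{d}}_{\Lambda}M$ and of its boundary analogue interact correctly with the matching objects of the triple, so that surjectivity on cogenerators genuinely yields a fibration of triples. Unlike the operadic case, the operations $\gamma_{L}$ and $\gamma_{R}$ mix the main section with the two sides, so I would have to check carefully that the primitive-element (cofree) filtration is preserved by these operations and by the $\Lambda$-costructure before invoking the surjectivity criterion; once this compatibility is established, everything else is a direct transcription of the cooperadic argument.
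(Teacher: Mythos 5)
Your proposal is correct and follows essentially the same route as the paper: the paper's proof simply observes that the collection forms a simplicial object in the category of triples and then declares the verification of the simplicial-frame conditions to be identical to the cooperadic case (Proposition~\ref{prop:simplicial-frame-cooperad}), which is exactly the argument you spell out (zeroth object via $\Omega^{*}_{\PL}(\Delta^{0})=\K$, contracting homotopy for the degeneracies, and the cofree-on-primitives description plus surjectivity of $\Omega^{*}_{\PL}(\Delta^{d})\to\Omega^{*}_{\PL}(\partial\Delta^{d})$ for the fibration condition). The compatibility issues you flag at the end are real points one must check, but the paper treats them as routine and does not elaborate further.
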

\begin{proof}
  One easily checks that $(W^{\Delta^{\bullet}}_{\Lambda}\calP, W^{\Delta^{\bullet}}_{\Lambda}M, W^{\Delta^{\bullet}}_{\Lambda}\calQ)$ defines a simplicial object in the category of bimodules (i.e.\ the category whose objects are triples $(\calP, M, \calQ)$ consisting of two operads and a bimodule over them).
  The proof that this is a simplicial frame for $(\calP, M, \calQ)$ is identical to the one of Proposition~\ref{prop:simplicial-frame-cooperad}.
\end{proof}

\subsection{The bimodule of primitive elements}

Let $\calP$ and $\calQ$ be 1-reduced Hopf cooperads.
A point in a ($\calP$-$\calQ$)-cobimodule $M$ is said to be \textit{primitive} if its image via both cobimodule operations is $0$.
By definition of the cobimodule structure of $W_{l}M$, an element $\Phi\in W_{l}M(n)$ is primitive if and only if for each leveled $n$-tree with section $(T, \iota)$ and for each level $i\in \{0,\ldots, h(T)\} \setminus \iota$, the evaluation of the polynomial differential form $p_{i}(t,dt)$ at $t=1$ vanishes.
Hence, the decoration that $\Phi$ assigns to a leveled tree with section must be so that the decorations of levels belong to the subspace $\K[t,dt]_{1} \coloneqq \ker(\ev_{t=1}) \subset \K[t,dt]$ of polynomial differential forms that vanish at the endpoint $t=1$.
For this reason, we introduce the functor
\begin{equation*}
  sE''_W : \slTree[n] \longrightarrow \Ch, \qquad
  (T,\iota) \longmapsto \bigotimes_{\substack{0\leq i \leq h(T) \\ i \neq \iota}} \K[t,dt]_{1}.
\end{equation*}

We will show (Proposition \ref{prop:primitive cobimod}) that the primitive elements form a $(\Prim(W_{l}\calP)\text{-}\Prim(W_{l}\calQ))$-bimodule.

\begin{defi}
  The space of the primitive elements of a ($\calP$-$\calQ$)-cobimodule $M$, denoted by $\Prim(W_{l}M)$ is the sequence given by the end
  $$
    \Prim(W_{l}M)(n) \coloneqq \int_{T\in \slTree[n]} s\overline{M}_{W}(T)\otimes sE''_{W}(T).
  $$
\end{defi}

\begin{lmm}\label{lem:cofree-wl}
  As a graded cobimodule, $W_{l}M$ is the cofree $(W_{l}\calP, W_{l}\calQ)$-cobimodule
  generated by the sequence $\Prim(W_{l}M)$, i.e.:
  $$
    W_{l}M \cong \calF^{c}_{l}[\Prim(W_{l}\calP),\Prim(W_{l}\calQ)](\Prim(W_{l}M)).
  $$
\end{lmm}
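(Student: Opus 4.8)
The plan is to mimic the proof of Lemma~\ref{Lmm1}, carefully tracking the role of the main section. First I would construct a morphism of graded cobimodules
$$
\psi : W_{l}M \longrightarrow \calF^{c}_{l}[\Prim(W_{l}\calP),\Prim(W_{l}\calQ)](\Prim(W_{l}M))
$$
by appealing to the universal property of the cofree cobimodule functor. By Lemma~\ref{Lmm1} we have identifications $W_{l}\calP \cong \calF^{c}_{l}(\Prim(W_{l}\calP))$ and $W_{l}\calQ \cong \calF^{c}_{l}(\Prim(W_{l}\calQ))$, so $W_{l}M$ is a $(\calF^{c}_{l}(\Prim(W_{l}\calP))\text{-}\calF^{c}_{l}(\Prim(W_{l}\calQ)))$-cobimodule. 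The universal property then reduces the construction of $\psi$ to a map of underlying cosequences $W_{l}M \to \Prim(W_{l}M)$. For this I would reuse the natural transformation $\pi \colon sE_{W} \Rightarrow sE''_{W}$ built in the proof of Lemma~\ref{Lmm1}, applied levelwise to the non-section levels: it sends a form $p(t,dt)$ indexing a level $i \ne \iota$ to $\widetilde{p}(t,dt) = p(t,dt) - t\,p(1,0) \in \K[t,dt]_{1}$, while leaving the section level (indexed by $0$) untouched. Since $\widetilde{p}$ vanishes at $t=1$ and agrees with $p$ at $t=0$, this descends to a well-defined map $W_{l}M \to \Prim(W_{l}M)$ compatible with the end relations \eqref{relationcoeq2}.

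Next I would check that $\psi$ is an isomorphism. Injectivity follows exactly as in Lemma~\ref{Lmm1}: $\psi$ is injective on primitives by construction, and $W_{l}M$ is conilpotent as a cobimodule, so injectivity propagates. For surjectivity I would need the cobimodule analogue of Corollary~\ref{cor:cofree-as-product}, namely that since $\Prim(W_{l}\calP)$, $\Prim(W_{l}\calQ)$ and $\Prim(W_{l}M)$ are $1$-reduced and the decorating functor $s\overline{C}_{B}$ is constant on permutations and contractions of permutable levels, the end defining $\calF^{c}_{l}[\Prim(W_{l}\calP),\Prim(W_{l}\calQ)](\Prim(W_{l}M))$ reduces to a product over isomorphism classes of reduced planar trees with section $[T] \in \sTree'_{l}[n]/{\cong}$. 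Given such an element $\omega = \{\omega_{[T]}\}$, I would explicitly build a preimage $\Phi \in W_{l}M$ by: decorating each vertex of a leveled tree with section $T$ by the corresponding decoration in $\omega_{[T]}$ (respecting whether the vertex lies below, on, or above the main section); indexing each non-section permutable level by the form $t$; and indexing each non-section non-permutable level by the primitive decoration of the corresponding edge in $\omega_{[T]}$. One then verifies that $\Phi$ satisfies the end relations \eqref{relationcoeq2} and that $\psi(\Phi) = \omega$, exactly as in the cooperadic case.

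The main obstacle will be the bookkeeping around the main section. Unlike the operadic situation, the outer tree $T$ now carries a section $\iota$, and the primitive decorations on its section vertices are themselves elements of $\Prim(W_{l}M)$ living on leveled \emph{sub}-trees that again carry their own sections. Thus there is a nested hierarchy of sections (one for the ambient tree, one for each sectioned sub-tree hanging from a section vertex), and I must verify that grafting these sub-trees via $\gamma_{L}$ and $\gamma_{R}$ reproduces the correct global main section and that the non-section levels of $T$ match up with the edges of $[T]$ in a way compatible with the three contraction cases distinguishing above from below the section. Establishing the product decomposition of the cofree cobimodule (the cobimodule version of Corollary~\ref{cor:cofree-as-product}) requires checking that two sectioned leveled trees are connected by morphisms in $\sTree'_{l}[n]$ precisely when they have the same underlying reduced planar tree with section; this is where the structural results of Section~\ref{SectTreeLev} on $\alpha$ and $\beta$ for sectioned trees do the real work.
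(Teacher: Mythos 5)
Your proposal follows essentially the same route as the paper's proof: the map $\psi$ is obtained from the universal property of the cofree cobimodule via the natural transformation $\pi(p) = p - t\,p(1,0)$ on non-section levels, injectivity follows from injectivity on primitives plus conilpotence, and surjectivity is established by the same explicit preimage construction (vertex decorations from $\omega_{[T]}$, permutable levels decorated by $t$, non-permutable levels by the corresponding edge decorations). Your explicit flagging of the cobimodule analogue of Corollary~\ref{cor:cofree-as-product} and of the section bookkeeping is a point of care the paper leaves implicit, but it does not change the argument.
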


\begin{proof}
 An element in the cofree cobimodule object is a map $\omega$ mapping a leveled tree with section to a decoration of the vertices on the main vertices (resp. below and above the main section) by elements in $\Prim(W_{l}M)$ (resp. by elements in $\Prim(W_{l}\calP)$ and $\Prim(W_{l}\calQ)$):
  $$
    \omega(T, \iota) \in \bigotimes_{v\in V_{d}(T)} \Prim(W_{l}\calP)(|v|) \otimes \bigotimes_{v\in V_{\iota}(T)} \Prim(W_{l}M)(|v|) \otimes \bigotimes_{v\in V_{u}(T)} \Prim(W_{l}\calQ)(|v|).
  $$

  In order to construct a morphism of sequences $W_{l}M\rightarrow \Prim(W_{l}M)$, let us recall that thanks to the identity \eqref{relationcoeq2}, any point in the Boardman--Vogt resolution is determined by its values on the leveled trees in the essential image $\beta \sTree$ having exactly one non-bivalent vertex in each level other than the main section. The same is true for the subspace of primitive elements. Then we introduce the natural transformation $\pi:sE_{W}\Rightarrow sE''_{W}$ which carries any polynomial differential form $p(t,dt)\in \K[t,dt]$ to the polynomial $\tilde{p}(t,dt)=p(t,dt)-tp(1,0)$~\cite[Lemma~5.3]{FresseTurchinWillwacher2017}.
  In particular, one has $\ev_{t=0}\tilde{p}(t,dt)=\ev_{t=0}p(t,dt)$ (so the new element also satisfies the equations $\Prim(W_{l}M)$ as an end). For any $\Phi\in W_{l}M$ and $T\in \beta\sTree^{\geq 2}$, one has
  $$
  \phi(\Phi)(T)=(\id\otimes E)\circ \Phi(T).
  $$
According to the universal property of the cofree cobimodule object, one has a morphism of graded $(W_{l}\calP\text{-}W_{l}\calQ)$-cobimodules
  $$
    \psi:W_{l}M \longrightarrow \calF^{c}_{l}[\Prim(W_{l}\calP),\Prim(W_{l}\calQ)](\Prim(W_{l}M)),
  $$
  due to the identification $W_{l}\calP\cong \calF^{c}_{l}(\Prim(W\calP))$ and $W_{l}\calQ\cong \calF^{c}_{l}(\Prim(W\calQ))$ described in Lemma~\ref{Lmm1}.

  The injectivity of our morphism $\psi$ on the primitive elements in the source and the fact that $W_{l}M$ is conilpotent imply that $\psi$ is injective itself.
  We are therefore left to prove that $\psi$ is surjective.

  Let $\omega \in \calF^{c}_{l}[\Prim(W_{l}\calP),\Prim(W_{l}\calQ)](\Prim(W_{l}M))$ be an element in the cofree construction.
  Just like in Lemma~\ref{Lmm1}, we can view $\omega = \{ \omega_{T'} \}$ as a collection of elements $\omega_{T'} \in s\overline{M}_{W}(T') \otimes sE''_{W}(T')$ indexed by isomorphism classes of planar trees with section (i.e.\ planar trees as in Section~\ref{sec:categ-plan-trees} equipped with marked vertices such that each path from a leaf to the root meets a unique marked vertex).
  Let us now define $\Phi \in W_{l}M$ such that $\psi(\Phi) = \omega$.
  Let $(T, \iota) \in \slTree[n]$ be a tree and $[T]$ be the corresponding isomorphism class of planar trees with section.
  \begin{itemize}
    \item Let $v \in V(T)$ be a vertex such that $|v| \ge 2$ or $v$ is on the section $\iota$.
          Then $v$ corresponds to a unique vertex in $[T]$, and we define the decoration of $v$ in $\Phi(T)$ to be the decoration of $v$ in $\omega_{[T]}$.
    \item If a level $0 \le i \neq \iota \le h(T)$ is permutable, then we decorate it with $p(t,dt) = t$.
          If the level $i$ is not permutable, then it corresponds to a unique edge in $[T]$, and we decorate the level with the decoration of the corresponding edge in $\omega_{[T]}$.
  \end{itemize}
  One can then check (thanks to the fact that $\omega$ satisfies the equation of the end defining the cofree cobimodule) that $\Phi$ is a well-defined element of $W_{l}M$, and $\psi(\Phi) = \omega$ is immediate.
\end{proof}

Recall from Proposition~\ref{prop:primitives are operad} that $\Sigma^{-1} \Prim(W_l \calP)$ and $\Sigma^{-1} \Prim(W_l \cal Q)$ are dg-operads.

\begin{pro}\label{prop:primitive cobimod}
  The sequence $\Sigma^{-1} \Prim(W_{l}M) = \{ \Sigma^{-1} \Prim(W_{l}M)(n) \}$ is a dg-$(\Sigma^{-1} \Prim(W_{l}\calP)\text{-}\Sigma^{-1} \Prim(W_{l}\calQ))$-cobimodule.
\end{pro}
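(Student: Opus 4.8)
The plan is to follow the proof of Proposition~\ref{prop:primitives are operad} almost verbatim, replacing the single operadic composition $\gamma$ by the two operations $\gamma_{L}$ and $\gamma_{R}$ on leveled trees with section introduced in Section~\ref{SectTreeLev}. Concretely, I would define the left action
\[ \gamma_{L} : \Prim(W_{l}\calP)(k) \otimes \Prim(W_{l}M)(n_{1}) \otimes \cdots \otimes \Prim(W_{l}M)(n_{k}) \longrightarrow \Prim(W_{l}M)(n_{1}+\cdots+n_{k}) \]
and, symmetrically, the right action
\[ \gamma_{R} : \Prim(W_{l}M)(k) \otimes \Prim(W_{l}\calQ)(n_{1}) \otimes \cdots \otimes \Prim(W_{l}\calQ)(n_{k}) \longrightarrow \Prim(W_{l}M)(n_{1}+\cdots+n_{k}). \]
Given $\Phi_{0}$ and $\{\Phi_{i}\}$, the element $\gamma_{L}(\Phi_{0};\{\Phi_{i}\})$ assigns to a leveled tree with section $T$ a decoration determined by three cases, exactly as in Proposition~\ref{prop:primitives are operad}: if $T = \gamma_{L}(T_{0};\{T_{i}\})$ on the nose (with $T_{0}\in\Tree_{l}[k]$ and $T_{i}\in\sTree_{l}[n_{i}]$) it is $\Phi_{0}(T_{0})\otimes\bigotimes_{i}\Phi_{i}(T_{i})$; if $T$ has this form only up to permutations and contractions of permutable levels, it is obtained by composing with the corresponding tree morphisms through the functoriality built into the end; and otherwise it is $0$. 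The right action is defined the same way using $\gamma_{R}$.

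First I would check that the output genuinely lies in $\Prim(W_{l}M)$ and not merely in $W_{l}M$: the level decorations inherited from the primitive inputs $\Phi_{0}$ and $\Phi_{i}$ lie in $\K[t,dt]_{1}$ (they vanish at $t=1$), and the permutable grafting levels introduced by $\gamma_{L}$ and $\gamma_{R}$ are absorbed by the contraction relations of the end exactly as in the operadic case. Next I would verify well-definedness as an element of the end itself, namely equivariance under permutations of permutable levels and compatibility with their contractions; this follows from the same equivariance of $\Phi_{0}$ and the $\Phi_{i}$ together with the fact that $\gamma_{L}$ and $\gamma_{R}$ commute with permutations and contractions of permutable levels. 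The second clause in the definition (``up to permutations and contractions'') is precisely what the end relations allow us to absorb.

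It then remains to verify the bimodule axioms: associativity of $\gamma_{L}$, associativity of $\gamma_{R}$, and compatibility of the left and right actions. Here the mechanism is identical to the operadic case: although $\gamma_{L}$ and $\gamma_{R}$ satisfy the bimodule axioms on $\sTree_{l}$ only up to permutations and contractions of permutable levels (as recorded at the end of Section~\ref{SectTreeLev}), the primitive elements are invariant under exactly these moves, so the axioms become strict at the level of $\Prim(W_{l}M)$. For instance, the two iterated left compositions built from trees $T_{0}$, $T_{i}$, $T_{i,j}$ produce leveled trees with section that agree up to permutation of permutable levels, whence $\Phi$ assigns them the same decoration. Finally, compatibility with the internal differential is inherited from $W_{l}M$, since the primitive subspace is preserved by the differential and the structure maps are built from the (co)operadic and (co)bimodule operations, which are chain maps.

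The step I expect to be the main obstacle is the mixed left--right compatibility axiom. One must match the two leveled trees with section obtained by first applying $\gamma_{L}$ and then $\gamma_{R}$ against those obtained in the reverse order, and check that they differ only by a permutation of permutable levels (together with insertions and removals of bivalent sections). The delicate part is the bookkeeping of which trees are sent to $0$ in each of the two orders, and confirming that the surviving non-zero contributions correspond bijectively; but this reduces entirely to the bimodule-up-to-permutation structure already established for $\sTree_{l}$, so no genuinely new geometric input is required.
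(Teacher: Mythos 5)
Your proposal is correct and follows essentially the same route as the paper: the left and right actions are defined on $\Prim(W_{l}M)$ by the same three-case formula built from the tree operations $\gamma_{L}$ and $\gamma_{R}$ of Section~\ref{SectTreeLev}, and the bimodule axioms hold strictly because primitive elements are invariant under permutations and contractions of permutable levels. The paper leaves the verifications (preservation of primitivity, well-definedness, and the mixed compatibility axiom) to the reader, so your additional detail on those points is a faithful elaboration rather than a different argument.
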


\begin{proof}
  The left module operations
  $$
    \tilde{\gamma}_{L} : \Sigma^{-1} \Prim(W_{l}\calP)(k) \otimes \Sigma^{-1} \Prim(W_{l}M)(n_{1}) \otimes \dots \otimes \Sigma^{-1} \Prim(W_{l}M)(n_{k}) \longrightarrow \Sigma^{-1} \Prim(W_{l}M)(n_{1}+\cdots +n_{k})
  $$
  are defined using the operation $\gamma_{L}$ introduced in Section \ref{SectTreeLev}.
  Let $\Phi_{0}$ and $\Phi_{i}$, with $i \leq k$, be maps in $\Sigma^{-1} \Prim(W_{l}\calP)(k)$ and $\Sigma^{-1} \Prim(W_{l}M)(n_{i})$, respectively.
  In order to define $\Phi=\tilde{\gamma}_{L}(\Phi_{0}\,,\,\{\Phi_{i}\})$ there are three cases to consider. First, if the leveled tree $T$ is of the form $\gamma_{L}(T_{0}\,,\,\{T_{i}\})$, then one has
  $$
    \Phi(T)=\Phi_{0}(T_{0}) \otimes \bigotimes_{1\leq i\leq k} (\Phi_{i}(T_{i}) \otimes dt).
  $$
  Secondly, if $T$ is of the form $T'=\gamma_{L}(T_{0}\,,\,\{T_{i}\})$ up to permutations of permutable levels and contractions of permutable levels, then the decoration of $T$ is given by the decoration of $T'$ composed with the corresponding morphisms of permutations and contractions.
  Finally, if $T$ is not of the form $\gamma_{L}(T_{0}\,,\,\{T_{i}\})$ up to permutations and contractions of permutable levels, then $\Phi(T)=0$.

  Similarly, we define the right module operations
  $$
    \tilde{\gamma}_{R} : \Sigma^{-1} \Prim(W_{l}M)(k) \otimes \Sigma^{-1} \Prim(W_{l}\calQ)(n_{1}) \otimes \cdots \otimes \Sigma^{-1} \Prim(W_{l}\calQ)(n_{k}) \longrightarrow \Sigma^{-1} \Prim(W_{l}M)(n_{1}+\cdots + n_{k})
  $$
  by using the operation $\gamma_{R}$ introduced in Section \ref{SectTreeLev}.
  The reader can easily check that the operations so obtained are well defined and make the sequence $\Sigma^{-1} \Prim(W_{l}M)$ into a $(\Sigma^{-1} \Prim(W_{l}\calP)\text{-}\Sigma^{-1} \Prim(W_{l}\calQ))$-bimodule.
\end{proof}

\begin{rmk}
  The product of two primitive elements of $W_{l}M$ may not necessarily be primitive, therefore $\Prim(W_{l}M)$ is not a Hopf cobimodule.
\end{rmk}

\begin{thm}\label{thm:wm-bar-cobim}
  The Boardman--Vogt resolution $W_{l}M$ is isomorphic (as a dg-cobimodule) to the leveled two-sided bar construction of the bimodule of its primitive elements:
  $$
    W_{l}M \cong \calB_{l}[\Sigma^{-1} \Prim(W_{l}\calP), \Sigma^{-1} \Prim(W_{l}\calQ)](\Sigma^{-1} \Prim(W_{l}M)).
  $$
\end{thm}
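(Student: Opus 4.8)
The plan is to deduce the isomorphism from the graded-cobimodule identification already established and then to verify on the nose that the two differentials coincide. The Lemma preceding Proposition~\ref{prop:primitive cobimod} provides an isomorphism of graded $(W_l\calP\text{-}W_l\calQ)$-cobimodules
\[ W_l M \cong \calF^c_l[\Prim(W_l\calP), \Prim(W_l\calQ)](\Prim(W_l M)), \]
and by definition the target $\calB_l[\Prim(W_l\calP),\Prim(W_l\calQ)](\Prim(W_l M))$ has exactly this cofree cobimodule as its underlying graded object. So the first step is to check that the above isomorphism is grading-preserving, up to the suspension/augmentation-ideal bookkeeping that the degree formula in the definition of the two-sided bar construction is designed to absorb (each $\Sigma\calU(\overline{-})$ accounts precisely for the extra degree the bar construction assigns per $\ge3$-valent vertex). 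This is the direct analogue of $W_l\calC \cong \calB_l(\Prim(W_l\calC))$ from Theorem~\ref{thm:W=B for bimods}, now carrying along the left and right module decorations attached to $V_d(T)$ and $V_u(T)$.

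It then remains to compare differentials, and here I would argue exactly as in Proposition~\ref{ProBlB} and the operadic verification behind Theorem~\ref{thm:W=B for bimods}. The differential on $W_l M$ is the sum of the internal differentials of the CDGAs decorating the vertices and the de Rham differential acting on the forms in $\K[t,dt]$ on the levels $\neq \iota$. Under the cofree identification I would decompose this along the splitting $\K[t,dt] = \K[t,dt]_1 \oplus \K\!\cdot\! t$ used in the proof of the preceding Lemma: the part preserving the skeleton tree reproduces the internal differential $d_{int}$ of the primitives, while differentiating the non-primitive generator $t$ on a non-permutable level (where $d(t)=dt \in \K[t,dt]_1$) merges that level into its neighbour via the end-relations~\eqref{relationcoeq2}. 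Matching this boundary term against the explicit $d_{ext}$, which contracts two consecutive non-permutable levels carrying a unique $\ge3$-valent vertex using the operad structure of $\Prim(W_l\calP)$/$\Prim(W_l\calQ)$ or the bimodule structure of $\Prim(W_l M)$, yields $d = d_{int}+d_{ext}$.

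The hard part will be the presence of the main section, which is the only genuinely new feature compared with the operadic case. The combinatorics of $d_{ext}$ is governed by the set $D_{T,\iota}$, whose contraction rule is \emph{asymmetric} about $\iota$: for $i>\iota'$ one contracts with the upper neighbour, for $i<\iota'$ with the lower one, and any contraction touching the section invokes the comodule maps $\gamma_L,\gamma_R$ rather than the pure cooperadic ones. I would therefore check the three cases separately — strictly above, strictly below, and adjacent to the section — confirming that they reproduce precisely the three contraction cases of $sE_W(\delta_{\{i\}})$ together with the convention that the $\iota$-level carries the form $0$. The delicate points are the compatibility of the left and right actions near the section and the signs, but these are controlled by the bimodule axioms for $\Prim(W_l M)$ already proved in Proposition~\ref{prop:primitive cobimod}, so no new structural input is needed. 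Finally I would remark that the resulting isomorphism is automatically a map of $(W_l\calP\text{-}W_l\calQ)$-cobimodules, since both the cofree identification and $d_{ext}$ are built from the same grafting operations $\gamma_L,\gamma_R$ on $\sTree_l$.
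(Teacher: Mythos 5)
Your proposal is correct and follows essentially the same route as the paper: the paper's proof of Theorem~\ref{thm:wm-bar-cobim} simply invokes the cofree identification of the preceding Lemma and reduces to checking that the differentials agree, exactly as in Theorem~\ref{thm:W=B for bimods} and Proposition~\ref{ProBlB}. Your expansion of the differential comparison (the splitting of $\K[t,dt]$, the end relations~\eqref{relationcoeq2}, and the case analysis around the main section governed by $D_{T,\iota}$) supplies the details the paper leaves implicit, without deviating from its strategy.
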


\begin{proof}
  Similar to the proof of Theorem \ref{thm:W=B for bimods}.
\end{proof}

\subsection{The Boardman--Vogt resolution and the two-sided bar-cobar construction}\label{SectCobarComod}

This section is split into three parts.
First, we introduce an alternative description of the cobar construction for dg-cobimodules.
Then, we show that the bar-cobar construction of a Hopf cobimodule is quasi-isomorphic (as a dg-cobimodule) to its Boardman--Vogt resolution.
Finally, we extend this result to Hopf $\Lambda$-bimodules by introducing a $\Lambda$-costructure on two-sided bar-cobar constructions.

\paragraph{The leveled two-sided cobar construction for cobimodules}

Dually to Section \ref{SectBarCobi}, we extend the free two-sided bimodule functor and the cobar construction to cobimodules using the category of leveled trees with section.
Let $A$ and $B$ be two 1-reduced symmetric sequences.
According to the notation introduced in Section \ref{SectBarCobi}, we build the functor
$$
  \calF_{l}[A,B]:\dg\Sigma \Seq_{>0}\longrightarrow \Sigma\Bimod_{\calF_{l}(A),\calF_{l}(B)},
$$
from the category of symmetric sequences of chain complexes to bimodules. For each object $X\in \dg\Sigma \Seq_{>0}$, we consider the two functors
\begin{align*}
  s\overline{X}_{F} : \slTree_{\iso}[n]^{op} & \longrightarrow \Ch, \qquad (T,\iota) \longmapsto \bigotimes_{v\in V_{d}(T)} A(|v|) \otimes \bigotimes_{v\in V(T)} X(|v|) \otimes \bigotimes_{v\in V(T)} B(|v|); \\
  sE_{1} : \slTree_{\iso}[n]                 & \longrightarrow \Ch, \qquad (T,\iota) \longmapsto \K.
\end{align*}
The free leveled two-sided bimodule functor $\calF_{l}[A,B]$ is then defined as the coend:
$$
  \calF_{l}[A,B](X)(n) \coloneqq \int^{T\in \slTree_{\iso}[n]} s\overline{X}_{F}(T)\otimes sE_{1}(T).
$$

An element in $\calF_{l}[A,B](X)(n)$ is the data of a leveled tree with section $T=(T,\iota)$ together with a family $\{x_{v}\}$, with $v\in V(T)$, of elements in the symmetric sequence $X$ (resp. the symmetric sequences $A$ and $B$) indexing the vertices on the main section (resp. below and above the main section). Such an element is denoted by $[T;\{x_{v}\}]$. The bimodule structure is defined by
\begin{align*}
  \gamma''_{L} : \calF_{l}(A)(k) \otimes  \calF_{l}[A,B](X)(n_{1}) \otimes \cdots \otimes \calF_{l}[A,B](X)(n_{k}) & \longrightarrow \calF_{l}[A,B](X)(n_{1}+\cdots + n_{k}),                                               \\
  [T_{0}; \{x_{v}^{0}\}], \bigl\{[T_{i}; \{x_{v}^{i}\}] \bigr\}_{i\leq k}                                          & \longmapsto \bigl[ \gamma_{L}(T_{0},\{T_{i}\}); \{x_{v}^{i}\}_{0\leq i \leq k}^{v\in V(T_{i})} \bigr]; \\
  \gamma''_{R} : \calF_{l}[A,B](X)(k) \otimes \calF_{l}(B)(n_{1})\otimes \cdots \otimes \calF_{l}(B)(n_{k})        & \longrightarrow \calF_{l}[A,B](X)(n_{1}+\cdots + n_{k}),                                               \\
  [T_{0}; \{x_{v}^{0}\}], \bigl\{ [T_{i}; \{x_{v}^{i}\}] \bigr\}_{i\leq k}                                         & \longmapsto \bigl[ \gamma_{R}(T_{0}, \{T_{i}\}); \{x_{v}^{i}\}_{0\leq i\leq k}^{v\in V(T_{i})}\bigr],
\end{align*}
where $\gamma_{L}$ and $\gamma_{R}$ are the operations \eqref{eq:def-gamma-l} and \eqref{eq:def-gamma-r}, respectively.
These operations are well defined since a point $\Phi$ is an equivalence class up to permutations of permutable levels and contractions of permutable levels.

\begin{defi}\label{def:leveled-cobar-sided}
  Let $\calP$ and $\calQ$ be two 1-reduced dg-cooperads.
  The leveled two-sided cobar construction of a dg-cobimodule $M$ in the category $\Ch$ is given by:

  $$
    \Omega_{l}[\calP,\calQ](M) \coloneqq \bigl( \calF_{l}[\Sigma^{-1} \calU(\overline{\calP}), \Sigma^{-1} \calU(\overline{\calQ})](\calU(M)), \; d_{\mathrm{int}}+d_{\mathrm{ext}} \bigr).
  $$
  The degree of an element $[T; \{x_{v}\}]$ is the sum of the degrees of the elements indexing the $\ge 3$-valent vertices (not on the main section) of $T$ minus $1$:
  $$
    \deg([T; \{x_{v}\}]) = \sum_{v\in V_{\ge2}(T) \setminus V_{\iota}(T)} (\deg(x_{v}) - 1) + \sum_{v \in V_{\iota}(T)} \deg(x_{v}).
  $$
  The differential $d_{\mathrm{int}}$ is the differential corresponding to the differential algebras $\mathcal{U}(\overline{\calP})$, $\mathcal{U}(M)$ and $\mathcal{U}(\overline{\calQ})$, while $d_{\mathrm{ext}}$ consists in splitting a level into two consecutive levels using the cooperadic structures of $\calP$ and $\calQ$ (on one of the vertices with $\ge2$ incoming edges of the level involved) as well as the cobimodule structure of $M$ (on any of the vertices on the main section). The bimodule structure is induced from the free two-sided bimodule functor $\calF_{l}[\Sigma^{-1}\mathcal{U}(\calP), \Sigma^{-1} \mathcal{U}(\calQ)]$.
\end{defi}

\begin{pro}
  The leveled two-sided cobar construction $\Omega_{l}[\calP,\calQ](M)$ of a dg-($\calP$-$\calQ$)-cobimodule $M$ is a well defined dg-($\Omega_{l}\calP$-$\Omega_{l}\calQ$)-bimodule.
\end{pro}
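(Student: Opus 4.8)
The plan is to treat this statement as the two-sided, bimodule-level counterpart of Proposition~\ref{pro-cobar-bimod-well-def}, obtained by the same passage from the cobar construction of a cooperad to the two-sided cobar construction that was used for the bar constructions; the argument is also exactly dual to the verification that the leveled two-sided bar construction $\calB_{l}[\calP,\calQ](M)$ is a well-defined cobimodule. First I would note that the underlying graded object is, by definition, the free two-sided bimodule $\calF_{l}[\calU(\Sigma^{-1}\overline{\calP}), \calU(\Sigma^{-1}\overline{\calQ})](\calU(M))$ over the graded operads $\Omega_{l}\calP$ and $\Omega_{l}\calQ$ (equipped with their external differentials from Proposition~\ref{pro-cobar-bimod-well-def}). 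Consequently the graded ($\Omega_{l}\calP$-$\Omega_{l}\calQ$)-bimodule structure, together with its independence of permutations and contractions of permutable levels, is automatic from the free bimodule functor, and the only thing to check is that the total differential $d_{int}+d_{ext}$ is a well-defined bimodule derivation squaring to zero.

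Next I would exhibit $d_{ext}$ as the unique derivation of ($\Omega_{l}\calP$-$\Omega_{l}\calQ$)-bimodules that extends the operadic external differentials on $\Omega_{l}\calP$ and $\Omega_{l}\calQ$ and is induced by the map $\alpha : \calU(M) \to \calF_{l}[\calU(\Sigma^{-1}\overline{\calP}), \calU(\Sigma^{-1}\overline{\calQ})](\calU(M))$ which sends a generator to the sum of all one-step splittings of a main-section vertex into two consecutive levels, using the left coaction $\gamma_{L}^{c}$ (producing a $\calP$-decorated vertex below the section) and the right coaction $\gamma_{R}^{c}$ (producing a $\calQ$-decorated vertex above the section). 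Because $\alpha$ lands in the free bimodule functor, which is defined through the end already quotienting out permutations and contractions of permutable levels, the induced derivation is automatically well-defined; moreover the Leibniz rule applied to the $\Omega_{l}\calP$- and $\Omega_{l}\calQ$-actions reproduces precisely the splittings of vertices below and above the section coming from the cooperadic structures, so that $d_{ext}$ agrees with the formula in the definition.

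The heart of the argument is $d^{2}=0$. That $d_{int}^{2}=0$ is clear, and $d_{int}d_{ext}+d_{ext}d_{int}=0$ follows from the fact that the cooperadic structures of $\calP$ and $\calQ$ and the cobimodule structure of $M$ are all compatible with the internal differentials. For $d_{ext}^{2}=0$, since $d_{ext}$ is a bimodule derivation it suffices to corestrict to the generating sequence $\calU(M)$, that is, to check the identity on three-level leveled trees with section; as in the classical cobar construction, each configuration is governed by one coherence relation, namely coassociativity of the cooperad structure of $\calP$ or of $\calQ$, coassociativity of the left or right coaction of $M$, or---and this is the genuinely new case compared with Proposition~\ref{pro-cobar-bimod-well-def}---the compatibility axiom asserting that the left coaction $\gamma_{L}^{c}$ and the right coaction $\gamma_{R}^{c}$ commute, together with the sign conventions. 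I expect this mixed left/right verification on the main section to be the only delicate point; everything else is dual to Proposition~\ref{prop:bar-well-def} and its two-sided extension. Finally, the structure maps $\gamma''_{L}$ and $\gamma''_{R}$ are morphisms of dg-bimodules by construction, since a bimodule derivation is by definition compatible with the operad actions, so $\Omega_{l}[\calP,\calQ](M)$ is indeed a dg-($\Omega_{l}\calP$-$\Omega_{l}\calQ$)-bimodule.
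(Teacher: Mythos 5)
Your proposal is correct and follows essentially the same route as the paper: the paper likewise presents $d_{ext}$ as the unique bimodule derivation induced by sending a generator of $\calU(M)$ to all one-step decompositions via the left and right coactions, and derives $d_{ext}^{2}=0$ from coassociativity of the cobimodule structure together with the compatibility between the left and right coactions. Your write-up merely spells out in more detail the steps the paper leaves implicit (the Leibniz-rule bookkeeping and the reduction to three-level trees).
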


\begin{proof}
  The proof is an extension of the proof of Proposition~\ref{pro-cobar-bimod-well-def}.
  The differential $d_{\mathrm{ext}}$ is the unique derivation induced by the map which sends an element $x \in \calU(M)(n)$ to all possible ways of decomposing it using either the left or the right comodule structure of $M$ (indexing trees with exactly two levels).
  This differential squares to zero thanks to the coassociativity of the cobimodule structure of $M$ and the compatibility between the left and right coactions.
\end{proof}

\paragraph{Comparison with the Boardman--Vogt resolution for Hopf $\Lambda$-cobimodules}

Let $\calP$ and $\calQ$ be two 1-reduced Hopf cooperads and let $M$ be a Hopf ($\calP$-$\calQ$)-cobimodule.
In Section \ref{SectBVCobi} we built a fibrant resolution of $M$ using the leveled Boardman--Vogt resolution $W_{l}M$.
In Sections~\ref{SectBarCoop}, \ref{SectCobarCDGA}, \ref{sec:two-sided-leveled}, and \ref{SectCobarComod}, we introduced leveled versions of the two-sided bar and cobar constructions, respectively.
In the following, we show that $W_{l}M$ is weakly equivalent to the two-sided bar-cobar construction of $M$.
Namely, we build an explicit weak equivalence of dg-($\calB_{l}\Omega_{l}(\calP)$-$\calB_{l}\Omega_{l}(\calQ)$)-cobimodules:
$$
  \Gamma:\calB_{l}[\Omega_{l}(\calP), \Omega_{l}(\calQ)]\bigl( \Omega_{l}[\calP,\calQ](M) \bigr) \longrightarrow W_{l}M,
$$
where the dg-($\calB_{l}\Omega_{l}(\calP)$-$\calB_{l}\Omega_{l}(\calQ)$)-cobimodule on $W_{l}M$ is induced by the maps of cooperads (see Section~\ref{SectCobarCDGA})
$$
  \calB_{l}\Omega_{l}(\calP)\longrightarrow W_{l}\calP \qquad \text{and} \qquad \calB_{l}\Omega_{l}(\calQ)\longrightarrow W_{l}\calQ.
$$

A point in the two-sided bar-cobar construction is a family of elements $\Phi=\{\Phi(T)\in \overline{\Omega_{l}[\calP,\calQ](M)}(T),\,\,T\in \slTree[n]\}$, indexed by the set of leveled trees with section $\slTree[n]$, and satisfying the following relations:
\begin{itemize}
  \item for each permutation $\sigma$ of permutable levels, one has $\Phi(T)=\Phi(\sigma\cdot T)$;
  \item for each morphism $\delta_{i}:T\rightarrow T/\{i\}$ contracting two permutable levels, one has $\Phi(T)=\Phi(T/\{i\})$.
\end{itemize}
For each leveled tree with section $T$, $\Phi(T)$ is the data of a family of leveled trees and leveled trees with section $\{T[v],\,\,v\in V(T)\}$ in which the vertices of the main section of the leveled tree with section $T$ (resp. below and above the main section) are indexed by leveled trees with section (resp. leveled trees).
Moreover, one has a family $\{X_{u}[v]\}$, with $v\in V(T)$ and $u\in V(T[v])$, of elements in the cooperads $\calP$, $\calQ$ and the cobimodule $M$ labelling the vertices of the sub-leveled trees $T[v]$.
To be explicit, $\Phi(T)$ is denoted by $\{[\{T[v]\}\,,\,\{x_{u}[v]\}]\}_{v\in V(T)}$.

Let $\Phi$ be a point in the two-sided bar-cobar construction. In order to define
\[\Gamma(\Phi) \coloneqq \{\Gamma(\Phi)(T)\in sE_{W}(T)\otimes s\overline{M}_{W}(T)\}_{T\in \slTree[n]}, \]
there are two cases to consider.
\begin{enumerate}
  \item If there is no leveled tree with section $T'\in \lTree[n]$ such that $T$ is of the form $\gamma_{T'}(\{T[v]\})$ up to permutations and contractions of permutable levels, then $\Gamma(\Phi)(T)=0$.
  \item Otherwise, if $T = \gamma_{T'}(\{T[v]\})$, then we set
        $$
          \Gamma(\Phi)(T) = \bigl[ \gamma_{T'}(\{T[v]\}); \{x_{u}[v]\}; \{p_{i}\} \bigr],
        $$
        where $\{x_{u}[v]\}$ is the family of elements labelling the sub-leveled trees $T'[v]$, with $v\in V(T')$.
        The $i$-th level in $\gamma_{T'}(\{T[v]\})$, with $i\neq \iota$, is indexed by the constant polynomial form $p_{i}(t,dt)=1$ if the $i$-th level correspond to the $0$-th level of one of the sub-leveled trees $T'[v]$.
        Otherwise, the $i$-th level is indexed by the polynomial form $p_{i}(t,dt)=dt$.
\end{enumerate}

\begin{pro}\label{ProCompaOp2}
  The map $\Gamma : \calB_{l}[\Omega_{l}(\calP), \Omega_{l}(\calQ)]\bigl( \Omega_{l}[\calP,\calQ](M) \bigr) \longrightarrow W_{l}M$ so defined is a quasi-isomorphism of dg-($\calB_{l}\Omega_{l}(\calP)$-$\calB_{l}\Omega_{l}(\calQ)$)-cobimodules.
\end{pro}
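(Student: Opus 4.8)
The plan is to transpose the proof of Proposition~\ref{ProCompaOp} to the cobimodule setting, replacing the identification $W_{l}\calC \cong \calB_{l}(\Prim W_{l}\calC)$ by its two-sided analogue (Theorem~\ref{thm:wm-bar-cobim}) and concluding with a two-out-of-three argument.

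First I would exhibit $\Gamma$ as the image of a triple of generating maps under the two-sided leveled bar functor. From the proof of Proposition~\ref{ProCompaOp}, decorating all external levels by $dt$ defines morphisms of dg-operads $\Gamma_{\calP} : \Omega_{l}\calP \to \Prim(W_{l}\calP)$ and $\Gamma_{\calQ} : \Omega_{l}\calQ \to \Prim(W_{l}\calQ)$. The same recipe defines a morphism of bimodules $\Gamma_{M} : \Omega_{l}[\calP,\calQ](M) \to \Prim(W_{l}M)$ over the pair $(\Gamma_{\calP},\Gamma_{\calQ})$; here one checks, exactly as in the operadic case, that the prescription $p_{i}(t,dt)=dt$ on the external levels (and $p_{i}(t,dt)=1$ on the levels coming from the $0$-th level of a sub-tree) is compatible with the cobimodule relations and lands in the primitives. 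Applying $\calB_{l}[-,-](-)$ to the triple $(\Gamma_{\calP},\Gamma_{M},\Gamma_{\calQ})$ produces precisely the map $\Gamma$, and by functoriality this realizes $\Gamma$ as a morphism of dg-($\calB_{l}\Omega_{l}\calP$-$\calB_{l}\Omega_{l}\calQ$)-cobimodules, where the cobimodule structure on the target $W_{l}M \cong \calB_{l}[\Prim W_{l}\calP, \Prim W_{l}\calQ](\Prim W_{l}M)$ is restricted along $\calB_{l}\Gamma_{\calP}$ and $\calB_{l}\Gamma_{\calQ}$, which are exactly the maps $\calB_{l}\Omega_{l}\calP \to W_{l}\calP$ and $\calB_{l}\Omega_{l}\calQ \to W_{l}\calQ$. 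This settles the assertion about the cobimodule structures.

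To see that $\Gamma$ is a quasi-isomorphism, I would use the commutative triangle
\[
  \begin{tikzcd}
    M \ar[r, "\eta"] \ar[d] & W_{l}M \\
    \calB_{l}[\Omega_{l}\calP, \Omega_{l}\calQ]\bigl(\Omega_{l}[\calP,\calQ](M)\bigr) \ar[ru, "\Gamma" swap] &
  \end{tikzcd}
\]
in which the horizontal map $\eta$ is a quasi-isomorphism by Theorem~\ref{ThmQI2} and the vertical map is the canonical map exhibiting $M$ inside its two-sided bar-cobar construction. Commutativity is checked on the corolla, where both composites recover the coaugmentation $M \to W_{l}M$; this is the same direct comparison of decorations as in the cooperadic case. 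Granting that the vertical map is a quasi-isomorphism, the conclusion follows from two-out-of-three.

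The main obstacle is therefore the vertical arrow: proving that $\calB_{l}[\Omega_{l}\calP, \Omega_{l}\calQ](\Omega_{l}[\calP,\calQ](M))$ is a resolution of $M$. I see two ways to handle it. The clean route is to transport the claim to the classical two-sided bar-cobar construction via the isomorphisms between the leveled and usual bar and cobar functors (Propositions~\ref{ProBlB} and~\ref{ProBlBCoop}, together with the cobimodule variants of Sections~\ref{sec:two-sided-leveled} and~\ref{SectCobarComod}), where $\calB[\Omega\calP,\Omega\calQ](\Omega[\calP,\calQ](M)) \simeq M$ is standard. Alternatively, one can argue directly as in Theorem~\ref{ThmQI2}: filter by tree height (equivalently by the number of $\ge 3$-valent vertices), so that on the associated graded only the internal differential coming from the acyclic part $\K[t,dt]_{0}$ survives, collapsing everything but the corolla term, which is exactly $M$. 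Either argument is essentially formal once the identifications of the previous subsections are in place, so the proposition reduces to the two-out-of-three argument above.
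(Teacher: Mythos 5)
Your proposal is correct and follows essentially the same route as the paper: the paper's proof consists precisely of the commutative triangle relating $\eta : M \to W_{l}M$, the coaugmentation $M \to \calB_{l}[\Omega_{l}\calP,\Omega_{l}\calQ](\Omega_{l}[\calP,\calQ](M))$, and $\Gamma$, followed by two-out-of-three. You in fact supply more detail than the paper does, both on why $\Gamma$ is a map of cobimodules (via $\calB_{l}[-,-]$ applied to the triple $(\Gamma_{\calP},\Gamma_{M},\Gamma_{\calQ})$) and on why the vertical arrow is a quasi-isomorphism, which the paper simply asserts.
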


\begin{proof}
  The quasi-isomorphism is a consequence of the commutative diagram:
  \[
    \begin{tikzcd}
      M \ar[r, "\simeq"] \ar[d, "\simeq" swap] & W_{l}M \\
      \calB_{l}[\Omega_{l}(\calP), \Omega_{l}(\calQ)]\bigl( \Omega_{l}[\calP,\calQ](M) \bigr) \ar[ru, "\Gamma" swap]
    \end{tikzcd}
    \qedhere
  \]
\end{proof}

We conclude with the compatibility of $\Gamma$ with the $\Lambda$-structures (compare with Theorem~\ref{thm:lambda-compatible}).

\begin{thm}\label{thm:finale}
  Let $\calP$ and $\calQ$  be two 1-reduced Hopf $\Lambda$-cooperad and $M$ be a Hopf $\Lambda$-cobimodule over the pair $\calP$ and $\calQ$.
  There exists a $\Lambda$-costructure on the two-sided leveled bar-cobar construction
  $$
    \calB_{l}[\Omega_{l}(\calP), \Omega_{l}(\calQ)]\bigl( \Omega_{l}[\calP,\calQ](M) \bigr)
  $$
  making the map

  $$
    \Gamma : \calB_{l}[\Omega_{l}(\calP), \Omega_{l}(\calQ)]\bigl( \Omega_{l}[\calP,\calQ](M) \bigr) \longrightarrow W_{\Lambda}M,
  $$
  introduced in Proposition \ref{ProCompaOp2}, into a quasi-isomorphism of dg-$\Lambda$-cobimodules.
\end{thm}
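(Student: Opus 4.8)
The plan is to carry over the proof of Theorem~\ref{thm:lambda-compatible} to the two-sided setting, working with leveled trees with section instead of leveled trees and distinguishing the three zones of such a tree: below the section (decorated by $\calP$), on the section (decorated by $M$), and above the section (decorated by $\calQ$). Since the underlying map $\Gamma$ is already a quasi-isomorphism by Proposition~\ref{ProCompaOp2}, the only genuinely new content is to produce a $\Lambda$-costructure on the source and to check that $\Gamma$ preserves it.

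First I would introduce auxiliary operations
\[ h[i]_{\ast}:\Omega_{l}[\calP,\calQ](M)(n)\longrightarrow \Omega_{l}[\calP,\calQ](M)(n+1), \]
in complete analogy with the operations \eqref{Finaleq} of Theorem~\ref{thm:lambda-compatible}. For an element $\underline{x}=[T;\{x_{v}\}]$ with $(T,\iota)\in\sTree_{l}[n]$, and for each tree $T'\in T[i]$ obtained by grafting a branch over a new leaf $i$, let $v_{i}$ be the first non-bivalent vertex on the path from that leaf to the root. When $|v_{i}|=2$, I label $v_{i}$ by the element $b_{v_{i}}$ coming from the $\Lambda$-costructure of $\calP$, $M$, or $\calQ$ according to whether $v_{i}$ lies below, on, or above the section; when $|v_{i}|\ge3$, I apply the corresponding restricted costructure map $(h[i]_{|v_{i}})_{\ast}$ to $x_{v_{i}}$. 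Setting $h[i]_{\ast}(\underline{x})=\sum_{T'\in T[i]}\eta_{i}(\underline{x},T')$, these do not assemble into a $\Lambda$-costructure on the cobar construction itself (just as in the cooperadic case), but they are exactly what is needed at the bar-cobar level. The key structural remark is that the path from leaf $i$ meets the main section \emph{exactly once}, so $v_{i}$ sits in a well-defined zone and a single Hopf $\Lambda$-costructure applies.

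Next I would define the $\Lambda$-costructure on $\calB_{l}[\Omega_{l}(\calP),\Omega_{l}(\calQ)]\bigl(\Omega_{l}[\calP,\calQ](M)\bigr)$ by the formula of Theorem~\ref{thm:lambda-compatible}: for $\Phi=\{\Phi(T)\}$ and $T\in\sTree_{l}[n+1]$ with distinguished vertex $v_{i}$ (and $T'$ the tree obtained by deleting the branch over leaf $i$), set $h[i]_{\ast}(\Phi)(T)_{v}=\Phi(T')_{v}$ for $v\neq v_{i}$, set $h[i]_{\ast}(\Phi)(T)_{v_{i}}=h[\epsilon_{T,i}]_{\ast}\bigl(\Phi(T')_{v_{i}}\bigr)$ (using the cobar operation above) when $|v_{i}|\ge3$, and $h[i]_{\ast}(\Phi)(T)_{v_{i}}=b_{v_{i}}\in\Omega_{l}(\,\cdot\,)(2)$ when $|v_{i}|=2$, where $\epsilon_{T,i}$ records the planar position of the incoming edge of $v_{i}$ over leaf $i$. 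The verifications that this is well defined (invariance under permutations and contractions of permutable levels, the $\Lambda$-relations $(u\circ v)_{\ast}=u_{\ast}\circ v_{\ast}$, and compatibility with $d_{int}+d_{ext}$) are zone-by-zone adaptations of the cooperadic arguments, using the definitions of $b_{v}$ and the coproduct $m^{\ast}$ from Section~\ref{SectModHopf}.

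The main obstacle will be compatibility of this $\Lambda$-costructure with the cobimodule operations $\tilde{\gamma}_{L}$ and $\tilde{\gamma}_{R}$ (built from $\gamma_{L}$ and $\gamma_{R}$, cf.\ \eqref{eq:def-gamma-l} and \eqref{eq:def-gamma-r}), since this is precisely where the two-sided structure departs from Theorem~\ref{thm:lambda-compatible}. One must confirm that inserting leaf $i$ commutes with both the left action by $\Omega_{l}\calP$ and the right action by $\Omega_{l}\calQ$ across the main section; this follows from the compatibility axioms of a Hopf $\Lambda$-cobimodule, i.e.\ from the fact that the $\Lambda$-costructures of $\calP$, $M$, and $\calQ$ are compatible with the left and right comultiplications of $M$, so that the cases $v_{i}\in V_{d}(T)$, $v_{i}\in V_{\iota}(T)$, and $v_{i}\in V_{u}(T)$ interlock correctly under $\gamma_{L}$ and $\gamma_{R}$. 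Finally, the compatibility of $\Gamma$ with the $\Lambda$-costructures reduces, via the identification $W_{l}M\cong\calB_{l}[\Prim(W_{l}\calP),\Prim(W_{l}\calQ)](\Prim(W_{l}M))$ of Theorem~\ref{thm:wm-bar-cobim} and the fact that $\Gamma$ is induced by the bimodule morphism decorating external levels by $dt$, to the already established statement of Theorem~\ref{thm:lambda-compatible}, applied separately in each zone.
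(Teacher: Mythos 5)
Your strategy coincides with the paper's: define auxiliary operations $h[i]_{*}$ on $\Omega_{l}[\calP,\calQ](M)$ by summing over the trees $T'$ obtained by grafting a branch over the new leaf, transport the formula of Theorem~\ref{thm:lambda-compatible} verbatim to the bar--cobar construction, and check compatibility with $\Gamma$. The one place where your case analysis goes wrong is the main section, which is exactly the feature that distinguishes the two-sided setting. Unlike $\Sigma^{-1}\calU(\overline{\calP})$ and $\Sigma^{-1}\calU(\overline{\calQ})$, the cosequence $\calU(M)$ is not reduced in arity $1$, so arity-one vertices on the main section of a tree in $\sTree'_{l}[n]$ carry genuine decorations in $M(1)$. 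Hence when $v_{i}$ lies on the main section and $|v_{i}|=2$ in $T'$, the corresponding vertex of $T$ is already decorated by some $x_{v_{i}}\in M(1)$, and the correct prescription (the one the paper uses) is to apply the $\Lambda$-costructure map $M(1)\to M(2)$ to $x_{v_{i}}$; your rule instead inserts $b_{v_{i}}=\varepsilon(1)\in M(2)$, discarding $x_{v_{i}}$. With that convention the functoriality relation $(v\circ u)_{*}=v_{*}\circ u_{*}$ and the compatibility with $\Gamma$ (whose target $W_{l}M$ applies the costructure map of $M$ to main-section decorations) both break. In other words, the dichotomy should not be ``$|v_{i}|=2$ versus $|v_{i}|\ge 3$'' uniformly across the three zones: off the section it is ``$|v_{i}|=2$ (genuinely new vertex, decorate by $b_{v_{i}}$) versus $|v_{i}|\ge3$ (push forward the old decoration)'', while on the section every vertex with $|v_{i}|\ge 2$ is of the second kind.

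Relatedly, your ``meets the main section exactly once'' remark identifies the right phenomenon but you never complete it: when $v_{i}$ lies strictly below the main section, the grafted branch crosses the section and creates a new arity-one vertex there, which must be decorated by the image of $1$ under $\varepsilon:\K\to M(1)$. Both points are local corrections to the definition of $\eta_{i}$; once they are made, the rest of your argument (well-definedness, compatibility with $\tilde{\gamma}_{L}$ and $\tilde{\gamma}_{R}$ via the Hopf $\Lambda$-cobimodule axioms, and the reduction of the $\Gamma$-compatibility question to the cooperadic case) proceeds as in the paper.
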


\begin{proof}
  The proof is similar to the one of Theorem~\ref{thm:lambda-compatible}, with the necessary adjustments.
  Let $h[i] : [n] \to [n+1]$ be the injective nondecreasing map that misses $i$.
  We first define auxiliary operations
  \begin{equation}\label{eq:histar-bim}
    h[i]_{*} : \Omega_{l}[\calP,\calQ](M)(n) \to \Omega_{l}[\calP,\calQ](M)(n+1).
  \end{equation}
  For $(T, \iota) \in \slTree[n]$, we define $(T,\iota)[i] \subset \slTree[n+1]$ to be the set of leveled $(n+1)$-trees with section $(T', \iota')$ such that $T$ can be obtained from $T'$ by removing the branch coming from the $i$-th leaf and levels composed of bivalent vertices (compare with the pictures in the definition of $h[i]_{*}$, Equation~\eqref{Lambdacostructure}).
  For such a tree $(T', \iota')$, we let $v_{i} \in V(T')$ be the first vertex on the path joining the $i$th leaf to the root which has at least two incoming edges.
  Given $\underline{x} = [T; \; \{x_{v}\}] \in \Omega_{l}[\calP,\calQ](M)(n)$, we define $\eta_{i}(\underline{x}, T', \iota') \in \Omega_{l}[\calP,\calQ](M)(n+1)$ as follows.
  The decorations of the vertices other than $v_{i}$ come from $\underline{x}$.
  \begin{enumerate}
    \item If $|v_{i}| = 2$ and $v_{i}$ is not on the main section, then the decoration of $v_{i}$ is the image of $1 \in \calQ(1) = \K$ under one of the $\Lambda$-costructure map $\calQ(1) \to \calQ(2)$, $\calP(1) \to \calP(2)$ depending on whether 1. $v_{i}$ is above, below or on the main section $\iota'$ and 2. the branch is on the left or the right.
    \item If either $v_{i}$ is not on the section $\iota'$ and $|v_{i}| \ge 3$ or $v_{i}$ is on the main section and $|v_{i}| \ge 2$, then the decoration of $v_{i}$ is obtained by applying the $\Lambda$-costructure map of $\calP$, $\calQ$ or $M$ to the decoration of the vertex corresponding to $v_{i}$ in $\underline{x}$.
  \end{enumerate}
  Moreover, if a bivalent vertex on the main section is created, then it is decorated by the element of $M(1)$ defined by the $\Lambda$-costructure map $\varepsilon : \K \to M(1)$.

  We can now define the map \eqref{eq:histar-bim} by:
  \[ h[i]_{*}(\underline{x}) \coloneqq \sum_{(T', \iota') \in (T,\iota)[i]} \eta_{i}(\underline{x}, T', \iota'). \]

  Using these auxiliary maps, we can define the $\Lambda$-costructure on the leveled bar-cobar construction exactly as in Theorem~\ref{thm:lambda-compatible}.
  We can then check easily that this $\Lambda$-costructure is compatible with $\Gamma$.
\end{proof}

\paragraph{Acknowledgments}
The authors would like to thank Clemens Berger, Matteo Felder, Benoit Fresse, Muriel Livernet, Bruno Vallette, and Thomas Willwacher for helpful discussions and comments.
R.C.\ and N.I.\ acknowledge partial support from ERC StG 678156--GRAPHCPX, two CNRS PEPS grants, and hospitality from ETH Zurich.
N.I.\ contributes to the IdEx University of Paris ANR-18-IDEX-0001.
J.D.\ acknowledges support from ERC StG 678156--GRAPHCPX.

\printbibliography
\vspace{20pt}

\noindent Ricardo Campos: IMAG, Université de Montpellier, CNRS, Montpellier, France\\
\textit{E-mail address: } \href{mailto:ricardo.campos@umontpellier.fr}{ricardo.campos@umontpellier.fr}

\vspace{20pt}

\noindent Julien Ducoulombier: Max Planck Institute for Mathematics, Vivatsgasse 7, 53111 Bonn, Germany\\
\textit{E-mail address: } \href{mailto:julien@mpim-bonn.de}{julien@mpim-bonn.de}

\vspace{20pt}

\noindent Najib Idrissi: Université de Paris and Sorbonne Université, IMJ-PRG, CNRS, F-75006 Paris, France\\
\textit{E-mail address: } \href{mailto:najib.idrissi-kaitouni@imj-prg.fr}{najib.idrissi-kaitouni@imj-prg.fr}
\end{document}